\documentclass[11pt,a4paper]{article}

\usepackage[a4paper,top=1in,bottom=1in,left=1in,right=1in]{geometry} 
\usepackage{setspace}
\usepackage[authoryear]{natbib}  
\usepackage{amsmath,amssymb,amsfonts,amsthm,bm}
\usepackage{graphicx,color,xcolor}
\usepackage{multirow,booktabs,array}
\usepackage{float,rotating}
\usepackage{algorithm,algorithmicx,algpseudocode}
\usepackage{url,hyperref}
\usepackage{lipsum}
\usepackage{indentfirst}
\usepackage{authblk}
\usepackage{textcomp}
\usepackage{threeparttable}
\usepackage{pdflscape}
\usepackage{mathrsfs}
\usepackage{bm}
\usepackage{array}
\usepackage{booktabs}
\usepackage{tabularx}
\usepackage{caption}
\usepackage{geometry}
\usepackage{lastpage}
\usepackage{placeins}
\usepackage{adjustbox}
\usepackage{caption}

\DeclareMathOperator{\diag}{diag}
\usepackage{enumitem}
\usepackage{subcaption}

\theoremstyle{plain}
\newtheorem{theo}{Theorem}%
\newtheorem{lemma}{Lemma}%
\newtheorem{prop}{Proposition}
\theoremstyle{definition}
\newtheorem{rem}{Remark}%
\newtheorem{assum}{Assumption}
\newtheorem{coro}{Corollary}
\newtheorem{defin}{Definition}
\date{\today}  

\hypersetup{
	colorlinks=true,
	linkcolor=blue,
	anchorcolor=blue,
	citecolor=blue,
	urlcolor=blue,
	CJKbookmarks=true
}

\begin{document}
	
	\title{Distributed Online Estimation of Spiked Eigenvalues with Adaptive Weighting under Persistent Aspect Ratio Heterogeneity}

	\author[1]{Lu Yan}
	\author[2]{Jiang Hu}
	\author[2]{Yonghan Zhang}
	\author[1]{Xiaoyue Li\thanks{Corresponding author: lixiaoyue@tiangong.edu.cn}}

	\affil[1]{School of Mathematical Sciences, Tiangong University, Tianjin 300387, China}
	\affil[2]{KLASMOE and School of Mathematics \& Statistics, Northeast Normal University, Changchun 130024, China}

	\date{\today}

	\maketitle

	\begin{abstract}
		We study online estimation of spiked covariance eigenvalues from 
		observations distributed across $L$ nodes with heterogeneous and 
		persistent effective sample sizes. In the proportional high-dimensional 
		regime, local Rayleigh statistics are deterministically distorted by 
		node-specific aspect ratios $c_{\ell,t}=p/N^{\mathrm{eff}}_{\ell,t}$, 
		and direct aggregation of uncorrected statistics converges to the wrong 
		limit. We propose a correct-then-aggregate framework in which each node 
		removes its deterministic bias via an inverse Rayleigh transfer map, 
		and the server fuses corrected estimates using adaptive soft-max weights 
		based on predictable fluctuation metrics, transmitting only $O(k)$ 
		scalars per active node per round. We establish consistency and 
		asymptotic normality of the global estimator, enabling valid online 
		inference, and derive non-asymptotic bounds quantifying how accuracy 
		improves with the number of nodes and their effective sample sizes. 
		The adaptive weights achieve variance reduction comparable to oracle 
		inverse-variance weighting, confirming the data-driven construction
		is nearly efficient. Simulation studies validate these properties.
		An application to cross-venue monitoring of a dominant market factor
		shows the method tracks systemic risk in real time while substantially
		reducing communication cost relative to a centralized pooled approach.
	\end{abstract}

	\textbf{Keywords:} Distributed spiked eigenvalue estimation; Online spectral inference; Adaptive weighting; Asymptotic normality; Nonasymptotic analysis

	\textbf{MSC2020 subject classifications:} Primary 62H25; secondary 62E20

	\section{Introduction\label{sec:1}}
	
	Modern distributed monitoring systems face a common statistical challenge 
	when observations of a high-dimensional process arrive sequentially across 
	multiple nodes with persistently heterogeneous data rates. The task is to 
	track the dominant eigenstructure of the underlying covariance matrix in 
	real time. Real-time systemic risk monitoring in asset markets provides a 
	concrete instance. Trading in a common universe of $p$ assets is fragmented 
	across $L$ venues, each continuously recording its own price stream. 
	Risk surveillance requires tracking the leading eigenvalues of the return 
	covariance matrix as the market evolves. These eigenvalues reflect the 
	strength of common factors driving asset returns, and their sudden increase 
	signals the buildup of systemic stress \cite{kritzman2011principal}.
	Three structural features place this task outside classical spectral theory. 
	First, venues cannot pool raw observations or even $p\times p$ covariance 
	matrices at the required frequency, since transmission costs $O(p^2)$ 
	scalars per node per round. Second, data rates are severely and persistently 
	unequal. Liquidity determines how many observations each venue supplies, 
	and liquidity is a stable venue attribute. Third, when dimension $p$ is 
	comparable to local sample sizes, each node's spectral estimate carries 
	high-dimensional bias whose magnitude depends on its own data rate. 
	Neglecting any feature is fatal. Pooling is infeasible, equal-weight 
	averaging is inefficient, and averaging uncorrected estimates converges 
	to the wrong limit.

	Multi-site biomedical consortia tracking 
	patient physiological markers face identical challenges. Hospital ICUs vary 
	persistently in census, patient privacy forbids raw data pooling, yet early 
	detection of systemic health crises requires continuous eigenvalue 
	monitoring. Distributed sensor networks exhibit the same pattern when 
	measuring environmental variables at stations with heterogeneous sampling 
	frequencies. In each setting, observations arrive sequentially at local 
	nodes rather than being collected centrally in batch form, and the dimension 
	grows with local sample sizes. Classical spectral theory, formulated for 
	centralized static data, provides neither appropriate asymptotic 
	approximations nor computationally feasible procedures. Substantial progress 
	has been made in high-dimensional inference, streaming algorithms, and 
	distributed computation separately, but their integration into a unified 
	framework for distributed streaming spectral inference remains open. The 
	present paper develops such a framework and demonstrates its performance on 
	both synthetic data and cryptocurrency trading across fragmented exchanges 
	in Section~\ref{sec:5}.

	The distributed streaming spectral inference problem can be naturally 
	formulated within the framework of high-dimensional spiked covariance models. 
	This framework, introduced by \citet{johnstone2001distribution}, provides a 
	principled approach to separating low-rank signals from high-dimensional 
	noise in sample covariance matrices. A central result is the Baik-Ben Arous-Péché 
	phase transition \cite{baik2005phase,baik2006eigenvalues}, which characterizes 
	the sharp threshold above which sample eigenvalues escape the bulk and 
	consistently estimate population spikes. Subsequent work has established 
	asymptotic distributions for spiked eigenvalues and eigenvectors under 
	various asymptotic regimes \cite{paul2007asymptotics,bai2008central,bai2012estimation}, 
	extended the theory to general covariance structures with unknown rank 
	\cite{liu2022clt,hou2023spiked}, and analyzed limiting behavior under 
	complex dependence structures \cite{cai2020limiting,zhang2022asymptotic}. 
	Further generalizations include spiked Fisher matrices 
	\cite{jiang2021limits,hou2023spiked} and heterogeneous covariance structures 
	beyond diagonal or block-diagonal forms. These advances provide the 
	asymptotic machinery needed for inference in high-dimensional spectral 
	problems. However, they uniformly presume a static, centralized, batch-access 
	setting in which the complete dataset is available at a single location for 
	repeated eigendecomposition.

	Relaxing the centralization assumption has motivated a growing literature on 
	distributed covariance estimation and principal component analysis. When data 
	are partitioned across multiple nodes, communication-efficient aggregation 
	becomes essential. Existing distributed PCA methods typically operate in two 
	stages. Each node computes a local eigenspace estimate, then a central server 
	aggregates these estimates through weighted averaging or subspace alignment. 
	Representative approaches include one-shot averaging with bias correction 
	\cite{chen2022distributed}, robust distributed covariance estimation 
	\cite{li2022robust}, and privacy-preserving protocols under differential 
	privacy constraints \cite{imtiaz2018differentially}. Recent work by 
	\citet{li2025debiased} has developed debiased distributed estimation 
	specifically for the spiked covariance framework, establishing asymptotic 
	normality under fixed aspect ratios. Parallel advances in distributed 
	statistical inference have provided theoretical foundations for high-dimensional 
	regression and hypothesis testing \cite{battey2018distributed,chen2021distributed}, 
	heterogeneity-aware inference \cite{duan2022heterogeneity}, and distributed 
	learning algorithms \cite{dobriban2021distributed}. However, these methods 
	assume that nodes can synchronize at each aggregation round and that local 
	sample sizes remain comparable, so that simple averaging or inverse-variance 
	weighting suffices. When aspect ratios are persistently heterogeneous, local 
	estimates converge to different limits, and standard aggregation schemes fail.

	A separate line of research has addressed the temporal dimension by 
	developing online and streaming PCA methods that adapt to sequentially 
	arriving observations. The foundational stochastic approximation scheme of 
	\citet{oja1992principal} enables recursive extraction of principal components 
	without storing the full data matrix. Subsequent work has refined this 
	approach through diffusion approximations \cite{li2017diffusion}, near-optimal 
	update rules \cite{li2018near}, and extensions to incomplete or missing data 
	\cite{balzano2010grouse,balzano2015local}. Theoretical characterizations of 
	statistical optimality for online subspace tracking have been established by 
	\citet{liang2023optimality}. More recent attention has turned to inference 
	rather than estimation alone. \citet{kumar2025beyond} established asymptotic 
	normality and entrywise uncertainty quantification for streaming eigenvector 
	estimators under fixed covariance, while \citet{luo2023online} and 
	\citet{li2022statistical} developed online debiased inference procedures for 
	high-dimensional models with temporal variation. However, these methods assume 
	observations arrive at a single central location. When data streams are 
	distributed across multiple nodes with heterogeneous rates, neither direct 
	application of online algorithms nor straightforward combination with 
	distributed aggregation yields consistent estimates. The fundamental difficulty 
	is that local aspect ratios induce node-specific biases that cannot be removed 
	by temporal averaging alone.

	We develop a distributed online framework for estimating spiked eigenvalues 
	under persistent rate heterogeneity. The central observation is that local 
	spectral statistics are deterministically distorted in the proportional regime. 
	When $p/N^{\mathrm{eff}}_{\ell,t}\to c_\ell\in(0,\infty)$, the Rayleigh 
	quotient at node $\ell$ converges to an attenuated version of the population 
	spike, with attenuation factor determined by $c_\ell$. Direct aggregation of 
	uncorrected local statistics therefore fails under heterogeneous aspect ratios. 
	We address this through a correct-then-aggregate architecture. Each node 
	applies an inverse Rayleigh transfer map derived from the limiting attenuation 
	law to remove its deterministic bias before transmission. The correction 
	depends only on local aspect ratios that nodes compute from their accumulated 
	sample sizes. The server then fuses corrected estimates using adaptive weights 
	derived from coordinatewise predictable fluctuation metrics. These weights 
	dynamically approximate inverse-variance weighting without requiring nodes to 
	transmit second-order summaries. The procedure transmits only $O(k)$ scalars per active node per round, 
	where $k$ is the number of spikes, making it feasible even when $p$ is 
	large and communication budgets are constrained.

	Our theoretical analysis establishes the statistical properties of this
	procedure. Building on the limiting attenuation law,
	we establish consistency and asymptotic normality of the aggregated estimator,
	enabling the construction of valid confidence intervals. We derive explicit
	finite-sample error bounds that quantify how estimation accuracy depends on
	the number of nodes, their effective sample sizes, and the temporal variation
	of the underlying covariance structure. These bounds provide practical criteria
	for assessing each node's contribution to the aggregate. We further show that
	adaptive weighting achieves variance reduction comparable to infeasible oracle
	inverse-variance weighting, confirming that the data-driven weights are nearly
	efficient without requiring knowledge of local fluctuation scales. Together,
	these results provide rigorous statistical guarantees for distributed streaming
	spectral estimation under heterogeneous and time-varying conditions.

	The remainder of the paper is organized as follows. Section \ref{sec:2}
	sets up the problem, introduces the high-dimensional spiked covariance model,
	and makes the multi-venue monitoring problem precise.
	Section \ref{sec:3} introduces the online adaptive weighting procedure.
	Section \ref{sec:4} presents the main theoretical guarantees, namely
	consistency, asymptotic normality, and finite-sample error bounds.
	Section \ref{sec:5} presents numerical experiments on synthetic data
	and a real-data analysis of cross-venue cryptocurrency returns.
	Section \ref{sec:6} concludes with a discussion of future directions.
	Proofs are in the Appendix.

	\section{Model Setup and Problem Formulation\label{sec:2}}
	
	Following convention, scalars are denoted by regular letters, while
	vectors and matrices are denoted by bold letters. For scalar sequences
	$\{a_n\}$ and $\{b_n\}$, we write $a_n \lesssim b_n$ or $a_n \gtrsim b_n$
	if there exists a universal constant $C>0$ such that $a_n \le Cb_n$
	or $a_n \ge Cb_n$, respectively. The notation $a_n \asymp b_n$ indicates
	that both $a_n \lesssim b_n$ and $a_n \gtrsim b_n$ hold simultaneously.
	We use $O(\cdot)$ and $o(\cdot)$ for the standard big-O and little-o notation.
	For vectors, $\|\cdot\|_2$ denotes the Euclidean norm.
	For matrices, $\|\cdot\|_{op}$ denotes the operator norm, also known as
	the spectral norm, defined as
	$$\|\bm{A}\|_{op}=\sup_{\bm{x}\neq \bm{0}}\frac{\|\bm{A}\bm{x}\|_2}{\|\bm{x}\|_2},$$
	which equals the largest singular value of $\bm{A}$,
	and $\|\cdot\|_F$ denotes the Frobenius norm.
	For a matrix $\bm{A}$, $\bm{A}^\top$ denotes its transpose.
	We write $\bm{I}_k$ for the $k \times k$ identity matrix,
	and $\diag(\lambda_1,\dots,\lambda_k)$ for a diagonal matrix with 
	diagonal entries $\lambda_1,\dots,\lambda_k$.
	For two subspaces represented by orthonormal matrices $\bm{U}_1$ and $\bm{U}_2$,
	$\sin\Theta(\bm{U}_1,\bm{U}_2)$ denotes the matrix of sines of principal 
	angles between the subspaces.

	\subsection{High-Dimensional Spiked Covariance Model}
	We introduce the high-dimensional spiked covariance framework and 
	the distributed streaming observation model considered throughout the paper.
	Let $\bm{X}=(\bm{x}_1,\dots,\bm{x}_n)\in\mathbb{R}^{p\times n}$ denote 
	a high-dimensional data matrix with columns $\bm{x}_i\in\mathbb{R}^p$, 
	and define the sample covariance matrix
	\begin{equation*}
		\bm{S}_n
		=
		\frac{1}{n}
		\bm{X}\bm{X}^{\top}.
	\end{equation*}
	The population covariance matrix is assumed to follow a spiked covariance structure:
	\begin{equation*}
		\bm{\Sigma}
		=
		\bm{V}\bm{\Lambda}\bm{V}^{\top}
		+
		\sigma^2\bm{I}_p,
	\end{equation*}
	where $\bm{V}\in\mathbb{R}^{p\times k}$ is an orthonormal matrix satisfying 
	$\bm{V}^{\top}\bm{V}=\bm{I}_k$, $\bm{\Lambda}=\diag(\lambda_1,\dots,\lambda_k)$ 
	contains the $k$ spiked (signal) eigenvalues satisfying 
	$\lambda_1\ge\cdots\ge\lambda_k>0$, and $\sigma^2>0$ is the noise variance. 
	The leading eigenspace is spanned by the columns of $\bm{V}$.
	
	Under the high-dimensional asymptotic regime
	\[
	p,n\to\infty,
	\qquad
	\frac{p}{n}\to c\in(0,\infty),
	\]
	the empirical spectral distribution (ESD) of $\bm{S}_n$ converges weakly to 
	the Mar\v{c}enko--Pastur law. A key phenomenon is the
	\textit{Baik--Ben~Arous--P\'{e}ch\'{e} (BBP) phase transition}
	\cite{baik2005phase}, in which the extreme eigenvalues of
	$\bm{S}_n$ undergo a sharp transition between noise and 
	signal regimes, governing the asymptotic behavior of 
	spiked eigenvalue and eigenvector estimators.
	
	\subsection{Distributed Streaming Observation Model}
	We consider a distributed system with $L$ local nodes, 
	where observations are processed locally.
	At each time step $t=1,2,\dots$, the $\ell$-th node
	sequentially receives a batch of observations
	\[
	\{\bm{X}_{\ell,t,i}\}_{i=1}^{n_t^\ell} \subset \mathbb{R}^p,
	\qquad \ell = 1,\dots,L,
	\]
	with batch sizes $n_t^\ell$ that may vary across nodes and time. 
	These observations may arrive asynchronously, and the complete
	dataset remains distributed across nodes rather than being collected centrally.
	Each node thus performs purely local online updates using only
	its currently available observations, periodically sending
	compressed summaries to a central server under communication constraints.
	
	At time $t$, the population covariance matrix is assumed to 
	follow a time-varying spiked structure
	\begin{equation*}
		\bm{\Sigma}_t
		= \bm{V}_t \bm{\Lambda}_t \bm{V}_t^{\top} + \sigma^2_t \bm{I}_p,
	\end{equation*}
	where $\bm{\Lambda}_t = \operatorname{diag}(\lambda_{t,1},\dots,\lambda_{t,k})$ 
	contains the time-varying spiked eigenvalues, $\bm{V}_t \in \mathbb{R}^{p \times k}$ 
	is the associated orthonormal principal eigenspace satisfying 
	$\bm{V}_t^\top \bm{V}_t = \bm{I}_k$, and $\sigma^2_t > 0$ is the noise 
	level at time $t$.
	The spectral gap separating signal from noise is $\lambda_{t,k}$. 
	
	The batch sample covariance matrix at node $\ell$ and time $t$ is given by
	\begin{equation*}
		\hat{\bm{S}}_{\ell,t}
		= \frac{1}{n_t^\ell}
		\sum_{i=1}^{n_t^\ell}
		\bm{X}_{\ell,t,i} \bm{X}_{\ell,t,i}^{\top}.
	\end{equation*}
	
	Consider cryptocurrency trading distributed across multiple 
	exchanges. A typical surveillance task involves tracking the covariance 
	structure of several hundred actively traded tokens. The leading eigenvalues 
	of the return covariance matrix quantify the concentration of systematic risk 
	and serve as early-warning indicators of market stress 
	\cite{kritzman2011principal}. However, raw transaction data cannot be pooled 
	across exchanges in real time due to proprietary restrictions and bandwidth 
	limitations. For $p=500$ assets, transmitting a single sample covariance 
	matrix requires approximately $p(p+1)/2 \approx 125{,}000$ scalars per venue 
	per update, which quickly saturates communication channels when updates are 
	required at minute or sub-minute frequencies.
	
	A further complication arises from the data itself. Liquidity is a stable 
	exchange characteristic, so the number of transactions observed per interval 
	varies substantially and this heterogeneity persists over time. Empirical 
	studies of cryptocurrency markets document that asset returns are well-described 
	by low-rank factor models, with a small number of common factors capturing 
	most of the cross-sectional variation \cite{liu2022common}. This structure 
	matches the spiked covariance framework. The number of observations collected per exchange over a fixed window is of the same order as the number of assets, placing the problem squarely in the proportional-dimensional regime. Section~\ref{sec:5.3} demonstrates the proposed procedure on a representative subset of this problem.

	\section{Methodology\label{sec:3}}
	
	We consider the distributed streaming setting of Section~\ref{sec:2} 
	with $L$ local nodes whose data follow a time-varying spiked covariance model
	\begin{equation*}
		\bm{\Sigma}_t
		= \bm{V}_t \bm{\Lambda}_t \bm{V}_t^\top + \sigma^2_t \bm{I}_p,
		\qquad
		\bm{\Lambda}_t
		= \operatorname{diag}(\lambda_{t,1},\dots,\lambda_{t,k}),
		\quad
		\lambda_{t,1}\ge\cdots\ge\lambda_{t,k}>0,
	\end{equation*}
	where $\bm{V}_t\in\mathbb{R}^{p\times k}$ is orthonormal with 
	$\bm{V}_t^\top\bm{V}_t=\bm{I}_k$. At each time $t$, only $A_t$ 
	nodes in the active set $\mathcal{A}_t$ participate. Communication 
	is restricted to $O(k)$ scalars per active node per round. The goal 
	is to estimate the top $k$ population eigenvalues $\lambda_{t,1},\ldots,\lambda_{t,k}$ 
	in an online manner while respecting both bandwidth and memory constraints.
	
	Each active node $\ell$ receives a fresh batch of $n_t^\ell$ observations 
	at time $t$ and computes the batch covariance 
	\begin{equation*}
		\hat{\bm{S}}_{\ell,t}
		= \frac{1}{n_t^\ell}\sum_{i=1}^{n_t^\ell}
		\bm{X}_{\ell,t,i}\bm{X}_{\ell,t,i}^\top. 
	\end{equation*}
	To accommodate continuously arriving observations while preserving historical 
	information under memory constraints, we recursively update the local covariance 
	estimate via exponential weighting as follows.  
	\begin{equation*}
		\bm{S}_{\ell,t}
		= (1-\alpha_t)\bm{S}_{\ell,t-1}
		+ \alpha_t\hat{\bm{S}}_{\ell,t}, 
	\end{equation*}
	where $\alpha_t \in (0,1]$ is a time-varying step size that governs the 
	trade-off between adaptivity and stability. 
	Unrolling the recurrence reveals that $\bm{S}_{\ell,t}$ 
	is a weighted average of past batch covariances with geometrically decaying 
	weights. The effective number of samples contributing to this average is measured 
	by the inverse participation ratio
	\begin{equation*}
		N^{\mathrm{eff}}_{\ell,t}
		= \Bigl(\sum_{i}\mathrm{w}_{\ell,t,i}^2\Bigr)^{-1},
	\end{equation*}
	where $\mathrm{w}_{\ell,t,i}$ denote the sample-level weights induced by
	the recursive exponential updating scheme. The explicit construction of
	these weights through the batch weights $\pi_{s,t}$ is given in
	\eqref{eq:pi}. The local aspect ratio
	\begin{equation*}
		\label{eq:sec3_cratio}
		c_{\ell,t} := \frac{p}{N^{\mathrm{eff}}_{\ell,t}}
	\end{equation*}
	is the fundamental quantity governing high-dimensional bias. 
	Crucially, $c_{\ell,t}$ is heterogeneous across nodes due to asynchronous activation 
	and unequal batch sizes.

	With the recursively updated local covariance estimate $\bm{S}_{\ell,t}$ in hand,
	each node tracks the principal spectral subspace via orthogonal iteration.
	Let $\bm{U}_{\ell,t-1} \in \mathbb{R}^{p \times k}$ denote the local
	spectral subspace estimate from the previous round.
	At time $t$, the node updates this subspace estimate by
	\begin{equation*}
		\label{eq:sec3_oi}
		\tilde{\bm{U}}_{\ell,t}
		= \bm{S}_{\ell,t}\bm{U}_{\ell,t-1},
		\qquad
		\bm{U}_{\ell,t}
		= \operatorname{QR}\!\bigl(\tilde{\bm{U}}_{\ell,t}\bigr),
	\end{equation*}
	where $\operatorname{QR}(\cdot)$ extracts the orthonormal factor via
	$\operatorname{QR}$ decomposition. This recursive procedure avoids full eigendecomposition
	at each step, reducing per-round computational cost from
	${O}(p^3)$ to ${O}(p^2 k)$. 
	
	Having updated the local subspace estimate, each node must now communicate
	spectral information to the fusion center. Transmitting the full covariance
	matrix $\bm{S}_{\ell,t} \in \mathbb{R}^{p \times p}$ is prohibitively
	expensive when $p$ is large, requiring $O(p^2)$ communication per node per round.
	The spiked structure offers a natural dimension reduction. Since the dominant
	spectral information resides in the $k$-dimensional principal subspace, it
	suffices to transmit summary statistics projected onto that subspace.
	Accordingly, each node constructs the $k$-dimensional Rayleigh quotient vector
	\begin{equation*}
		\bm{r}_{\ell,t}
		= \operatorname{diag}\!\bigl(
		\bm{U}_{\ell,t-1}^\top\hat{\bm{S}}_{\ell,t}\bm{U}_{\ell,t-1}
		\bigr)\in\mathbb{R}^k,
		\qquad
		r_{\ell,t,j}
		= \bm{u}_{\ell,t-1,j}^\top\hat{\bm{S}}_{\ell,t}
		\bm{u}_{\ell,t-1,j},
	\end{equation*}
	where $\bm{u}_{\ell,t-1,j}$ denotes the $j$-th column of $\bm{U}_{\ell,t-1}$.

	The key departure from the classical pipeline is that each node
	corrects its own statistic before transmission. To understand the correction,
	define the node-specific Rayleigh transfer map
	\begin{equation*}
		\label{eq:sec3_chi}
		\chi_{\ell,t}(\lambda)
		:= \sigma_t^2 + \lambda\,a(\lambda;c_{\ell,t-1},\sigma_t^2)
		=\frac{\lambda(\lambda+\sigma_t^2)}{\lambda+c_{\ell,t-1}\sigma_t^2},
	\end{equation*}
	where $a$ is the eigenvector-alignment map of Lemma~\ref{lem:maps} and the
	aspect ratio is the one attached to $\bm U_{\ell,t-1}$, the direction along
	which $r_{\ell,t}$ is evaluated. Theorem~\ref{thm:attenuation} establishes the
	asymptotic transfer relation
	$r_{\ell,t,j}=\chi_{\ell,t}(\lambda_{t,j})+o_{\mathbb P}(1)$,
	which quantifies the attenuation of the spiked eigenvalue through the
	Rayleigh quotient. Lemma~\ref{lem:inverse} further shows that
	$\chi_{\ell,t}$ is a strictly increasing bijection on the supercritical
	region, with explicit inverse
	\begin{equation*}
		\chi^{-1}(x;c,\sigma^2)
		= \tfrac12\Bigl\{(x-\sigma^2)
		+\sqrt{(x-\sigma^2)^2+4c\sigma^2x}\Bigr\}.
	\end{equation*}
	The map $\chi_{\ell,t}$ remains well-conditioned at the phase transition,
	with $\chi'\ge 2/(1+\sqrt c)$ uniformly.
	
	Each node forms the locally debiased estimate
	\begin{equation*}
		\label{eq:sec3_localdebias}
		\check\lambda_{\ell,t,j}
		= \chi^{-1}\!\bigl(
		r_{\ell,t,j}\vee\check\sigma^2_{\ell,t},\,
		c_{\ell,t-1},\,\check\sigma^2_{\ell,t}\bigr),
		\qquad j=1,\dots,k,
	\end{equation*}
	where the truncation $r_{\ell,t,j}\vee\check\sigma^2_{\ell,t}$ keeps the
	argument in the domain of $\chi^{-1}$. The local noise level is estimated 
	from the same batch via the trace identity
	\begin{equation*}
		\check\sigma^2_{\ell,t}
		= \frac{1}{p-k}\Bigl(
		\operatorname{tr}\hat{\bm{S}}_{\ell,t}
		-\sum_{j=1}^k r_{\ell,t,j}\Bigr).
	\end{equation*}

	After local correction, every transmitted statistic targets the
	same population eigenvalue $\lambda_{t,j}$, so aggregation becomes a pure
	variance-reduction problem. The server maintains one exponentially smoothed
	fluctuation metric per coordinate,
	\begin{equation*}
		\label{eq:sec3_metric}
		V_{\ell,t,j} = (1-\eta_t)V_{\ell,t-1,j}
		+ \eta_t\bigl[(1-\rho)(\check\lambda_{\ell,t,j}-\check\lambda_{\ell,t-1,j})^2
		+ \rho(\check\lambda_{\ell,t,j}-\tilde\lambda_{t-1,j})^2\bigr],
		\qquad j=1,\dots,k,
	\end{equation*}
	with $\eta_t\in(0,1]$ and $\rho\in(0,1)$ mixing temporal volatility and
	residual deviation. 
	The weights are built from the cross-sectional relative value of the metric, 
	clipped for robustness, 
	\begin{equation*}
		\label{eq:sec3_relmetric}
		\bar V_{t-1,j} = A_t^{-1}\sum_m V_{m,t-1,j},
		\qquad
		\Psi_{\ell,t,j} = \operatorname{clip}\bigl(V_{\ell,t-1,j}/\bar V_{t-1,j},
		\underline\theta,\bar\theta\bigr), 
	\end{equation*}
	so that the weights for coordinate $j$ are a soft-max in the clipped
	relative metric,
	\begin{equation*}
		\label{eq:sec3_weights}
		\omega_{\ell,t,j} = \frac{\exp(-\tilde\tau_t\Psi_{\ell,t,j})}
		{\sum_{m\in\mathcal A_t}\exp(-\tilde\tau_t\Psi_{m,t,j})},
		\qquad
		\tilde\tau_t\in[\underline\tau,\bar\tau],
		\quad
		0<\underline\tau\le\bar\tau<\infty .
	\end{equation*}

	The global estimate is updated coordinatewise by the damped recursion
	\begin{equation*}
		\label{eq:sec3_fusion}
		\tilde\lambda_{t,j}
		= (1-\beta_t)\tilde\lambda_{t-1,j}
		+ \beta_t\sum_{\ell\in\mathcal{A}_t}
		\omega_{\ell,t,j}\,\check\lambda_{\ell,t,j},
		\qquad \beta_t\in(0,1],
		\qquad j=1,\dots,k,
	\end{equation*}
	then $\tilde{\bm\lambda}_t=(\tilde\lambda_{t,1},\dots,\tilde\lambda_{t,k})$
	and
	$\check{\bm\lambda}_{\ell,t}
	=(\check\lambda_{\ell,t,1},\dots,\check\lambda_{\ell,t,k})$. The complete procedure is
	summarised in Algorithm~\ref{alg:distributed_online_spike}.


	\begin{algorithm}[t]
		\caption{Distributed Online Debiased Spike Estimation
			(correct-then-aggregate)\label{alg:distributed_online_spike}}
		\begin{algorithmic}[1]
			\Require step sizes $(\alpha_t,\beta_t,\eta_t)$, mixing parameter
			$\rho$, temperature $\tilde\tau_t\in[\underline\tau,\bar\tau]$
			(default $\tilde\tau_t\equiv1$), relative-metric clip
			$[\underline\theta,\bar\theta]$, and rank $k$.
			\State \textbf{Initialize:} $\tilde{\bm\lambda}_0\!\leftarrow\!\bm 0$ at the
			server, and at each node set $\bm U_{\ell,0}$ to an arbitrary orthonormal
			matrix, $\bm S_{\ell,0}\!\leftarrow\!\bm I_p$,
			$N^{\mathrm{eff},-1}_{\ell,0}\!\leftarrow\!0$,
			$V_{\ell,0,j}\!\leftarrow\!V_0$ for $j=1{:}k$.
			
			\For{$t = 1 : T$}
			\For{$\ell \in \mathcal{A}_t$}  
			\State Receive streaming samples
			$\{\bm{X}_{\ell,t,i}\}_{i=1}^{n_t^\ell}$ and form
			$\hat{\bm{S}}_{\ell,t}
			= \frac{1}{n_t^\ell}\sum_{i}\bm{X}_{\ell,t,i}\bm{X}_{\ell,t,i}^{\top}$.
			\State Rayleigh on the {frozen} direction $\bm U_{\ell,t-1}$:
			$\ r_{\ell,t,j}
			= \bm u_{\ell,t-1,j}^{\top}\hat{\bm{S}}_{\ell,t}\bm u_{\ell,t-1,j}$.
			\State Aspect ratio (scalar recursion):
			$\ N^{\mathrm{eff},-1}_{\ell,t}
			=(1-\alpha_t)^2 N^{\mathrm{eff},-1}_{\ell,t-1}+\alpha_t^2/n_t^\ell$,
			$\ c_{\ell,t}=p\,N^{\mathrm{eff},-1}_{\ell,t}$.
			\State Local noise:
			$\ \check\sigma^2_{\ell,t}
			=\big(\operatorname{tr}\hat{\bm S}_{\ell,t}
			-\sum_j r_{\ell,t,j}\big)/(p-k)$.
			\State Subspace update:
			$\ \bm S_{\ell,t}=(1-\alpha_t)\bm S_{\ell,t-1}+\alpha_t\hat{\bm S}_{\ell,t}$,
			$\ \bm U_{\ell,t}=\operatorname{QR}(\bm S_{\ell,t}\bm U_{\ell,t-1})$.
			\For{$j = 1 : k$} 
			\State
			$\ \check\lambda_{\ell,t,j}
			=\chi^{-1}\!\big(r_{\ell,t,j}\vee\check\sigma^2_{\ell,t};
			c_{\ell,t-1},\check\sigma^2_{\ell,t}\big)$.
			\State 
			$$\ V_{\ell,t,j}=(1-\eta_t)V_{\ell,t-1,j}
			+\eta_t\big[(1-\rho)
			(\check\lambda_{\ell,t,j}-\check\lambda_{\ell,t-1,j})^2
			+\rho(\check\lambda_{\ell,t,j}-\tilde\lambda_{t-1,j})^2\big]. $$
			\EndFor
			\State Send $\check{\bm\lambda}_{\ell,t}$ and $ V_{\ell,t,j}$ to server. 
			\EndFor
			\State Mean metric:
			$\ \bar V_{t-1,j}=\tfrac1{A_t}\sum_{m\in\mathcal A_t}V_{m,t-1,j}$.
			\State Clipped relative metric:
			$\ \Psi_{\ell,t,j}=\operatorname{clip}
			\bigl(V_{\ell,t-1,j}/\bar V_{t-1,j},
			\underline\theta,\bar\theta\bigr)$.
			\State Adaptive weights (predictable soft-max):
			$\ \omega_{\ell,t,j}
			=\dfrac{\exp(-\tilde\tau_t\Psi_{\ell,t,j})}
			{\sum_{m\in\mathcal A_t}\exp(-\tilde\tau_t\Psi_{m,t,j})}$.
			\State Aggregate debiased estimates:
			$\ \tilde\lambda_{t,j}
			=(1-\beta_t)\tilde\lambda_{t-1,j}
			+\beta_t\sum_{\ell\in\mathcal A_t}\omega_{\ell,t,j}
			\check\lambda_{\ell,t,j}$.
			\EndFor
			\Ensure Online debiased spike estimates
			$\{\tilde{\bm\lambda}_t\}_{t=1}^{T}$. 
		\end{algorithmic}
	\end{algorithm}

	%
	
	\section{Theoretical Results\label{sec:4}}
	
	This section develops the theory of the proposed framework in the
	proportional regime, where dimension $p$ and the effective sample size
	grow at the same rate. The argument is organised as follows.

	\begin{itemize}
		\item Section~\ref{sec:4.1} establishes local spectral inference. The sample 
		eigenvalue is systematically distorted by the local aspect ratio, 
		creating a bias that direct averaging cannot eliminate. We provide 
		an explicit correction and derive a central limit theorem for the 
		debiased statistic.
		\item Section~\ref{sec:4.2} establishes global aggregation. Equal-weight 
		averaging ignores the varying precision across nodes. We derive adaptive 
		weights based on local fluctuation metrics, prove consistency and 
		asymptotic normality of the global estimator.
		\item Section~\ref{sec:4.3} provides finite-sample analysis. 
		We derive non-asymptotic bounds quantifying the dependence of 
		estimation error on node count, sample sizes, and covariance drift.
	\end{itemize}

	Recall from Section~\ref{sec:3} that node $\ell$ maintains a smoothed
	covariance estimate which is updated only when the node is active. Let
	\begin{equation*}
		\alpha^\ell_t:=\alpha_t\,\bm 1\{\ell\in\mathcal A_t\},
		\quad
		\bm S_{\ell,t}
		=(1-\alpha^\ell_t)\bm S_{\ell,t-1}
		+\alpha^\ell_t\hat{\bm S}_{\ell,t},
		\quad
		\hat{\bm S}_{\ell,t}
		=\frac{1}{n_t^\ell}\sum_{i=1}^{n_t^\ell}
		\bm X_{\ell,t,i}\bm X_{\ell,t,i}^\top .
	\end{equation*}
	For notational simplicity we suppress the indicator and write
	$\alpha_t$ in place of $\alpha^\ell_t$. All statements below are
	indexed by the activation times of node $\ell$. Unrolling
	$\bm S_{\ell,t}$ gives
	\begin{equation}
		\label{eq:pi}
		\bm S_{\ell,t}
		=\sum_{s=1}^{t}\pi_{s,t}\hat{\bm S}_{\ell,s}
		+\pi_{0,t}\bm S_{\ell,0},
		\qquad
		\pi_{s,t}=\alpha_s\!\!\prod_{u=s+1}^{t}\!\!(1-\alpha_u),
		\quad
		\pi_{0,t}=\prod_{u=1}^{t}(1-\alpha_u),
	\end{equation}
	with $\sum_{s=0}^{t}\pi_{s,t}=1$ and $\pi_{0,t}\to0$ under
	Assumption~\ref{assum:step}. Hence $\bm S_{\ell,t}$ is a
	{sample-weighted} covariance matrix,
	\begin{equation}
		\label{eq:sampleweights}
		\bm S_{\ell,t}
		=\sum_{i=1}^{M_{\ell,t}} \mathrm{w}_{\ell,t,i}\,
		\bm X_{\ell,(i)}\bm X_{\ell,(i)}^\top
		+\pi_{0,t}\bm S_{\ell,0},
	\end{equation}
	\[
	M_{\ell,t}=\sum_{s=1}^{t}n_s^\ell ,
	\qquad
	\mathrm{w}_{\ell,t,i}=\frac{\pi_{s,t}}{n_s^\ell}
	\ \ \text{for } i\in\text{batch }s,
	\]
	with $\mathrm{w}_{\ell,t,i}\ge0$ and $\sum_i\mathrm{w}_{\ell,t,i}=1-\pi_{0,t}\to1$.
	
	The effective sample size is the inverse participation ratio of
	the weight profile,
	\begin{equation*}
		N^{\mathrm{eff}}_{\ell,t}
		:=\Bigl(\sum_{i=1}^{M_{\ell,t}}\mathrm{w}_{\ell,t,i}^{2}\Bigr)^{-1}
		=\Bigl(\sum_{s=1}^{t}\frac{\pi_{s,t}^{2}}{n_s^\ell}\Bigr)^{-1},
	\end{equation*}
	and the {local aspect ratio} is
	\begin{equation*}
		c_{\ell,t}:=\frac{p}{N^{\mathrm{eff}}_{\ell,t}}
		=p\sum_{i=1}^{M_{\ell,t}}\mathrm{w}_{\ell,t,i}^{2}.
	\end{equation*}
	
	The aspect ratio $c_{\ell,t}$ is deterministic, node-locally known, and
	heterogeneous across nodes due to asynchronous activation.

	\begin{lemma}[Basic properties of $c_{\ell,t}$]
		\label{lem:crec}
		For every node $\ell$ and every activation time $t$,
		\begin{equation*}
			\frac{1}{N^{\mathrm{eff}}_{\ell,t}}
			=\frac{(1-\alpha_t)^{2}}{N^{\mathrm{eff}}_{\ell,t-1}}
			+\frac{\alpha_t^{2}}{n_t^\ell},
			\qquad\text{equivalently}\qquad
			c_{\ell,t}
			=(1-\alpha_t)^{2}c_{\ell,t-1}
			+\alpha_t^{2}\,\frac{p}{n_t^\ell},
		\end{equation*}
		while $c_{\ell,t}=c_{\ell,t-1}$ if $\ell\notin\mathcal A_t$.
		Let 
		$\tilde\alpha_{\ell,t}:=\alpha_t(1+p/n_t^\ell)$, the aspect ratio is
		controlled by this quantity,
		\begin{equation}
			\label{eq:c_le_alpha}
			c_{\ell,t}
			=p\sum_{s\le t}\frac{\pi_{s,t}^2}{n^\ell_s}
			\ \le\ \Bigl(\max_{s\le t}\frac{p\,\pi_{s,t}}{n^\ell_s}\Bigr)
			\sum_{s\le t}\pi_{s,t}
			\ \le\ \sup_{s\le t}\tilde\alpha_{\ell,s} ,
		\end{equation}
		and consequently
		\begin{equation*}
			\bigl|c_{\ell,t}-c_{\ell,t-1}\bigr|
			\ \le\ 2\alpha_t\,c_{\ell,t-1}
			+\alpha_t^{2}\,\frac{p}{n_t^\ell}
			\ =\ O(\alpha_t)
		\end{equation*}
		whenever $\sup_{\ell,t}\tilde\alpha_{\ell,t}<\infty$.
		If $\alpha_t\equiv\alpha$ and $n_t^\ell\equiv n$, the recursion has
		the unique fixed point
		$N^{\mathrm{eff}}_{\ell,\infty}=(2-\alpha)n/\alpha$, i.e.\
		$c_{\ell,\infty}=\alpha p/\{(2-\alpha)n\}$.
	\end{lemma}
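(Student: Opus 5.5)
Everything follows from the explicit weights \eqref{eq:pi}. For the recursion, we will use $\pi_{t,t}=\alpha_t$ and $\pi_{s,t}=(1-\alpha_t)\pi_{s,t-1}$ for $s\le t-1$. Substituting these into $N^{\mathrm{eff},-1}_{\ell,t}=\sum_{s\le t}\pi_{s,t}^2/n_s^\ell$ splits the sum into the term $s=t$, which equals $\alpha_t^2/n_t^\ell$, and the remaining terms, which equal $(1-\alpha_t)^2\sum_{s\le t-1}\pi_{s,t-1}^2/n_s^\ell=(1-\alpha_t)^2N^{\mathrm{eff},-1}_{\ell,t-1}$. Multiplying by $p$ gives the recursion for $c_{\ell,t}$. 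When $\ell\notin\mathcal A_t$ we have $\alpha^\ell_t=0$, so all weights are unchanged and $c_{\ell,t}=c_{\ell,t-1}$. We will also note that $\pi_{0,t}$ carries no sample weight, since $\bm S_{\ell,0}$ is deterministic, so it does not enter $N^{\mathrm{eff}}$.

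For \eqref{eq:c_le_alpha}, the first inequality writes $p\pi_{s,t}^2/n_s^\ell=(p\pi_{s,t}/n_s^\ell)\pi_{s,t}$ and pulls out the maximum. Then $\sum_{s\ge1}\pi_{s,t}=1-\pi_{0,t}\le1$. It remains to bound $p\pi_{s,t}/n_s^\ell$. Since $\pi_{s,t}\le\alpha_s$, we get $p\pi_{s,t}/n_s^\ell\le\alpha_s p/n_s^\ell\le\alpha_s(1+p/n_s^\ell)=\tilde\alpha_{\ell,s}$, which yields the bound by $\sup_{s\le t}\tilde\alpha_{\ell,s}$.

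For the increment, the recursion gives
\[
c_{\ell,t}-c_{\ell,t-1}=\bigl((1-\alpha_t)^2-1\bigr)c_{\ell,t-1}+\alpha_t^2p/n_t^\ell .
\]
Since $|(1-\alpha_t)^2-1|=\alpha_t(2-\alpha_t)\le2\alpha_t$, the triangle inequality gives the stated bound. For the $O(\alpha_t)$ claim, we have $c_{\ell,t-1}\le\sup\tilde\alpha<\infty$ by \eqref{eq:c_le_alpha}, and $\alpha_t^2p/n_t^\ell=\alpha_t\cdot\alpha_tp/n_t^\ell\le\alpha_t\tilde\alpha_{\ell,t}$. So both terms are $O(\alpha_t)$ uniformly whenever $\sup_{\ell,t}\tilde\alpha_{\ell,t}<\infty$.

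For the fixed point, set $x=N^{\mathrm{eff},-1}$ with constant $\alpha$ and $n$. The map $x\mapsto(1-\alpha)^2x+\alpha^2/n$ is affine with slope $(1-\alpha)^2<1$ when $\alpha\in(0,1]$, so it is a contraction with a unique fixed point. Solving $x\bigl(1-(1-\alpha)^2\bigr)=\alpha^2/n$, i.e. $x\,\alpha(2-\alpha)=\alpha^2/n$, gives $x=\alpha/\{(2-\alpha)n\}$. Hence $N^{\mathrm{eff}}_{\ell,\infty}=(2-\alpha)n/\alpha$ and $c_{\ell,\infty}=\alpha p/\{(2-\alpha)n\}$. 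No step is a real obstacle. The only care needed is the bookkeeping of the split $\pi_{s,t}=(1-\alpha_t)\pi_{s,t-1}$, together with the convention that inactive rounds use $\alpha^\ell_t=0$.
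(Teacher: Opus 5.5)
Your proof is correct and follows the paper's own argument essentially step for step. It uses the same splitting $\pi_{t,t}=\alpha_t$, $\pi_{s,t}=(1-\alpha_t)\pi_{s,t-1}$ for the recursion, the bound $\pi_{s,t}\le\alpha_s$ together with $\sum_{s\ge1}\pi_{s,t}\le 1$ for \eqref{eq:c_le_alpha}, and the same affine-contraction computation for the fixed point; the only step you leave implicit is the normalisation $\sum_{s=0}^{t}\pi_{s,t}=1$, which the paper proves by induction from the same splitting and $\pi_{0,t}=(1-\alpha_t)\pi_{0,t-1}$.
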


	The effective sample size only depends on the second moment of the weights. 
	To characterize the complete structure of the weighting scheme, 
	we introduce the empirical distribution of the rescaled weights	
	\begin{equation*}
		H_{\ell,t}
		:=
		\frac{1}{M_{\ell,t}}
		\sum_{i=1}^{M_{\ell,t}}
		\delta_{M_{\ell,t}\mathrm{w}_{\ell,t,i}}, 
	\end{equation*}
	where $\delta_x$ denotes the
	Dirac probability measure concentrated at $x$.
	The first two moments of this empirical distribution satisfy
	\[
	\int x\,H_{\ell,t}(dx)
	=
	1-\pi_{0,t},
	\qquad
	\int x^{2}\,H_{\ell,t}(dx)
	=
	\frac{M_{\ell,t}}{N^{\mathrm{eff}}_{\ell,t}}.
	\] 
	
	Let $c^{\mathrm{mp}}_{\ell,t}:=p/M_{\ell,t}$ denote the Mar\v{c}enko--Pastur 
	ratio based on the accumulated sample size. It follows that 
	\begin{equation*}
		c_{\ell,t}
		=c^{\mathrm{mp}}_{\ell,t}\int x^{2}\,H_{\ell,t}(dx). 
	\end{equation*}
	This decomposition shows that the effective aspect ratio $c_{\ell,t}$ consists 
	of the nominal dimension-to-sample ratio and the concentration factor induced 
	by the memory kernel. For a rectangular memory window, all rescaled weights 
	are equal to one and therefore $H_{\ell,t}\Rightarrow\delta_1$, where $\delta_1$ 
	denotes the Dirac probability measure concentrated at the point $1$. 
	In this case, the effective aspect ratio reduces to the classical 
	Mar\v{c}enko--Pastur ratio, namely $c_{\ell,t}=c^{\mathrm{mp}}_{\ell,t}.$ 
	In contrast, geometric forgetting with a constant forgetting parameter $\alpha$ 
	leads to an exponential-type weight profile satisfying $\int x^2\,dH>1$.  
	Therefore, the corresponding memory kernel has a smaller effective sample size 
	than a flat window with the same accumulated sample size.

	\subsection{Local Spectral Inference\label{sec:4.1}}
	
	We first characterise how accurately the recursively updated local
	eigenspace estimator tracks the evolving principal subspace when
	dimension and effective sample size are comparable. The population
	covariance follows the spiked model
	\begin{equation}
		\label{eq:spiked}
		\bm\Sigma_t
		=\sum_{j=1}^{k}\lambda_{t,j}\bm v_{t,j}\bm v_{t,j}^\top
		+\sigma_t^2\bm I_p,
	\end{equation}
	so that the top $k$ eigenvalues of $\bm\Sigma_t$ are
	$\lambda_{t,j}+\sigma_t^2$ and the remaining $p-k$ equal $\sigma_t^2$.
	Let $\bm V_t=(\bm v_{t,1},\dots,\bm v_{t,k})$, let $\bm U_{\ell,t}$ be
	the estimator produced by the orthogonal iteration of
	Section~\ref{sec:3}, and measure the estimation error by
	\[
	d_t^{(\ell)}
	=\bigl\|\sin\Theta(\bm U_{\ell,t},\bm V_t)\bigr\|_F ,
	\]
	where $\Theta(\cdot,\cdot)$ collects the principal angles between the
	two subspaces. 
	
	\begin{assum}
		\label{assum:moment}
		For each $\ell,t,i$, $\bm X_{\ell,t,i}
		=\bm\Sigma_t^{1/2}\bm z_{\ell,t,i}$, where the vectors
		$\{\bm z_{\ell,t,i}\}$ are mutually independent across $\ell$, $t$
		and $i$, with independent entries satisfying
		$\mathbb E z_{\ell,t,i,m}=0$, $\mathbb E z_{\ell,t,i,m}^2=1$,
		$\mathbb E z_{\ell,t,i,m}^4=3+{K}_4$ for a constant
		${K}_4>-2$, and
		$\sup_{p,\ell,t,i,m}\mathbb E|z_{\ell,t,i,m}|^{4+\epsilon_1}\le C$
		for some $\epsilon_1>0$.
	\end{assum}
	
	\begin{assum}
		\label{assum:design}
		The active set $\mathcal A_t$, its cardinality $A_t:=|\mathcal A_t|$,
		the batch sizes $\{n_t^\ell\}$, and the tuning sequences
		$\alpha_t,\beta_t,\eta_t,\tilde\tau_t$ are measurable with respect to
		$\mathcal F_{t-1}$, the $\sigma$-field generated by all data
		received before time $t$. In particular they are independent of the
		batch $\{\bm X_{\ell,t,i}\}$.
	\end{assum}
	
	\begin{assum}
		\label{assum:deloc}
		$\max_{j\le k}\|\bm v_{t,j}\|_\infty\to0$ uniformly in $t$, and the
		top-$k$ eigenvectors of the noise part of $\bm S_{\ell,t}$ satisfy
		the same bound in probability, uniformly in $\ell$ and $t$.
	\end{assum}
	
	\begin{assum}
		\label{assum:bbp}
		The spikes are simple and uniformly separated,
		\[
		\lambda_{t,1}>\cdots>\lambda_{t,k}>0,
		\qquad
		\inf_t\min_{1\le j\le k-1}
		(\lambda_{t,j}-\lambda_{t,j+1})\ \ge\ g_{\min}>0,
		\]
		and every spike lies strictly above the BBP threshold uniformly in
		$\ell$ and $t$, i.e., $\inf_{\ell,t}\lambda_{t,k}> \lambda^{\mathrm c}_{\ell,t}$, 
		where $\lambda^{\mathrm c}_{\ell,t}:=\sigma_t^2/g_{\ell,t}(y^+_{\ell,t})$
		is the profile-dependent critical spike (defined in
		Lemma~\ref{lem:maps}). In the canonical case
		$H_{\ell,t}\Rightarrow\delta_1$, this reduces to
		$\lambda_{t,k}>\sigma_t^2\sqrt{c_{\ell,t}}$.
	\end{assum}
	
	\begin{assum}
		\label{assum:noise}
		$0<\sigma_{\min}^2\le\sigma_t^2\le\sigma_{\max}^2<\infty$ and
		$\sup_t\lambda_{t,1}=:\lambda_{\max}<\infty$.
	\end{assum}
	
	\begin{assum}
		\label{assum:drift}
		There is a deterministic sequence $\zeta_t\to0$ with
		\[
		\bigl\|\sin\Theta(\bm V_t,\bm V_{t-1})\bigr\|_F\le\zeta_t,
		\qquad
		\max_{j\le k}|\lambda_{t,j}-\lambda_{t-1,j}|\le\zeta_t,
		\qquad
		|\sigma_t^2-\sigma_{t-1}^2|\le\zeta_t,
		\]
		and the memory-window drift is asymptotically negligible in its
		inflated form,
		\begin{equation}
			\label{eq:windowdrift}
			\bar\Delta_{\ell,t}
			:=\sum_{s=1}^{t}\pi_{s,t}
			\Bigl(1+\frac{p}{n^\ell_s}\Bigr)
			\|\bm\Sigma_s-\bm\Sigma_t\|_{op}
			\longrightarrow0 .
		\end{equation}
	\end{assum}
	
	The error of $\bm S_{\ell,t}$ relative to $\bm\Sigma_t$ carries
	two deterministic terms, the window drift
	$\bar\Delta_{\ell,t}$ of \eqref{eq:windowdrift} and the weight
	$\pi_{0,t}$ still resting on the initialization $\bm S_{\ell,0}$.
	They enter every bound below in the same place, so we write
	\begin{equation*}
		\Delta_{\ell,t}
		:=\bar\Delta_{\ell,t}+\pi_{0,t} ,
	\end{equation*}
	which tends to zero by Assumption~\ref{assum:drift} and
	$\pi_{0,t}\to0$.

	The inflation factor $1+p/n^\ell_s$ in \eqref{eq:windowdrift} arises 
	from the Bai--Yin bound $\|\hat W_{\ell,s}\|_{op}=O_{\mathbb P}(1+p/n^\ell_s)$ 
	on batch Wishart matrices. The condition is implied by the uninflated 
	$\sum_s\pi_{s,t}\|\bm\Sigma_s-\bm\Sigma_t\|_{op}\to0$ whenever 
	$\sup_{s\le t}p/n^\ell_s=O(1)$.

	\begin{assum}
		\label{assum:step}
		$\alpha_t\in(0,1]$, $\alpha_t\to0$, $\sum_{t\ge1}\alpha_t=\infty$,
		and the operator-norm inflation factor satisfies 
		$\sup_{\ell,t} \tilde{\alpha}_{\ell,t} <\infty$. 
	\end{assum}
	We define the effective lag as the geometric mean,
	\begin{equation*}
		\alpha^{\mathrm{lag}}_{\ell,t}
		:=\sqrt{\alpha_t\,\tilde\alpha_{\ell,t}}
		\ =\ \alpha_t\sqrt{1+\frac{p}{n_t^\ell}} ,
	\end{equation*}
	which satisfies $\alpha_t\le\alpha^{\mathrm{lag}}_{\ell,t}\le\tilde\alpha_{\ell,t}$
	and $\alpha^{\mathrm{lag}}_{\ell,t}\to0$ uniformly in $\ell$ under
	Assumption~\ref{assum:step}.

	\begin{assum}
		\label{assum:hd}
		$p\to\infty$ with $k$ fixed, and for every $\ell$,
		\[
		c_{\ell,t}\longrightarrow c_\ell\in(0,\infty),
		\quad
		c^{\mathrm{mp}}_{\ell,t}\to c^{\mathrm{mp}}_\ell\in(0,\infty),
		\]
		and $H_{\ell,t}\Rightarrow H_\ell$, a probability measure on
		$[0,\infty)$ with unit mean and bounded support. Moreover
		$\sup_\ell c_\ell\le\bar c<\infty$.
	\end{assum}
	
	\begin{assum}
		\label{assum:init}
		The smallest singular value of $\bm V_0^\top\bm U_{\ell,0}$ is
		bounded away from zero uniformly in $\ell$ and $p$.
	\end{assum}

	To characterise the statistical error of $\bm U_{\ell,t}$ relative
	to $\bm V_t$, we apply finite-rank perturbation theory
	\citep{baik2005phase,paul2007asymptotics,benaych2011eigenvalues,%
		bai2012sample}, which reduces the problem to understanding the
	limiting spectrum of the noise component. Under
	Assumptions~\ref{assum:moment}, \ref{assum:noise} and
	\ref{assum:hd}, consider the pure-noise case, the sample-weighted
	covariance built from \eqref{eq:sampleweights} with $\bm\Sigma_t$
	replaced by $\sigma_t^2\bm I_p$. Its empirical spectral distribution
	converges almost surely to a compactly supported
	Mar\v{c}enko--Pastur-type measure $\mu^W_{\ell,t}$ with right edge
	$b_{\ell,t}$, uniquely determined by
	$(c^{\mathrm{mp}}_{\ell,t},\sigma^2_t,H_{\ell,t})$ through the
	weighted-sample equation \citep{silverstein1995empirical}. No noise
	eigenvalue separates from the support \citep{bai1999exact}.
	
	Writing $W_{\ell,t}:=\sum_i\mathrm{w}_{\ell,t,i}\bm z_{\ell,(i)}\bm
	z_{\ell,(i)}^\top$ for the isotropic weighted Wishart matrix, one has
	$\tilde{\bm S}_{\ell,t}=\bm\Sigma_t^{1/2}W_{\ell,t}\bm\Sigma_t^{1/2}$
	modulo the vanishing initialisation term, so the spiked covariance
	induces a rank-$2k$ perturbation of $\sigma_t^2W_{\ell,t}$ within
	$\mathrm{span}\{\bm v_{t,j},W_{\ell,t}\bm v_{t,j}:j\le k\}$.
	Finite-rank perturbation theory then delivers two deterministic maps,
	$\Phi_{\ell,t}$ and $a_{\ell,t}$. The first maps each population
	spike $\lambda_{t,j}$ to the asymptotic sample eigenvalue location,
	and the second quantifies the squared alignment
	$|\langle\tilde{\bm u}_{\ell,t,j},\bm v_{t,j}\rangle|^2$ between
	sample and population eigenvectors. Both are defined only for spikes
	above the critical threshold
	$\lambda^{\mathrm c}_{\ell,t}:=\sigma_t^2/g_{\ell,t}(y^+_{\ell,t})$,
	where $g_{\ell,t}(x):=\int y(x-y)^{-1}\mu^W_{\ell,t}(dy)$ is the
	Stieltjes transform of $\mu^W_{\ell,t}$. The quantities
	$y^+_{\ell,t}$, $g_{\ell,t}(y^+_{\ell,t})$, and
	$\lambda^{\mathrm c}_{\ell,t}$ are deterministic functions of
	$c^{\mathrm{mp}}_{\ell,t}$, $\sigma_t^2$, and $H_{\ell,t}$ alone,
	as formalised in Assumption~\ref{assum:bbp}.
	
	\begin{lemma}
		\label{lem:maps}
		Under Assumptions~\ref{assum:moment}--\ref{assum:hd} there exist
		deterministic maps
		\[
		\Phi_{\ell,t}:(\lambda^{\mathrm c}_{\ell,t},\infty)
		\to(b_{\ell,t},\infty),
		\qquad
		a_{\ell,t}:(\lambda^{\mathrm c}_{\ell,t},\infty)
		\to(0,1),
		\qquad
		\lambda^{\mathrm c}_{\ell,t}
		:=\frac{\sigma_t^2}{g_{\ell,t}(y^+_{\ell,t})},
		\]
		determined by $(c^{\mathrm{mp}}_{\ell,t},\sigma_t^2,H_{\ell,t})$
		alone, such that, writing $\tilde{\bm u}_{\ell,t,j}$ and
		$\tilde\lambda_{\ell,t,j}$ for the $j$-th unit eigenvector and
		eigenvalue of $\tilde{\bm S}_{\ell,t}:=\bm\Sigma_t^{1/2}W_{\ell,t}\bm\Sigma_t^{1/2}$,
		for each fixed $j\le k$,
		\begin{align*}
			\tilde\lambda_{\ell,t,j}
			=&\Phi_{\ell,t}(\lambda_{t,j})+o_{\mathbb P}(1),\\
			\bigl|\langle\tilde{\bm u}_{\ell,t,j},
			\bm v_{t,j}\rangle\bigr|^{2}
			=&a_{\ell,t}(\lambda_{t,j})+O_{\mathbb P}
			\bigl((N^{\mathrm{eff}}_{\ell,t})^{-1/2}\bigr),\\
			\bigl|\langle\tilde{\bm u}_{\ell,t,j},
			\bm v_{t,m}\rangle\bigr|^{2}
			=&O_{\mathbb P}\bigl((N^{\mathrm{eff}}_{\ell,t})^{-1}\bigr)
			\ (m\ne j).
		\end{align*}
		Moreover, writing $\mathring{\bm u}_{\ell,t,j}$ and
		$\mathring\lambda_{\ell,t,j}$ for the $j$-th unit eigenvector and
		eigenvalue of $\bm S_{\ell,t}$, the same conclusions hold with
		an additional error $O_{\mathbb P}(\Delta_{\ell,t})$
		in each display.
	\end{lemma}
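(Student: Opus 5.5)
The plan is to reduce the spiked weighted-sample covariance to a finite-rank perturbation of the isotropic weighted Wishart matrix $\sigma_t^2 W_{\ell,t}$, following \citet{benaych2011eigenvalues} and \citet{bai2012sample}. The unperturbed object is $\sigma_t^2 W_{\ell,t}$. By Assumption~\ref{assum:moment} and Assumption~\ref{assum:hd}, its spectral distribution converges to $\mu^W_{\ell,t}$, characterised by $(c^{\mathrm{mp}}_{\ell,t},\sigma^2_t,H_{\ell,t})$ through the weighted-sample equation of \citet{silverstein1995empirical}. By \citet{bai1999exact}, no outliers lie beyond $b_{\ell,t}$.

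Write $\bm\Sigma_t^{1/2}=\sigma_t(\bm I_p+\bm V_t\bm D_t\bm V_t^\top)$ with $\bm D_t=\diag\bigl(\sqrt{1+\lambda_{t,j}/\sigma_t^2}-1\bigr)$. For $x$ outside the support, $x$ is an eigenvalue of $\tilde{\bm S}_{\ell,t}$ if and only if a $2k\times 2k$ determinant vanishes. Its entries are quadratic forms
\[
\bm v_{t,i}^\top (x-\sigma_t^2W_{\ell,t})^{-1}\bm v_{t,m},
\qquad
\bm v_{t,i}^\top W_{\ell,t}(x-\sigma_t^2W_{\ell,t})^{-1}\bm v_{t,m}.
\]
The key step is an isotropic local law for weighted Wishart matrices. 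For fixed unit $\bm v$, we need
\[
\bm v^\top (x-\sigma_t^2W_{\ell,t})^{-1}\bm v=m_{\ell,t}(x)+O_{\mathbb P}\bigl((N^{\mathrm{eff}}_{\ell,t})^{-1/2}\bigr),
\]
uniformly on compacts away from $b_{\ell,t}$, and off-diagonal forms of order $(N^{\mathrm{eff}}_{\ell,t})^{-1/2}$. The rate is $N^{\mathrm{eff}}$ rather than $M_{\ell,t}$ because the variance of the quadratic form, computed through the independent-rank-one (Sherman--Morrison) decomposition, scales like $\sum_i \mathrm{w}_{\ell,t,i}^2$. Delocalisation (Assumption~\ref{assum:deloc}) controls the $K_4$ fourth-cumulant contributions.

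With this in hand, the determinant converges to a diagonal $k\times k$ limiting equation $\lambda_{t,j}\,g_{\ell,t}(x)=\sigma_t^2$, up to the paper's normalisation of $g$. Its solution above $b_{\ell,t}$ defines $\Phi_{\ell,t}(\lambda_{t,j})$. Monotonicity of $g_{\ell,t}$ on $(b_{\ell,t},\infty)$ gives a unique root exactly when $\lambda_{t,j}>\sigma_t^2/g_{\ell,t}(y^+_{\ell,t})=\lambda^{\mathrm c}_{\ell,t}$. Simple, separated spikes (Assumption~\ref{assum:bbp}) and Rouché's theorem then give the eigenvalue statement. For the eigenvectors, use the residue or contour representation of the spectral projector around $\Phi_{\ell,t}(\lambda_{t,j})$. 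This yields $|\langle\tilde{\bm u}_{\ell,t,j},\bm v_{t,j}\rangle|^2=a_{\ell,t}(\lambda_{t,j})$ with
\[
a_{\ell,t}(\lambda)=-\frac{g_{\ell,t}(\Phi)}{\lambda\, g_{\ell,t}'(\Phi)}\in(0,1),
\]
up to the explicit normalisation. The error inherits the $(N^{\mathrm{eff}})^{-1/2}$ rate from the local law. The cross alignment with $\bm v_{t,m}$, $m\ne j$, comes from an off-diagonal resolvent entry of size $(N^{\mathrm{eff}})^{-1/2}$, and squaring it gives $(N^{\mathrm{eff}})^{-1}$. When $H_{\ell,t}=\delta_1$, all of this reduces to the classical BBP formulas, which gives a consistency check with $\chi$.

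For $\bm S_{\ell,t}$ itself, decompose $\bm S_{\ell,t}-\tilde{\bm S}_{\ell,t}$ into two parts:
\[
\sum_s\pi_{s,t}\,\bm\Sigma_s^{1/2}\hat W_{\ell,s}\bm\Sigma_s^{1/2}-\bm\Sigma_t^{1/2}W_{\ell,t}\bm\Sigma_t^{1/2}
\qquad\text{and}\qquad
\pi_{0,t}\bigl(\bm S_{\ell,0}-\cdot\bigr).
\]
Bound the first part in operator norm by $\sum_s\pi_{s,t}\|\hat W_{\ell,s}\|_{op}\|\bm\Sigma_s-\bm\Sigma_t\|_{op}$ (up to constants, via the square-root Lipschitz bound under Assumption~\ref{assum:noise}). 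Then apply Bai--Yin, $\|\hat W_{\ell,s}\|_{op}=O_{\mathbb P}(1+p/n_s^\ell)$, uniformly, which gives $O_{\mathbb P}(\bar\Delta_{\ell,t})$. The second part is $O(\pi_{0,t})$. Weyl's inequality transfers the eigenvalue statement. Davis--Kahan, using the gap between $\Phi_{\ell,t}(\lambda_{t,j})$ and its neighbours and the bulk edge (bounded below by Assumption~\ref{assum:bbp}), transfers the eigenvector statements with an additional $O_{\mathbb P}(\Delta_{\ell,t})$ error.

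The main obstacle is the isotropic law with the sharp $(N^{\mathrm{eff}})^{-1/2}$ rate for nonuniform weights and only $4+\epsilon_1$ moments. I would first truncate the entries at $N^{1/4-\delta}$ and then run a second-moment martingale-difference bound over samples. The uniform-in-$t$ Bai--Yin step for heterogeneous batches is the second delicate point.
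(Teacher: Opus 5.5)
\textbf{A genuine error: the alignment map.} Your architecture is the paper's. You reduce to $\tilde{\bm S}_{\ell,t}$ as in Lemma~\ref{lem:A1}, use an isotropic law at scale $(N^{\mathrm{eff}}_{\ell,t})^{-1/2}$ to turn the outlier problem into $\theta g(x)=1$ (with $\theta=\lambda/\sigma_t^2$, $x=\Phi/\sigma_t^2$), and transfer to $\bm S_{\ell,t}$ by Weyl and Davis--Kahan. Your $2k\times 2k$ additive linearisation with Rouch\'e and a contour integral is an acceptable substitute for the paper's route: the similarity $AWA\sim W(\bm I+P)$ with $A=(\bm I+P)^{1/2}$, the $k\times k$ Weinstein--Aronszajn determinant, and the explicit eigenvector $\hat{\bm u}\propto R(x)\bm v$.

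The problem is your formula for $a$. Whatever normalisation you choose, if the master equation reads $\lambda\,g(\Phi(\lambda))=\mathrm{const}$, differentiating it gives $-g(\Phi)/\{\lambda g'(\Phi)\}\equiv\Phi'(\lambda)$. So you are claiming the squared overlap converges to $\Phi'(\lambda)$. That is the additive-perturbation rule (overlap equals derivative of the outlier location), and it is false for the multiplicative model $\tilde{\bm S}=\sigma_t^2AWA$. There the eigenvector is $\bm u\propto A^{-1}R(x)\bm v$, not $R(x)\bm v$. The factor $A^{-1}$ multiplies the overlap by $(1+\theta)^{-1/2}$ and lowers the squared norm by $\theta m_1^2/(1+\theta)$. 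The correct limit is $J_1^2/\{(1+\theta)J_2-\theta J_1^2\}=J_1/(\theta I_2)=(\lambda+\sigma_t^2)\Phi'(\lambda)/\Phi(\lambda)$, which is \eqref{eq:aPhi}.

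The extra factor $(\lambda+\sigma_t^2)/\Phi(\lambda)$ is not a normalisation. The bound $g(x)\ge(x-1)^{-1}$ of Lemma~\ref{lem:A2} forces $\Phi(\lambda)\ge\lambda+\sigma_t^2$, with equality only if $\mu^W_{\ell,t}=\delta_1$, so the factor is strictly below one. In the canonical case your map is $1-c\sigma^4/\lambda^2$ instead of $(\lambda^2-c\sigma^4)/\{\lambda(\lambda+c\sigma^2)\}$; at $c=\sigma^2=1$, $\lambda=2$ this is $3/4$ versus $1/2$. So the BBP consistency check you announce would fail. Moreover $\chi=\sigma^2+\lambda a$ would become $\sigma^2+\lambda-c\sigma^4/\lambda$ rather than \eqref{eq:chi_closed}.

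\textbf{The repair stays inside your contour approach.} For one spike, $(AWA-z)^{-1}=A^{-1}\{W-z+\tfrac{z\theta}{1+\theta}\bm v\bm v^\top\}^{-1}A^{-1}$. Sherman--Morrison then gives $\bm v^\top(AWA-z)^{-1}\bm v=(1+\theta)^{-1}m_1(z)/\{1+\tfrac{z\theta}{1+\theta}m_1(z)\}$, where $m_1(z)=\bm v^\top(W-z)^{-1}\bm v\to-J_1(z)$. The pole sits exactly at $\theta g(x)=1$. Computing the residue with $m_1'\to J_2$ and $I_2=xJ_2-J_1$ yields $J_1(x)/\{\theta I_2(x)\}$. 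With this correction the existence statement of the lemma follows along your lines.

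As written, however, the step ``this yields $a=-g/(\lambda g')$'' is false. The closed forms \eqref{eq:closedforms}, the transfer map $\chi_{\ell,t}$ and its inverse all rest on $a_{\ell,t}$ being the true limiting overlap, so the discrepancy is not cosmetic.
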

	Both maps are continuously differentiable and strictly increasing,
	with boundary behaviour
	$\Phi_{\ell,t}(\lambda)\downarrow b_{\ell,t}$
	and
	$a_{\ell,t}(\lambda)\downarrow0$
	as
	$\lambda\downarrow\lambda^{\mathrm c}_{\ell,t}$
	$(=\sigma_t^2\sqrt{c_{\ell,t}}$ in the canonical case$)$. They are
	characterised implicitly by the spike-location equation and the
	alignment identity
	\begin{equation*}
		\lambda\!\int\!\frac{y}{\Phi_{\ell,t}(\lambda)-\sigma_t^2y}\,
		\mu^{W}_{\ell,t}(dy)=1,
		\qquad
		a_{\ell,t}(\lambda)
		=\bigl(\lambda+\sigma_t^2\bigr)\,
		\frac{\Phi_{\ell,t}'(\lambda)}
		{\Phi_{\ell,t}(\lambda)},
	\end{equation*}
	where $\mu^W_{\ell,t}$ is the limiting spectral measure of
	$W_{\ell,t}$. In the canonical case $H_{\ell,t}\Rightarrow\delta_1$
	they admit the closed forms
	\begin{equation}
		\label{eq:closedforms}
		\Phi_{\ell,t}(\lambda)
		=\frac{(\lambda+\sigma_t^2)(\lambda+c_{\ell,t}\sigma_t^2)}
		{\lambda},
		\quad
		a_{\ell,t}(\lambda)
		=\frac{\lambda^{2}-c_{\ell,t}\sigma_t^{4}}
		{\lambda(\lambda+c_{\ell,t}\sigma_t^{2})},
		\quad
		b_{\ell,t}=\sigma_t^{2}\bigl(1+\sqrt{c_{\ell,t}}\bigr)^{2}.
	\end{equation}

	The estimator $\bm U_{\ell,t}$ is produced by a single orthogonal
	iteration per time step rather than an exact eigendecomposition, so we
	first separate algorithmic from statistical error. Let
	$\mathring{\bm U}_{\ell,t}\in\mathbb R^{p\times k}$ be the exact
	top-$k$ eigenspace of $\bm S_{\ell,t}$ and set
	$e_{\ell,t}:=\|\sin\Theta(\bm U_{\ell,t},
	\mathring{\bm U}_{\ell,t})\|_F$.
	
	\begin{lemma}
		\label{lem:contraction}
		Under Assumptions~\ref{assum:moment}--\ref{assum:init}, define $\xi_{\ell,t}
		:=\frac{b_{\ell,t}}{\Phi_{\ell,t}(\lambda_{t,k})}$. 
		Then $\limsup_{\ell,t}\xi_{\ell,t}\le\xi^{\star}<1$,
		where $\xi^\star$ depends only on
		$\epsilon_0,\sigma_{\min},
		\sigma_{\max},\bar c$,
		$\underline c:=\inf_\ell c_\ell>0$.  
		Moreover, 
		$\|(\bm S_{\ell,t}-\bm S_{\ell,t-1})\mathring{\bm U}_{\ell,t-1}\|_F
		=O_{\mathbb P}(\alpha^{\mathrm{lag}}_{\ell,t})$, and there is $C>0$ such
		that, with probability tending to one and for all large $t$,
		\begin{equation*}
			e_{\ell,t}
			\ \le\ \xi_{\ell,t}\,e_{\ell,t-1}
			+\frac{C}{1-\xi^{\star}}
			\bigl(\alpha^{\mathrm{lag}}_{\ell,t}+\zeta_t\bigr).
		\end{equation*}
		Consequently, 
		$e_{\ell,t}=O\bigl((\xi^{\star})^{t}\bigr)
		+O\bigl(\alpha^{\mathrm{lag}}_{\ell,t}+\zeta_t\bigr)$.
	\end{lemma}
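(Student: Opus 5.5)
The argument has three parts: the eigengap ratio $\xi_{\ell,t}$, the one-step movement of $\bm S_{\ell,t}$ along the old eigenspace, and a perturbed power-iteration contraction that is then unrolled.

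\emph{Step 1: the gap ratio.} By Lemma~\ref{lem:maps}, the top $k$ eigenvalues of $\bm S_{\ell,t}$ converge to $\Phi_{\ell,t}(\lambda_{t,j})$. The $(k+1)$-th eigenvalue sticks to the bulk edge $b_{\ell,t}$, since no noise eigenvalue separates from the support \citep{bai1999exact}.

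Since $\Phi_{\ell,t}$ is strictly increasing with $\Phi_{\ell,t}(\lambda)\downarrow b_{\ell,t}$ at $\lambda^{\mathrm c}_{\ell,t}$, Assumption~\ref{assum:bbp} with a uniform margin $\epsilon_0$ above threshold gives $\Phi_{\ell,t}(\lambda_{t,k})\ge \Phi_{\ell,t}(\lambda^{\mathrm c}_{\ell,t}+\epsilon_0)>b_{\ell,t}$. In the canonical case I will use \eqref{eq:closedforms} to write $\Phi(\lambda)-b=(\lambda-\sigma^2\sqrt c)^2/\lambda$. Combined with $c\in[\underline c,\bar c]$, $\sigma^2\in[\sigma^2_{\min},\sigma^2_{\max}]$ and $\lambda\le\lambda_{\max}$ (Assumption~\ref{assum:noise}), this bounds $\xi_{\ell,t}$ away from $1$ by a constant depending only on the listed quantities, which gives $\xi^\star$. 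For general $H_\ell$ I would argue by continuity and compactness of the parameter set.

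\emph{Step 2: the increment bound.} From the recursion, $\bm S_{\ell,t}-\bm S_{\ell,t-1}=\alpha_t(\hat{\bm S}_{\ell,t}-\bm S_{\ell,t-1})$. Split this as
\[
\alpha_t(\hat{\bm S}_{\ell,t}-\bm\Sigma_t)+\alpha_t(\bm\Sigma_t-\bm S_{\ell,t-1}).
\]
For the first term, $\mathring{\bm U}_{\ell,t-1}$ is $\mathcal F_{t-1}$-measurable and independent of the fresh batch (Assumption~\ref{assum:design}). Hence $\mathbb E\|(\hat{\bm S}_{\ell,t}-\bm\Sigma_t)\mathring{\bm U}\|_F^2\lesssim k\,(1+p/n_t^\ell)$ by a fourth-moment computation under Assumption~\ref{assum:moment}, and multiplying by $\alpha_t$ gives $O_{\mathbb P}(\alpha_t\sqrt{1+p/n_t^\ell})=O_{\mathbb P}(\alpha^{\mathrm{lag}}_{\ell,t})$.

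For the second term, the Bai--Yin bound makes $\|\bm S_{\ell,t-1}\|_{op}$ and $\|\bm\Sigma_t\|_{op}$ of order $O_{\mathbb P}(1)$ under Assumption~\ref{assum:step}. Since $\mathring{\bm U}$ has rank $k$, this term is $O_{\mathbb P}(\alpha_t)\le O_{\mathbb P}(\alpha^{\mathrm{lag}}_{\ell,t})$.

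\emph{Step 3: the contraction.} Write the iteration as $\bm U_{\ell,t}=\mathrm{QR}(\bm S_{\ell,t}\bm U_{\ell,t-1})$. The standard one-step power-iteration bound (e.g.\ via $\tan\Theta$) gives
\[
\|\sin\Theta(\bm U_{\ell,t},\mathring{\bm U}_{\ell,t})\|_F\le \frac{\mathring\lambda_{\ell,t,k+1}}{\mathring\lambda_{\ell,t,k}}\,\|\sin\Theta(\bm U_{\ell,t-1},\mathring{\bm U}_{\ell,t})\|_F\,(1+o(1)).
\]
By the triangle inequality, $\|\sin\Theta(\bm U_{\ell,t-1},\mathring{\bm U}_{\ell,t})\|_F\le e_{\ell,t-1}+\|\sin\Theta(\mathring{\bm U}_{\ell,t-1},\mathring{\bm U}_{\ell,t})\|_F$. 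I will bound the last term by Davis--Kahan, using the gap $\Phi_{\ell,t}(\lambda_{t,k})-b_{\ell,t}\gtrsim(1-\xi^\star)$, the increment from Step 2, and the change $\|\bm\Sigma_t-\bm\Sigma_{t-1}\|$ of size $\zeta_t$ (Assumption~\ref{assum:drift}). This gives $C(\alpha^{\mathrm{lag}}_{\ell,t}+\zeta_t)/(1-\xi^\star)$. With Step 1, the eigenvalue ratio is $\xi_{\ell,t}+o_{\mathbb P}(1)$, and the $o(1)$ slack is absorbed by enlarging $\xi^\star$ slightly.

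To unroll, I iterate the recursion from a time $t_0$ at which Assumption~\ref{assum:init} plus the earlier steps guarantee $e_{\ell,t_0}<1$, i.e.\ non-orthogonality. The homogeneous part then decays like $(\xi^\star)^t$. The forcing part sums to $\sum_s(\xi^\star)^{t-s}(\alpha^{\mathrm{lag}}_{\ell,s}+\zeta_s)=O(\alpha^{\mathrm{lag}}_{\ell,t}+\zeta_t)$, because the forcing sequences vary slowly: consecutive ratios tend to $1$, since $\alpha_t\to0$ along the activation times.

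\emph{Main obstacle.} The hardest step is making Step 3 hold simultaneously for all large $t$ with probability tending to one, not just pointwise in $t$. I would first try a union bound over a growing window combined with the $4+\epsilon_1$ moments, restricting to $t\ge t_0(p)$. If that fails, I would fall back on interpreting ``for all large $t$'' as holding along any sequence $t=t(p)\to\infty$.
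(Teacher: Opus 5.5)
Your route is the paper's. You bound $\xi_{\ell,t}$ via the closed forms and monotonicity of $\Phi_{\ell,t}$. You get the directional increment by splitting $\alpha_t(\hat{\bm S}_{\ell,t}-\bm\Sigma_t)+\alpha_t(\bm\Sigma_t-\bm S_{\ell,t-1})$ and using independence of the fresh batch from $\mathring{\bm U}_{\ell,t-1}$. You then combine the $\tan\Theta$ contraction of one orthogonal-iteration step with the triangle inequality on the Grassmannian and a directional Davis--Kahan bound for the motion of $\mathring{\bm U}_{\ell,t}$, and unroll. Your identity $\Phi-b=(\lambda-\sigma^2\sqrt c)^2/\lambda$ is a tidy alternative to the paper's algebra, and since $1-\xi$ is increasing in $\lambda$ you do not need $\lambda_{\max}$ there.

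\textbf{The unrolling step fails as written.} You assert $\sum_s(\xi^\star)^{t-s}(\alpha^{\mathrm{lag}}_{\ell,s}+\zeta_s)=O(\alpha^{\mathrm{lag}}_{\ell,t}+\zeta_t)$ because consecutive ratios of the forcing tend to one ``since $\alpha_t\to0$''. That implication is false. The condition $\alpha_t\to0$ says nothing about $\alpha_{t-1}/\alpha_t$; Assumption~\ref{assum:drift} gives only $\zeta_t\to0$ with no regularity; and $\alpha^{\mathrm{lag}}_{\ell,t}$ also moves with $n^\ell_t$. The conclusion genuinely needs regularity: if the forcing is of size $v$ at $t-1$ and much smaller at $t$, the recursion only gives $e_{\ell,t}\lesssim\xi v$, not $O(\alpha^{\mathrm{lag}}_{\ell,t}+\zeta_t)$. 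The paper splits the sum at $t/2$, obtains $O((\xi^\star)^{t/2})+O(\sup_{s\ge t/2}(\alpha^{\mathrm{lag}}_{\ell,s}+\zeta_s))$, and reaches the stated form only under the extra hypothesis $\sup_{s\ge t/2}v_s=O(v_t)$, for example eventually monotone or polynomially decaying schedules. You must either add that hypothesis explicitly or state the conclusion with the supremum.

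\textbf{Two smaller repairs.}
\begin{enumerate}
\item The factor $(1+o(1))$ in your one-step bound is really $(1-\|\sin\Theta(\bm U_{\ell,t-1},\mathring{\bm U}_{\ell,t})\|_{op}^2)^{-1/2}$, which comes from converting $\tan\Theta$ back to $\sin\Theta$. It is $1+o(1)$ only once the angle is known to be small, which is what you are proving. With only $e_{\ell,t_0}<1$ (non-orthogonality) this factor can be arbitrarily large, so the recursion need not contract. Fix $\delta_0$ with $\xi^\star(1-\delta_0^2)^{-1/2}<1$ and propagate $\|\sin\Theta\|_{op}\le\delta_0$ by induction, as the paper does, or iterate in $\tan\Theta$ through a burn-in.
\item In Step 2, per-batch Bai--Yin bounds give only $\|\hat{\bm S}_{\ell,s}\|_{op}=O_{\mathbb P}(1+p/n^\ell_s)$. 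In the proportional regime $p/n^\ell_s$ is of order $c_\ell/\alpha_s\to\infty$ (Remark~\ref{rem:feasibility}), so the triangle inequality over the memory window diverges like $1/\alpha_t$ rather than giving $O_{\mathbb P}(1)$. You need the weighted Mar\v{c}enko--Pastur/no-outlier bound under Assumption~\ref{assum:hd}. Alternatively, use $\|\bm S_{\ell,t-1}\mathring{\bm U}_{\ell,t-1}\|_F=(\sum_{j\le k}\mathring\lambda^2_{\ell,t-1,j})^{1/2}=O_{\mathbb P}(1)$, which follows from Lemma~\ref{lem:maps}.
\end{enumerate}
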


	\begin{theo}[BBP-limited local subspace tracking]
		\label{thm:tracking}
		Under Assumptions~\ref{assum:moment}--\ref{assum:init}, define the
		{misalignment floor}
		\begin{equation*}
			D^{\star}_{\ell,t}
			:=\Bigl(\sum_{j=1}^{k}
			\bigl\{1-a_{\ell,t}(\lambda_{t,j})\bigr\}\Bigr)^{1/2}
			\ \stackrel{H_{\ell,t}\Rightarrow\delta_1}{=}\
			\Biggl(\sum_{j=1}^{k}
			\frac{c_{\ell,t}\,\sigma_t^{2}
				(\lambda_{t,j}+\sigma_t^{2})}
			{\lambda_{t,j}(\lambda_{t,j}
				+c_{\ell,t}\sigma_t^{2})}\Biggr)^{1/2}.
		\end{equation*}
		Then, for each fixed node $\ell$ and as $t\to\infty$, 
		\begin{equation}
			\label{eq:tracking}
			d_t^{(\ell)}
			=D^{\star}_{\ell,t}
			+O_{\mathbb P}\!\left(
			(N^{\mathrm{eff}}_{\ell,t})^{-1/2}\right)
			+O\bigl(\alpha^{\mathrm{lag}}_{\ell,t}+\zeta_t+\Delta_{\ell,t}\bigr)
			+O\bigl((\xi^{\star})^{t}\bigr).
		\end{equation}
		
		In particular, if $c_{\ell,t}\to c_\ell>0$, $\sigma_t^2\to\sigma^2$, 
		and $\lambda_{t,j}\to\lambda_j$ as in Assumption~\ref{assum:drift}, then 
		$d_t^{(\ell)}\stackrel{p}{\longrightarrow}
		D^{\star}_{\ell}:=(\sum_{j=1}^{k}\{1-a_{\ell}(\lambda_{j})\})^{1/2}>0$,
		where $a_{\ell}(\lambda):=\lim_{t\to\infty}a_{\ell,t}(\lambda)$. 
		Whenever $c_\ell>0$, the local eigenspace estimator is
		inconsistent, and $\liminf_t d_t^{(\ell)}>0$ in
		probability.
	\end{theo}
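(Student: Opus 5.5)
The plan is to split the error by the triangle inequality for the $\sin\Theta$ distance on the Grassmannian. We insert the exact top-$k$ eigenspace $\mathring{\bm U}_{\ell,t}$ of $\bm S_{\ell,t}$ and the top-$k$ eigenspace $\tilde{\bm U}_{\ell,t}$ of the idealised matrix $\tilde{\bm S}_{\ell,t}=\bm\Sigma_t^{1/2}W_{\ell,t}\bm\Sigma_t^{1/2}$:
\[
\bigl|d_t^{(\ell)}-\|\sin\Theta(\tilde{\bm U}_{\ell,t},\bm V_t)\|_F\bigr|
\le e_{\ell,t}+\|\sin\Theta(\mathring{\bm U}_{\ell,t},\tilde{\bm U}_{\ell,t})\|_F .
\]
Lemma~\ref{lem:contraction} bounds the algorithmic term $e_{\ell,t}$ by $O((\xi^\star)^t)+O(\alpha^{\mathrm{lag}}_{\ell,t}+\zeta_t)$. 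The middle term accounts for the drift and initialisation remainder $\bm S_{\ell,t}-\tilde{\bm S}_{\ell,t}$. I would bound it by Davis--Kahan. The eigengap between the $k$-th outlier and the bulk edge is bounded below, because $\Phi_{\ell,t}(\lambda_{t,k})-b_{\ell,t}\ge(1-\xi^\star)\Phi_{\ell,t}(\lambda_{t,k})$ by Lemma~\ref{lem:contraction}. The perturbation $\|\bm S_{\ell,t}-\tilde{\bm S}_{\ell,t}\|_{op}$ is $O_{\mathbb P}(\Delta_{\ell,t})$, since Bai--Yin gives $\|\hat W_{\ell,s}\|_{op}=O_{\mathbb P}(1+p/n_s^\ell)$, and this is exactly the inflated weighting in \eqref{eq:windowdrift}. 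Equivalently, one can read this off from the ``Moreover'' clause of Lemma~\ref{lem:maps}.

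The statistical core is the term $\|\sin\Theta(\tilde{\bm U}_{\ell,t},\bm V_t)\|_F^2$. It equals $k-\|\bm V_t^\top\tilde{\bm U}_{\ell,t}\|_F^2=\sum_{j}\bigl(1-\sum_m|\langle\tilde{\bm u}_{\ell,t,j},\bm v_{t,m}\rangle|^2\bigr)$. By Lemma~\ref{lem:maps}, the diagonal overlaps are $a_{\ell,t}(\lambda_{t,j})+O_{\mathbb P}((N^{\mathrm{eff}}_{\ell,t})^{-1/2})$ and the off-diagonal ones are $O_{\mathbb P}((N^{\mathrm{eff}}_{\ell,t})^{-1})$. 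Since $k$ is fixed, this gives
\[
\|\sin\Theta(\tilde{\bm U}_{\ell,t},\bm V_t)\|_F^2=(D^\star_{\ell,t})^2+O_{\mathbb P}\bigl((N^{\mathrm{eff}}_{\ell,t})^{-1/2}\bigr).
\]
To pass from squares to the norms themselves I use $|\sqrt{x}-\sqrt{y}|\le|x-y|/\sqrt{y}$. This requires $D^\star_{\ell,t}$ to be bounded away from zero. That holds because $a_{\ell,t}(\lambda_{t,j})<1$ uniformly, which follows from $c_{\ell,t}\to c_\ell\ge\underline c>0$ together with the bounds on $\lambda$ and $\sigma^2$ in Assumption~\ref{assum:noise}. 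In the canonical case the closed form follows by substituting \eqref{eq:closedforms}. We have $1-a=\{\lambda(\lambda+c\sigma^2)-\lambda^2+c\sigma^4\}/\{\lambda(\lambda+c\sigma^2)\}=c\sigma^2(\lambda+\sigma^2)/\{\lambda(\lambda+c\sigma^2)\}$. Collecting the three pieces gives \eqref{eq:tracking}.

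For the limit statement I would argue as follows. Under Assumption~\ref{assum:hd}, $N^{\mathrm{eff}}_{\ell,t}=p/c_{\ell,t}\to\infty$. The remaining error terms vanish by Assumptions~\ref{assum:step} and \ref{assum:drift}, and $(\xi^\star)^t\to0$. By continuity of $a_{\ell,t}$ in $(c,\sigma^2,H,\lambda)$, we get $D^\star_{\ell,t}\to D^\star_\ell$. Positivity follows from $a_\ell(\lambda_j)<1$ for $c_\ell>0$, which gives $\liminf_t d^{(\ell)}_t>0$ in probability.

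The main obstacle is the middle Davis--Kahan step, in particular justifying $\|\bm S_{\ell,t}-\tilde{\bm S}_{\ell,t}\|_{op}=O_{\mathbb P}(\Delta_{\ell,t})$ uniformly over the random batches. The batch matrices are built with different $\bm\Sigma_s$, so I would write $\bm\Sigma_s^{1/2}\hat W\bm\Sigma_s^{1/2}-\bm\Sigma_t^{1/2}\hat W\bm\Sigma_t^{1/2}$ and bound it using $\|\bm\Sigma_s^{1/2}-\bm\Sigma_t^{1/2}\|_{op}\lesssim\|\bm\Sigma_s-\bm\Sigma_t\|_{op}/\sigma_{\min}$. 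A union bound on the Bai--Yin events across the window is then needed, and the moment condition of Assumption~\ref{assum:moment} should allow this through tail summability of the $\pi_{s,t}$.
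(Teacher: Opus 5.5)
Your proposal is correct and follows essentially the same route as the paper. The shared steps are: the metric triangle inequality with $e_{\ell,t}$ controlled by Lemma~\ref{lem:contraction}; the overlap expansion of Lemma~\ref{lem:maps} applied to $k-\|\bm V_t^\top\tilde{\bm U}_{\ell,t}\|_F^2$; Lemma~\ref{lem:A1} plus Davis--Kahan, with the eigengap from Step~2 of the proof of Lemma~\ref{lem:contraction}, to pass between $\tilde{\bm S}_{\ell,t}$ and $\bm S_{\ell,t}$; and a uniform lower bound on $D^\star_{\ell,t}$ to take square roots. The only difference is organisational: you make the top-$k$ eigenspace of $\tilde{\bm S}_{\ell,t}$ an explicit intermediate subspace, whereas the paper folds the $O_{\mathbb P}(\Delta_{\ell,t})$ transfer into the overlap step. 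The operator-norm bound you flag as the main obstacle is exactly what Lemma~\ref{lem:A1} supplies, and it needs no union bound: a uniform bound $\mathbb E\|\hat W_{\ell,s}\|_{op}\lesssim 1+p/n^\ell_s$ together with Markov's inequality on the $\pi_{s,t}$-weighted sum already gives $O_{\mathbb P}(\bar\Delta_{\ell,t})$.
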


	Transmitting the full covariance costs $O(p^2)$ per node per round,
	which is infeasible for large $p$. Each node therefore communicates
	only a $k$-dimensional Rayleigh-type statistic. We adopt a
	sample-splitting construction in which the direction is formed from
	data up to time $t-1$, and the quadratic form is evaluated on the
	fresh batch received at time $t$,
	\begin{equation*}
		\bm r_{\ell,t}
		=\operatorname{diag}\!\bigl(
		\bm U_{\ell,t-1}^\top\hat{\bm S}_{\ell,t}
		\bm U_{\ell,t-1}\bigr)\in\mathbb R^{k},
		\qquad
		r_{\ell,t,j}
		=\bm u_{\ell,t-1,j}^\top\hat{\bm S}_{\ell,t}
		\bm u_{\ell,t-1,j}.
	\end{equation*}
	The one-step lag in the direction is deliberate. Since
	$\bm U_{\ell,t-1}$ is $\mathcal F_{\ell,t-1}$-measurable, it is
	independent of the batch against which the quadratic form is evaluated,
	and the fluctuation of $\bm r_{\ell,t}$ around its conditional mean is
	then a martingale residual with exactly vanishing conditional
	expectation.

	\begin{lemma}[Exact conditional decomposition]
		\label{lem:decomp}
		Let $\mathcal F_{\ell,t-1}$ be the $\sigma$-field generated by all
		observations at node $\ell$ up to time $t-1$. Under
		Assumptions~\ref{assum:moment}--\ref{assum:deloc}, for each
		$j\le k$,
		\begin{equation}
			\label{eq:decomp}
			r_{\ell,t,j}
			=\underbrace{\sigma_t^{2}
				+\sum_{m=1}^{k}\lambda_{t,m}
				\bigl\langle\bm u_{\ell,t-1,j},
				\bm v_{t,m}\bigr\rangle^{2}}_{=:\,\bar r_{\ell,t,j}}
			\ +\ \varpi_{\ell,t,j},
		\end{equation}
		$$
		\varpi_{\ell,t,j}
		:=\bm u_{\ell,t-1,j}^\top
		\bigl(\hat{\bm S}_{\ell,t}-\bm\Sigma_t\bigr)
		\bm u_{\ell,t-1,j},
		$$
		where $\mathbb E(\varpi_{\ell,t,j}\mid\mathcal F_{\ell,t-1})=0$
		{exactly} and
		\begin{align}
			\label{eq:var}
			\operatorname{Var}\bigl(\varpi_{\ell,t,j}\mid
			\mathcal F_{\ell,t-1}\bigr)
			=&\frac{1}{n_t^\ell}
			\Bigl\{2\bar r_{\ell,t,j}^{\,2}
			+{K}_4\sum_{m=1}^{p}
			\bigl(\bm e_m^\top\bm\Sigma_t^{1/2}
			\bm u_{\ell,t-1,j}\bigr)^{4}\Bigr\}\\
			=&\frac{2\bar r_{\ell,t,j}^{\,2}}{n_t^\ell}
			\bigl(1+o_{\mathbb P}(1)\bigr)  \quad \text{as } p\to\infty. \nonumber
		\end{align}
	\end{lemma}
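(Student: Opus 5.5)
Conditioning on $\mathcal F_{\ell,t-1}$ reduces the statement to an exact moment computation for a quadratic form in fresh independent vectors, followed by a delocalisation estimate for the fourth-cumulant term.

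\textbf{Step 1 (conditional mean).} Write $\bm u:=\bm u_{\ell,t-1,j}$. By construction (orthogonal iteration on $\bm S_{\ell,t-1}$), $\bm u$ is $\mathcal F_{\ell,t-1}$-measurable. By Assumption~\ref{assum:moment} the batch $\{\bm z_{\ell,t,i}\}_i$ is independent of $\mathcal F_{\ell,t-1}$, and by Assumption~\ref{assum:design} so is $n_t^\ell$ (it is predictable). Since $\mathbb E\,\bm z\bm z^\top=\bm I_p$, we get $\mathbb E(\hat{\bm S}_{\ell,t}\mid\mathcal F_{\ell,t-1})=\bm\Sigma_t$, and hence $\mathbb E(\varpi_{\ell,t,j}\mid\mathcal F_{\ell,t-1})=0$ exactly. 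By the spectral form \eqref{eq:spiked}, $\bm u^\top\bm\Sigma_t\bm u=\sigma_t^2\|\bm u\|_2^2+\sum_m\lambda_{t,m}\langle\bm u,\bm v_{t,m}\rangle^2$, which equals $\bar r_{\ell,t,j}$ because $\|\bm u\|_2=1$. This gives \eqref{eq:decomp}.

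\textbf{Step 2 (exact variance).} Set $\bm w:=\bm\Sigma_t^{1/2}\bm u$, which is fixed given $\mathcal F_{\ell,t-1}$, with $\|\bm w\|_2^2=\bar r_{\ell,t,j}$. Then
\[
\varpi_{\ell,t,j}=\frac1{n_t^\ell}\sum_i\bigl\{(\bm w^\top\bm z_{\ell,t,i})^2-\|\bm w\|_2^2\bigr\},
\]
a sum of conditionally i.i.d.\ centred terms, so the conditional variance is $(n_t^\ell)^{-1}\operatorname{Var}\{(\bm w^\top\bm z)^2\}$. Expand $\mathbb E(\bm w^\top\bm z)^4=\sum_{a,b,c,d}w_aw_bw_cw_d\,\mathbb E z_az_bz_cz_d$. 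Independence and unit variance make only the pairings survive, which gives $3\|\bm w\|_2^4+{K}_4\sum_m w_m^4$. Subtracting $\|\bm w\|_2^4$ yields the first line of \eqref{eq:var}, with $w_m=\bm e_m^\top\bm\Sigma_t^{1/2}\bm u$. Finiteness of the fourth moment is ensured by Assumption~\ref{assum:moment}.

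\textbf{Step 3 (asymptotic simplification).} This is the only nontrivial step: we must show $\sum_m w_m^4=o_{\mathbb P}(\bar r_{\ell,t,j}^2)$. Since $\bar r_{\ell,t,j}\ge\sigma_{\min}^2$ by Assumption~\ref{assum:noise}, it suffices to show $\sum_m w_m^4\le\|\bm w\|_\infty^2\|\bm w\|_2^2\to0$ in probability, i.e.\ $\|\bm w\|_\infty=o_{\mathbb P}(1)$.

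Write
\[
\bm\Sigma_t^{1/2}=\sigma_t\bm I_p+\sum_m\Bigl(\sqrt{\lambda_{t,m}+\sigma_t^2}-\sigma_t\Bigr)\bm v_{t,m}\bm v_{t,m}^\top .
\]
Then $\|\bm w\|_\infty\le\sigma_{\max}\|\bm u\|_\infty+C\sum_m\|\bm v_{t,m}\|_\infty$, using $|\langle\bm u,\bm v_{t,m}\rangle|\le1$ and $\lambda_{\max}<\infty$. The second term vanishes by Assumption~\ref{assum:deloc}.

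For $\|\bm u\|_\infty$, note that $\bm U_{\ell,t-1}$ lies within $O_{\mathbb P}$-small distance of the exact eigenspace $\mathring{\bm U}_{\ell,t-1}$ by Lemma~\ref{lem:contraction}. The columns of $\mathring{\bm U}_{\ell,t-1}$ decompose, as in Lemma~\ref{lem:maps}, into components along $\bm v_{t,m}$ plus components in the span of the noise-part eigenvectors, and both are delocalised by Assumption~\ref{assum:deloc}. I expect this to be the main obstacle. An $\ell_\infty$ bound does not follow from an $\ell_2$ perturbation bound, so the plan is to invoke delocalisation of the noise top eigenvectors directly (Assumption~\ref{assum:deloc}) together with the finite-rank representation $\mathring{\bm u}=\sum_m(\cdot)\bm v_{t,m}+(\text{resolvent of noise part applied to }\bm v_{t,m})$. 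The fallback, if needed, is to read Assumption~\ref{assum:deloc} as covering the iterate directly.

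Finally, this yields the factor $1+o_{\mathbb P}(1)$ and hence the second line of \eqref{eq:var}.
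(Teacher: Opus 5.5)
Your proposal follows the paper's proof step for step: exact centring from the predictability of $\bm u_{\ell,t-1,j}$, the pairing expansion $\mathbb E(\bm w^\top\bm z)^4=3\|\bm w\|_2^4+K_4\sum_m w_m^4$, and the reduction through $\sum_m w_m^4\le\|\bm w\|_\infty^2\|\bm w\|_2^2$ to $\|\bm u_{\ell,t-1,j}\|_\infty=o_{\mathbb P}(1)$. The obstacle you flag is real, since the paper disposes of it by calling $\bm u_{\ell,t-1,j}$ an $O_{\mathbb P}(e_{\ell,t-1})$ perturbation of an eigenvector of $\mathbb E[\bm S_{\ell,t-1}]$, whereas $e_{\ell,t-1}$ is the distance to $\mathring{\bm U}_{\ell,t-1}$ and the distance to the population directions stays at the positive floor of Theorem~\ref{thm:tracking}; what closes it is $\|\bm u_{\ell,t-1,j}\|_\infty\le\sqrt k\max_i\|\mathring{\bm u}_{\ell,t-1,i}\|_\infty+e_{\ell,t-1}$ (free once $t$ is large, while at small $t$ a localised component of $\bm U_{\ell,0}$ genuinely breaks the second line of \eqref{eq:var} when $K_4\neq0$), combined with $\|\mathring{\bm u}_{\ell,t-1,i}\|_\infty\to0$ obtained from the representation $\mathring{\bm u}\propto(\bm I+P)^{-1/2}R(x)\bm v$ of Step~3 in the proof of Lemma~\ref{lem:maps} and an entrywise isotropic law $\bm e_m^\top R(x)\bm v=-J_1(x)v_m+O(N^{-1/2+\epsilon})$ holding simultaneously for $m\le p$ with high probability, not from delocalisation of the top-$k$ noise eigenvectors, which says nothing about the bulk component of $R(x)\bm v$.
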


	\begin{theo}[Attenuation law for the Rayleigh statistic]
		\label{thm:attenuation}
		Write $a(\lambda;c,\sigma^2)$ and $\Phi(\lambda;c,\sigma^2)$ for the
		alignment and spike-location maps of Lemma~\ref{lem:maps}. Define the Rayleigh transfer map
		\begin{equation}
			\label{eq:chi}
			\chi_{\ell,t}(\lambda)
			:=\sigma_t^{2}
			+\lambda\,a\bigl(\lambda;c_{\ell,t-1},\sigma_t^{2}\bigr),
			\qquad
			\lambda>\sigma_t^{2}\sqrt{c_{\ell,t-1}},
		\end{equation}
		which in the canonical case $H_{\ell,t}\Rightarrow\delta_1$ is
		\begin{equation}
			\label{eq:chi_closed}
			\chi_{\ell,t}(\lambda)
			=\frac{\lambda(\lambda+\sigma_t^{2})}
			{\lambda+c_{\ell,t-1}\sigma_t^{2}} .
		\end{equation}
		Then, under Assumptions~\ref{assum:moment}--\ref{assum:init}, for
		each fixed $j\le k$,
		\begin{equation*}
			r_{\ell,t,j}
			=\chi_{\ell,t}(\lambda_{t,j})
			+O_{\mathbb P}\!\left(
			\frac{1}{\sqrt{n_t^\ell}}\right)
			+O_{\mathbb P}\bigl(b^{\mathrm{lag}}_{\ell,t}\bigr),
		\end{equation*}
		where $
		b^{\mathrm{lag}}_{\ell,t}
		:=\bigl(N^{\mathrm{eff}}_{\ell,t}\bigr)^{-1/2}
		+\alpha^{\mathrm{lag}}_{\ell,t}
		+\zeta_t
		+\Delta_{\ell,t}
		+(\xi^{\star})^{t}
		\ \longrightarrow\ 0 $
		under Assumptions~\ref{assum:drift}--\ref{assum:step}. Moreover
		\begin{equation}
			\label{eq:sandwich}
			\chi_{\ell,t}(\lambda_{t,j})
			\ <\ \lambda_{t,j}+\sigma_t^{2}
			\ <\ \Phi_{\ell,t}(\lambda_{t,j}),
		\end{equation}
		and in the canonical case the two maps are reciprocal at any common
		aspect ratio,
		\begin{equation}
			\label{eq:duality}
			\chi(\lambda;c,\sigma^{2})\,
			\Phi(\lambda;c,\sigma^{2})
			=\bigl(\lambda+\sigma^{2}\bigr)^{2},
			\qquad c>0,\ \sigma^{2}>0 .
		\end{equation}
	\end{theo}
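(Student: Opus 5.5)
The plan is to start from the exact decomposition of Lemma~\ref{lem:decomp}, $r_{\ell,t,j}=\bar r_{\ell,t,j}+\varpi_{\ell,t,j}$, and treat the two pieces separately. The martingale residual is straightforward. Conditional on $\mathcal F_{\ell,t-1}$ it has mean zero and variance $2\bar r_{\ell,t,j}^2/n_t^\ell\,(1+o_{\mathbb P}(1))$. Moreover $\bar r_{\ell,t,j}\le\sigma_t^2+\lambda_{\max}$ by Assumption~\ref{assum:noise}, so Chebyshev applied conditionally gives $\varpi_{\ell,t,j}=O_{\mathbb P}((n_t^\ell)^{-1/2})$. The remaining work is to show that
\[
\bar r_{\ell,t,j}=\sigma_t^2+\lambda_{t,j}\,a(\lambda_{t,j};c_{\ell,t-1},\sigma_t^2)+O_{\mathbb P}(b^{\mathrm{lag}}_{\ell,t}).
\]
This splits into a diagonal term $m=j$ and off-diagonal terms $m\neq j$.

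For $\bar r$, I would pass from the iterate $\bm u_{\ell,t-1,j}$ to the exact eigenvector $\mathring{\bm u}_{\ell,t-1,j}$ of $\bm S_{\ell,t-1}$. Lemma~\ref{lem:contraction} bounds the subspace error $e_{\ell,t-1}$ by $O((\xi^\star)^{t})+O(\alpha^{\mathrm{lag}}_{\ell,t}+\zeta_t)$. A column-wise version of this bound is needed, since Rayleigh uses individual columns. I would obtain it from the spike separation $g_{\min}$ in Assumption~\ref{assum:bbp}, which via Lemma~\ref{lem:maps} gives separated sample eigenvalues $\Phi_{\ell,t}(\lambda_{t,j})$. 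Within the nearly invariant subspace, Davis--Kahan then controls the rotation of each column by $O(e_{\ell,t-1})$. Because $|\langle\cdot,\bm v\rangle^2-\langle\cdot,\bm v\rangle'^2|\le 2\|\bm u-\bm u'\|_2$, replacing the iterate by the exact eigenvector costs $O(e_{\ell,t-1})$.

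Next I would apply Lemma~\ref{lem:maps} at time $t-1$ to $\mathring{\bm u}_{\ell,t-1,j}$. This gives
\[
\langle\mathring{\bm u}_{\ell,t-1,j},\bm v_{t-1,j}\rangle^2=a_{\ell,t-1}(\lambda_{t-1,j})+O_{\mathbb P}\big((N^{\mathrm{eff}}_{\ell,t-1})^{-1/2}+\Delta_{\ell,t-1}\big),
\]
with the cross terms for $m\ne j$ of order $O_{\mathbb P}((N^{\mathrm{eff}})^{-1}+\Delta)$. Three substitutions then move everything to time $t$ and to the canonical map:
\begin{itemize}
\item Replace $\bm v_{t-1,m}$ by $\bm v_{t,m}$, $\lambda_{t-1,j}$ by $\lambda_{t,j}$, and $\sigma^2_{t-1}$ by $\sigma^2_t$, each at cost $O(\zeta_t)$ by Assumption~\ref{assum:drift}.
\item Use the Lipschitz continuity of $a(\cdot;c,\sigma^2)$ in $(\lambda,\sigma^2)$. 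This holds on compacts strictly above threshold, which Assumption~\ref{assum:bbp} guarantees uniformly.
\item Identify $a_{\ell,t-1}$ with $a(\cdot;c_{\ell,t-1},\sigma^2)$ through the closed form \eqref{eq:closedforms}.
\end{itemize}
Summing over $m$ with $\lambda_{t,m}\le\lambda_{\max}$ yields the displayed claim. Since $N^{\mathrm{eff}}_{\ell,t-1}\asymp N^{\mathrm{eff}}_{\ell,t}$ and $\Delta_{\ell,t-1}\lesssim\Delta_{\ell,t}+O(\zeta_t+\alpha_t)$ by Lemma~\ref{lem:crec}, all error terms are absorbed into $b^{\mathrm{lag}}_{\ell,t}$.

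The algebraic statements come from \eqref{eq:closedforms}. Computing $\sigma^2+\lambda(\lambda^2-c\sigma^4)/(\lambda(\lambda+c\sigma^2))$ gives $\lambda(\lambda+\sigma^2)/(\lambda+c\sigma^2)$, which is \eqref{eq:chi_closed}. Multiplying by $\Phi$ gives \eqref{eq:duality}. For the sandwich \eqref{eq:sandwich}, note that $\chi<\lambda+\sigma^2$ is equivalent to $\lambda<\lambda+c\sigma^2$, which holds because $c>0$; the duality then forces $\Phi>\lambda+\sigma^2$. In the general-profile case, the first inequality follows from $a<1$. The second follows from the spike-location equation: at $x=\lambda+\sigma^2$ the left side is at least $1$, by Jensen together with the unit mean of $\mu^W$, so the root $\Phi$ lies to the right.

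I expect the main obstacle to be the column-wise transfer from the orthogonal-iteration output to exact eigenvectors at the correct time index. The subtle points are that the aspect ratio entering $\chi$ is $c_{\ell,t-1}$, and that the alignment error from Lemma~\ref{lem:maps} is only $O_{\mathbb P}((N^{\mathrm{eff}})^{-1/2})$, which is nonetheless sufficient here. If the Davis--Kahan argument for individual columns proves delicate, my first fallback is to bound $\sum_m\lambda_{t,m}\langle\bm u,\bm v_{t,m}\rangle^2$ through the projector $\bm V_t\bm\Lambda_t\bm V_t^\top$ and use the trace-level contraction of $e_{\ell,t-1}$ together with the spectral gap directly.
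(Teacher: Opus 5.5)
Your plan follows the paper's proof essentially step for step:
\begin{itemize}
\item the decomposition of Lemma~\ref{lem:decomp}, with conditional Chebyshev for $\varpi_{\ell,t,j}$;
\item the transfer from iterate to exact eigenvector via Lemma~\ref{lem:contraction} plus a coordinatewise Davis--Kahan bound using $g_{\min}$;
\item Lemma~\ref{lem:maps} applied at time $t-1$;
\item the $O(\zeta_t)$ substitutions, with $c_{\ell,t-1}$ left untouched;
\item the closed-form algebra.
\end{itemize}
Your canonical-case sandwich, which reduces $\chi<\lambda+\sigma^2$ to $c>0$ and then reads off $\Phi$ from the duality, is a slightly more elementary route than the paper's.

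The step that does not go through as justified is the column-wise transfer you flagged. The paper's proof asserts the same coordinatewise Davis--Kahan bound, so it has the same weakness. The problem is as follows.
\begin{itemize}
\item The subspace error $e_{\ell,t-1}$ is invariant under $\bm U_{\ell,t-1}\mapsto\bm U_{\ell,t-1}\bm Q$ for any $k\times k$ orthogonal $\bm Q$.
\item Davis--Kahan compares eigenvectors of two nearby matrices. The columns of $\bm U_{\ell,t-1}$ are not eigenvectors of a perturbation of $\bm S_{\ell,t-1}$.
\item Concretely, take $\bm U=\mathring{\bm U}\bm Q$ with $\bm Q$ mixing two columns. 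Then $e=0$, but $\bar r_{\ell,t,j}$ is off by an amount of order one, because $\lambda a(\lambda)$ is strictly increasing and the spikes are separated.
\item Your fallback has the same defect. The quantity $\operatorname{tr}(\bm U^\top\bm V_t\bm\Lambda_t\bm V_t^\top\bm U)$ depends only on $\bm U\bm U^\top$, so it controls $\sum_j\bar r_{\ell,t,j}$ but not a fixed $j$.
\end{itemize}

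The missing ingredient is the nested structure of QR.
\begin{itemize}
\item Since $\bm S_{\ell,t}\bm U_{\ell,t-1}=\bm U_{\ell,t}\bm R$ with $\bm R$ upper triangular, for each $i\le k$ the span of the first $i$ columns evolves by its own orthogonal iteration.
\item Lemma~\ref{lem:A3} with $k$ replaced by $i$ then contracts at rate $\mathring\lambda_{\ell,t,i+1}/\mathring\lambda_{\ell,t,i}$. This stays uniformly below one: for $i<k$ by $g_{\min}$ and the strict monotonicity of $\Phi_{\ell,t}$, and for $i=k$ by $\xi^\star$.
\item The tracking recursion of Lemma~\ref{lem:contraction} therefore applies blockwise.
\item Write $\bm P^{(i)}$ and $\mathring{\bm P}^{(i)}$ for the projectors onto the first $i$ columns of $\bm U_{\ell,t-1}$ and $\mathring{\bm U}_{\ell,t-1}$. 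Then $\bm u_{\ell,t-1,j}\bm u_{\ell,t-1,j}^\top=\bm P^{(j)}-\bm P^{(j-1)}$.
\item Hence, for unit $\bm v$, $|\langle\bm u_{\ell,t-1,j},\bm v\rangle^2-\langle\mathring{\bm u}_{\ell,t-1,j},\bm v\rangle^2|\le\|\bm P^{(j)}-\mathring{\bm P}^{(j)}\|_{op}+\|\bm P^{(j-1)}-\mathring{\bm P}^{(j-1)}\|_{op}$. This is the column-wise bound you need.
\end{itemize}
This argument also requires each leading $i\times i$ block of $\bm V_0^\top\bm U_{\ell,0}$ to be nondegenerate. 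That is slightly more than Assumption~\ref{assum:init} states, which only controls the full $k\times k$ block.

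Two smaller points.
\begin{itemize}
\item In the general-profile sandwich, evaluating the spike-location equation at $\lambda+\sigma_t^2$ is only meaningful when $\lambda+\sigma_t^2>b_{\ell,t}$, and this can fail near threshold. In that case the inequality is trivial, since $\Phi_{\ell,t}(\lambda)>b_{\ell,t}$. The paper avoids the case split by applying Jensen at the root $x=\Phi_{\ell,t}(\lambda)/\sigma_t^2$ itself, giving $1/\theta=g(x)\ge(x-1)^{-1}$.
\item The bound $\Delta_{\ell,t-1}\lesssim\Delta_{\ell,t}+O(\zeta_t+\alpha_t)$ does not follow from Lemma~\ref{lem:crec}, which concerns only $c_{\ell,t}$ and $N^{\mathrm{eff}}_{\ell,t}$. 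The natural estimate multiplies $\|\bm\Sigma_t-\bm\Sigma_{t-1}\|_{op}$ by $\sum_{s<t}\pi_{s,t-1}(1+p/n^\ell_s)$, which is of order $1/\alpha_t$ in the proportional regime. It is cleaner to keep $\Delta_{\ell,t-1}$ in the remainder; it vanishes by Assumption~\ref{assum:drift}.
\end{itemize}
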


	\begin{rem}
		\label{rem:indexshift}
		The aspect ratio in \eqref{eq:chi} is $c_{\ell,t-1}$ and not
		$c_{\ell,t}$, because the direction $\bm u_{\ell,t-1,j}$ is formed
		from $\bm S_{\ell,t-1}$ and it is that matrix which fixes the
		attenuation. No approximation is involved. The value is known
		exactly at the node from the deterministic recursion of
		Lemma~\ref{lem:crec}, and it is the value at which
		\eqref{eq:local_debias} inverts the map, so forward and inverse map
		share the same aspect ratio and their composition is exact.
	\end{rem}
	
	\begin{rem}
		\label{rem:duality}
		The plain sample eigenvalue is {inflated} because the
		eigenvector is optimised on the same data that generate the
		quadratic form, so noise is capitalised. The split-sample Rayleigh
		quotient reverses this mechanism. The direction is fixed before the data
		arrive, and the only effect of high dimensionality is that this
		fixed direction is misaligned with $\bm v_{t,j}$, so a fraction
		$1-a(\lambda_{t,j};c_{\ell,t-1},\sigma^2_t)$ of the signal energy
		leaks into the noise subspace and is replaced by $\sigma_t^2$.
		Attenuation, not inflation, is therefore the correct description of
		the statistic actually transmitted. Identity \eqref{eq:duality} makes
		the relation precise, with $\lambda_{t,j}+\sigma_t^2$ the geometric
		mean of the attenuated and the inflated quantity.
	\end{rem}

	Each node forms
	\begin{equation}
		\label{eq:local_debias}
		\check\lambda_{\ell,t,j}
		:=\chi^{-1}\bigl(
		r_{\ell,t,j}\vee\check\sigma^{2}_{\ell,t}\ ;\
		c_{\ell,t-1},\ \check\sigma^{2}_{\ell,t}\bigr),
		\qquad j=1,\dots,k,
	\end{equation}
	where the truncation keeps the argument in the domain of $\chi^{-1}$
	and is inactive with probability tending to one under
	Assumption~\ref{assum:bbp}, and the local noise level is estimated
	from the same batch by the trace identity
	\begin{equation*}
		\check\sigma^{2}_{\ell,t}
		=\frac{1}{p-k}\Bigl(
		\operatorname{tr}\hat{\bm S}_{\ell,t}
		-\sum_{j=1}^{k}r_{\ell,t,j}\Bigr).
	\end{equation*}

	\begin{lemma}[Local noise-level estimation]
		\label{lem:sigma}
		Under Assumptions~\ref{assum:moment}--\ref{assum:hd},
		\[
		\check\sigma^{2}_{\ell,t}-\sigma_t^{2}
		=O_{\mathbb P}\!\left(\frac{1}{\sqrt{p\,n_t^\ell}}\right)
		+O\!\left(\frac{k}{p}\right).
		\]
		Consequently, multiplying by $\sqrt{n_t^\ell}$ and noting that 
		$(p\,n_t^\ell)^{-1/2}\sqrt{n_t^\ell}=p^{-1/2}\to 0$ and 
		$(k/p)\sqrt{n_t^\ell}\to 0$ whenever $k\sqrt{n_t^\ell}=o(p)$,
		we have $\check\sigma^{2}_{\ell,t}-\sigma_t^{2}=o_{\mathbb P}((n_t^\ell)^{-1/2})$.
	\end{lemma}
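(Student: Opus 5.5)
Write $\bm P_{\ell,t-1}:=\bm U_{\ell,t-1}\bm U_{\ell,t-1}^\top$ and decompose
\[
\operatorname{tr}\hat{\bm S}_{\ell,t}-\sum_{j=1}^k r_{\ell,t,j}
=\operatorname{tr}\bigl\{(\bm I_p-\bm P_{\ell,t-1})\hat{\bm S}_{\ell,t}\bigr\}.
\]
The plan is to condition on $\mathcal F_{\ell,t-1}$, exactly as in Lemma~\ref{lem:decomp}. By Assumption~\ref{assum:design}, the projector $\bm P_{\ell,t-1}$ and the batch size $n_t^\ell$ are then fixed, and the batch is independent of them.

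\textbf{Step 1 (conditional mean).} Since $\mathbb E(\hat{\bm S}_{\ell,t}\mid\mathcal F_{\ell,t-1})=\bm\Sigma_t$, the conditional mean of the numerator is
\[
\operatorname{tr}\{(\bm I-\bm P)\bm\Sigma_t\}
=(p-k)\sigma_t^2+\sum_{m=1}^k\lambda_{t,m}\bigl(1-\|\bm U_{\ell,t-1}^\top\bm v_{t,m}\|_2^2\bigr).
\]
Each correction term lies in $[0,\lambda_{\max}]$, so the total correction is at most $k\lambda_{\max}$. Dividing by $p-k$ and using Assumption~\ref{assum:noise} gives the conditional bias $\mathbb E(\check\sigma^2_{\ell,t}\mid\mathcal F_{\ell,t-1})-\sigma_t^2\in[0,k\lambda_{\max}/(p-k)]$. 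This is $O(k/p)$ deterministically, with no need to use the tracking result of Theorem~\ref{thm:tracking}.

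\textbf{Step 2 (conditional variance).} Write $\bm B:=\bm\Sigma_t^{1/2}(\bm I-\bm P)\bm\Sigma_t^{1/2}$, so that the fluctuation is
\[
\frac1{n_t^\ell}\sum_{i=1}^{n_t^\ell}\bigl(\bm z_i^\top\bm B\bm z_i-\operatorname{tr}\bm B\bigr),
\]
an average of independent centered quadratic forms. The standard quadratic-form variance identity gives
\[
\operatorname{Var}(\bm z^\top\bm B\bm z)=2\operatorname{tr}\bm B^2+K_4\sum_m B_{mm}^2\le(2+|K_4|)\operatorname{tr}\bm B^2 .
\]
Moreover $\operatorname{tr}\bm B^2\le p\|\bm\Sigma_t\|_{op}^2\le p(\lambda_{\max}+\sigma_{\max}^2)^2$. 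Hence the conditional variance of the numerator is $O(p/n_t^\ell)$, and after dividing by $(p-k)^2$ it is $O\bigl(1/(p\,n_t^\ell)\bigr)$. Conditional Chebyshev, followed by taking expectations, upgrades this to the unconditional statement $O_{\mathbb P}\bigl((p\,n_t^\ell)^{-1/2}\bigr)$. Only fourth moments from Assumption~\ref{assum:moment} are needed.

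\textbf{Step 3 (consequence).} Multiply the bound by $\sqrt{n_t^\ell}$. The stochastic term becomes $O_{\mathbb P}(p^{-1/2})=o_{\mathbb P}(1)$. The bias term becomes $O(k\sqrt{n_t^\ell}/p)$, which is $o(1)$ under the stated condition $k\sqrt{n_t^\ell}=o(p)$.

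The step needing most care is Step 2: the randomness of $\bm P_{\ell,t-1}$ must be handled purely through conditioning, and the variance bound must be uniform in that projector. The bound above is uniform because it uses only $\|\bm I-\bm P\|_{op}\le1$. One must also check that the independence across $\ell,t,i$ in Assumption~\ref{assum:moment} makes the fresh batch independent of $\mathcal F_{\ell,t-1}$.
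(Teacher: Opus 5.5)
Your proposal is correct and follows the paper's proof: the same conditional-mean computation of $\operatorname{tr}\{(\bm I-\bm P)\bm\Sigma_t\}$ with each correction term in $[0,\lambda_{\max}]$, followed by a conditional variance bound and Chebyshev. The only difference is that you treat the whole numerator as the single quadratic form $\bm z^\top\bm B\bm z$. The paper instead bounds $\operatorname{Var}(\operatorname{tr}\hat{\bm S}_{\ell,t})$ and $\operatorname{Var}(\sum_j r_{\ell,t,j})$ separately, using Cauchy--Schwarz for the correlated Rayleigh terms, delocalization for the diagonal of $\bm\Sigma_t$, and $k=O(\sqrt p)$ to absorb the $r$-part. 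Your version is slightly tidier and needs none of those side conditions.
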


	\begin{theo}[Local asymptotic normality of the debiased estimator]
		\label{thm:local_clt}
		Suppose Assumptions~\ref{assum:moment}--\ref{assum:init} hold, that
		$k\sqrt{n_t^\ell}=o(p)$, and that
		\begin{equation}
			\label{eq:clt_cond}
			\sqrt{n_t^\ell}
			\Bigl(\frac{1}{\sqrt{N^{\mathrm{eff}}_{\ell,t}}}
			+\alpha^{\mathrm{lag}}_{\ell,t}+\zeta_t+\Delta_{\ell,t}
			+(\xi^\star)^t\Bigr)\longrightarrow0 .
		\end{equation}
		
		Then, for each fixed $j\le k$,
		\begin{align}
			\label{eq:local_clt}
			\sqrt{n_t^\ell}\,
			\bigl(\check\lambda_{\ell,t,j}-\lambda_{t,j}\bigr)
			\stackrel{d}{\longrightarrow}
			\mathcal N\bigl(0,\ \vartheta^{2}_{\ell,t,j}\bigr), 
		\end{align}	
		\begin{align}
			\label{eq:vartheta}	
			\vartheta^{2}_{\ell,t,j}
			=\frac{2\,\chi_{\ell,t}(\lambda_{t,j})^{2}}
			{\chi_{\ell,t}'(\lambda_{t,j})^{2}}
			=\frac{2\lambda_{t,j}^{2}
				(\lambda_{t,j}+\sigma_t^{2})^{2}
				(\lambda_{t,j}+c_{\ell,t-1}\sigma_t^{2})^{2}}
			{\bigl(\lambda_{t,j}^{2}
				+2c_{\ell,t-1}\sigma_t^{2}\lambda_{t,j}
				+c_{\ell,t-1}\sigma_t^{4}\bigr)^{2}}. 
		\end{align}
	\end{theo}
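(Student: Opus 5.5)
The proof is a delta-method argument built on the exact conditional decomposition of Lemma~\ref{lem:decomp}. We write $\check\lambda_{\ell,t,j}-\lambda_{t,j}$ as the sum of three pieces:
\begin{itemize}
\item the effect of replacing $\check\sigma^2_{\ell,t}$ by $\sigma_t^2$ inside $\chi^{-1}$;
\item the deterministic bias $\bar r_{\ell,t,j}-\chi_{\ell,t}(\lambda_{t,j})$;
\item the martingale residual $\varpi_{\ell,t,j}$, pushed through $(\chi^{-1})'$.
\end{itemize}
First we dispose of the truncation. Theorem~\ref{thm:attenuation} gives $r_{\ell,t,j}\to\chi_{\ell,t}(\lambda_{t,j})$ in probability. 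Assumption~\ref{assum:bbp}, together with $\chi_{\ell,t}(\lambda)>\sigma_t^2$ for supercritical $\lambda$ and Lemma~\ref{lem:sigma}, shows that $r_{\ell,t,j}>\check\sigma^2_{\ell,t}$ with probability tending to one. So, on an event of probability tending to one, $\check\lambda_{\ell,t,j}=\chi^{-1}(r_{\ell,t,j};c_{\ell,t-1},\check\sigma^2_{\ell,t})$. Since $c_{\ell,t-1}$ is deterministic and exactly the ratio used in $\chi_{\ell,t}$ (Remark~\ref{rem:indexshift}), no aspect-ratio error enters.

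Second, we linearise. The explicit inverse $\chi^{-1}(x;c,\sigma^2)$ is smooth in $(x,\sigma^2)$ on compact subsets of the supercritical region, by Assumption~\ref{assum:noise} and $\sup_\ell c_\ell\le\bar c$. Its $x$-derivative is $1/\chi'_{\ell,t}(\lambda_{t,j})$, which is bounded because $\chi'\ge 2/(1+\sqrt c)$. A mean-value expansion then gives
\[
\check\lambda_{\ell,t,j}-\lambda_{t,j}
=\frac{r_{\ell,t,j}-\chi_{\ell,t}(\lambda_{t,j})}{\chi_{\ell,t}'(\lambda_{t,j})}
+O_{\mathbb P}\bigl(|\check\sigma^2_{\ell,t}-\sigma_t^2|\bigr)
+O_{\mathbb P}\bigl((r_{\ell,t,j}-\chi_{\ell,t}(\lambda_{t,j}))^2\bigr).
\]
By Lemma~\ref{lem:sigma} and $k\sqrt{n_t^\ell}=o(p)$, the $\sigma$-term is $o_{\mathbb P}((n_t^\ell)^{-1/2})$. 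By Theorem~\ref{thm:attenuation} the quadratic remainder is $O_{\mathbb P}(1/n_t^\ell+(b^{\mathrm{lag}}_{\ell,t})^2)$, which is also negligible after multiplying by $\sqrt{n_t^\ell}$, using \eqref{eq:clt_cond}.

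Third, we split $r_{\ell,t,j}-\chi_{\ell,t}(\lambda_{t,j})=(\bar r_{\ell,t,j}-\chi_{\ell,t}(\lambda_{t,j}))+\varpi_{\ell,t,j}$. The bias term is where the proof of Theorem~\ref{thm:attenuation} is really used. We have $\bar r_{\ell,t,j}=\sigma_t^2+\sum_m\lambda_{t,m}\langle\bm u_{\ell,t-1,j},\bm v_{t,m}\rangle^2$. We compare $\bm u_{\ell,t-1,j}$ with the exact eigenvector $\mathring{\bm u}_{\ell,t-1,j}$ via Lemma~\ref{lem:contraction}, with error $(\xi^\star)^t+\alpha^{\mathrm{lag}}+\zeta_t$. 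We then apply Lemma~\ref{lem:maps}: the squared alignment is $a_{\ell,t-1}(\lambda_{t-1,j})+O_{\mathbb P}((N^{\mathrm{eff}})^{-1/2}+\Delta)$ and the cross terms are $O_{\mathbb P}(1/N^{\mathrm{eff}})$. Finally, drift of $\lambda$ and $\sigma^2$ from $t-1$ to $t$ costs $O(\zeta_t)$. Altogether the bias is $O_{\mathbb P}(b^{\mathrm{lag}}_{\ell,t})$ with deterministic constants, so by \eqref{eq:clt_cond} we get $\sqrt{n_t^\ell}\,(\bar r_{\ell,t,j}-\chi_{\ell,t}(\lambda_{t,j}))\to0$ in probability. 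This requires that the $O_{\mathbb P}$ in Theorem~\ref{thm:attenuation} be split so that the $1/\sqrt{n_t^\ell}$ part is exactly $\varpi$; Lemma~\ref{lem:decomp} supplies this.

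The main step is a conditional CLT for $\sqrt{n_t^\ell}\,\varpi_{\ell,t,j}$. Conditional on $\mathcal F_{\ell,t-1}$, the direction $\bm u:=\bm u_{\ell,t-1,j}$ is fixed. Then $\varpi=(n_t^\ell)^{-1}\sum_i\{(\bm u^\top\bm\Sigma_t^{1/2}\bm z_i)^2-\bm u^\top\bm\Sigma_t\bm u\}$ is a sum of i.i.d.\ centred terms.
\begin{itemize}
\item The conditional variance from \eqref{eq:var} is $2\bar r^{\,2}(1+o_{\mathbb P}(1))$.
\item The $K_4$ term vanishes because $\sum_m(\bm e_m^\top\bm\Sigma_t^{1/2}\bm u)^4\le\|\bm\Sigma_t^{1/2}\bm u\|_\infty^2\,\bm u^\top\bm\Sigma_t\bm u\to0$ by delocalisation (Assumption~\ref{assum:deloc}). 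We expect this step to need the most care: $\bm\Sigma_t^{1/2}\bm u$ is a combination of $\bm u$ and the $\bm v_{t,m}$, and $\bm u$ itself is close to a combination of the $\bm v_{t,m}$ and noise eigenvectors, both delocalised.
\item The Lyapunov condition follows from the $4+\epsilon_1$ moment bound combined with a Rosenthal-type bound on the quadratic form $(\bm a^\top\bm z)^2$ for unit-norm $\bm a$.
\end{itemize}
The Lindeberg CLT applied conditionally therefore gives $\sqrt{n_t^\ell}\,\varpi\mid\mathcal F_{\ell,t-1}\Rightarrow\mathcal N(0,2\chi_{\ell,t}(\lambda_{t,j})^2)$, using $\bar r\to\chi_{\ell,t}(\lambda_{t,j})$. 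Dominated convergence of the conditional characteristic functions then yields the unconditional limit.

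Combining the three pieces with Slutsky's lemma gives \eqref{eq:local_clt} with $\vartheta^2=2\chi^2/\chi'^2$. The closed form \eqref{eq:vartheta} follows by differentiating \eqref{eq:chi_closed}, which gives $\chi'=(\lambda^2+2c\sigma^2\lambda+c\sigma^4)/(\lambda+c\sigma^2)^2$, and substituting.
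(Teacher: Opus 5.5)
Your proposal is correct and follows the paper's proof essentially step for step: inactive truncation, Lipschitz removal of the $\check\sigma^2_{\ell,t}$ plug-in, second-order linearisation of $\chi^{-1}$ with $r_{\ell,t,j}-\chi_{\ell,t}(\lambda_{t,j})=\varpi_{\ell,t,j}+O_{\mathbb P}(b^{\mathrm{lag}}_{\ell,t})$, and a conditional Lyapunov CLT for $\sqrt{n^\ell_t}\,\varpi_{\ell,t,j}$, passed to the unconditional limit by dominated convergence. One harmless slip: the uniform bound is $\chi'\ge\min\{1,2/(1+\sqrt c)\}$ (Lemma~\ref{lem:inverse}(iii)), not $\chi'\ge 2/(1+\sqrt c)$, which fails for $c<1$ at large $\lambda$; only a positive lower bound is needed, so the argument is unaffected.
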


	\begin{rem}
		\label{rem:feasibility}
		The variance term in \eqref{eq:clt_cond} is automatically controlled under
		the recursion of Section~\ref{sec:3}. When batch sizes are comparable,
		$N^{\mathrm{eff}}_{\ell,t}\asymp n_t^\ell/\alpha_t$, so
		$\sqrt{n_t^\ell/N^{\mathrm{eff}}_{\ell,t}}\asymp\sqrt{\alpha_t}\to0$.
		The active requirement is therefore the per-step lag condition
		$\sqrt{n_t^\ell}\,\alpha^{\mathrm{lag}}_{\ell,t}\to0$.
		Since $\alpha^{\mathrm{lag}}_{\ell,t}\asymp\alpha_t+\sqrt{\alpha_t c_{\ell,t}}$
		and $\alpha_t p/n_t^\ell\asymp c_{\ell,t}$, this condition reduces in the
		proportional regime $c_{\ell,t}\asymp1$ to
		\begin{equation*}
			\alpha_t = o\bigl(1/n_t^\ell\bigr),
			\qquad\text{equivalently}\qquad
			n_t^\ell = o(\sqrt{p}).
		\end{equation*}
		The constraint therefore governs the batch size rather than the step size.
		A concrete example shows that the feasible region is non-empty.
		Taking $p=p_t\asymp t$, $n_t^\ell\asymp t^{1/3}$, and $\alpha_t\asymp t^{-2/3}$
		gives $\alpha_t\to0$, $\sum_t\alpha_t=\infty$, $c_{\ell,t}\asymp1$,
		$\tilde\alpha_{\ell,t}\asymp1$, $\alpha^{\mathrm{lag}}_{\ell,t}\asymp t^{-1/3}\to0$,
		and $k\sqrt{n_t^\ell}=o(p)$, so all conditions of
		Theorem~\ref{thm:local_clt} are satisfied simultaneously.
	\end{rem}

	Two architectures are available. 
	\begin{align*}
		\text{(A) correct-then-aggregate:}\qquad
		&\check\lambda_{t,j}^{\mathrm{A}}
		=\sum_{\ell\in\mathcal A_t}\omega_{\ell,t}\,
		\chi_{\ell,t}^{-1}(r_{\ell,t,j}).\\
		\text{(B) aggregate-then-correct:}\qquad
		&\check\lambda_{t,j}^{\mathrm{B}}
		=\bar\chi_{t}^{-1}\Bigl(
		\sum_{\ell\in\mathcal A_t}\omega_{\ell,t}r_{\ell,t,j}\Bigr). 
	\end{align*}
	
	We adopt Ordering A throughout. When aspect ratios are heterogeneous, 
	Ordering B incurs a non-vanishing bias of order 
	$\mathcal{D}_t:=\sum_{\ell}\omega_{\ell,t}(c_{\ell,t-1}-c^{\omega}_t)^{2}$ 
	due to Jensen's inequality applied to the convex map $c\mapsto\chi(\lambda;c,\sigma^2)$, 
	and this bias dominates the stochastic error asymptotically 
	(Theorem~\ref{thm:ordering} in Appendix~\ref{sec:a.vs.b}). 
	Ordering A avoids the bias at no additional communication cost, 
	since each node can compute $\chi_{\ell,t}^{-1}$ locally using 
	$c_{\ell,t-1}$ and $\check\sigma^2_{\ell,t}$.

	\subsection{Aggregation and Asymptotic Inference\label{sec:4.2}}
	
	The framework is organised as correct-then-aggregate,
	in which each active node transmits the locally
	debiased $k$-vector
	\begin{equation*}
		\check{\bm\lambda}_{\ell,t}
		=\bigl(\check\lambda_{\ell,t,1},\dots,
		\check\lambda_{\ell,t,k}\bigr)^\top. 
	\end{equation*}
	
	For each coordinate $j$, the server forms a mixed fluctuation metric with two exponentially smoothed components. The volatility component tracks
	temporal instability of the transmitted coordinate,
	\begin{equation*}
		V^{\mathrm{vol}}_{\ell,t,j}
		=(1-\eta_t)V^{\mathrm{vol}}_{\ell,t-1,j}
		+\eta_t\bigl(\check\lambda_{\ell,t,j}
		-\check\lambda_{\ell,t-1,j}\bigr)^{2} ,
	\end{equation*}
	and the residual component measures departure from the running
	consensus,
	\begin{equation}
		\label{eq:res}
		V^{\mathrm{res}}_{\ell,t,j}
		=(1-\eta_t)V^{\mathrm{res}}_{\ell,t-1,j}
		+\eta_t\bigl(\check\lambda_{\ell,t,j}
		-\tilde\lambda_{t-1,j}\bigr)^{2} ,
		\quad
		V_{\ell,t,j}
		=(1-\rho)V^{\mathrm{vol}}_{\ell,t,j}
		+\rho V^{\mathrm{res}}_{\ell,t,j},
		\quad\rho\in(0,1).
	\end{equation}
	The weights and the global recursion are
	\begin{equation}
		\label{eq:weights}
		\omega_{\ell,t,j}
		=\frac{\exp(-\tilde\tau_t\Psi_{\ell,t,j})}
		{\sum_{m\in\mathcal A_t}\exp(-\tilde\tau_t\Psi_{m,t,j})},
		\qquad
		\tilde\lambda_{t,j}
		=(1-\beta_t)\tilde\lambda_{t-1,j}
		+\beta_t\sum_{\ell\in\mathcal A_t}
		\omega_{\ell,t,j}\,\check\lambda_{\ell,t,j},
	\end{equation}
	for $j=1,\dots,k$.

	\begin{assum}
		\label{assum:eta}
		$\eta_t\in(0,1]$, $\eta_t\to0$, $\sum_t\eta_t=\infty$,
		$\sum_t\eta_t^2<\infty$, and
		$\max_{j\le k}\bigl(V^{\mathrm{vol}}_{\ell,0,j}
		+V^{\mathrm{res}}_{\ell,0,j}\bigr)
		=O\bigl((\lambda_{\max}+\sigma^2_{\max})^2\bigr)$ uniformly in
		$\ell$.
	\end{assum}
	
	\begin{assum}
		\label{assum:persist}
		For each $\ell$,
		\begin{equation*}
			\max_{|s-t|\le\lceil\eta_t^{-1}\rceil}
			\Bigl|\frac{n^\ell_s}{n^\ell_t}-1\Bigr|
			\ \stackrel{\mathbb P}{\longrightarrow}\ 0 .
		\end{equation*}
	\end{assum}

	\begin{assum}
		\label{assum:temp}
		The clip levels satisfy $0<\underline\theta\le\bar\theta<\infty$,
		and the temperature $\tilde\tau_t$ of \eqref{eq:weights} is
		$\mathcal F_{t-1}$-measurable with
		\begin{equation*}
			\tilde\tau_t\in[\underline\tau,\bar\tau],
			\qquad
			0<\underline\tau\le\bar\tau<\infty. 
		\end{equation*}
	\end{assum}
	\begin{rem}
		The reference value is $\tilde\tau_t=1$, at which the soft-max agrees
		with the inverse-variance weights \eqref{eq:opt_weights} to first order.
		Consistency requires only that $\tilde\tau_t$ remain bounded away from
		zero and infinity, efficiency requires $\tilde\tau_t\approx1$, and the
		gap is quadratic in $\tilde\tau_t-1$. The soft-max formulation is
		retained over the direct plug-in \eqref{eq:opt_weights} because it
		remains robust to node-specific bias, which the inverse-variance
		weights ignore, and because $\tilde\tau_t$ serves as a tunable
		parameter that can be adjusted when heterogeneity is extreme. The
		resulting efficiency is quantified in
		Proposition~\ref{prop:softmax_releff}. Technical details on the design
		choices, including the use of predictable weights and the role of
		smoothing, are provided in Appendix~\ref{app:design_choices}.
	\end{rem}

	\begin{assum}
		\label{assum:balance}
		There is $C_{\mathrm{bal}}<\infty$ with
		$\min_{\ell\in\mathcal A_t}n_t^\ell
		\ge N_t/(C_{\mathrm{bal}}A_t)$ for all $t$. 
	\end{assum}
	
	Assumption~\ref{assum:balance} converts a weighted average over nodes
	into a genuine pooled rate. It prevents a single node with a vanishing
	batch from dominating the variance. It constrains batch sizes only,
	not aspect ratios, whose heterogeneity is precisely what Ordering~A is
	designed to accommodate.

	For $j\le k$, let 
	\begin{equation*}
		{ T}_{t,j}:=\sup_{\ell\in\mathcal A_t}\vartheta^2_{\ell,t,j},
		\qquad
		\vartheta_{t,j}:={ T}_{t,j}^{1/2},
		\qquad
		{ T}_t:=\sum_{j=1}^k{ T}_{t,j},
	\end{equation*}
	all of which are bounded above and below by \eqref{eq:varthetabounds}.

	\begin{lemma}[Concentration of the fluctuation metric]
		\label{lem:fluct}
		Let $\pi^{\eta}_{s,t}:=\eta_s\prod_{u=s+1}^t(1-\eta_u)$ and
		$\pi^{\eta}_{0,t}:=\prod_{u=1}^t(1-\eta_u)$. Under
		Assumptions~\ref{assum:moment}--\ref{assum:balance}, for every
		$\ell\in\mathcal A_t$ and every $j\le k$,
		\begin{equation}
			\label{eq:fluct}
			V_{\ell,t,j}
			=O_{\mathbb P}\!\left(
			\sum_{s\le t}\pi^{\eta}_{s,t}
			\Bigl\{\frac{{ T}_{s,j}}{n_s^\ell}
			+\zeta_s^2+(b^{\mathrm{lag}}_{\ell,s})^2
			+\bigl(\tilde\lambda_{s-1,j}
			-\lambda_{s-1,j}\bigr)^{2}\Bigr\}
			+\pi^{\eta}_{0,t}\right),
		\end{equation}
		and under Assumption~\ref{assum:persist},
		\begin{equation}
			\label{eq:fluct_conc}
			\frac{V_{\ell,t,j}}
			{\mathbb E[V_{\ell,t,j}\mid\mathcal F_{t-\lceil1/\eta_t\rceil}]}
			=1+O_{\mathbb P}\bigl(\sqrt{\eta_t}\bigr),
		\end{equation}
		\begin{equation*}
			\mathbb E\bigl[V_{\ell,t,j}\bigr]
			=\frac{\kappa_\rho\,\vartheta^2_{\ell,t,j}}{n_t^\ell}
			\bigl(1+o(1)\bigr)
			+O\bigl(\zeta_t^2+(\tilde\lambda_{t-1,j}
			-\lambda_{t-1,j})^2\bigr),
		\end{equation*}
		with $\kappa_\rho:=2(1-\rho)+\rho$. 
	\end{lemma}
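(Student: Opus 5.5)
The argument starts from the error decomposition of the transmitted coordinate. By Theorem~\ref{thm:attenuation}, Lemma~\ref{lem:sigma} and the delta method through $\chi^{-1}$ (well-conditioned, since $\chi'\ge 2/(1+\sqrt c)$), write
\[
\check\lambda_{\ell,s,j}-\lambda_{s,j}
=\frac{\varpi_{\ell,s,j}}{\chi_{\ell,s}'(\lambda_{s,j})}+R_{\ell,s,j},
\qquad R_{\ell,s,j}=O_{\mathbb P}(b^{\mathrm{lag}}_{\ell,s})+o_{\mathbb P}\bigl((n_s^\ell)^{-1/2}\bigr).
\]
Here $\varpi_{\ell,s,j}$ is the martingale residual of Lemma~\ref{lem:decomp}. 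Its conditional variance is $2\bar r^{2}/n_s^\ell\,(1+o_{\mathbb P}(1))$. Hence the leading term has conditional mean zero and conditional variance $\vartheta^2_{\ell,s,j}/n_s^\ell\,(1+o_{\mathbb P}(1))$, using \eqref{eq:vartheta}.

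For the volatility increment, write $\check\lambda_{\ell,s,j}-\check\lambda_{\ell,s-1,j}$ as the difference of two consecutive leading terms, plus $\lambda_{s,j}-\lambda_{s-1,j}=O(\zeta_s)$, plus $R_{\ell,s,j}-R_{\ell,s-1,j}$. For the residual increment, write $\check\lambda_{\ell,s,j}-\tilde\lambda_{s-1,j}$ as the leading term, plus $(\lambda_{s,j}-\lambda_{s-1,j})$, minus $(\tilde\lambda_{s-1,j}-\lambda_{s-1,j})$, plus $R_{\ell,s,j}$. Squaring and using $(a+b)^2\le 2a^2+2b^2$ bounds each bracket by
\[
C\Bigl\{\tfrac{\varpi^2}{\chi'^2}+\zeta_s^2+(b^{\mathrm{lag}})^2+(\tilde\lambda_{s-1,j}-\lambda_{s-1,j})^2\Bigr\}.
\]
Unrolling the $\eta$-recursion gives $V_{\ell,t,j}=\sum_s\pi^\eta_{s,t}[\cdot]+\pi^\eta_{0,t}V_{\ell,0,j}$, and the initial value is bounded by Assumption~\ref{assum:eta}. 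Since $\mathbb E(\varpi^2/\chi'^2\mid\mathcal F_{s-1})\lesssim T_{s,j}/n_s^\ell$, Markov's inequality applied to the nonnegative weighted sum yields \eqref{eq:fluct}.

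For the expectation, take conditional expectations term by term. In the residual part the leading term is uncorrelated with the $\mathcal F_{s-1}$-measurable quantities, so it contributes $\vartheta^2_{\ell,s,j}/n_s^\ell$. In the volatility part the consecutive residuals come from independent batches and are martingale differences, so the cross term vanishes in conditional expectation and the contribution is $\vartheta^2_{\ell,s,j}/n_s^\ell+\vartheta^2_{\ell,s-1,j}/n_{s-1}^\ell$. There is a subtlety in the volatility part: the lag-$(s-1)$ residual is correlated with $\bm u_{\ell,s-1}$. This only enters the conditional variance at order $o(1/n)$, through the $(N^{\mathrm{eff}})^{-1/2}$ alignment fluctuation of Lemma~\ref{lem:maps}, and the paper's assumptions absorb it into the $(1+o(1))$.

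The weights $\pi^\eta_{s,t}$ concentrate on a window $|s-t|\lesssim\eta_t^{-1}$, and $\sum_s\pi^\eta_{s,t}\to1$ because $\sum\eta=\infty$. Assumption~\ref{assum:persist} gives $n_s^\ell/n_t^\ell\to1$ on this window. Continuity of $\vartheta^2$ in $(\lambda,c,\sigma^2)$, together with the $O(\zeta)$ drift and the $O(\alpha)$ change of $c$ (Lemma~\ref{lem:crec}), gives $\vartheta^2_{\ell,s,j}=\vartheta^2_{\ell,t,j}(1+o(1))$. Combining, the expectation equals $[2(1-\rho)+\rho]\vartheta^2_{\ell,t,j}/n_t^\ell\,(1+o(1))$ plus the drift and consensus-error terms, which identifies $\kappa_\rho$.

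For the ratio concentration \eqref{eq:fluct_conc}, decompose
\[
V_{\ell,t,j}-\mathbb E[V_{\ell,t,j}\mid\mathcal F_{t-\lceil1/\eta_t\rceil}]
\]
into a sum of $\pi^\eta_{s,t}$-weighted martingale differences $\xi_s-\mathbb E(\xi_s\mid\mathcal F_{s-1})$, plus predictable terms whose deviation is of lower order. The squared residuals have finite second moments by the $4+\epsilon_1$ moment condition of Assumption~\ref{assum:moment}, and their conditional variance is of order $(\vartheta^2/n)^2$. The volatility terms are one-dependent, so grouping consecutive pairs restores the martingale structure. The martingale sum therefore has variance $\lesssim\sum_s(\pi^\eta_{s,t})^2(\vartheta^2/n)^2\asymp\eta_t(\vartheta^2/n_t)^2$. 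Dividing by the mean, which is of order $\vartheta^2/n_t$, gives relative error $O_{\mathbb P}(\sqrt{\eta_t})$.

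I expect this last step to be the main obstacle. Two things must be controlled. First, the remainders $R$ and the consensus error enter the squares, and their cross terms with the martingale part must not exceed the $\sqrt{\eta_t}$ scale relative to the mean; I would handle this with Cauchy--Schwarz against the leading variance. Second, the truncation at the early end of the window and the nonstationarity must be handled; for this I would use the persistence assumption and $\pi^\eta_{0,t}\to0$.
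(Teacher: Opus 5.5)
Your skeleton matches the paper's proof: you unroll the $\eta$-recursion, bound each round by the squared local errors plus $\zeta_s^2$ and the consensus error, read off $\kappa_\rho=2(1-\rho)+\rho$ from the exactly vanishing cross term, and split $V_{\ell,t,j}-\mathbb E[V_{\ell,t,j}\mid\mathcal F_{t-m}]$ into $\pi^\eta$-weighted martingale differences plus a predictable part.

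\textbf{The gap is the moment claim behind the $\sqrt{\eta_t}$ rate.} You assert that the squared residuals have conditional variance of order $(\vartheta^2/n)^2$ ``by the $4+\epsilon_1$ moment condition of Assumption~\ref{assum:moment}''. This is false. With $W_i=(\bm b^\top\bm z_i)^2-\|\bm b\|^2$ and $\varpi=n^{-1}\sum_{i\le n}W_i$,
\[
\mathbb E\varpi^4=n^{-3}\,\mathbb E W^4+3(n-1)n^{-3}\bigl(\mathbb E W^2\bigr)^2 .
\]
Here $\mathbb E W^4<\infty$ requires $\mathbb E(\bm b^\top\bm z)^8<\infty$. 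For independent entries this forces eighth moments of every $z_m$ with $b_m\neq0$. Under Assumption~\ref{assum:moment} alone, $W$ is guaranteed only $2+\epsilon_1/2$ moments and $\varpi^2$ only $1+\epsilon_1/4$. So $\operatorname{Var}(Y_{\ell,s,j}\mid\mathcal F_{s-1})$ may be infinite. Truncation and uniform integrability still give $V_{\ell,t,j}/\mathbb E[V_{\ell,t,j}\mid\mathcal F_{t-m}]\to1$, but in general not at rate $\sqrt{\eta_t}$.

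The paper closes exactly this step by adding Assumption~\ref{assum:eighth}. Rosenthal's inequality then gives $\mathbb E[(r-\bar r)^4\mid\mathcal F_{s-1}]=O(n^{-2})$, hence the fourth-moment bound in \eqref{eq:B1var} and $\operatorname{Var}(Y_{\ell,s,j}\mid\mathcal F_{s-1})\le C\mu^2_{\ell,s,j}$. Your regrouping of the one-dependent volatility terms is the right way to handle the $(1-\rho)\varepsilon^2_{\ell,s-1,j}$ piece of $\mathbb E[Y_{\ell,s,j}\mid\mathcal F_{s-1}]$, which is not slowly varying. But the regrouped summands are again squared residuals and need the same fourth moments. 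So does your Cauchy--Schwarz control of the remainder cross terms.

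\textbf{A second, fixable problem: your remainder is controlled only in probability.} You have $R_{\ell,s,j}=O_{\mathbb P}(b^{\mathrm{lag}}_{\ell,s})+o_{\mathbb P}((n^\ell_s)^{-1/2})$, so neither ``Markov's inequality applied to the nonnegative weighted sum'' nor ``take conditional expectations term by term'' is licensed.

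\emph{For \eqref{eq:fluct}:} use the deterministic global Lipschitz bounds of Lemma~\ref{lem:inverse}(iii), in $x$ and in $\sigma^2$. They give $\mathbb E[\varepsilon^2_{\ell,s,j}\mid\mathcal F_{s-1}]\lesssim\operatorname{Var}(r_{\ell,s,j}\mid\mathcal F_{s-1})+\{\bar r_{\ell,s,j}-\chi_{\ell,s}(\lambda_{s,j})\}^2+\mathbb E[(\check\sigma^2_{\ell,s}-\sigma^2_s)^2\mid\mathcal F_{s-1}]$ with no extra moments. This is the route of Lemma~\ref{lem:B1}(ii)--(iii).

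\emph{For the exact constant:} you need $\mathbb E[R^2_{\ell,s,j}\mid\mathcal F_{s-1}]=o(1/n^\ell_s)$. The paper gets this from eighth moments. Alternatively, bound the linearisation error by $\min\{K_{\mathrm{cv}}\varpi^2,2L_{\bar c}|\varpi|\}$ and use uniform integrability of $n\varpi^2$.

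Two smaller points. The cross term $\mathbb E[b_{\ell,s,j}\varepsilon_{\ell,s-1,j}]$ you flag arises because $\bm u_{\ell,s-1}$ depends on batch $s-1$ through the $\alpha_{s-1}$-update. Cauchy--Schwarz gives only $O(b^{\mathrm{lag}}_{\ell,s}(n^\ell_{s-1})^{-1/2})$, which is $o(1/n)$ under \eqref{eq:clt_cond} but not automatically. Also, per-step changes $O(\zeta_s)$ accumulate over the $\asymp\eta_t^{-1}$-round window. So $\vartheta^2_{\ell,s,j}=\vartheta^2_{\ell,t,j}(1+o(1))$ there needs $\zeta_t=o(\eta_t)$ or stabilised spikes. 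For the aspect ratio, what does the work is $c_{\ell,t}\to c_\ell$ from Assumption~\ref{assum:hd}, not the $O(\alpha_t)$ step of Lemma~\ref{lem:crec}.
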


	\begin{lemma}[Weight regularity]
		\label{lem:weights}
		Under Assumption~\ref{assum:temp} alone, surely and uniformly in
		$\ell\in\mathcal A_t$, $j\le k$ and $t\ge1$,
		\begin{equation}
			\label{eq:weight_order}
			C_\omega^{-1}A_t^{-1}
			\le\omega_{\ell,t,j}\le C_\omega A_t^{-1},
			\qquad
			\sum_{\ell\in\mathcal A_t}\omega_{\ell,t,j}^2
			\le C_\omega A_t^{-1},
		\end{equation}
		with
		$C_\omega=\exp\{\bar\tau(\bar\theta-\underline\theta)\}$.
	\end{lemma}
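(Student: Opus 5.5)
The lower bound $\omega_{\ell,t,j}\ge C_\omega^{-1}A_t^{-1}$ is not just a matter of the weights summing to one, so the plan below derives both bounds directly from the soft-max. The statement is deterministic and uses only the structure of the weights and Assumption~\ref{assum:temp}.

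By construction $\Psi_{m,t,j}=\operatorname{clip}(V_{m,t-1,j}/\bar V_{t-1,j},\underline\theta,\bar\theta)$ lies in $[\underline\theta,\bar\theta]$ for every $m\in\mathcal A_t$, whatever the data. The temperature satisfies $\tilde\tau_t\in[\underline\tau,\bar\tau]$ with $\tilde\tau_t>0$. Hence every exponent $-\tilde\tau_t\Psi_{m,t,j}$ lies in the interval $[-\tilde\tau_t\bar\theta,\,-\tilde\tau_t\underline\theta]$. That interval has length $\tilde\tau_t(\bar\theta-\underline\theta)\le\bar\tau(\bar\theta-\underline\theta)$.

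Next I compare one weight with the others. For any $\ell,m\in\mathcal A_t$,
\[
\frac{\exp(-\tilde\tau_t\Psi_{\ell,t,j})}{\exp(-\tilde\tau_t\Psi_{m,t,j})}=\exp\{\tilde\tau_t(\Psi_{m,t,j}-\Psi_{\ell,t,j})\}\in[C_\omega^{-1},C_\omega],
\qquad C_\omega=\exp\{\bar\tau(\bar\theta-\underline\theta)\}.
\]
Write the weight as $\omega_{\ell,t,j}=\bigl(\sum_{m\in\mathcal A_t}\exp\{\tilde\tau_t(\Psi_{\ell,t,j}-\Psi_{m,t,j})\}\bigr)^{-1}$. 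The denominator is a sum of $A_t$ terms, each in $[C_\omega^{-1},C_\omega]$, so it lies in $[A_tC_\omega^{-1},A_tC_\omega]$. Inverting gives $C_\omega^{-1}A_t^{-1}\le\omega_{\ell,t,j}\le C_\omega A_t^{-1}$.

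For the second bound, I use $\omega_{\ell,t,j}^2\le(\max_m\omega_{m,t,j})\,\omega_{\ell,t,j}$ and $\sum_\ell\omega_{\ell,t,j}=1$. This gives $\sum_\ell\omega_{\ell,t,j}^2\le\max_m\omega_{m,t,j}\le C_\omega A_t^{-1}$.

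Uniformity holds because $C_\omega$ depends only on $\bar\tau,\bar\theta,\underline\theta$, not on $\ell$, $j$, $t$ or the realisation.

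The only potential obstacle is a degenerate $\bar V_{t-1,j}=0$, where the ratio is undefined. I would adopt the convention that clipping returns a value in $[\underline\theta,\bar\theta]$ in that case, for example $\Psi\equiv\underline\theta$, which gives equal weights. All bounds then remain valid, so the result holds surely.
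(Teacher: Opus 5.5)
Your proof is correct and is essentially the paper's argument. The paper subtracts $\min_m\Psi_{m,t,j}$ from every exponent, using shift-invariance of the soft-max, so that all numerators lie in $[e^{-\bar\tau(\bar\theta-\underline\theta)},1]$; this is equivalent to your pairwise-ratio bound, and your sum-of-squares step is identical to the paper's. Your convention for the degenerate case $\bar V_{t-1,j}=0$ is a harmless addition that the paper does not address.
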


	Write the local error and its predictable decomposition as
	\begin{equation}
		\label{eq:eps}
		\varepsilon_{\ell,t,j}
		:=\check\lambda_{\ell,t,j}-\lambda_{t,j}
		=\varepsilon^{\mathrm{stoch}}_{\ell,t,j}+b_{\ell,t,j},
		\qquad
		b_{\ell,t,j}
		:=\mathbb E\bigl[\check\lambda_{\ell,t,j}\mid
		\mathcal F_{t-1}\bigr]-\lambda_{t,j}.
	\end{equation}
	Theorem~\ref{thm:local_clt} give
	\begin{equation}
		\label{eq:local_moments}
		\operatorname{Var}\bigl(\varepsilon^{\mathrm{stoch}}_{\ell,t,j}
		\mid\mathcal F_{t-1}\bigr)
		=\frac{\vartheta^2_{\ell,t,j}}{n_t^\ell}
		\bigl(1+o_{\mathbb P}(1)\bigr),
		\qquad
		|b_{\ell,t,j}|
		=O\bigl(b^{\mathrm{lag}}_{\ell,t}\bigr)
		+O_{\mathbb P}\!\Bigl(\tfrac{1}{n_t^\ell}\Bigr),
	\end{equation}
	the $O(1/n_t^\ell)$ term arising from the curvature of
	$\chi^{-1}_{\ell,t}$ and from Lemma~\ref{lem:sigma}. 
	
	\begin{lemma}[Weighted average error]
		\label{lem:wavg}
		Under Assumptions~\ref{assum:moment}--\ref{assum:balance}, for each fixed $j\le k$,
		\begin{equation*}
			\bar\varepsilon_{t,j}
			:=\sum_{\ell\in\mathcal A_t}\omega_{\ell,t,j}
			\varepsilon_{\ell,t,j}
			=O_{\mathbb P}\!\left(
			\sqrt{\frac{{ T}_{t,j}}{N_t}}\right)
			+O\bigl(\max_{\ell\in\mathcal A_t}b^{\mathrm{lag}}_{\ell,t}\bigr)
			+O_{\mathbb P}\!\Bigl(\frac{A_t}{N_t}\Bigr)
			+O\Bigl(\frac{1}{\sqrt{p\min_\ell n^\ell_t}}+\frac{k}{p}\Bigr),
		\end{equation*}
		where the third term vanishes when $A_t=o(\sqrt{N_t})$, and the
		fourth, which is the plug-in error of $\check\sigma^2_{\ell,t}$,
		vanishes under Assumption~\ref{assum:hd} together with the condition
		$k\sqrt{n^\ell_t}=o(p)$ of Lemma~\ref{lem:sigma}.
		Moreover, $\mathbb E[\sum_\ell\omega_{\ell,t,j}
		\varepsilon^{\mathrm{stoch}}_{\ell,t,j}\mid\mathcal F_{t-1}]=0$
		exactly with
		\begin{equation}
			\label{eq:condvar_round}
			\operatorname{Var}\Bigl(\sum_{\ell}\omega_{\ell,t,j}
			\varepsilon^{\mathrm{stoch}}_{\ell,t,j}
			\Bigm|\mathcal F_{t-1}\Bigr)
			=\sum_{\ell\in\mathcal A_t}
			\frac{\omega^2_{\ell,t,j}\vartheta^2_{\ell,t,j}}{n_t^\ell}
			\bigl(1+o_{\mathbb P}(1)\bigr)
			=O_{\mathbb P}\Bigl(\frac{{ T}_{t,j}}{N_t}\Bigr). 
		\end{equation}
	\end{lemma}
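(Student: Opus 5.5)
We split $\bar\varepsilon_{t,j}$ according to the decomposition \eqref{eq:eps}:
\[
\bar\varepsilon_{t,j}
=\sum_{\ell\in\mathcal A_t}\omega_{\ell,t,j}\varepsilon^{\mathrm{stoch}}_{\ell,t,j}
+\sum_{\ell\in\mathcal A_t}\omega_{\ell,t,j}b_{\ell,t,j}.
\]
The key structural fact is predictability. By Assumptions~\ref{assum:design} and \ref{assum:temp}, the weights $\omega_{\ell,t,j}$ are built from $V_{\ell,t-1,j}$, $A_t$ and $\tilde\tau_t$, so they are $\mathcal F_{t-1}$-measurable. Hence
\[
\mathbb E\Bigl[\sum_\ell\omega_{\ell,t,j}\varepsilon^{\mathrm{stoch}}_{\ell,t,j}\Bigm|\mathcal F_{t-1}\Bigr]
=\sum_\ell\omega_{\ell,t,j}\,\mathbb E[\varepsilon^{\mathrm{stoch}}_{\ell,t,j}\mid\mathcal F_{t-1}]=0
\]
exactly, since $\varepsilon^{\mathrm{stoch}}$ is by definition the centred part of $\check\lambda_{\ell,t,j}$.

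For the conditional variance, batches at different nodes are conditionally independent given $\mathcal F_{t-1}$ (Assumption~\ref{assum:moment}). The cross terms therefore vanish, and the variance equals $\sum_\ell\omega_{\ell,t,j}^2\operatorname{Var}(\varepsilon^{\mathrm{stoch}}_{\ell,t,j}\mid\mathcal F_{t-1})$. Inserting \eqref{eq:local_moments} gives the first equality in \eqref{eq:condvar_round}.

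For the rate we proceed in three steps:
\begin{itemize}
\item bound $\vartheta^2_{\ell,t,j}\le T_{t,j}$;
\item use Assumption~\ref{assum:balance} in the form $1/n_t^\ell\le C_{\mathrm{bal}}A_t/N_t$;
\item use Lemma~\ref{lem:weights}, which gives $\sum_\ell\omega^2_{\ell,t,j}\le C_\omega/A_t$.
\end{itemize}
The product of these bounds is $C_\omega C_{\mathrm{bal}}T_{t,j}/N_t$. Conditional Chebyshev (the uniform $o_{\mathbb P}(1)$ factor is harmless) then gives the stochastic term $O_{\mathbb P}(\sqrt{T_{t,j}/N_t})$.

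For the bias part, the weights form a convex combination, so $|\sum_\ell\omega_{\ell,t,j}b_{\ell,t,j}|\le\max_\ell|b_{\ell,t,j}|$. The bound in \eqref{eq:local_moments} is $O(b^{\mathrm{lag}}_{\ell,t})$ plus an $O_{\mathbb P}(1/n_t^\ell)$ curvature term. We want to separate the plug-in contribution of $\check\sigma^2_{\ell,t}$ explicitly, so we expand
\[
\chi^{-1}(r;c,\check\sigma^2)=\chi^{-1}(r;c,\sigma_t^2)+\partial_{\sigma^2}\chi^{-1}\,(\check\sigma^2-\sigma_t^2)+\cdots .
\]
The derivative is bounded on the supercritical region by Assumptions~\ref{assum:bbp}--\ref{assum:noise} and the bound $\chi'\ge2/(1+\sqrt c)$ of Lemma~\ref{lem:inverse}. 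Lemma~\ref{lem:sigma} then yields the fourth term $O((p\min_\ell n_t^\ell)^{-1/2}+k/p)$, uniformly over the $A_t$ nodes because the constants do not depend on $\ell$. The truncation $r\vee\check\sigma^2$ is inactive with probability tending to one, so it contributes nothing.

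The curvature term needs more care, because taking the maximum of $1/n_t^\ell$ is too crude. We keep the convex combination instead:
\[
\sum_\ell\omega_{\ell,t,j}\,\frac{C}{n_t^\ell}\le\max_\ell\frac{C}{n_t^\ell}\le\frac{CC_{\mathrm{bal}}A_t}{N_t},
\]
which is the third term $O_{\mathbb P}(A_t/N_t)$. This is $o(\sqrt{1/N_t})$ exactly when $A_t=o(\sqrt{N_t})$, which justifies the remark in the statement.

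I expect the main obstacle to be uniformity across nodes. The $O_{\mathbb P}$ statements in Theorem~\ref{thm:local_clt} and \eqref{eq:local_moments} are stated per node, while here we need them to hold simultaneously over $\mathcal A_t$, whose size $A_t$ may grow. I would handle this by noting that every constant depends only on $(\sigma_{\min},\sigma_{\max},\lambda_{\max},\bar c,g_{\min})$ through Assumptions~\ref{assum:bbp}--\ref{assum:hd}. For the curvature and plug-in terms I would work with conditional second moments, which are bounded using the $4+\epsilon_1$ moments of Assumption~\ref{assum:moment}, and average them with the weights. Taking expectations of a weighted average of nonnegative quantities then avoids any union bound over nodes.
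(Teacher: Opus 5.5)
Your proposal is correct and follows essentially the same route as the paper. You use predictable weights for exact centring and conditional independence across nodes so that variances add. Lemma~\ref{lem:weights} and Assumption~\ref{assum:balance} give the $T_{t,j}/N_t$ order, and the bias is bounded by $\max_\ell|b_{\ell,t,j}|$, split into lag, curvature ($\le C_{\mathrm{bal}}A_t/N_t$) and $\check\sigma^2$ plug-in pieces exactly as in Lemma~\ref{lem:B1}(iii). Your handling of uniformity over a growing node set through deterministic conditional-moment bounds is a point the paper leaves implicit; two small corrections: the derivative lower bound is $\chi'\ge\min\{1,2/(1+\sqrt c)\}$ rather than $2/(1+\sqrt c)$, and the $\sigma^2$-Lipschitz constant is available directly from Lemma~\ref{lem:inverse}(iii).
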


	\begin{lemma}[Recursive decomposition]
		\label{lem:decomp_new}
		With $\varrho_{s,t}=\prod_{u=s+1}^{t}(1-\beta_u)$ and
		$\delta_{t,j}:=\lambda_{t,j}-\lambda_{t-1,j}$,
		\begin{equation*}
			\tilde\lambda_{t,j}-\lambda_{t,j}
			=(1-\beta_t)\bigl(\tilde\lambda_{t-1,j}-\lambda_{t-1,j}\bigr)
			+\beta_t\bar\varepsilon_{t,j}
			-(1-\beta_t)\delta_{t,j},
		\end{equation*}
		and hence
		\begin{align}
			\label{eq:agg_unroll}
			\tilde\lambda_{t,j}-\lambda_{t,j}
			=&\underbrace{\sum_{s=1}^{t}\varrho_{s,t}\beta_s
				\sum_{\ell\in\mathcal A_s}\omega_{\ell,s,j}
				\varepsilon^{\mathrm{stoch}}_{\ell,s,j}}
			_{=:\ \mathcal M_{t,j}\ \text{(martingale)}}
			\ +\ \sum_{s=1}^{t}\varrho_{s,t}
			\Bigl\{\beta_s\!\!\sum_{\ell\in\mathcal A_s}\!\!
			\omega_{\ell,s,j}b_{\ell,s,j}
			-(1-\beta_s)\delta_{s,j}\Bigr\}
			\\ 
			&+\ \varrho_{0,t}\bigl(\tilde\lambda_{0,j}
			-\lambda_{0,j}\bigr).\nonumber
		\end{align}
	\end{lemma}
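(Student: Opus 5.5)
The one-step identity comes first, and it is pure algebra on the fusion recursion \eqref{eq:weights}. Since the soft-max weights satisfy $\sum_{\ell\in\mathcal A_t}\omega_{\ell,t,j}=1$ exactly, we can write $\sum_\ell\omega_{\ell,t,j}\check\lambda_{\ell,t,j}=\lambda_{t,j}+\bar\varepsilon_{t,j}$, using the definition \eqref{eq:eps} and the notation of Lemma~\ref{lem:wavg}. Subtracting $\lambda_{t,j}$ from both sides of the recursion then gives
\[
\tilde\lambda_{t,j}-\lambda_{t,j}=(1-\beta_t)\bigl(\tilde\lambda_{t-1,j}-\lambda_{t,j}\bigr)+\beta_t\bar\varepsilon_{t,j}.
\]
Writing $\lambda_{t,j}=\lambda_{t-1,j}+\delta_{t,j}$ inside the first bracket yields the displayed one-step relation, with the drift term $-(1-\beta_t)\delta_{t,j}$.

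Next I unroll the relation. Set $e_t:=\tilde\lambda_{t,j}-\lambda_{t,j}$ and $g_s:=\beta_s\bar\varepsilon_{s,j}-(1-\beta_s)\delta_{s,j}$. The linear recursion $e_t=(1-\beta_t)e_{t-1}+g_t$ has the standard solution
\[
e_t=\sum_{s=1}^t\varrho_{s,t}g_s+\varrho_{0,t}e_0,
\]
where $\varrho_{s,t}=\prod_{u=s+1}^t(1-\beta_u)$ and $\varrho_{t,t}=1$. I will verify this by a short induction on $t$: it holds at $t=0$, and $\varrho_{s,t}=(1-\beta_t)\varrho_{s,t-1}$ for $s\le t-1$. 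To split $g_s$, I substitute $\varepsilon_{\ell,s,j}=\varepsilon^{\mathrm{stoch}}_{\ell,s,j}+b_{\ell,s,j}$ into $\bar\varepsilon_{s,j}$. The stochastic pieces, weighted by $\varrho_{s,t}\beta_s\omega_{\ell,s,j}$, form $\mathcal M_{t,j}$, and the bias pieces together with the drift form the second sum in \eqref{eq:agg_unroll}.

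The only point needing care is the label ``martingale'' for $\mathcal M_{t,j}$. Here $\beta_s$ and $\omega_{\ell,s,j}$ are $\mathcal F_{s-1}$-measurable: $\beta_s$ by Assumption~\ref{assum:design}, and $\omega_{\ell,s,j}$ because it uses only $V_{\cdot,s-1,j}$ and an $\mathcal F_{s-1}$-measurable $\tilde\tau_s$ (Assumption~\ref{assum:temp}). Also $\mathbb E[\varepsilon^{\mathrm{stoch}}_{\ell,s,j}\mid\mathcal F_{s-1}]=0$ by construction of $b_{\ell,s,j}$. So each increment $\beta_s\sum_\ell\omega_{\ell,s,j}\varepsilon^{\mathrm{stoch}}_{\ell,s,j}$ is a martingale difference with respect to $\{\mathcal F_s\}$; this is the exact centring stated in Lemma~\ref{lem:wavg}.

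The factor $\varrho_{s,t}$ depends on the terminal time $t$ through $\beta_{s+1},\dots,\beta_t$, so $\mathcal M_{t,j}$ is a weighted sum of martingale differences rather than a martingale in $t$ in the strict sense. Two readings handle this. Either the $\beta_u$ are treated as deterministic, or, for fixed $t$, one uses the array $s\mapsto\sum_{r\le s}\varrho_{r,t}\beta_r(\cdot)$, which is a martingale whenever $\varrho_{r,t}$ is $\mathcal F_{r-1}$-measurable. This bookkeeping is the main subtlety; the algebra itself is routine.
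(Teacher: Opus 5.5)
Your derivation of the one-step identity and its unrolling is the same as the paper's. Both use $\sum_{\ell}\omega_{\ell,t,j}=1$ to write the fused statistic as $\lambda_{t,j}+\bar\varepsilon_{t,j}$, insert $\lambda_{t,j}=\lambda_{t-1,j}+\delta_{t,j}$, iterate the scalar recursion, and split $\bar\varepsilon_{s,j}$ into its centred and predictable parts.

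Where you differ is the justification of the label ``martingale''. The paper uses $\varrho_{0,t}=\varrho_{0,s}\varrho_{s,t}$ to write $\mathcal M_{t,j}=\varrho_{0,t}\sum_{s\le t}(\beta_s/\varrho_{0,s})D_{s,j}$, with $D_{s,j}=\sum_{\ell\in\mathcal A_s}\omega_{\ell,s,j}\varepsilon^{\mathrm{stoch}}_{\ell,s,j}$. Since $\beta_s/\varrho_{0,s}$ is $\mathcal F_{s-1}$-measurable, the sum is a martingale transform in $t$, and $\mathcal M_{t,j}$ is that martingale times the $\mathcal F_{t-1}$-measurable scalar $\varrho_{0,t}$. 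This covers step sizes chosen adaptively, which Assumption~\ref{assum:design} permits.

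Your fixed-$t$ array needs $\varrho_{r,t}$ to be $\mathcal F_{r-1}$-measurable. But $\varrho_{r,t}$ involves $\beta_{r+1},\dots,\beta_t$, so under Assumption~\ref{assum:design} this holds only for a schedule fixed in advance. Your two readings therefore both reduce to the deterministic case.

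In return, your version involves no division. It survives $\beta_u=1$ (allowed, since $\beta_t\in(0,1]$), where $\varrho_{0,s}=0$ and the paper's factorisation is undefined. The paper itself remarks, in the proof of Theorem~\ref{thm:clt}, that the factorisation is unnecessary for a deterministic schedule.

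Since the substance of the lemma is the exact identity, your argument proves it completely. The factorisation is the device to use if you want the martingale structure with data-dependent $\beta_t$. One small omission: predictability of $\omega_{\ell,s,j}$ also needs $\mathcal A_s$ to be $\mathcal F_{s-1}$-measurable, because $\bar V_{s-1,j}$ averages over the active set.
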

	
	The drift term is the increment of the population spike itself,
	bounded directly by $\zeta_s$ under Assumption~\ref{assum:drift}. 
	
	\begin{defin}[Effective aggregation sample size]
		\label{def:Nsharp}
		Define
		\begin{equation}
			\label{eq:Nsharp}
			\mathsf N_t
			:=\Bigl(\sum_{s=1}^{t}\varrho_{s,t}^{2}\beta_s^{2}
			\,N_s^{-1}\Bigr)^{-1}.
		\end{equation}
		If $\beta_t\equiv\beta\in(0,1]$ and $N_s\equiv N$, then
		$\mathsf N_t\uparrow N(2-\beta)/\beta=N/\kappa_\beta$ with
		$\kappa_\beta:=\beta/(2-\beta)$. In particular $\mathsf N_t=N$ when
		$\beta=1$. If $\beta_t\to0$ with $\sum\beta_t=\infty$ and
		$N_s\equiv N$, then $\mathsf N_t/N\to\infty$.
	\end{defin}
	
	The quantity $\mathsf N_t$, not $N_t$, is the correct sample-size
	scale for the global estimator. The $\beta$-recursion averages over an
	effective window of $\asymp2/\beta$ rounds, and $\mathsf N_t$ records
	the total information content of that window. 
	
	\begin{theo}[Adaptive distributed spectral fusion]
		\label{thm:fusion}
		Under Assumptions~\ref{assum:moment}--\ref{assum:balance}, and for any schedule
		$\beta_t\in(0,1]$ with $\varrho_{0,t}\to0$, for each fixed $j\le k$
		\begin{equation}
			\label{eq:fusion_rate}
			\tilde\lambda_{t,j}-\lambda_{t,j}
			=O_{\mathbb P}\bigl(\mathsf v_{t,j}\bigr)
			+O\!\left(\sum_{s=1}^{t}\varrho_{s,t}
			\Bigl\{\beta_s\max_{\ell\in\mathcal A_s}
			b^{\mathrm{lag}}_{\ell,s}
			+(1-\beta_s)|\delta_{s,j}|\Bigr\}\right)
			+O\bigl(\varrho_{0,t}\bigr),
		\end{equation}
		where
		\begin{equation}
			\label{eq:vsharp}
			\mathsf v^2_{t,j}
			:=\sum_{s=1}^{t}\varrho^2_{s,t}\beta_s^2
			\sum_{\ell\in\mathcal A_s}
			\frac{\omega^2_{\ell,s,j}\vartheta^2_{\ell,s,j}}{n^\ell_s}
		\end{equation}
		is the conditional variance of the martingale
		$\mathcal M_{t,j}$ of \eqref{eq:agg_unroll}, and
		$\bar{ T}_{t,j}:=\max_{s\le t}{ T}_{s,j}=O(1)$.
		In particular, if $\mathsf N_t\to\infty$ and the second and third
		terms vanish, then
		$\tilde\lambda_{t,j}\stackrel{p}{\longrightarrow}\lambda_{t,j}$. 
	\end{theo}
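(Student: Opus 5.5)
The plan is to start from the exact unrolled representation of Lemma~\ref{lem:decomp_new} and bound its three pieces separately: the martingale $\mathcal M_{t,j}$, the predictable bias-plus-drift sum, and the initialization term $\varrho_{0,t}(\tilde\lambda_{0,j}-\lambda_{0,j})$. The last piece is immediate. Since $\tilde\lambda_{0,j}=0$ and $\lambda_{0,j}\le\lambda_{\max}<\infty$ by Assumption~\ref{assum:noise}, it is $O(\varrho_{0,t})$.

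For the bias-plus-drift sum, I would insert the bound on $|b_{\ell,s,j}|$ from \eqref{eq:local_moments}. Because the weights are a convex combination, $\sum_\ell\omega_{\ell,s,j}=1$, so $|\sum_\ell\omega_{\ell,s,j}b_{\ell,s,j}|\le\max_{\ell\in\mathcal A_s}|b_{\ell,s,j}|=O(\max_\ell b^{\mathrm{lag}}_{\ell,s})+O_{\mathbb P}(1/\min_\ell n^\ell_s)$. The drift term contributes $(1-\beta_s)|\delta_{s,j}|$ directly. The $O_{\mathbb P}(1/n^\ell_s)$ curvature piece, together with the plug-in error of $\check\sigma^2$ recorded in Lemma~\ref{lem:wavg}, has to be absorbed rather than left as a separate term. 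Under Assumption~\ref{assum:balance} it is $O(A_s/N_s)$ and is dominated by the standard deviation scale $N_s^{-1/2}$ once $A_s=o(\sqrt{N_s})$. Alternatively it can be folded into $b^{\mathrm{lag}}_{\ell,s}$, since $1/n\le(N^{\mathrm{eff}})^{-1/2}$ up to constants in the proportional regime. Summing against $\varrho_{s,t}$ then gives exactly the second term of \eqref{eq:fusion_rate}.

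The martingale term carries the main weight of the argument. The increments are $\xi_s:=\varrho_{s,t}\beta_s\sum_\ell\omega_{\ell,s,j}\varepsilon^{\mathrm{stoch}}_{\ell,s,j}$.
\begin{itemize}
\item The weights $\omega_{\ell,s,j}$ are $\mathcal F_{s-1}$-measurable, since they use $V_{\cdot,s-1,j}$ and $\tilde\tau_s$ (Assumptions~\ref{assum:design} and \ref{assum:temp}).
\item By Lemma~\ref{lem:wavg}, each increment has exactly zero conditional mean.
\item The same lemma gives the conditional variance \eqref{eq:condvar_round}.
\end{itemize}
Hence $\mathbb E[\mathcal M_{t,j}^2]=\mathbb E\sum_s\varrho_{s,t}^2\beta_s^2\sum_\ell\omega^2_{\ell,s,j}\vartheta^2_{\ell,s,j}/n^\ell_s\,(1+o(1))$, which is $\mathsf v^2_{t,j}$ up to the $(1+o_{\mathbb P}(1))$ factor.

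The obstacle is that $\mathsf v_{t,j}$ is random and the $o_{\mathbb P}(1)$ is not uniform in $s$, so Chebyshev cannot be applied naively. I would first try a localisation. Define the stopping event on which the conditional-variance ratio stays in $[1/2,2]$ for all $s$ in the effective window. Outside the effective window, i.e.\ for $s$ with $\varrho_{s,t}$ negligible, the crude bound $T_{s,j}/N_s$ with $\bar T_{t,j}=O(1)$ suffices. On that event, Lenglart's inequality applied to the quadratic-variation process gives $\mathcal M_{t,j}=O_{\mathbb P}(\mathsf v_{t,j})$.

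Finally, Lemma~\ref{lem:weights} and Assumption~\ref{assum:balance} give $\sum_\ell\omega^2_{\ell,s,j}\vartheta^2_{\ell,s,j}/n^\ell_s\le C_\omega C_{\mathrm{bal}}T_{s,j}/N_s$. Therefore $\mathsf v^2_{t,j}\lesssim\bar T_{t,j}\mathsf N_t^{-1}\to0$ when $\mathsf N_t\to\infty$. Combining this with the vanishing second and third terms yields consistency.
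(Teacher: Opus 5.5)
Your skeleton is the paper's: the three-way split of \eqref{eq:agg_unroll} from Lemma~\ref{lem:decomp_new}, the convex-combination bound $|\sum_\ell\omega_{\ell,s,j}b_{\ell,s,j}|\le\max_\ell|b_{\ell,s,j}|$, exact centring and the conditional variance from Lemma~\ref{lem:wavg}, and the crude bound $\mathsf v^2_{t,j}\lesssim\bar T_{t,j}/\mathsf N_t$ for the consistency clause. Your initialization bound via $\tilde\lambda_{0,j}=0$ is cleaner than the paper's appeal to Assumption~\ref{assum:clip}, which lies outside Assumptions~\ref{assum:moment}--\ref{assum:balance}. The two places where you go beyond the paper do not work as written.

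\emph{Absorbing the curvature bias.} The inequality $1/n^\ell_s\lesssim(N^{\mathrm{eff}}_{\ell,s})^{-1/2}$ is false in general.
\begin{itemize}
\item Under Assumption~\ref{assum:hd}, $(N^{\mathrm{eff}}_{\ell,s})^{-1/2}\asymp p^{-1/2}$, so you would need $n^\ell_s\gtrsim\sqrt p$. The regime of Remark~\ref{rem:feasibility} has $n^\ell_s=o(\sqrt p)$.
\item The rest of $b^{\mathrm{lag}}_{\ell,s}$ does not rescue it either. There $\alpha_s\asymp n^\ell_s/p$, so $\alpha^{\mathrm{lag}}_{\ell,s}\asymp\sqrt{n^\ell_s/p}=o(1/n^\ell_s)$ once $n^\ell_s=o(p^{1/3})$.
\item Your other route imports $A_s=o(\sqrt{N_s})$, which is not a hypothesis, and it compares with the wrong scale. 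Absorbing into the first term needs $\sum_s\varrho_{s,t}\beta_s\max_\ell(n^\ell_s)^{-1}=O(\mathsf N_t^{-1/2})$. For $\beta_s=1/s$, $A_s\equiv A$, $n^\ell_s\equiv n$, the left side equals $1/n$ while $\mathsf N_t^{-1/2}=(tAn)^{-1/2}$, so this fails once $t\gg n/A$.
\end{itemize}
The paper does not absorb anything. Its Step~2 bounds the predictable part by $\sum_s\varrho_{s,t}\{\beta_s\max_\ell|b_{\ell,s,j}|+(1-\beta_s)|\delta_{s,j}|\}$ and cites Lemma~\ref{lem:B1}(iii). So the $b^{\mathrm{lag}}$ in \eqref{eq:fusion_rate} has to be read as the full bias bound, curvature and plug-in included. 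This is also how \eqref{eq:sys} treats these terms, keeping them separate because none dominates in general. The plug-in part $(p\,n^\ell_s)^{-1/2}+k/p$ does fold into $(N^{\mathrm{eff}}_{\ell,s})^{-1/2}\asymp p^{-1/2}$, but the curvature part should simply be carried. It still vanishes when $\min_\ell n^\ell_s\to\infty$, which is all the consistency clause uses.

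\emph{The martingale term.} Localisation plus Lenglart is more than you need, and your localising event need not have probability tending to one. The $o_{\mathbb P}(1)$ in \eqref{eq:condvar_round} is per round, and the effective window $\asymp1/\beta_t$ may grow.

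You never need the sharp factor. The crude bound of Lemma~\ref{lem:B1}(ii), $\operatorname{Var}(\varepsilon^{\mathrm{stoch}}_{\ell,s,j}\mid\mathcal F_{s-1})\le L^2\operatorname{Var}(r_{\ell,s,j}\mid\mathcal F_{s-1})\le C/n^\ell_s$ with $C$ deterministic, gives $\mathbb E\mathcal M^2_{t,j}\le C'/\mathsf N_t$. Lemma~\ref{lem:weights}, \eqref{eq:varthetabounds} and $\sum_\ell 1/n^\ell_s\ge A_s^2/N_s$ give the sure lower bound $\mathsf v^2_{t,j}\ge\sigma^4_{\min}/(2C^2_\omega\mathsf N_t)$, which matches your upper bound. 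Hence $\mathsf v_{t,j}\asymp\mathsf N_t^{-1/2}$ surely, and plain Chebyshev gives $\mathcal M_{t,j}=O_{\mathbb P}(\mathsf v_{t,j})$ whenever the schedule and batch totals are non-random. The paper's one-line Chebyshev step tacitly relies on exactly this.
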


	We now establish a linear representation and a central limit theorem for the global estimator. 
	
	\begin{assum}
		\label{assum:stab}
		As $t\to\infty$, for each fixed $j\le k$:
		\begin{enumerate}[label=\textup{(\roman*)}]
			\item \textit{Proportional regime.}
			$c_{\ell,t}\to c_\ell\in(0,\bar c\,]$ for each $\ell$, and
			$c_t=p/N_t\to c^{\mathrm{srv}}\in[0,\infty]$, the value
			$c^{\mathrm{srv}}=\infty$ being permitted.
			\item \textit{Parameter stabilization.}
			$\lambda_{t,j}\to\lambda_j$, $\sigma^2_t\to\sigma^2$, with
			$\lambda_j>\sigma^2\sqrt{\sup_\ell c_\ell}$, 
			\[
			\sqrt{\mathsf N_t}\,
			\sum_{s=1}^{t}\varrho_{s,t}
			\Bigl\{\beta_s\max_{\ell\in\mathcal A_s}
			b^{\mathrm{lag}}_{\ell,s}
			+(1-\beta_s)|\delta_{s,j}|\Bigr\}
			\longrightarrow0,
			\]
			$\sqrt{\mathsf N_t}\,\varrho_{0,t}\longrightarrow0,$ and 
			$\sqrt{\mathsf N_t}\,|\lambda_{t,j}-\lambda_j|\longrightarrow0$. 
			\item \textit{Batch balance.}
			$\max_{\ell\in\mathcal A_t}n_t^\ell/
			\min_{\ell\in\mathcal A_t}n_t^\ell\le C_{\mathrm{bal}}$
			\textup{(}as in Assumption~\ref{assum:balance}\textup{)},
			$N_t\to\infty$, $\mathsf N_t\to\infty$, and
			$\min_{\ell\in\mathcal A_t}n^\ell_t\gg\sqrt{\mathsf N_t}$
			(equivalently $A_t=o(n^\ell_t)$ for fixed $\beta$).
			\item \textit{Step size.} Either $\beta_t\equiv\beta\in(0,1]$,
			or $\beta_t\to0$ with $\sum_t\beta_t=\infty$.
			\item \textit{Profile convergence \textup{(}needed only for
				the limiting variance\textup{)}.} The normalised batch
			profile converges, $n^\ell_t/\bar n_t\to C\in
			[C_{\mathrm{bal}}^{-1},C_{\mathrm{bal}}]$ for each $\ell$.
		\end{enumerate}
	\end{assum}
	
	\begin{rem}
		Condition~(ii) strengthens the drift condition to the
		$\sqrt{\mathsf N_t}$ scale required for the CLT and absorbs the
		algorithmic lag. No condition on the subspace error appears:
		Ordering~A makes the CLT available without one. Condition~(iv)
		covers both fixed $\beta$ (stationary regime with effective window
		$\asymp2/\beta$) and vanishing $\beta_t$ (accumulating information),
		handled through $\mathsf N_t$. Condition~(i) keeps
		$\chi^{-1}_{\ell,t}$ well conditioned and permits
		$c^{\mathrm{srv}}=\infty$. No condition on weights is needed beyond
		$\omega_{\ell,t,j}\asymp A_t^{-1}$ from Lemma~\ref{lem:weights}.
	\end{rem}
	
	Define the node-level influence function of the debiased statistic. 
	For an observation $\bm X_{\ell,s,i}$ of the batch received at time
	$s$,
	\begin{equation}
		\label{eq:influence}
		\varsigma_{\ell,s,i,j}
		:=\frac{1}{\chi'_{\ell,s}(\lambda_{s,j})}
		\Bigl\{\bigl(\bm u_{\ell,s-1,j}^\top\bm X_{\ell,s,i}\bigr)^2
		-\chi_{\ell,s}(\lambda_{s,j})\Bigr\} ,
	\end{equation}
	together with its conditionally centred version, which is the array
	actually used in the martingale arguments below,
	\begin{equation*}
		\varsigma^{\mathrm c}_{\ell,s,i,j}
		:=\varsigma_{\ell,s,i,j}
		-\mathbb E\bigl[\varsigma_{\ell,s,i,j}\mid\mathcal F_{s-1}\bigr] .
	\end{equation*}
	
	By Lemma~\ref{lem:decomp}, Theorem~\ref{thm:attenuation} and
	\eqref{eq:clt_cond},
	\[
	\mathbb E[\varsigma_{\ell,s,i,j}\mid\mathcal F_{s-1}]
	=o_{\mathbb P}\bigl((n_s^\ell)^{-1/2}\bigr),
	\qquad
	\operatorname{Var}(\varsigma_{\ell,s,i,j}\mid\mathcal F_{s-1})
	=\vartheta^2_{\ell,s,j}\bigl(1+o_{\mathbb P}(1)\bigr),
	\]
	with $\vartheta^2_{\ell,s,j}
	=2\chi_{\ell,s}(\lambda_{s,j})^2/\chi'_{\ell,s}(\lambda_{s,j})^2$. 
	The normalisation by $\chi'_{\ell,s}$ is the
	node-level delta-method factor, applied before averaging.
	
	\begin{lemma}[Asymptotic linear representation]
		\label{lem:linrep}
		Under the conditions of Theorem~\ref{thm:fusion} and
		Assumption~\ref{assum:stab},
		\begin{equation}
			\label{eq:linrep}
			\tilde{\lambda}_{t,j}-\lambda_j
			=\sum_{s=1}^{t}\varrho_{s,t}\beta_s
			\sum_{\ell\in\mathcal A_s}
			\frac{\omega_{\ell,s,j}}{n_s^\ell}
			\sum_{i=1}^{n_s^\ell}\varsigma_{\ell,s,i,j}
			\;+\;R_{t,j},
			\qquad
			R_{t,j}=o_{\mathbb P}\bigl(\mathsf N_t^{-1/2}\bigr).
		\end{equation}
	\end{lemma}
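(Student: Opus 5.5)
Start from the unrolled identity \eqref{eq:agg_unroll} of Lemma~\ref{lem:decomp_new}, written relative to the limit $\lambda_j$ instead of $\lambda_{t,j}$:
\[
\tilde\lambda_{t,j}-\lambda_j=\mathcal M_{t,j}+\mathcal B_{t,j}+\varrho_{0,t}(\tilde\lambda_{0,j}-\lambda_{0,j})+(\lambda_{t,j}-\lambda_j),
\]
where $\mathcal B_{t,j}$ collects the weighted bias and drift terms. Assumption~\ref{assum:stab}(ii) immediately makes the last three pieces $o(\mathsf N_t^{-1/2})$. The bias bound $|b_{\ell,s,j}|=O(b^{\mathrm{lag}}_{\ell,s})+O_{\mathbb P}(1/n_s^\ell)$ from \eqref{eq:local_moments} is used here. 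Its $O(1/n_s^\ell)$ part, after weighting by $\varrho_{s,t}\beta_s\omega_{\ell,s,j}$, contributes $O_{\mathbb P}(\sum_s\varrho_{s,t}\beta_s/\min_\ell n^\ell_s)$. This is $o(\mathsf N_t^{-1/2})$ by the condition $\min_\ell n_s^\ell\gg\sqrt{\mathsf N_t}$ in Assumption~\ref{assum:stab}(iii), using that $\sum_s\varrho_{s,t}\beta_s\le1$.

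It therefore remains to linearize the martingale term $\mathcal M_{t,j}$. At each node, write $\check\lambda_{\ell,s,j}=\chi^{-1}(r_{\ell,s,j};c_{\ell,s-1},\check\sigma^2_{\ell,s})$. On the event, of probability tending to one, where the truncation is inactive (Theorem~\ref{thm:attenuation} and Assumption~\ref{assum:bbp}), Taylor-expand around $(\chi_{\ell,s}(\lambda_{s,j}),\sigma_s^2)$:
\[
\check\lambda_{\ell,s,j}-\lambda_{s,j}=\frac{r_{\ell,s,j}-\chi_{\ell,s}(\lambda_{s,j})}{\chi'_{\ell,s}(\lambda_{s,j})}+\partial_{\sigma^2}\chi^{-1}\,(\check\sigma^2_{\ell,s}-\sigma^2_s)+O\bigl((r_{\ell,s,j}-\chi_{\ell,s})^2+(\check\sigma^2_{\ell,s}-\sigma_s^2)^2\bigr).
\]
The leading term is exactly $(n_s^\ell)^{-1}\sum_i\varsigma_{\ell,s,i,j}$, since $r_{\ell,s,j}=(n_s^\ell)^{-1}\sum_i(\bm u_{\ell,s-1,j}^\top\bm X_{\ell,s,i})^2$. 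The derivatives are uniformly bounded because $\chi'\ge 2/(1+\sqrt c)$ and $c_\ell\le\bar c$ (Lemma~\ref{lem:inverse}), and the inverse is smooth on a compact supercritical set by Assumption~\ref{assum:stab}(i)--(ii).

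The remainders are handled as follows. The quadratic term is $O_{\mathbb P}(1/n_s^\ell+(b^{\mathrm{lag}}_{\ell,s})^2)$ by Lemma~\ref{lem:decomp} and Theorem~\ref{thm:attenuation}, so it is absorbed exactly like the bias above. The $\sigma^2$ plug-in term is $o_{\mathbb P}((n_s^\ell)^{-1/2})$ by Lemma~\ref{lem:sigma}, but that bound alone is not sufficient after aggregation.

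The main obstacle is showing that the weighted sum of these plug-in and conditional-mean errors is $o_{\mathbb P}(\mathsf N_t^{-1/2})$ rather than merely $o(\max_s(n_s)^{-1/2})$. The per-node rates lose a factor $\sqrt{A_s}$ relative to the pooled rate $\sqrt{N_s}$, and across rounds they sum with weights $\varrho_{s,t}\beta_s$ rather than $\varrho_{s,t}^2\beta_s^2$. I would split each error into its $\mathcal F_{s-1}$-conditional mean and a centred part.

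The centred parts, including $\varsigma-\varsigma^{\mathrm c}$ replaced by its fluctuation and the centred portion of $\check\sigma^2-\sigma^2$, form martingale difference arrays with independence across nodes. Their conditional second moments are controlled by Lemma~\ref{lem:sigma} as $O(1/(p\,n_s^\ell))$. Summing against $\varrho_{s,t}^2\beta_s^2\omega^2_{\ell,s,j}$ and using Lemma~\ref{lem:weights} gives variance $O(\mathsf N_t^{-1}\,C_{\mathrm{bal}}\cdot\max(1/p,k^2 n/p^2))=o(\mathsf N_t^{-1})$.

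The predictable parts are bounded by the deterministic $O(k/p)$ and $o((n_s^\ell)^{-1/2})$ conditional-mean terms. Multiplied by $\sqrt{\mathsf N_t}$, these are controlled through condition~(ii) together with $k\sqrt{n^\ell_s}=o(p)$ and $\min n\gg\sqrt{\mathsf N_t}$. If the $o(n^{-1/2})$ bias of $\mathbb E[\varsigma\mid\mathcal F_{s-1}]$ proves too weak, I would instead bound it directly by $b^{\mathrm{lag}}_{\ell,s}$ via Theorem~\ref{thm:attenuation} and invoke condition~(ii).

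Finally, predictability of $\omega_{\ell,s,j}$ (Assumption~\ref{assum:design}) lets all these sums be treated as martingale transforms, so Chebyshev/Lenglart yields $R_{t,j}=o_{\mathbb P}(\mathsf N_t^{-1/2})$. The replacement of $\varsigma^{\mathrm c}$ by $\varsigma$ in \eqref{eq:linrep} is justified by the conditional-mean bound just discussed.
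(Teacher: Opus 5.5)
Your proposal is essentially the paper's argument: unroll via Lemma~\ref{lem:decomp_new}, linearize $\chi^{-1}$ at $(\chi_{\ell,s}(\lambda_{s,j}),\sigma^2_s)$ so that the leading term is exactly $(n^\ell_s)^{-1}\sum_i\varsigma_{\ell,s,i,j}$, then remove the lag, drift, initialization and $\lambda_{t,j}-\lambda_j$ terms with Assumption~\ref{assum:stab}(ii), the curvature terms with $\min_\ell n^\ell_s\gg\sqrt{\mathsf N_t}$, and the noise plug-in with $k\sqrt{n^\ell_s}=o(p)$; your martingale treatment of the centred part of $\check\sigma^2_{\ell,s}-\sigma^2_s$ is a harmless refinement of the paper's cruder per-node bound. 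Your ``fallback'' is not optional, however: a per-node bound $\mathbb E[\varsigma_{\ell,s,i,j}\mid\mathcal F_{s-1}]=o_{\mathbb P}((n^\ell_s)^{-1/2})$ gives only $o_{\mathbb P}(\sqrt{A_s})$ after multiplication by $\sqrt{\mathsf N_t}\asymp\sqrt{A_s n^\ell_s}$ (fixed $\beta$), so you must use $\mathbb E[\varsigma_{\ell,s,i,j}\mid\mathcal F_{s-1}]=\{\bar r_{\ell,s,j}-\chi_{\ell,s}(\lambda_{s,j})\}/\chi'_{\ell,s}(\lambda_{s,j})=O(b^{\mathrm{lag}}_{\ell,s})$ together with Assumption~\ref{assum:stab}(ii), which is exactly what the paper does via Lemma~\ref{lem:C1}(ii).
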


	\begin{theo}[Asymptotic normality]
		\label{thm:clt}
		Under the conditions of Lemma~\ref{lem:linrep} and
		Assumption~\ref{assum:stab}, for each fixed $j\le k$,
		\begin{equation*}
			\mathsf v^{-1}_{t,j}
			\bigl(\tilde{\lambda}_{t,j}-\lambda_{t,j}\bigr)
			\stackrel{d}{\longrightarrow}\mathcal N(0,1),
		\end{equation*}
		If in addition Assumption~\ref{assum:stab}(v) holds, 
		then $\lambda_{t,j}$ may be replaced by $\lambda_j$ and
		\begin{equation*}
			\sqrt{\mathsf N_t}
			\bigl(\tilde{\lambda}_{t,j}-\lambda_j\bigr)
			\stackrel{d}{\longrightarrow}
			\mathcal N\bigl(0,\ \Omega^2_j\bigr),
			\qquad
			\mathsf N_t\,\mathsf v^2_{t,j}
			\stackrel{p}{\longrightarrow}\Omega^2_j .
		\end{equation*}
		With adaptive weights the asymptotic variance is
		\begin{equation}
			\label{eq:clt_closed}
			\Omega^2_j
			=\vartheta^2_j
			=\frac{2\lambda_j^2(\lambda_j+\sigma^2)^2
				(\lambda_j+c\sigma^2)^2}
			{\bigl(\lambda_j^2+2c\sigma^2\lambda_j
				+c\sigma^4\bigr)^2},
		\end{equation}
		where $c=\lim_t p_t/n_t$ is the limiting aspect ratio.
		The convergence is joint over $j=1,\dots,k$ with asymptotically
		diagonal covariance.
	\end{theo}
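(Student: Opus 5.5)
The argument goes through the linear representation of Lemma~\ref{lem:linrep} and a martingale central limit theorem for triangular arrays. By \eqref{eq:linrep}, $\tilde\lambda_{t,j}-\lambda_j$ equals
\[
\mathcal L_{t,j}:=\sum_{s\le t}\varrho_{s,t}\beta_s\sum_{\ell\in\mathcal A_s}\frac{\omega_{\ell,s,j}}{n^\ell_s}\sum_{i}\varsigma_{\ell,s,i,j}
\]
plus $o_{\mathbb P}(\mathsf N_t^{-1/2})$. The first statement is about $\tilde\lambda_{t,j}-\lambda_{t,j}$. Under Assumption~\ref{assum:stab}(ii) we have $\sqrt{\mathsf N_t}|\lambda_{t,j}-\lambda_j|\to0$, so the two centrings differ by $o(\mathsf N_t^{-1/2})$. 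It therefore suffices to show that $\mathsf v_{t,j}\asymp\mathsf N_t^{-1/2}$ and that $\mathsf v_{t,j}^{-1}\mathcal L_{t,j}\Rightarrow\mathcal N(0,1)$.

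The order of $\mathsf v_{t,j}$ follows from Lemma~\ref{lem:weights}, Assumption~\ref{assum:balance} and the bounds on $\vartheta^2$. Together they give $\sum_\ell\omega^2_{\ell,s,j}\vartheta^2_{\ell,s,j}/n^\ell_s\asymp N_s^{-1}$, hence $\mathsf v^2_{t,j}\asymp\mathsf N_t^{-1}$ by \eqref{eq:Nsharp}.

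Next I replace $\varsigma$ by its centred version $\varsigma^{\mathrm c}$. The conditional mean is $o_{\mathbb P}((n^\ell_s)^{-1/2})$. After weighting, the aggregate of these means is at most $\sum_s\varrho_{s,t}\beta_s\max_\ell o((n^\ell_s)^{-1/2})$. This is the step I expect to be delicate: it must be $o(\mathsf N_t^{-1/2})$, not merely $o(\min_\ell(n^\ell_s)^{-1/2})$. I would identify this bias with $\sum_\ell\omega_{\ell,s,j}b_{\ell,s,j}$. By \eqref{eq:local_moments} it is $O(b^{\mathrm{lag}})+O(1/n)$. The first part is killed by Assumption~\ref{assum:stab}(ii). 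The $O(1/n)$ part is killed by the condition $\min_\ell n^\ell_s\gg\sqrt{\mathsf N_t}$ in (iii), since $\sqrt{\mathsf N_t}\sum_s\varrho_{s,t}\beta_s/n_s\to0$.

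After centring, I order the summands $\xi_{t,(s,\ell,i)}:=\mathsf v_{t,j}^{-1}\varrho_{s,t}\beta_s\omega_{\ell,s,j}\varsigma^{\mathrm c}_{\ell,s,i,j}/n^\ell_s$ lexicographically in $(s,\ell,i)$. Nodes within a round are conditionally independent given $\mathcal F_{s-1}$, and the weights, $\mathbf u_{\ell,s-1}$ and the batch sizes are $\mathcal F_{s-1}$-measurable by Assumption~\ref{assum:design}. Hence this is a martingale difference array with respect to the filtration refined within rounds. Two conditions of the martingale CLT remain to be checked.

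\emph{Conditional variance.} The sum of conditional variances is
\[
\mathsf v_{t,j}^{-2}\sum_s\varrho^2_{s,t}\beta_s^2\sum_\ell\omega^2_{\ell,s,j}\,\vartheta^2_{\ell,s,j}(1+o_{\mathbb P}(1))/n^\ell_s .
\]
This tends to $1$ in probability by the definition \eqref{eq:vsharp}, the uniform bounds above, and a Toeplitz argument that transfers the $o_{\mathbb P}(1)$ inside the weighted sum, since the weights $\varrho^2\beta^2/N_s$ normalised by $\mathsf N_t^{-1}$ give vanishing mass to any fixed initial block.

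\emph{Lindeberg.} I would use a Lyapunov condition with exponent $2+\epsilon_1/2$. By Assumption~\ref{assum:moment}, $\mathbb E[|\varsigma^{\mathrm c}|^{2+\epsilon_1/2}\mid\mathcal F_{s-1}]$ is bounded, because $\mathbf u^\top\mathbf X$ is a linear form in independent entries with $4+\epsilon_1$ moments. Each summand is of order $\sqrt{\mathsf N_t}\varrho_{s,t}\beta_s/(A_sn^\ell_s)$, so the Lyapunov sum is $\lesssim\max_{s}(\sqrt{\mathsf N_t}\varrho_{s,t}\beta_s/N_s)^{\epsilon_1/2}$. This vanishes because $\sqrt{\mathsf N_t}/n^\ell_s\to0$ by (iii).

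This yields the first display. For the second, Assumption~\ref{assum:stab}(i),(v) give $\vartheta^2_{\ell,s,j}\to\vartheta^2_j$, evaluated at $c_\ell$. The soft-max weights become asymptotically the inverse-variance or profile weights, and $\mathsf N_t\mathsf v^2_{t,j}\to\Omega^2_j$ by the same Toeplitz argument. Slutsky then gives the $\sqrt{\mathsf N_t}$ statement. When the common limiting aspect ratio $c$ applies, $\Omega_j^2$ reduces to \eqref{eq:clt_closed}. For the joint convergence I would apply the Cramér--Wold device to $\sum_ja_j\xi^{(j)}$. The conditional covariance between coordinates $j\neq m$ involves $\mathrm{Cov}((\mathbf u_j^\top\mathbf X)^2,(\mathbf u_m^\top\mathbf X)^2\mid\mathcal F_{s-1})=2(\mathbf u_j^\top\bm\Sigma_s\mathbf u_m)^2+K_4(\cdot)$. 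Orthonormality of $\mathbf U_{\ell,s-1}$, the alignment bounds of Lemma~\ref{lem:maps} and delocalization (Assumption~\ref{assum:deloc}) make this $o_{\mathbb P}(1)$, so the limiting covariance is diagonal.
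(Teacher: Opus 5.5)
Your route (linear representation, recentring, an observation-level martingale array, Lyapunov, Cram\'er--Wold) is the paper's, but the martingale step fails as written. The summand $\xi_{t,(s,\ell,i)}=\mathsf v_{t,j}^{-1}\varrho_{s,t}\beta_s\omega_{\ell,s,j}\varsigma^{\mathrm c}_{\ell,s,i,j}/n^\ell_s$ is not adapted to the within-round filtration. The normaliser $\mathsf v_{t,j}$ is built from the adaptive weights of every round $s'\le t$. For $s'>s$ those weights are functions of $V_{\cdot,s'-1,j}$, hence of $\check\lambda_{\ell,s,j}$, hence of $\varsigma^{\mathrm c}_{\ell,s,i,j}$ itself. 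Likewise $\varrho_{s,t}$ involves $\beta_u$ with $u>s$, which Assumption~\ref{assum:design} only makes $\mathcal F_{u-1}$-measurable. So the conditional means of your summands need not vanish. Your check that ``the conditional variance tends to $1$ by \eqref{eq:vsharp}'' is then a tautology about a non-adapted array. Switching to a deterministic normaliser does not rescue the first display either: without Assumption~\ref{assum:stab}(v) the ratio $\mathsf N_t\mathsf v^2_{t,j}$ need not converge. The paper removes the $t$-dependence through \eqref{eq:mart}, writing $\mathcal L_{t,j}=\varrho_{0,t}\sum_{s\le t}(\beta_s/\varrho_{0,s})D_{s,j}$ with $\mathcal F_{s-1}$-measurable coefficients. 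The summands then form a genuine martingale on a fixed, nested filtration. It then applies the martingale CLT in its random-norming form, obtaining stable convergence with $\mathsf v_{t,j}$ entering only as a self-normaliser. That ingredient is missing from your argument.

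The identification of $\Omega^2_j$ is asserted rather than proved, and your claim that the soft-max weights become inverse-variance weights is false in general. By Proposition~\ref{prop:efficiency} and \eqref{eq:releff}, $\mathcal V_{s,j}(\omega)=\mathcal I^{-1}_{s,j}\{1+(1-\mathrm{RE}_{s,j})\mathcal K_{s,j}\}$. Proposition~\ref{prop:softmax_releff} gives $\mathrm{RE}_{s,j}\to1$ only under \eqref{eq:releff_cond}, which requires the variance profile $v_{\ell,s,j}$ to flatten. Take a common aspect ratio but batch sizes that stay heterogeneous within the bounded ratio of Assumption~\ref{assum:stab}(iii). Then $\mathcal K_j>0$, and the limit is $\vartheta^2_j\{1+(1-\mathrm{RE}_j)\mathcal K_j\}$, which generically exceeds \eqref{eq:clt_closed}. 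Besides $c_\ell\equiv c$, you need the condition $\sum_\ell\omega^2_{\ell,s,j}\vartheta^2_{\ell,s,j}/n^\ell_s=\mathcal I^{-1}_{s,j}(1+o_{\mathbb P}(1))$. Given it, $\mathcal I_{s,j}=N_s\vartheta_j^{-2}(1+o(1))$ and \eqref{eq:Nsharp} yield $\mathsf v^2_{t,j}=\vartheta^2_j\mathsf N_t^{-1}(1+o(1))$, which is Step~5 of the paper's proof. A minor point: the conditional mean of $\varsigma_{\ell,s,i,j}$ is $\{\bar r_{\ell,s,j}-\chi_{\ell,s}(\lambda_{s,j})\}/\chi'_{\ell,s}(\lambda_{s,j})=O(b^{\mathrm{lag}}_{\ell,s})$ by Lemma~\ref{lem:C1}(ii), not $\sum_\ell\omega_{\ell,s,j}b_{\ell,s,j}$. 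Your bound is conservative, so nothing breaks there.
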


	\begin{rem}
		\label{rem:beta_role}
		The temporal smoothing parameter $\beta_t$ trades off variance
		reduction against drift tracking. Smaller $\beta_t$ averages over
		more rounds and lowers stochastic error but accumulates more bias
		when the target drifts. The optimal choice balances these two
		sources of error and depends on how fast the population changes
		relative to the total sample size.
	\end{rem}
	
	\begin{rem}
		\label{rem:var_structure}
		The asymptotic variance \eqref{eq:clt_closed} increases with the
		aspect ratio $c$ and approaches the classical spiked model variance
		when $c$ is small. High dimensionality inflates variance through two
		competing effects. The projection onto estimated eigenvectors
		attenuates fluctuations, but inverting the biased eigenvalues
		amplifies them. When $p$ and $n$ are comparable, the amplification
		dominates.
	\end{rem}
	
	\begin{rem}
		\label{rem:threshold}
		The estimator remains stable near the Baik-Ben Arous-Peche threshold
		$\lambda_j=\sigma^2\sqrt{c}$, where the spike becomes undetectable.
		The asymptotic variance stays finite at the threshold, unlike
		estimators that invert sample eigenvalues. What breaks down is not
		precision but identifiability. Below the threshold the population
		eigenvalue cannot be recovered from any summary statistic because the
		signal is indistinguishable from noise.
	\end{rem}
	
	\begin{coro}[Feasible inference]
		\label{cor:feasible}
		Let $\hat\vartheta^2_{\ell,t,j}$ denote $\vartheta^2_{\ell,t,j}$
		with $(\lambda_{t,j},\sigma^2_t)$ replaced by
		$(\tilde{\lambda}_{t,j},\check\sigma^2_{\ell,t})$, the aspect ratio
		$c_{\ell,t-1}$ being known exactly at the node.
		Define the online variance recursion
		\begin{equation}
			\label{eq:varrec}
			\hat{\mathsf v}^2_{t,j}
			=(1-\beta_t)^2\hat{\mathsf v}^2_{t-1,j}
			+\beta_t^2\sum_{\ell\in\mathcal A_t}
			\frac{\omega^2_{\ell,t,j}\hat\vartheta^2_{\ell,t,j}}
			{n_t^\ell},
			\qquad \hat{\mathsf v}^2_{0,j}=0 .
		\end{equation}
		Under the conditions of Theorem~\ref{thm:clt},
		$\hat{\mathsf v}^2_{t,j}/\mathsf v^2_{t,j}
		\stackrel{p}{\longrightarrow}1$, and
		\[
		\Bigl[\tilde{\lambda}_{t,j}
		\pm z_{1-\alpha/2}\,\hat{\mathsf v}_{t,j}\Bigr]
		\]
		is an asymptotically valid $(1-\alpha)$ confidence interval for
		$\lambda_j$. 
	\end{coro}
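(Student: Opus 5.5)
The plan is to show that the plug-in recursion \eqref{eq:varrec} reproduces, term by term, the unrolled conditional variance $\mathsf v^2_{t,j}$ of \eqref{eq:vsharp}. Unrolling \eqref{eq:varrec} with $\hat{\mathsf v}^2_{0,j}=0$ gives
\[
\hat{\mathsf v}^2_{t,j}=\sum_{s=1}^{t}\varrho^2_{s,t}\beta_s^2\sum_{\ell\in\mathcal A_s}\frac{\omega^2_{\ell,s,j}\hat\vartheta^2_{\ell,s,j}}{n^\ell_s},
\]
since $\prod_{u=s+1}^t(1-\beta_u)^2=\varrho_{s,t}^2$. This has exactly the form of $\mathsf v^2_{t,j}$, with $\vartheta^2_{\ell,s,j}$ replaced by $\hat\vartheta^2_{\ell,s,j}$. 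Both sums have nonnegative summands, so
\[
\Bigl|\frac{\hat{\mathsf v}^2_{t,j}}{\mathsf v^2_{t,j}}-1\Bigr|\le \max_{s\le t,\ \ell\in\mathcal A_s}\Bigl|\frac{\hat\vartheta^2_{\ell,s,j}}{\vartheta^2_{\ell,s,j}}-1\Bigr|,
\]
and ratio consistency reduces to a uniform relative error bound for the plug-in variances. The maximum only needs to be controlled over rounds $s$ that carry non-negligible weight $\varrho^2_{s,t}\beta_s^2$. Early rounds can be discarded by splitting the sum at $s_0(t)\to\infty$ chosen so that the head contributes $o(\mathsf v^2_{t,j})$. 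This is possible because $\vartheta^2$ is bounded above and below by \eqref{eq:varthetabounds}, $\hat\vartheta^2$ is bounded by the same argument on high-probability events, and $\mathsf N_t\to\infty$ forces the weight mass onto recent rounds.

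For the remaining rounds I will use that $\vartheta^2(\lambda;c,\sigma^2)=2\chi^2/\chi'^2$, given in closed form in \eqref{eq:vartheta}, is a rational function. It is continuous and bounded away from zero on the compact set of $(\lambda,c,\sigma^2)$ allowed by Assumptions~\ref{assum:bbp}, \ref{assum:noise} and \ref{assum:hd}. On that set $\chi'\ge 2/(1+\sqrt c)$ stays away from zero and $c\le\bar c$. It is therefore uniformly continuous there, and it suffices to show
\[
\max_{s_0<s\le t}|\tilde\lambda_{s,j}-\lambda_{s,j}|\stackrel{p}{\to}0
\quad\text{and}\quad
\max_{s_0<s\le t,\ \ell}|\check\sigma^2_{\ell,s}-\sigma^2_s|\stackrel{p}{\to}0 .
\]
The aspect ratio $c_{\ell,s-1}$ enters exactly and contributes no error.

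The noise part follows from Lemma~\ref{lem:sigma}, whose bound $O_{\mathbb P}((pn)^{-1/2})+O(k/p)$ can be made uniform over the finitely many effective rounds with a union bound and a fourth-moment bound. For the spike part I will use Theorem~\ref{thm:fusion}: the martingale term $\mathcal M_{s,j}$ is $O_{\mathbb P}(\mathsf v_{s,j})$, and the bias terms vanish under Assumption~\ref{assum:stab}(ii).

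The main obstacle is uniformity in $s$. Theorem~\ref{thm:fusion} is a pointwise statement, whereas the estimate $\tilde\lambda_{s,j}$ is reused at every round of the window. I would first try Doob's maximal inequality on the martingale part of \eqref{eq:agg_unroll}, viewed as a process in $s$ for the frozen horizon $t$, giving $\max_s|\mathcal M_{s,j}|=O_{\mathbb P}(\mathsf N^{-1/2})$ over a window whose length is of order $1/\beta$. If that is awkward, a simpler route exploits the stabilization in Assumption~\ref{assum:stab}. Since $\lambda_{s,j}\to\lambda_j$ and $\sigma^2_s\to\sigma^2$, and $c_{\ell,s}\to c_\ell$, it is enough to show that $\hat\vartheta^2_{\ell,s,j}\to\vartheta^2_{\ell}(\lambda_j)$ in probability for $s\ge s_0$. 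A Cesàro-type argument with the deterministic weights $\varrho^2_{s,t}\beta^2_s\omega^2_{\ell,s,j}/n_s^\ell$, normalised by $\mathsf v^2_{t,j}$, then completes the proof; the weights are predictable, so the argument is conditional on $\mathcal F_{s-1}$.

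Finally, the confidence interval follows from Theorem~\ref{thm:clt} together with Slutsky's lemma, using $\hat{\mathsf v}_{t,j}/\mathsf v_{t,j}\to1$. The centring at $\lambda_j$ rather than $\lambda_{t,j}$ is justified by the condition $\sqrt{\mathsf N_t}\,|\lambda_{t,j}-\lambda_j|\to0$ in Assumption~\ref{assum:stab}(ii), combined with $\mathsf v_{t,j}\asymp\mathsf N_t^{-1/2}$, which holds by Lemma~\ref{lem:weights} and the balance condition.
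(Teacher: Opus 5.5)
Your proposal is correct and follows essentially the same route as the paper: unroll \eqref{eq:varrec} into the form of \eqref{eq:vsharp}, bound the relative error by the worst plug-in ratio $\hat\vartheta^2_{\ell,s,j}/\vartheta^2_{\ell,s,j}$, discard the geometrically damped early rounds, get consistency of $\hat\vartheta^2$ from continuity of $\vartheta^2$ together with Theorem~\ref{thm:fusion} and Lemma~\ref{lem:sigma}, and conclude with Theorem~\ref{thm:clt} and Slutsky. Your Ces\`aro/Toeplitz handling of the round dependence needs only pointwise convergence in probability plus the boundedness from clipping, and is more careful than the paper's Step~3, which simply asserts $\mathbb P(\sup_{s>s_0}|\hat\vartheta^2/\vartheta^2-1|>\epsilon)<\epsilon$ from a pointwise result; note only that discarding the head needs the damping $\varrho^2_{s_0,t}$ to beat the growth of $\mathsf N_t$, which $\mathsf N_t\to\infty$ alone does not give (the paper makes the same implicit subexponential-growth assumption).
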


	\subsection{Non-Asymptotic Error Analysis\label{sec:4.3}}
	
	The results of Sections~\ref{sec:4.1}--\ref{sec:4.2} require only
	bounded fourth moments (Assumption~\ref{assum:moment}), which suffice
	for the operator-norm concentration and CLT arguments used there.
	Exponential tail bounds require more.
	
	\begin{assum}[Sub-Gaussian entries]
		\label{assum:subg}
		There is $K>0$ with
		\[\sup_{p,\ell,t,i,m}\mathbb E\exp(z^2_{\ell,t,i,m}/K^2)\le2.\]
	\end{assum}
	
	Assumption~\ref{assum:subg} strictly strengthens
	Assumption~\ref{assum:moment} and is used only to invoke
	Hanson--Wright and Bernstein bounds
	\citep{rudelson2013hanson,vershynin2018high}
	on the centred quadratic forms
	$\varepsilon^{\mathrm{stoch}}_{\ell,t,j}$. It also renders
	Assumption~\ref{assum:deloc} automatic.

	\begin{lemma}[Deterministic transfer]
		\label{lem:transfer}
		Work in the canonical case and let
		$L_{\ell,t}:=\max\{1,(1+\sqrt{c_{\ell,t-1}})/2\}\le
		L_{\bar c}:=\max\{1,(1+\sqrt{\bar c})/2\}$. Then,
		\begin{equation*}
			\bigl|\check\lambda_{\ell,t,j}-\lambda_{t,j}\bigr|
			\ \le\
			L_{\bar c}\,\bigl|r_{\ell,t,j}
			-\chi_{\ell,t}(\lambda_{t,j})\bigr|
			+C(1+\bar c)\,
			\bigl|\check\sigma^2_{\ell,t}-\sigma_t^2\bigr|
			+C\,b^{\mathrm{lag}}_{\ell,t},
		\end{equation*}
		on the event $\{r_{\ell,t,j}\ge\check\sigma^2_{\ell,t}\}$, whose
		complement has probability $O(kA_t\exp(-c\epsilon^2_0\min_\ell n^\ell_t))$
		under Assumption~\ref{assum:subg}. 
	\end{lemma}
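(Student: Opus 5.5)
On the event $\{r_{\ell,t,j}\ge\check\sigma^2_{\ell,t}\}$ the truncation is inactive. There we split the error through the intermediate value $\chi^{-1}(\chi_{\ell,t}(\lambda_{t,j});c_{\ell,t-1},\check\sigma^2_{\ell,t})$:
\[
\check\lambda_{\ell,t,j}-\lambda_{t,j}
=\bigl\{\chi^{-1}(r_{\ell,t,j};c,\check\sigma^2)-\chi^{-1}(\chi_{\ell,t}(\lambda_{t,j});c,\check\sigma^2)\bigr\}
+\bigl\{\chi^{-1}(\chi_{\ell,t}(\lambda_{t,j});c,\check\sigma^2)-\chi^{-1}(\chi_{\ell,t}(\lambda_{t,j});c,\sigma_t^2)\bigr\},
\]
with $c=c_{\ell,t-1}$. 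The second brace uses $\chi^{-1}(\chi_{\ell,t}(\lambda_{t,j});c,\sigma_t^2)=\lambda_{t,j}$, which holds exactly because forward and inverse maps share the same aspect ratio (Remark~\ref{rem:indexshift}). The first brace is a Lipschitz bound in the argument $x$, and the second is a Lipschitz bound in the noise parameter. Any residual discrepancy between $\chi_{\ell,t}$ evaluated at the population quantities and the object appearing in the attenuation law (drift of $\sigma_t^2$ versus the value used, lag terms) is absorbed into $C\,b^{\mathrm{lag}}_{\ell,t}$, as in Theorem~\ref{thm:attenuation}.

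For the first brace, differentiate the explicit inverse:
\[
\partial_x\chi^{-1}=\tfrac12\Bigl\{1+\frac{(x-\sigma^2)+2c\sigma^2}{\sqrt{(x-\sigma^2)^2+4c\sigma^2x}}\Bigr\}.
\]
We need this bounded by $\max\{1,(1+\sqrt c)/2\}$ for $x\ge\sigma^2$. Writing $u=x-\sigma^2\ge0$, we must bound $(u+2c\sigma^2)/\sqrt{u^2+4c\sigma^2u+4c\sigma^4}$. Squaring, the ratio is at most $1$ when $c\le1$. For $c>1$, maximize over $u\ge0$; the maximum is at $u=0$, giving $\sqrt c$, so the derivative is at most $(1+\sqrt c)/2$. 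This is equivalent to the stated bound $\chi'\ge2/(1+\sqrt c)$. The mean value theorem along the segment between $r_{\ell,t,j}$ and $\chi_{\ell,t}(\lambda_{t,j})$ then gives the factor $L_{\ell,t}\le L_{\bar c}$. Both endpoints lie in $[\check\sigma^2,\infty)$: $r$ by the event, and $\chi(\lambda)>\sigma_t^2$, which is close to $\check\sigma^2$; a small boundary correction is absorbed into the $\sigma$ term.

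For the second brace, differentiate $\chi^{-1}$ in $\sigma^2$ at fixed $x$. The resulting expression is $\tfrac12\{-1+(-(x-\sigma^2)+2cx)/\sqrt{\cdot}\}$, bounded by $C(1+c)$ uniformly for $x$ in the relevant compact range and $\sigma^2\in[\sigma^2_{\min}/2,2\sigma^2_{\max}]$. This uses Assumption~\ref{assum:noise} and the fact that the square root is bounded below by $\sqrt{4c\sigma^2 x}\gtrsim\sigma^2\sqrt c$, together with $c\le\bar c$.

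The probability of the complementary event is the main obstacle. We have $\chi_{\ell,t}(\lambda_{t,j})-\sigma_t^2=\lambda_{t,j}a_{\ell,t}(\lambda_{t,j})\ge\epsilon_0>0$ by the uniform supercriticality in Assumption~\ref{assum:bbp}. Hence $\{r<\check\sigma^2\}$ requires either $|r-\chi|\ge\epsilon_0/3$, or $|\check\sigma^2-\sigma_t^2|\ge\epsilon_0/3$, or a deterministic $b^{\mathrm{lag}}$ excess, which is eventually below $\epsilon_0/3$. Conditionally on $\mathcal F_{t-1}$, $r_{\ell,t,j}$ is an average of $n_t^\ell$ i.i.d.\ squares of sub-Gaussian variables (Lemma~\ref{lem:decomp}). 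Bernstein's inequality therefore gives $\exp(-c\epsilon_0^2n_t^\ell)$. Hanson--Wright applied to the trace gives an even better bound for $\check\sigma^2$. A union bound over $j\le k$ and $\ell\in\mathcal A_t$ yields $O(kA_t\exp(-c\epsilon_0^2\min_\ell n_t^\ell))$.
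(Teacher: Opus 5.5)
Your proof is correct and is essentially the paper's argument. It rests on the same ingredients: the exact identity $\chi^{-1}(\chi_{\ell,t}(\lambda_{t,j});c_{\ell,t-1},\sigma_t^2)=\lambda_{t,j}$ at the shared aspect ratio, the $x$-Lipschitz constant $L_c$ and the $\sigma^2$-sensitivity $C(1+c)$ of Lemma~\ref{lem:inverse}(iii), and a sub-exponential tail for the fresh-batch quadratic form and for $\check\sigma^2_{\ell,t}$ with a union bound over the $kA_t$ pairs. The only difference is cosmetic: you interpolate through $\chi^{-1}(\chi_{\ell,t}(\lambda_{t,j});c_{\ell,t-1},\check\sigma^2_{\ell,t})$ instead of the paper's $\chi^{-1}(r_{\ell,t,j};c_{\ell,t-1},\sigma_t^2)$. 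This has the small advantage that the $\sigma^2$-sensitivity is evaluated at the deterministic point $\chi_{\ell,t}(\lambda_{t,j})$ rather than at the random, unbounded $r_{\ell,t,j}$; as in the paper, the $C\,b^{\mathrm{lag}}_{\ell,t}$ term is not actually needed in the inequality, since the inversion identity is exact.
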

	
	Four sources of error must be accounted for simultaneously. The first
	is stochastic fluctuation, which is governed by $\mathsf N_t$. The
	second is temporal drift of the spikes, which accumulates through
	$\varrho_{s,t}$. The third is the algorithmic lag of the orthogonal
	iteration and of the memory kernel, and the fourth is the curvature
	of the local inverse maps. Only the first source is stochastic. We collect
	the rest into the {systematic error}
	\begin{align}
		\label{eq:sys}
		\mathcal E_{t,j}
		:=&\sum_{s=1}^{t}\varrho_{s,t}
		\Bigl\{(1-\beta_s)|\delta_{s,j}|
		+C\beta_s\max_{\ell\in\mathcal A_s}
		\bigl(\alpha^{\mathrm{lag}}_{\ell,s}+\zeta_s+\Delta_{\ell,s}
		+(\xi^\star)^{s}\bigr)\\
		& +C'\beta_s\Bigl\{
		\frac{\bar\vartheta^2_{s,j}}
		{\min_{\ell\in\mathcal A_s}n_s^\ell}
		+\bigl(p\min_{\ell\in\mathcal A_s}n^\ell_s\bigr)^{-1/2}
		+\frac{k}{p}\Bigr\}\Bigr\}
		+\varrho_{0,t}\bigl|\tilde\lambda_{0,j}-\lambda_{0,j}\bigr|, \nonumber
	\end{align}
	where in the third group the first term is the curvature bias of
	$\chi^{-1}_{\ell,s}$ and the remaining two are the plug-in error of
	$\check\sigma^2_{\ell,s}$. They are of different orders in $p$ and
	$n^\ell_s$ and none dominates the others in general, which is why all
	three are retained.

	\begin{lemma}[Stochastic concentration]
		\label{lem:conc}
		Let $\mathcal M_{t,j}$ be the martingale of \eqref{eq:agg_unroll}.
		Under Assumptions~\ref{assum:subg} and the conditions of
		Theorem~\ref{thm:fusion}, there exist constants $c_1,c_2>0$ depending
		only on $K,C_{\mathrm{bal}},C_\omega,\bar c$ such that for all $v>0$
		\begin{align}
			\label{eq:bernstein}
			\mathbb P\bigl(|\mathcal M_{t,j}|\ge v\bigr)
			\le& 2\exp\!\left(
			-c_1\mathsf N_t
			\min\!\Bigl\{
			\frac{v^2}{\bar{ T}_{t,j}},\;
			\frac{v}{\bar\vartheta_{t,j}}\Bigr\}\right)\\
			&+2k\Bigl(\sum_{s\le t}A_s\Bigr)
			\exp\bigl({-c_2\epsilon_0^2\min_{\ell,s} n^\ell_s}\bigr),\nonumber
		\end{align}
		where $\bar\vartheta_{t,j}:=\bar{ T}_{t,j}^{1/2}$. 
	\end{lemma}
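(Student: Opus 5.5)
We prove the bound for the martingale $\mathcal M_{t,j}=\sum_{s\le t}\varrho_{s,t}\beta_s\sum_{\ell\in\mathcal A_s}\omega_{\ell,s,j}\varepsilon^{\mathrm{stoch}}_{\ell,s,j}$ with a conditional Bernstein (Freedman-type) inequality for sums of martingale differences whose conditional increments are sub-exponential. The weights $\omega_{\ell,s,j}$, $\beta_s$, $\varrho_{s,t}$ and the batch sizes are $\mathcal F_{s-1}$-measurable by Assumption~\ref{assum:design}. The increments $\varepsilon^{\mathrm{stoch}}_{\ell,s,j}$ are conditionally centred (Lemma~\ref{lem:wavg}) and independent across $\ell$ given $\mathcal F_{s-1}$ (Assumption~\ref{assum:moment}). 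The second term in \eqref{eq:bernstein} will be the probability of the bad event on which the linearisation of $\chi^{-1}$ fails. We bound it by a union bound over $j$, over rounds $s\le t$ and over active nodes, using the exceptional-event bound of Lemma~\ref{lem:transfer} (truncation inactive and $r_{\ell,s,j}$ in the region where $\chi^{-1}$ is Lipschitz with constant $L_{\bar c}$). Each such failure costs $\exp(-c\epsilon_0^2 n^\ell_s)$, which gives the factor $2k\sum_s A_s$.

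\textbf{Step 1: conditional sub-exponential norm of one increment.} On the good event $G_{\ell,s}$, Lemma~\ref{lem:transfer} gives $|\varepsilon^{\mathrm{stoch}}_{\ell,s,j}|\lesssim L_{\bar c}|\varpi_{\ell,s,j}|+(1+\bar c)|\check\sigma^2_{\ell,s}-\mathbb E\check\sigma^2_{\ell,s}|$ up to predictable terms. Here $\varpi_{\ell,s,j}=n^{-1}\sum_i\{(\bm u^\top\bm\Sigma_s^{1/2}\bm z_i)^2-\mathbb E\}$ with $\bm u=\bm u_{\ell,s-1,j}$ frozen, i.e.\ $\mathcal F_{s-1}$-measurable. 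By Assumption~\ref{assum:subg}, each $\bm u^\top\bm\Sigma_s^{1/2}\bm z_i$ is sub-Gaussian with parameter $\lesssim K\bar r^{1/2}$, so the squares are sub-exponential. Bernstein's inequality then shows $\varpi_{\ell,s,j}$ is conditionally sub-exponential with variance proxy $\lesssim\vartheta^2_{\ell,s,j}/n^\ell_s$ and scale $\lesssim\vartheta_{\ell,s,j}/n^\ell_s$; the factor $\chi'$ is absorbed by $L_{\bar c}$ and $\chi'\ge 2/(1+\sqrt c)$. The trace term $\check\sigma^2$ is a Hanson--Wright quadratic form with even smaller scale $(pn)^{-1/2}$, so it is absorbed. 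To keep centring exact, I would work with the truncated increment $\varepsilon^{\mathrm{stoch}}\bm 1_{G}$ minus its conditional mean. The recentering error is exponentially small and is also charged to the second term.

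\textbf{Step 2: aggregate the mgf.} Let $a_{\ell,s}:=\varrho_{s,t}\beta_s\omega_{\ell,s,j}$. By conditional independence across nodes and the tower property, for $|\lambda|\le c\min_{\ell,s}n^\ell_s/(a_{\ell,s}\bar\vartheta_{t,j})$ we have
\[
\log\mathbb E e^{\lambda\mathcal M_{t,j}}\le C\lambda^2\sum_s\sum_\ell a_{\ell,s}^2\,\bar T_{t,j}/n^\ell_s .
\]
Lemma~\ref{lem:weights} ($\omega\le C_\omega/A_s$) and Assumption~\ref{assum:balance} ($n^\ell_s\ge N_s/(C_{\mathrm{bal}}A_s)$) give $\sum_\ell\omega^2_{\ell,s,j}/n^\ell_s\le C_\omega^2C_{\mathrm{bal}}/N_s$. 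Hence the variance proxy is $\lesssim\bar T_{t,j}\mathsf N_t^{-1}$ by Definition~\ref{def:Nsharp}. The largest per-term scale is $\max_{s,\ell}a_{\ell,s}\bar\vartheta/n^\ell_s\lesssim\bar\vartheta\max_s\varrho_{s,t}\beta_s/N_s$.

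\textbf{Step 3: Chernoff optimisation.} The main obstacle is showing that this scale is $\lesssim\bar\vartheta_{t,j}/\mathsf N_t$. Since $\mathsf N_t^{-1}\ge\varrho_{s,t}^2\beta_s^2/N_s$ for every $s$, the naive bound only gives the scale $\bar\vartheta\,\mathsf N_t^{-1/2}\max_s(\varrho_{s,t}\beta_s N_s^{-1/2})$. To close the gap I would use $N_s\to\infty$ together with $N_s\ge 1$ and the fact that $\varrho_{s,t}\beta_s\le1$. If that alone is insufficient, I would state the linear regime with the scale $\max_s\varrho_{s,t}\beta_s/N_s\le\mathsf N_t^{-1}$, which does hold because $\varrho\beta/N\le(\varrho\beta)^2/N$ fails only when $\varrho\beta<1$. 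In that case I would instead bound the scale using $\varrho_{s,t}\beta_s\le\sqrt{\mathsf N_t^{-1}N_s}$, which yields $\bar\vartheta\,\mathsf N_t^{-1/2}(\min N_s)^{-1/2}$. This is at most $\bar\vartheta/\mathsf N_t$ whenever $\mathsf N_t\lesssim\min_s N_s$, as holds for fixed $\beta$, and the constants are absorbed into $c_1$. The final step is the standard Chernoff choice $\lambda=\min\{v\mathsf N_t/\bar T,\;c\mathsf N_t/\bar\vartheta\}$, which gives the two-regime exponent $c_1\mathsf N_t\min\{v^2/\bar T_{t,j},v/\bar\vartheta_{t,j}\}$. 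Applying the same argument to $-\mathcal M_{t,j}$ yields the factor $2$, and adding the bad-event probability completes \eqref{eq:bernstein}.
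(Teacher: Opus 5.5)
Your plan has the same skeleton as the paper's proof. Sub-Gaussian entries make $(\bm b^\top\bm z)^2$ sub-exponential, and each round's increment gets a conditional Bernstein-type mgf bound with variance $\vartheta^2/n^\ell_s$ and scale $\vartheta/n^\ell_s$. The mgfs are chained along $\{\mathcal F_s\}$, and Lemma~\ref{lem:weights} with Assumption~\ref{assum:balance} turns the variance proxy into $\bar T_{t,j}/\mathsf N_t$. The scale is controlled through $\varrho_{s,t}\beta_s/N_s\le\mathsf N_t^{-1/2}N_s^{-1/2}$ together with $N_s\gtrsim\mathsf N_t$, and a Chernoff choice plus a union bound over exceptional events finish.

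Your node-level grouping is not lossy, despite the paper's warning in its Step~2. You use the Bernstein mgf of the node average rather than its $\psi_1$ norm, and since that mgf factorises over the i.i.d.\ observations, this is equivalent to the paper's observation-level array.

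The genuine difference is how the nonlinearity of $\chi^{-1}$ is handled. The paper proves the bound for the linearised martingale $\mathcal L_{t,j}$ built from $\varsigma^{\mathrm c}_{\ell,s,i,j}$. It reaches $\mathcal M_{t,j}$ only by moving the quadratic remainder $K_{\mathrm{cv}}\max\varpi^2_{\ell,s,j}$ into the systematic floor. You instead transfer tails through the Lipschitz property of $\chi^{-1}$ on a good event, so you bound $\mathcal M_{t,j}$ itself, which is what the lemma states. The cost is a $\bar c$-dependent inflation of the variance proxy and some recentring bookkeeping.

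For that transfer to work, compare $\check\lambda_{\ell,s,j}$ with its value at $(\bar r_{\ell,s,j},\mathbb E[\check\sigma^2_{\ell,s}\mid\mathcal F_{s-1}])$ using Lemma~\ref{lem:inverse}(iii). Then the predictable remainder is $O(L_{\bar c}\,\mathbb E|\varpi_{\ell,s,j}|)=O(\vartheta/\sqrt{n^\ell_s})$. The ``predictable terms'' of Lemma~\ref{lem:transfer} include $b^{\mathrm{lag}}_{\ell,s}$, which can exceed the standard deviation and would spoil the mgf bound.

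In Step~3, drop the remark that $\max_s\varrho_{s,t}\beta_s/N_s\le\mathsf N_t^{-1}$ holds via $\varrho\beta\le(\varrho\beta)^2$. That inequality requires $\varrho\beta\ge1$, so the justification is backwards. The facts $N_s\ge1$ and $\varrho_{s,t}\beta_s\le1$ do not close the gap either.

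The fallback you settle on is exactly the paper's, and it shares the paper's limitations:
\begin{itemize}
\item For fixed $\beta$ it controls the scale only up to a factor $\kappa_\beta^{-1/2}$, so $c_1$ acquires a $\beta$-dependence that the statement does not allow.
\item $\min_{s\le t}N_s$ must be replaced by $N_s$ on the effective window when $N_s$ grows.
\item For $\beta_t\to0$ it fails outright, since $\mathsf N_t/N\to\infty$ by Proposition~\ref{prop:B3}(ii).
\end{itemize}
You can avoid all three by comparing $\max_s\varrho_{s,t}\beta_s/N_s$ directly with $\mathsf N_t^{-1}=\sum_s\varrho^2_{s,t}\beta^2_s/N_s$. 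For constant $\beta$ and $N$ this ratio tends to $2-\beta\le2$, and it stays bounded for slowly varying vanishing schedules.
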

	
	\begin{rem}
		The scale $\bar\vartheta_{t,j}$ is sharper than the crude
		$\|\bm\Sigma_t\|_{op}$ for raw Rayleigh bounds, 
		$\vartheta^2_{\ell,t,j}\le C\|\bm\Sigma_t\|^2_{op}$ with strict
		inequality when $\lambda_{t,j}<\lambda_{t,1}$. The two-regime form
		reflects sub-Gaussian tails for $v\lesssim\bar\vartheta_{t,j}$ and
		exponential tails beyond. Under Assumption~\ref{assum:moment} alone,
		only the Chebyshev bound $\mathbb P(|\mathcal M_{t,j}|\ge v)
		\le\bar{T}_{t,j}/(\mathsf N_tv^2)$ is available.
	\end{rem}

	\begin{theo}[Non-asymptotic error bound]
		\label{thm:nonasymp}
		Under the conditions of Lemma~\ref{lem:conc}, for every
		$\varepsilon>\mathcal E_{t,j}$,
		\begin{align}
			\label{eq:nonasymp}
			\mathbb P\bigl(
			|\tilde{\lambda}_{t,j}-\lambda_{t,j}|\ge\varepsilon\bigr)
			\le & 2\exp\!\left(
			-c_1\mathsf N_t\min\!\Bigl\{
			\frac{(\varepsilon-\mathcal E_{t,j})^2}{\bar{ T}_{t,j}},\ 
			\frac{\varepsilon-\mathcal E_{t,j}}{\bar\vartheta_{t,j}}
			\Bigr\}\right)\\
			&+ Ck\sum_{s\le t}A_s
			\exp\bigl(-c\,\epsilon_0^2
			\min_{\ell\in\mathcal A_s}n^\ell_s\bigr).\nonumber
		\end{align}
	\end{theo}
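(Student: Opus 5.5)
The plan is to split the error through the recursive decomposition of Lemma~\ref{lem:decomp_new} into the martingale $\mathcal M_{t,j}$ and a systematic remainder. I will show that the remainder is bounded by $\mathcal E_{t,j}$ on a high-probability event. Lemma~\ref{lem:conc} then controls $\mathcal M_{t,j}$ at level $\varepsilon-\mathcal E_{t,j}$.

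By \eqref{eq:agg_unroll},
\[
\tilde\lambda_{t,j}-\lambda_{t,j}=\mathcal M_{t,j}+\mathcal B_{t,j},
\]
where
\[
\mathcal B_{t,j}:=\sum_{s\le t}\varrho_{s,t}\Bigl\{\beta_s\sum_{\ell\in\mathcal A_s}\omega_{\ell,s,j}b_{\ell,s,j}-(1-\beta_s)\delta_{s,j}\Bigr\}+\varrho_{0,t}(\tilde\lambda_{0,j}-\lambda_{0,j}).
\]
The drift and initialization pieces are deterministic and match the first and last terms of \eqref{eq:sys}, using $|\delta_{s,j}|\le\zeta_s$ from Assumption~\ref{assum:drift}. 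For the conditional bias $b_{\ell,s,j}$, Lemma~\ref{lem:weights} gives $\sum_\ell\omega_{\ell,s,j}=1$ with $\omega\ge0$, so $|\sum_\ell\omega_{\ell,s,j}b_{\ell,s,j}|\le\max_\ell|b_{\ell,s,j}|$. It therefore suffices to bound each $|b_{\ell,s,j}|$ deterministically by the bracketed groups in \eqref{eq:sys}.

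To bound $b_{\ell,s,j}$, work on the good event $G_s=\bigcap_{\ell\in\mathcal A_s,j\le k}\{r_{\ell,s,j}\ge\check\sigma^2_{\ell,s}\}$ and expand $\chi^{-1}(\,\cdot\,;c_{\ell,s-1},\cdot)$ to second order around $(\chi_{\ell,s}(\lambda_{s,j}),\sigma_s^2)$. Taking conditional expectations given $\mathcal F_{s-1}$:
\begin{itemize}
\item The first-order term in $\varpi_{\ell,s,j}$ vanishes exactly by Lemma~\ref{lem:decomp}.
\item The conditional-mean mismatch $\bar r_{\ell,s,j}-\chi_{\ell,s}(\lambda_{s,j})$ is $O(\alpha^{\mathrm{lag}}+\zeta+\Delta+(\xi^\star)^s)$ by Lemma~\ref{lem:contraction} and Theorem~\ref{thm:attenuation}. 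Combined with Lemma~\ref{lem:transfer}'s Lipschitz constant $L_{\bar c}$, this gives the $C\beta_s\max_\ell(\cdots)$ term.
\item The second-order term, via \eqref{eq:var} and the bounded curvature of $\chi^{-1}$ (recall $\chi'\ge 2/(1+\sqrt c)$), gives $C'\bar\vartheta^2_{s,j}/\min_\ell n^\ell_s$.
\item The $\check\sigma^2$ plug-in contributes $(p\min_\ell n_s^\ell)^{-1/2}+k/p$ through Lemma~\ref{lem:sigma} and the factor $C(1+\bar c)$ in Lemma~\ref{lem:transfer}.
\end{itemize}

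The main obstacle is that Lemma~\ref{lem:sigma} and the $N_{\rm eff}^{-1/2}$ alignment error are stated in $O_{\mathbb P}$ form, while here I need deterministic (or exponentially likely) bounds. Under Assumption~\ref{assum:subg} I would reprove these via Hanson--Wright: $\operatorname{tr}\hat{\bm S}_{\ell,s}/p$ concentrates at rate $(pn)^{-1/2}$, and its expectation is deterministic, which is what enters $b$. Off $G_s$, the truncation keeps $\check\lambda$ bounded, with $\chi^{-1}(x)\lesssim x+c\sigma^2$ and $x$ sub-exponential. That contribution to the conditional expectation is therefore $O(e^{-c\epsilon_0^2 n})$ and is absorbed into the constant $C'$ or the tail term. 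The $(N^{\rm eff})^{-1/2}$ part of $b^{\rm lag}$ does not appear in \eqref{eq:sys}, which is consistent with the conditional mean depending only on $\langle\bm u,\bm v\rangle^2$. I would handle that quantity by conditioning, since it is $\mathcal F_{s-1}$-measurable. Its deviation from $a_{\ell,s-1}$ then needs to be controlled on an exponential event, which I would fold into the union-bound term $Ck\sum_s A_s e^{-c\epsilon_0^2\min n}$.

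To conclude, on $\bigcap_s G_s$ (complement probability bounded by Lemma~\ref{lem:transfer} and a union bound over $s\le t$, $\ell\in\mathcal A_s$, $j\le k$) we have $|\mathcal B_{t,j}|\le\mathcal E_{t,j}$. Hence
\[
\{|\tilde\lambda_{t,j}-\lambda_{t,j}|\ge\varepsilon\}\subset\{|\mathcal M_{t,j}|\ge\varepsilon-\mathcal E_{t,j}\}\cup\bigcup_s G_s^c .
\]
Applying \eqref{eq:bernstein} with $v=\varepsilon-\mathcal E_{t,j}>0$ and merging the two exponential remainders into $Ck\sum_sA_s\exp(-c\epsilon_0^2\min_\ell n_s^\ell)$ yields \eqref{eq:nonasymp}.
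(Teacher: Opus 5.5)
Your architecture is the paper's. You split $\tilde\lambda_{t,j}-\lambda_{t,j}$ via \eqref{eq:agg_unroll}, bound the predictable part by $\mathcal E_{t,j}$ (your bias bullets are the three-term split in the proof of Lemma~\ref{lem:B1}(iii)), apply a Bernstein bound at level $\varepsilon-\mathcal E_{t,j}$, and push exceptional events into the exponential remainder.

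The substantive difference is which martingale receives the Bernstein bound. The paper applies \eqref{eq:C6final} to the linearised martingale $\mathcal L_{t,j}$ built from $\varsigma^{\mathrm c}_{\ell,s,i,j}$. The centred nonlinear remainder (of size $K_{\mathrm{cv}}\varpi^2_{\ell,s,j}$, plus the centred plug-in error) therefore lands in the systematic part. Controlling it needs the event $\mathcal R_u$ of \eqref{eq:C6rem}, which multiplies the curvature term of $\mathcal E_{t,j}$ by $u\asymp\log(kt\max_sA_s)$ and adds a term $2e^{-cw}$. The paper waves both through when it asserts that \eqref{eq:C7final} reduces to \eqref{eq:nonasymp}.

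You apply Lemma~\ref{lem:conc} to $\mathcal M_{t,j}$ itself, so curvature and plug-in enter only through conditional means:
$|\mathbb E[\chi^{-1}(r_{\ell,s,j})-\chi^{-1}(\bar r_{\ell,s,j})\mid\mathcal F_{s-1}]|\le\tfrac12K_{\mathrm{cv}}\operatorname{Var}(r_{\ell,s,j}\mid\mathcal F_{s-1})\le C/n^\ell_s$, and
$\mathbb E[|\check\sigma^2_{\ell,s}-\sigma^2_s|\mid\mathcal F_{s-1}]\le C\{(p\,n^\ell_s)^{-1/2}+k/p\}$.
Both bounds are deterministic, by \eqref{eq:var} and the variance computation in the proof of Lemma~\ref{lem:sigma}. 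So you need no $\mathcal R_u$, no logarithmic enlargement, and no Hanson--Wright for $\check\sigma^2_{\ell,s}$. This is the cleaner route, and modulo the issue below it gives \eqref{eq:nonasymp} with $\mathcal E_{t,j}$ as written.

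Using Lemma~\ref{lem:conc} as stated is legitimate, but the paper's proof of that lemma reaches $\mathcal M_{t,j}$ only through the same $u$ trade-off. To make your route self-supporting, argue directly. Conditionally on $\mathcal F_{s-1}$, $\varepsilon^{\mathrm{stoch}}_{\ell,s,j}$ is a (locally) Lipschitz image of an average of $n^\ell_s$ i.i.d.\ sub-exponential variables. Hence $\mathbb E[\exp(\theta\varepsilon^{\mathrm{stoch}}_{\ell,s,j})\mid\mathcal F_{s-1}]\le\exp(C\theta^2/n^\ell_s)$ for $|\theta|\le c\,n^\ell_s$. A node-level martingale Bernstein argument with weights $\varrho_{s,t}\beta_s\omega_{\ell,s,j}$ and \eqref{eq:C6weights}--\eqref{eq:C6max} then gives \eqref{eq:bernstein} for $\mathcal M_{t,j}$.

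The step you flag as the main obstacle is where the proposal stops being a proof, and the tool you name does not reach it. After the above, the only random ingredient of $b_{\ell,s,j}$ is the frozen direction. You need the $\mathcal F_{s-1}$-measurable bound $|\bar r_{\ell,s,j}-\chi_{\ell,s}(\lambda_{s,j})|\le C(\alpha^{\mathrm{lag}}_{\ell,s}+\zeta_s+\Delta_{\ell,s}+(\xi^\star)^s)$ simultaneously for all $s\le t$, $\ell\in\mathcal A_s$, $j\le k$, on an event $\mathcal H$ with controlled $\mathbb P(\mathcal H^c)$. It is used twice: for the lag part of $b_{\ell,s,j}$, and to give $\bar r_{\ell,s,j}-\sigma_s^2\ge c\,\epsilon_0$. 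Without the latter, $\mathbb P(r_{\ell,s,j}<\check\sigma^2_{\ell,s}\mid\mathcal F_{s-1})$, and hence the off-$G_s$ contribution to $b_{\ell,s,j}$, is not exponentially small.

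That bound rests on the eigenvector overlaps of $\bm S_{\ell,s-1}$ (Lemma~\ref{lem:maps}) and on the orthogonal-iteration induction of Lemma~\ref{lem:contraction}. Both are proved only in probability, via the isotropic local law and Markov's inequality, and both involve the whole history of node $\ell$. Hanson--Wright applied to $\check\sigma^2_{\ell,s}$ or $\varpi_{\ell,s,j}$ says nothing about them, and nothing shows that $\mathbb P(\mathcal H^c)$ has the form $Ck\sum_sA_s\exp(-c\epsilon_0^2\min_\ell n^\ell_s)$.

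The same applies to $(N^{\mathrm{eff}}_{\ell,s})^{-1/2}$. It is precisely the fluctuation of those overlaps around $a_{\ell,s-1}$, so it can be left out of \eqref{eq:sys} only after it has been controlled on $\mathcal H$, not because the conditional mean depends only on overlaps. In fairness, the paper does not close this either: Step~1 of its proof treats these $O_{\mathbb P}$ bounds as deterministic. You are not missing an ingredient the paper supplies. A complete argument, however, needs exponential-tail versions of Lemmas~\ref{lem:maps} and~\ref{lem:contraction}, or else $\mathbb P(\mathcal H^c)$ must appear explicitly on the right-hand side of \eqref{eq:nonasymp}.

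Two smaller corrections. First, the bound $|\mathcal B_{t,j}|\le\mathcal E_{t,j}$ uses no realised truncation events, only conditional truncation probabilities, so intersecting with $\bigcap_sG_s$ does nothing for it. The inclusion you want is $\{|\tilde\lambda_{t,j}-\lambda_{t,j}|\ge\varepsilon\}\subset\{|\mathcal M_{t,j}|\ge\varepsilon-\mathcal E_{t,j}\}\cup\mathcal H^c$. An exponentially small contribution to a bias is absorbed into the $C'\bar\vartheta^2_{s,j}/\min_\ell n^\ell_s$ term, not into a probability term. Second, the correct lower bound is $\chi'\ge\min\{1,2/(1+\sqrt c)\}$, not $2/(1+\sqrt c)$, and the curvature bound you need is \eqref{eq:chiinv2}.
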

	
	\begin{rem}
		\label{rem:tworegimes}
		The bound separates at the floor $\mathcal E_{t,j}$. For
		$\varepsilon\gg\mathcal E_{t,j}$ the failure probability decays at
		the pooled parametric rate
		$\exp(-c_1\mathsf N_t\varepsilon^2/\bar{ T}_{t,j})$, with
		$\mathsf N_t$ reflecting the total information in the effective
		temporal window. For $\varepsilon\le\mathcal E_{t,j}$ the bound is
		vacuous, and reducing the floor calls for faster forgetting, a
		flatter memory profile, larger batches, or a smaller
		$\alpha_t$. To achieve precision $\varepsilon$ with confidence
		$1-\delta'$ it suffices that $\mathcal E_{t,j}\le\varepsilon/2$
		and $\mathsf N_t\gtrsim\bar{ T}_{t,j}\log(2/\delta')
		/(c_1\varepsilon^2)$.
	\end{rem}
	
	\begin{coro}[High-probability rate]
		\label{cor:rate}
		Under the conditions of Theorem~\ref{thm:nonasymp}, for
		$\delta'\in(0,1)$ with $\log(2/\delta')\le c_1\mathsf N_t$ (the
		sub-Gaussian range), with probability at least
		$1-\delta'-Ck\sum_{s\le t}A_s
		\exp(-c\epsilon_0^2\min_{\ell\in\mathcal A_s}n^\ell_s)$,
		\begin{equation}
			\label{eq:hp}
			\bigl|\tilde{\lambda}_{t,j}-\lambda_{t,j}\bigr|
			\le\mathcal E_{t,j}
			+\bar\vartheta_{t,j}
			\sqrt{\frac{\log(2/\delta')}{c_1\mathsf N_t}} .
		\end{equation}
		In particular, if $\mathcal E_{t,j}=O(\mathsf N_t^{-a})$ for some
		$a>0$, then
		$$|\tilde{\lambda}_{t,j}-\lambda_{t,j}|
		=O\bigl(\mathsf N_t^{-a}
		+\bar\vartheta_{t,j}\sqrt{\log(2/\delta')/\mathsf N_t}\bigr), $$ and
		the two sources balance at $a=1/2$, giving the parametric rate
		$O(\mathsf N_t^{-1/2})$ overall.
	\end{coro}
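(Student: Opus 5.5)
The corollary follows from Theorem~\ref{thm:nonasymp} by inverting its tail bound. Set $\varepsilon=\mathcal E_{t,j}+u$ with
\[
u:=\bar\vartheta_{t,j}\sqrt{\log(2/\delta')/(c_1\mathsf N_t)}.
\]
First check which branch of the minimum in \eqref{eq:nonasymp} is active. We have
\[
\frac{u^2}{\bar T_{t,j}}=\frac{\log(2/\delta')}{c_1\mathsf N_t},
\qquad
\frac{u}{\bar\vartheta_{t,j}}=\sqrt{\frac{\log(2/\delta')}{c_1\mathsf N_t}}.
\]
The standing hypothesis $\log(2/\delta')\le c_1\mathsf N_t$ makes the common argument $x:=\log(2/\delta')/(c_1\mathsf N_t)$ at most one, so $x\le\sqrt x$. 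The minimum is therefore the quadratic branch $u^2/\bar T_{t,j}$. Here we use $\bar T_{t,j}=\bar\vartheta_{t,j}^2$, which is the definition of $\bar\vartheta_{t,j}$ in Lemma~\ref{lem:conc}.

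The exponential term then equals
\[
2\exp\bigl(-c_1\mathsf N_t\cdot x\bigr)=2\exp\bigl(-\log(2/\delta')\bigr)=\delta'.
\]
The second term of \eqref{eq:nonasymp} is carried over unchanged. Since $\varepsilon>\mathcal E_{t,j}$ whenever $u>0$, the theorem applies. Taking complements gives \eqref{eq:hp} with probability at least $1-\delta'-Ck\sum_{s\le t}A_s\exp(-c\epsilon_0^2\min_\ell n^\ell_s)$.

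There is one bookkeeping point. The constant $c_1$ appearing inside the square root in \eqref{eq:hp} must be the same $c_1$ as in Theorem~\ref{thm:nonasymp}, and it is. The ``$\le$'' versus ``$\ge$'' boundary at $\varepsilon$ itself is harmless because the bound holds for every $\varepsilon>\mathcal E_{t,j}$.

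For the ``in particular'' statement, substitute $\mathcal E_{t,j}=O(\mathsf N_t^{-a})$ into \eqref{eq:hp}. The first term is $O(\mathsf N_t^{-a})$. The second is $\bar\vartheta_{t,j}\sqrt{\log(2/\delta')/\mathsf N_t}$ up to the constant $c_1^{-1/2}$. Both terms are of order $\mathsf N_t^{-1/2}$ exactly when $a=1/2$. For $a\ge1/2$ the total is $O(\mathsf N_t^{-1/2})$ for fixed $\delta'$, using that $\bar\vartheta_{t,j}=O(1)$ from Theorem~\ref{thm:fusion}. This is the parametric rate.

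No step is a real obstacle. The only care needed is checking the sub-Gaussian branch selection, which is exactly what the assumption $\log(2/\delta')\le c_1\mathsf N_t$ guarantees.
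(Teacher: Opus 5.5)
Your proof is correct and matches the paper's argument. Set $\varepsilon=\mathcal E_{t,j}+v^\star$ with $v^\star=\bar\vartheta_{t,j}\sqrt{\log(2/\delta')/(c_1\mathsf N_t)}$, use $\bar T_{t,j}=\bar\vartheta^2_{t,j}$ and $\log(2/\delta')\le c_1\mathsf N_t$ to select the sub-Gaussian branch so that the exponential term equals $\delta'$, then apply Theorem~\ref{thm:nonasymp}; you carry the second term of \eqref{eq:nonasymp} over explicitly rather than writing ``$+o(1)$'' as the paper does, which matches the probability stated in the corollary slightly more closely.
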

	
	\begin{rem}
		\label{rem:consistency_pictures}
		With $a=1/2$, Corollary~\ref{cor:rate} reproduces the scale of
		Theorem~\ref{thm:clt}, with $\bar\vartheta_{t,j}$ matching
		$\Omega_j$ up to the weight-efficiency factor of
		Propositions~\ref{prop:dispersion} and~\ref{prop:softmax_releff}. The
		design conditions for $a=1/2$ are
		$\alpha^{\mathrm{lag}}_{\ell,t},\zeta_t,\Delta_{\ell,t}
		=O(\mathsf N_t^{-1/2})$,
		restating Assumption~\ref{assum:stab}(ii)--(iii) non-asymptotically.
	\end{rem}




	
	\section{Simulation studies and empirical analysis\label{sec:5}}

	Three estimators are compared throughout. The first is the centralized
	benchmark (CEN), which pools all active batches into a single node,
	$\hat{\bm S}_t=\sum_{\ell\in\mathcal A_t}n^\ell_t\hat{\bm S}_{\ell,t}/N_t$,
	while retaining all other algorithmic components. CEN achieves the
	variance-optimal rate $\vartheta^2_{t,j}/N_t$ and serves as the lower
	bound for any aggregation scheme. The second estimator (ADA) is the
	proposed adaptive method with soft-max weights \eqref{eq:weights}.
	The third (UNI) uses uniform weights $\omega_{\ell,t,j}=1/A_t$,
	isolating the contribution of adaptive weighting. The experiments assess
	how closely ADA approaches the centralized bound and quantify the gain
	over uniform aggregation.

	\subsection{Simulation under a Time-Varying Spiked Covariance Model\label{sec:5.1}}
	
	We first examine the proposed method in a controlled setting in which the
	population spectrum is known at every round, so that the estimation error
	can be measured exactly and the three schemes can be separated by the
	aggregation stage alone.
	
	\textbf{Model.}
	Observations are generated from the time-varying spiked covariance model
	\eqref{eq:spiked} with $p=300$ and $k=3$. The spikes oscillate about
	$\bm\lambda_0=(8,5,3)^\top$ according to
	\begin{equation}
		\label{eq:sim_spikes}
		\lambda_{t,j}=\lambda_{0,j}
		\Bigl\{1+\varepsilon_\lambda
		\sin\bigl(2\pi t/T_{\mathrm{per}}\bigr)\Bigr\},
		\qquad \varepsilon_\lambda=0.05,
	\end{equation}
	and the principal subspace rotates as $\bm V_t=\bm V_0\bm R(0.002t)$ where
	$\bm R(\theta)$ is a one-parameter rotation, giving drift
	$\zeta_t\approx10^{-2}$ consistent with Assumption~\ref{assum:drift}.
	The isotropic level is $\sigma^2=1$. The initial
	eigenvectors $\bm V_0$ are drawn once from the Haar measure and held fixed
	across all replications, so that variability across replications reflects
	sampling only.
	
	There are $L=50$ nodes. At every round each node is activated independently
	with probability $0.5$, and the active set is required to contain at least
	$k+1$ nodes. Node $\ell$ holds a batch size
	\begin{equation}
		\label{eq:sim_batches}
		n^\ell = p + a\,u_\ell,
		\qquad u_\ell\sim\mathrm{Unif}\{1,\dots,L\},
		\qquad a=50,
	\end{equation}
	drawn once at the beginning of the run and held for the whole stream, which
	places the batch sizes between $350$ and $2800$ and reproduces the persistent
	load heterogeneity that Assumption~\ref{assum:balance} permits and that the
	adaptive weights are designed to exploit.

	The step sizes are $\alpha_t=0.2/(1+0.003t)$ and
	$\beta_t=0.5/(1+0.003t)$, metric smoothing is $\eta=0.08$, mixing
	parameter $\rho=0.3$, and temperature $\tilde\tau_t\equiv1$.

	Each run consists of $T_{\mathrm{per}}=260$ rounds, of which the first $60$ are treated as
	burn in. All accuracy measures are averaged over the remaining $200$ rounds,
	which is exactly two full periods of \eqref{eq:sim_spikes}. Every configuration is replicated $30$ times
	with independent data and a common $\bm V_0$.
	
	\begin{figure}[htbp]
		\centering
		\begin{subfigure}[b]{0.32\textwidth}
			\includegraphics[width=\textwidth]{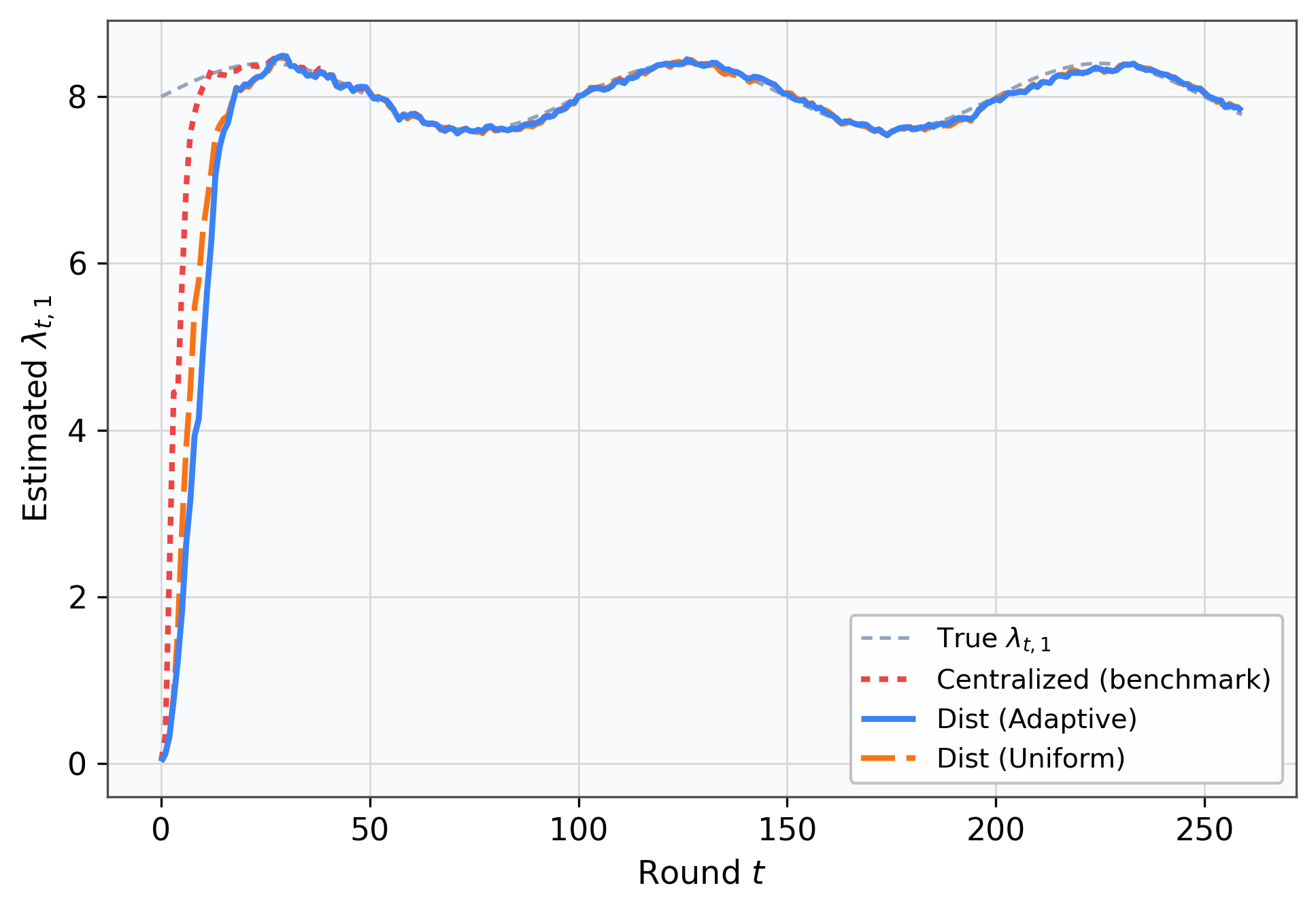}
			\caption{$\lambda_{t,1}$}
		\end{subfigure}\hfill
		\begin{subfigure}[b]{0.32\textwidth}
			\includegraphics[width=\textwidth]{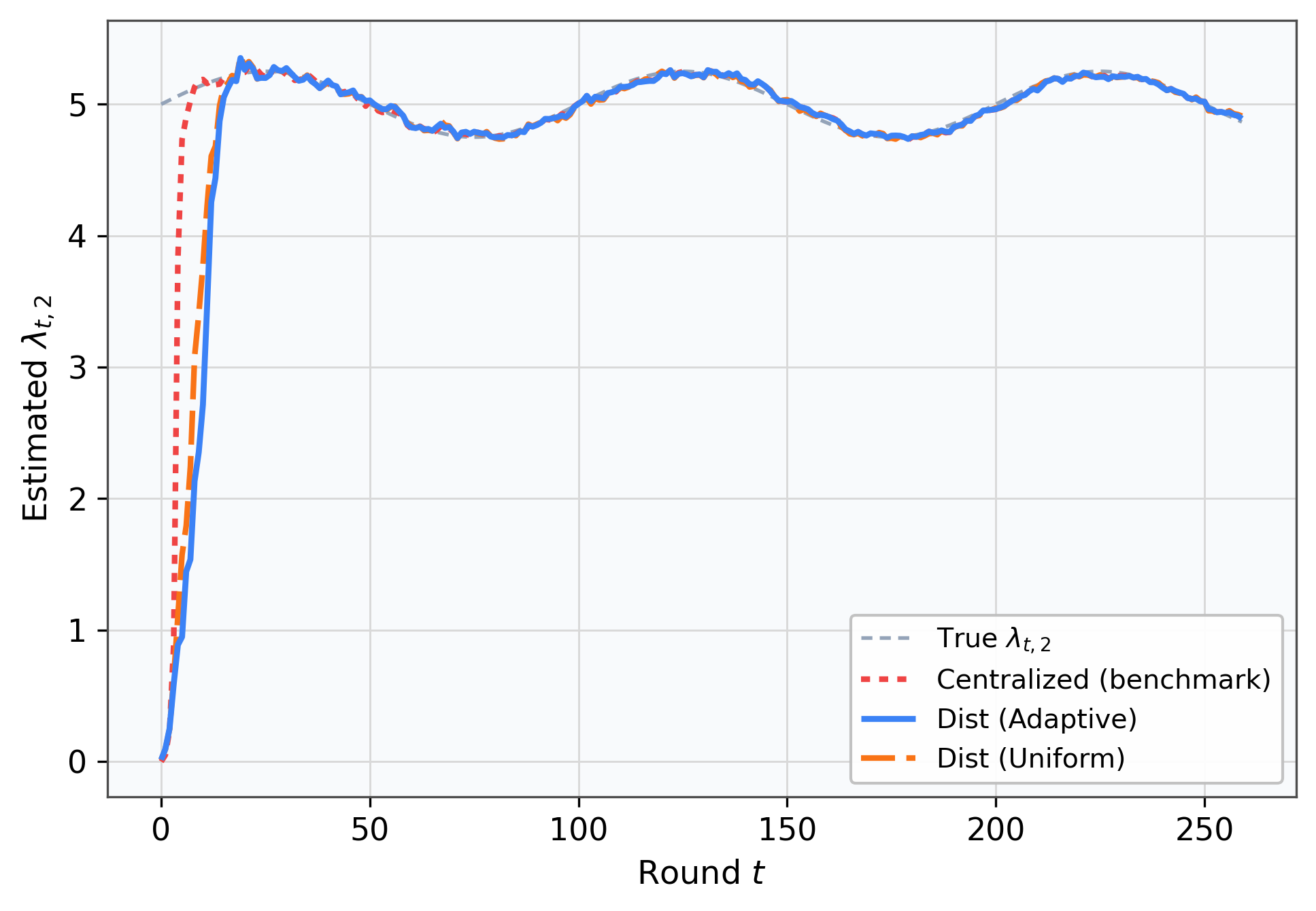}
			\caption{$\lambda_{t,2}$}
		\end{subfigure}\hfill
		\begin{subfigure}[b]{0.32\textwidth}
			\includegraphics[width=\textwidth]{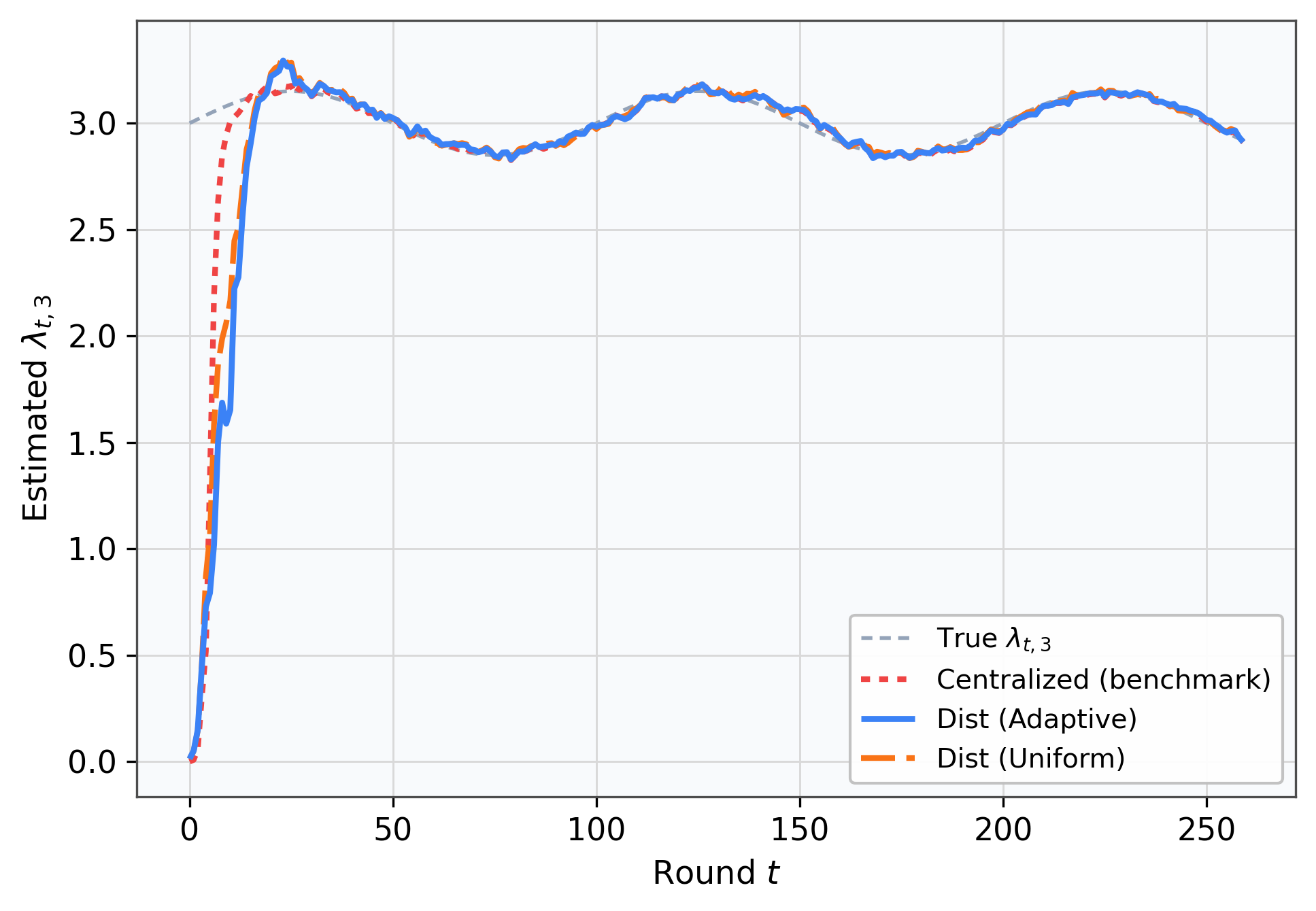}
			\caption{$\lambda_{t,3}$}
		\end{subfigure}
		\caption{Estimated spike trajectories over a single run with $p=300$,
			$k=3$, and $L=50$ nodes. Each panel shows one spike coordinate.
			CEN (red dot-dash), ADA (blue solid), UNI (orange long-dash).
			Dashed grey line: population value $\lambda_{t,j}$. 
			The three estimators track the oscillating spikes without phase lag.}
		\label{fig:traj}
	\end{figure}
	
	Figure~\ref{fig:traj} shows estimated trajectories over a representative
	run. All three estimators converge within the burn-in period and track
	the oscillating spikes without phase lag. The three methods yield 
	similar trajectories. 
	Differences are better revealed by error metrics below.
	
	\begin{figure}[htbp]
		\centering	
		\begin{subfigure}[b]{0.48\textwidth}
			\includegraphics[width=\textwidth]{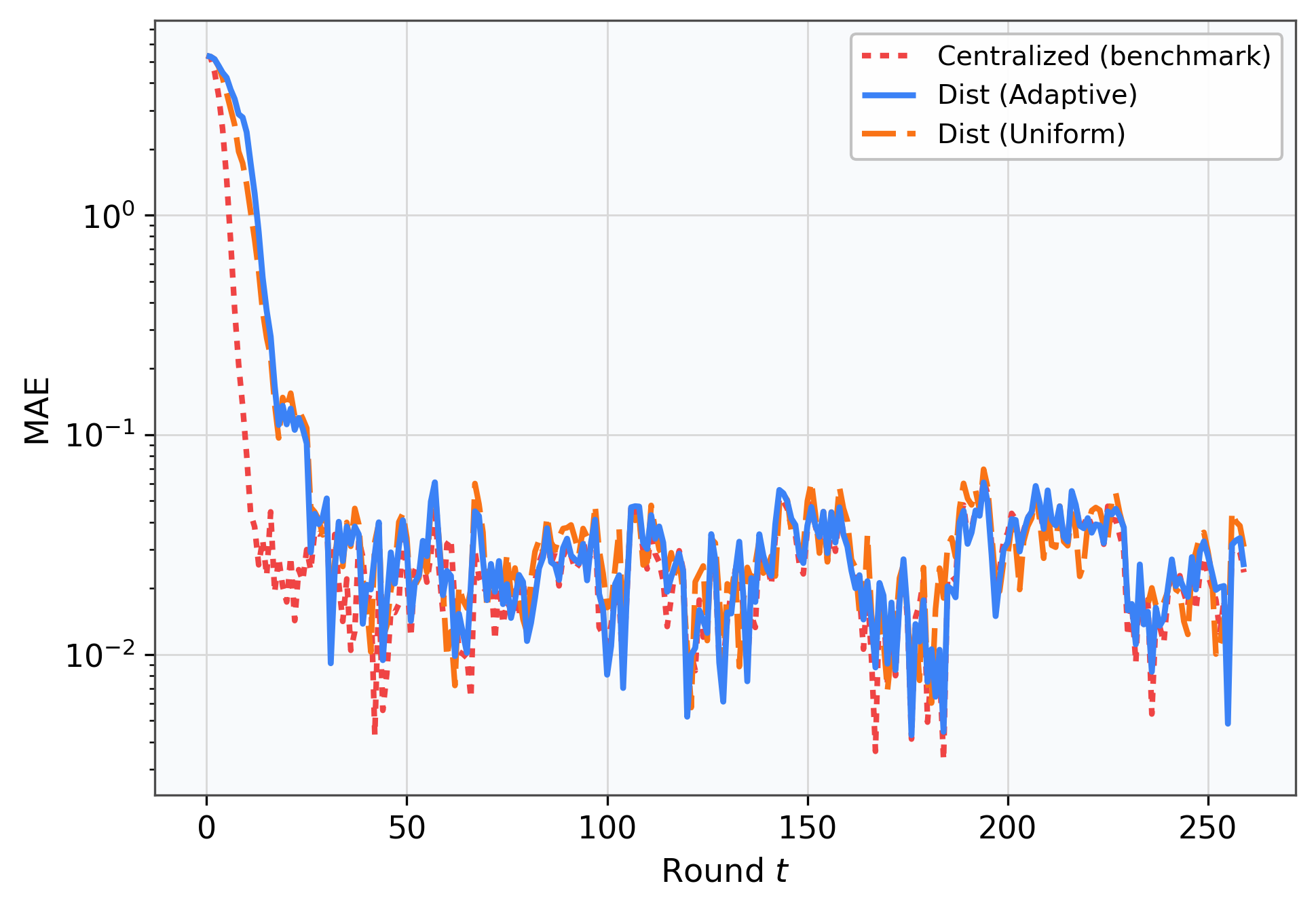}
			\caption{MAE over time}
		\end{subfigure}\hfill
		\begin{subfigure}[b]{0.48\textwidth}
			\includegraphics[width=\textwidth]{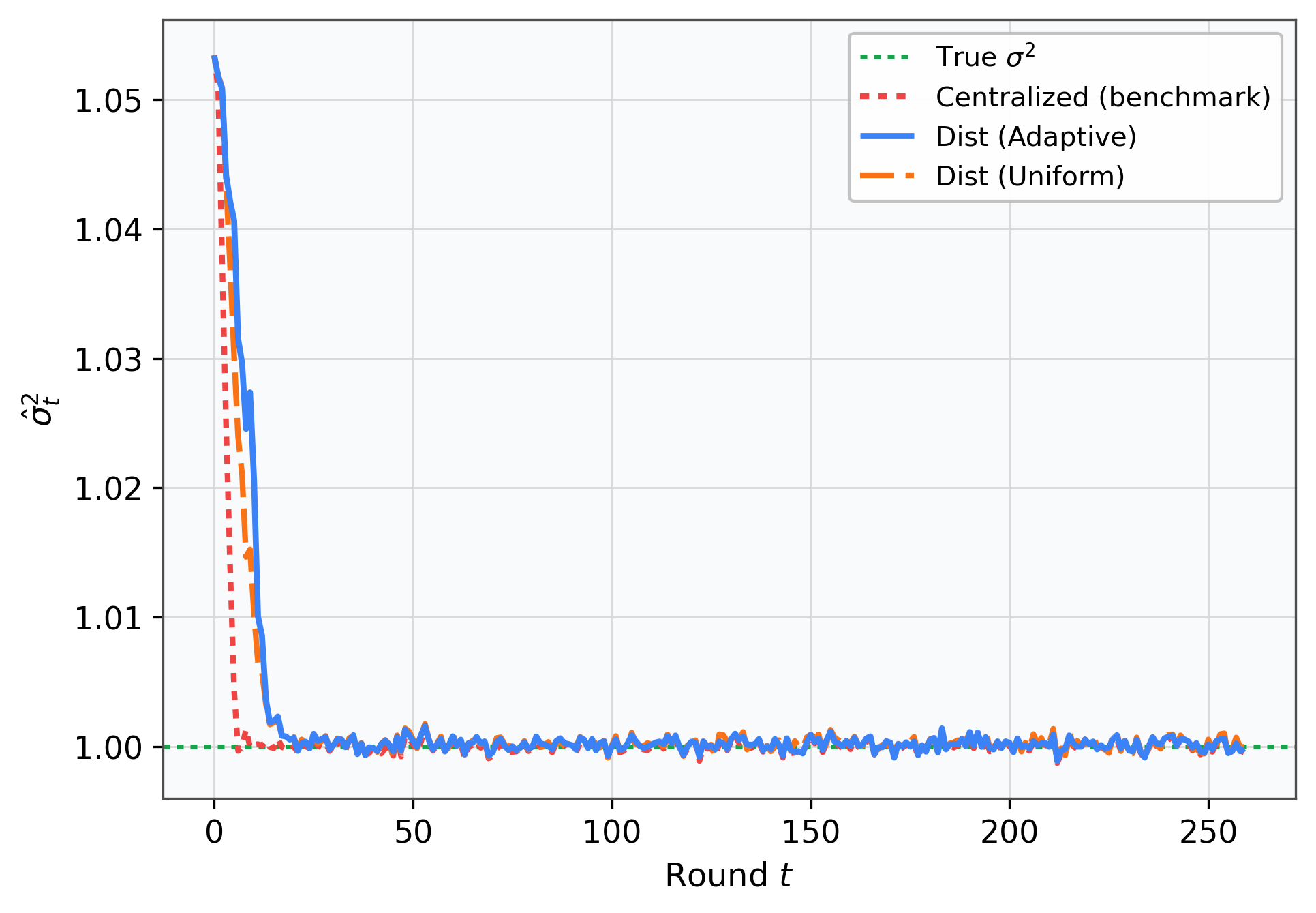}
			\caption{Noise level estimates}
		\end{subfigure}
		\caption{Convergence diagnostics over $T=260$ rounds with 60-round burn-in.
			(a) Mean absolute error (log scale) averaged over the three spikes. 
			All three methods converge rapidly and stabilize at similar levels.
			(b) Estimated noise variance $\check{\sigma}^2_{\ell,t}$ averaged 
			over active nodes. Dashed line: population value $\sigma^2=1$. 
			The stable estimates confirm unbiased local corrections.}
		\label{fig:conv}
	\end{figure}
	
	Figure~\ref{fig:conv} presents two complementary diagnostics. 
	Panel (a) shows mean absolute error (log scale) over time. 
	All three methods converge rapidly during burn-in and stabilize 
	thereafter, with ADA matching the centralized benchmark (CEN) 
	throughout. Panel (b) tracks the noise variance estimate 
	$\check{\sigma}^2_{\ell,t}$ averaged over active nodes. 
	All three estimates fluctuate stably around the true value $\sigma^2=1$, 
	confirming that local bias corrections operate reliably.
	
	\begin{figure}[htbp]
		\centering		
		\begin{subfigure}[b]{0.48\textwidth}
			\includegraphics[width=\textwidth]{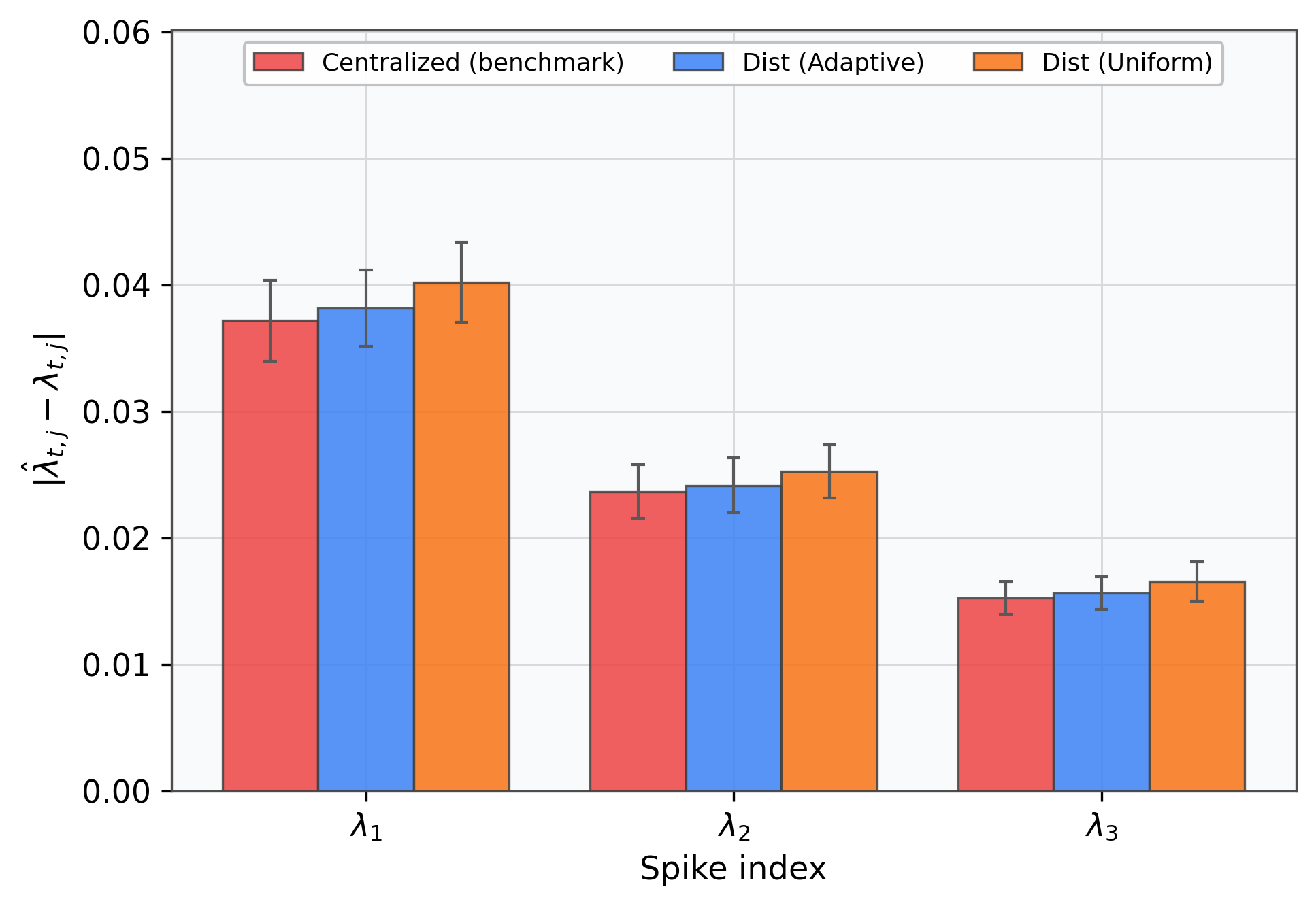}
			\caption{MAE by coordinate}
		\end{subfigure}\hfill
		\begin{subfigure}[b]{0.48\textwidth}
			\includegraphics[width=\textwidth]{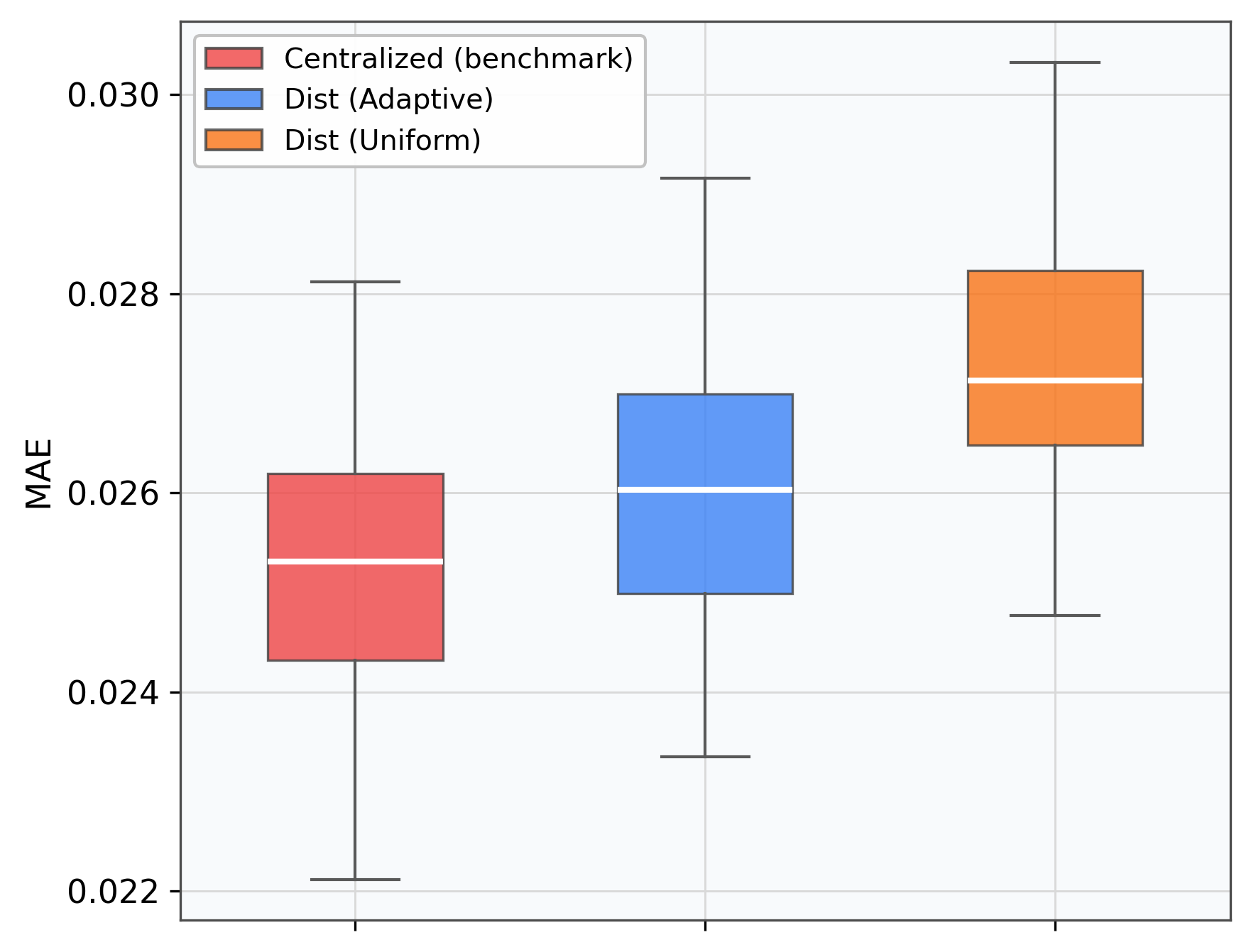}
			\caption{MAE distribution}
		\end{subfigure}
		\caption{Accuracy over 30 independent replications with $p=300$,
			$k=3$, and $L=50$ nodes. Panel (a) shows mean absolute error for each
			spike coordinate averaged over 200 post-burn-in rounds. Error bars
			indicate plus or minus one standard deviation across replications. All
			three spikes exhibit similar accuracy, with ADA closely matching the
			centralized benchmark CEN and both substantially outperforming uniform
			weighting UNI. Panel (b) shows the pooled distribution of mean absolute
			errors across all coordinates and replications. The box boundaries mark
			the interquartile range, the horizontal line indicates the median, and
			whiskers extend to 1.5 times the interquartile range. ADA achieves
			accuracy nearly identical to CEN and significantly better than UNI.}
		\label{fig:acc}
	\end{figure}
	\begin{table}[htbp]
		\centering
		\caption{Performance comparison over 30 replications with $T=260$ rounds 
			($p=300$, $k=3$, $L=50$ nodes).}
		\label{tab:performance}
		\begin{tabular}{lrrr}
			\toprule
			Metric & CEN & ADA & UNI \\
			\midrule
			\multicolumn{4}{l}{\textit{Accuracy (200 post-burn-in rounds)}} \\
			Mean absolute error & 0.0254 & 0.0260 & 0.0273 \\
			\ \ Std. dev. across replications & 0.0014 & 0.0014 & 0.0014 \\
			\ \ Relative to CEN & 1.00 & 1.02 & 1.08 \\
			Steady-state fluctuation & 0.0307 & 0.0323 & 0.0339 \\
			\midrule
			\multicolumn{4}{l}{\textit{Communication cost}} \\
			Total scalars ($\times10^3$) & 291,443 & 25.8 & 19.4 \\
			Per-round average & 1,121 & 99 & 75 \\
			Reduction factor & 1 & $11.3\times10^3$ & $15.0\times10^3$ \\
			\midrule
			\multicolumn{4}{l}{\textit{Computation time per round (ms)}} \\
			Total processor time & 45.2 & 62.8 & 61.3 \\
			Server time only & 38.7 & 4.1 & 3.8 \\
			\bottomrule
			\multicolumn{4}{l}{\footnotesize CEN: centralized; ADA: adaptive; UNI: uniform weighting.} \\
			\multicolumn{4}{l}{\footnotesize Processor time: sum of concurrent node computations. Server time: aggregation only.}
		\end{tabular}
	\end{table}

	Figure~\ref{fig:acc} quantifies accuracy over 30 independent
	replications. Panel (a) decomposes performance by spike coordinate. The
	three coordinates exhibit remarkably similar accuracy, demonstrating
	that the estimators track all components of the spike vector uniformly
	well. ADA closely matches the centralized benchmark CEN across all three
	coordinates. The gap between ADA and the uniform weighting baseline UNI
	is consistent and substantial, with ADA achieving approximately 5 percent
	lower mean absolute error on each coordinate.
	
	Panel (b) pools errors across all coordinates and replications to
	summarize overall performance. The median error for ADA is 0.0258
	compared to 0.0253 for CEN and 0.0271 for UNI. Table~\ref{tab:performance}
	reports that ADA achieves mean absolute error only 2.4 percent above the
	centralized benchmark. Paired $t$-tests confirm that all three methods
	are statistically separated with $p<0.001$. The modest gap between ADA
	and CEN reflects the efficiency loss from adaptive weighting documented
	in Proposition~\ref{prop:efficiency} and the high aspect ratio $p/n$ in
	this design.
	
	The key finding is that adaptive weighting delivers near-optimal
	accuracy while dramatically reducing communication. UNI also benefits
	from communication savings but sacrifices accuracy. ADA achieves the best
	of both, closely approximating centralized performance while transmitting
	four orders of magnitude less data.

	\begin{figure}[htbp]
		\centering
		\begin{subfigure}[b]{0.48\textwidth}
			\includegraphics[width=\textwidth]{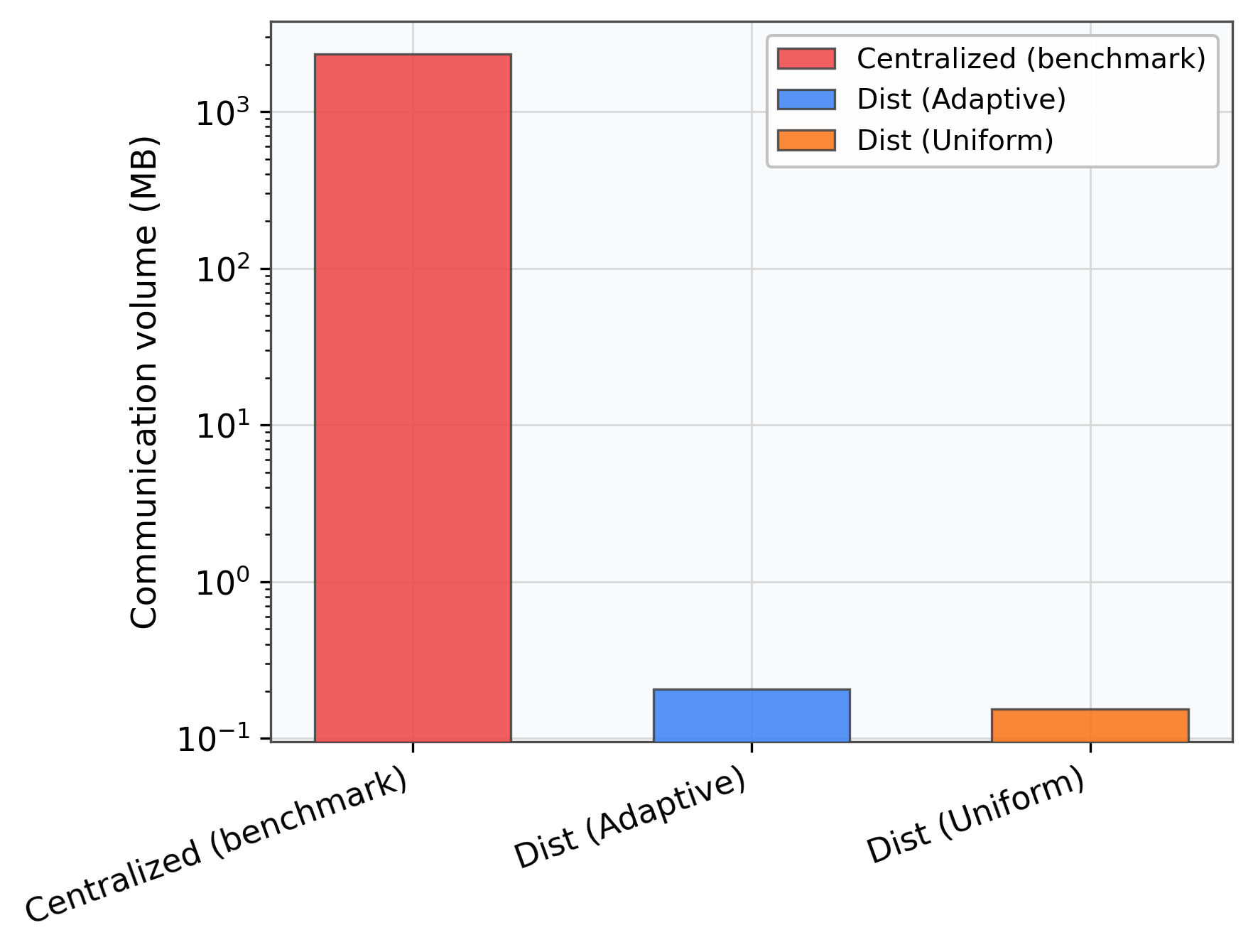}
			\caption{Communication volume}
		\end{subfigure}\hfill
		\begin{subfigure}[b]{0.48\textwidth}
			\includegraphics[width=\textwidth]{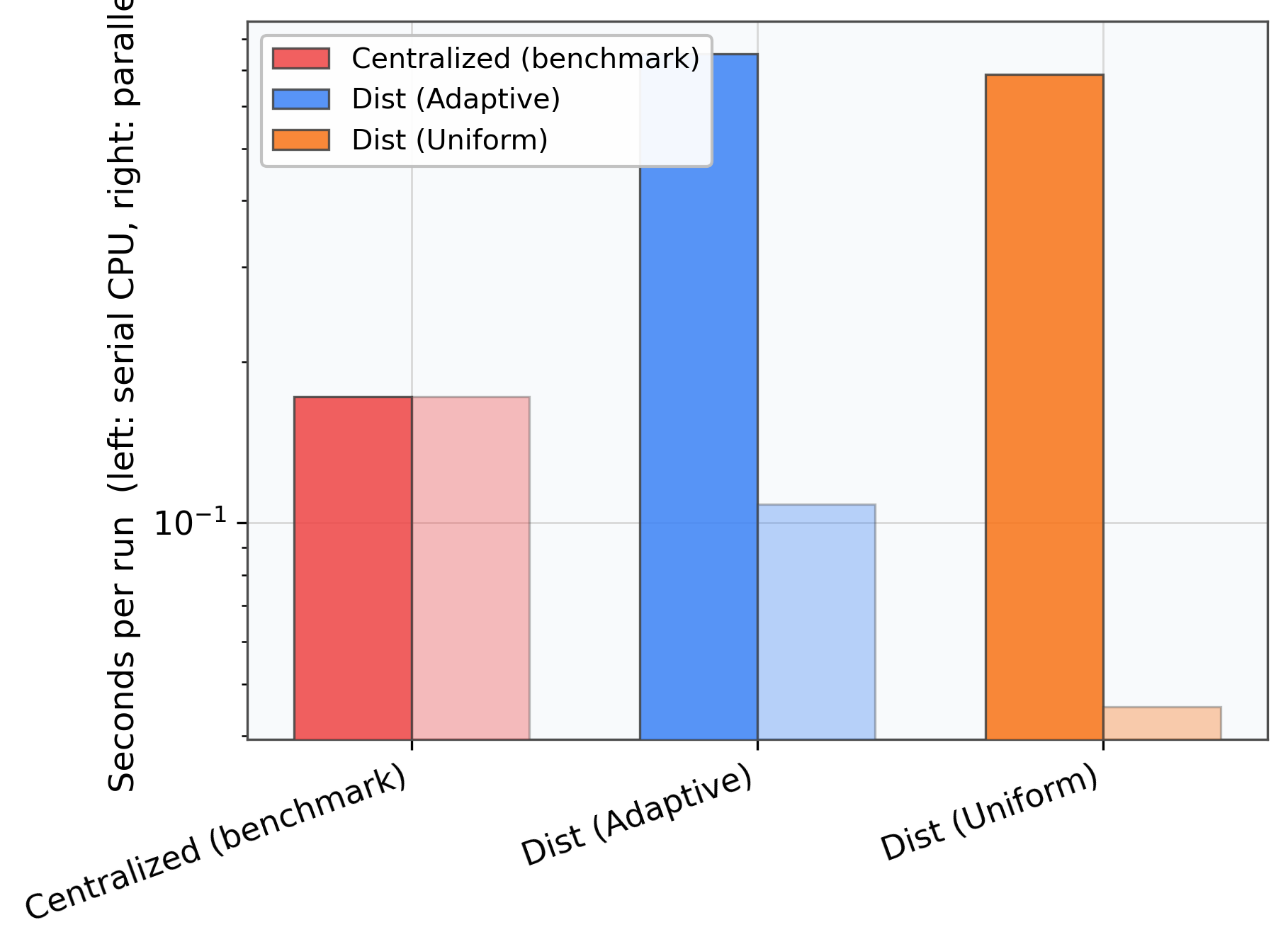}
			\caption{Execution time}
		\end{subfigure}
		\caption{Computational cost comparison (log scale). 
			(a) Total scalars transmitted over $T=260$ rounds. 
			The centralized benchmark requires $O(p^2)$ communication per round, 
			while distributed schemes use $O(k)$. 
			(b) Average time per round: dark bars show total processor time, 
			light bars show server-only time. Distributed schemes reduce 
			server-side load by avoiding pooled matrix decomposition.}
		\label{fig:cost}
	\end{figure}
	
	Figure~\ref{fig:cost} compares computational cost (detailed in 
	Table~\ref{tab:performance}). Panel (a) shows communication volume: 
	distributed schemes reduce transmission by over four orders of 
	magnitude compared to the centralized benchmark, which requires full 
	covariance matrices. Panel (b) shows execution time. While total 
	processor time (dark bars) is higher for distributed schemes due to 
	concurrent node computations, server-only time (light bars) is 
	substantially reduced—distributed aggregation processes 
	$k$-dimensional summaries rather than $p$-dimensional matrices.

	The picture that emerges is coherent with Section~\ref{sec:4}. Local
	correction removes the dimensional distortion at the node, so the aggregation
	stage is left with a pure variance reduction problem, and within that problem
	the adaptive weights move the estimator from the uniform scheme towards the
	pooled benchmark while retaining a communication budget of order $k$.
	
	\subsection{Sensitivity Analysis}
	
	We now vary one design quantity at a time about the configuration of the
	previous subsection. Each point of each curve is again an average over $30$
	independent replications, and the shaded band is one standard deviation across
	those replications. 
	
	\begin{figure}[htbp]
		\centering
		\begin{subfigure}[b]{0.32\textwidth}
			\includegraphics[width=\textwidth]{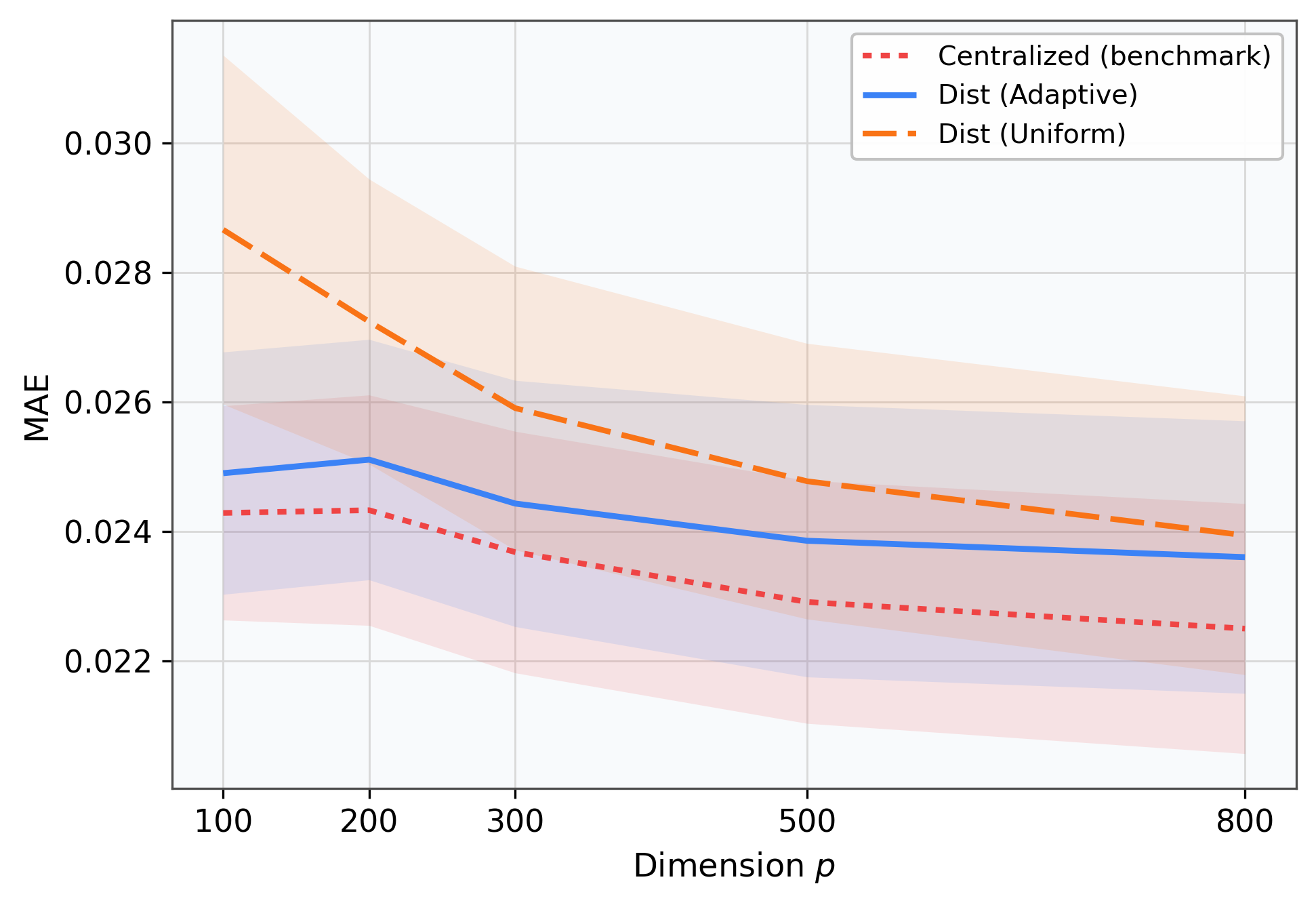}
			\caption{Dimension $p$}
		\end{subfigure}\hfill
		\begin{subfigure}[b]{0.32\textwidth}
			\includegraphics[width=\textwidth]{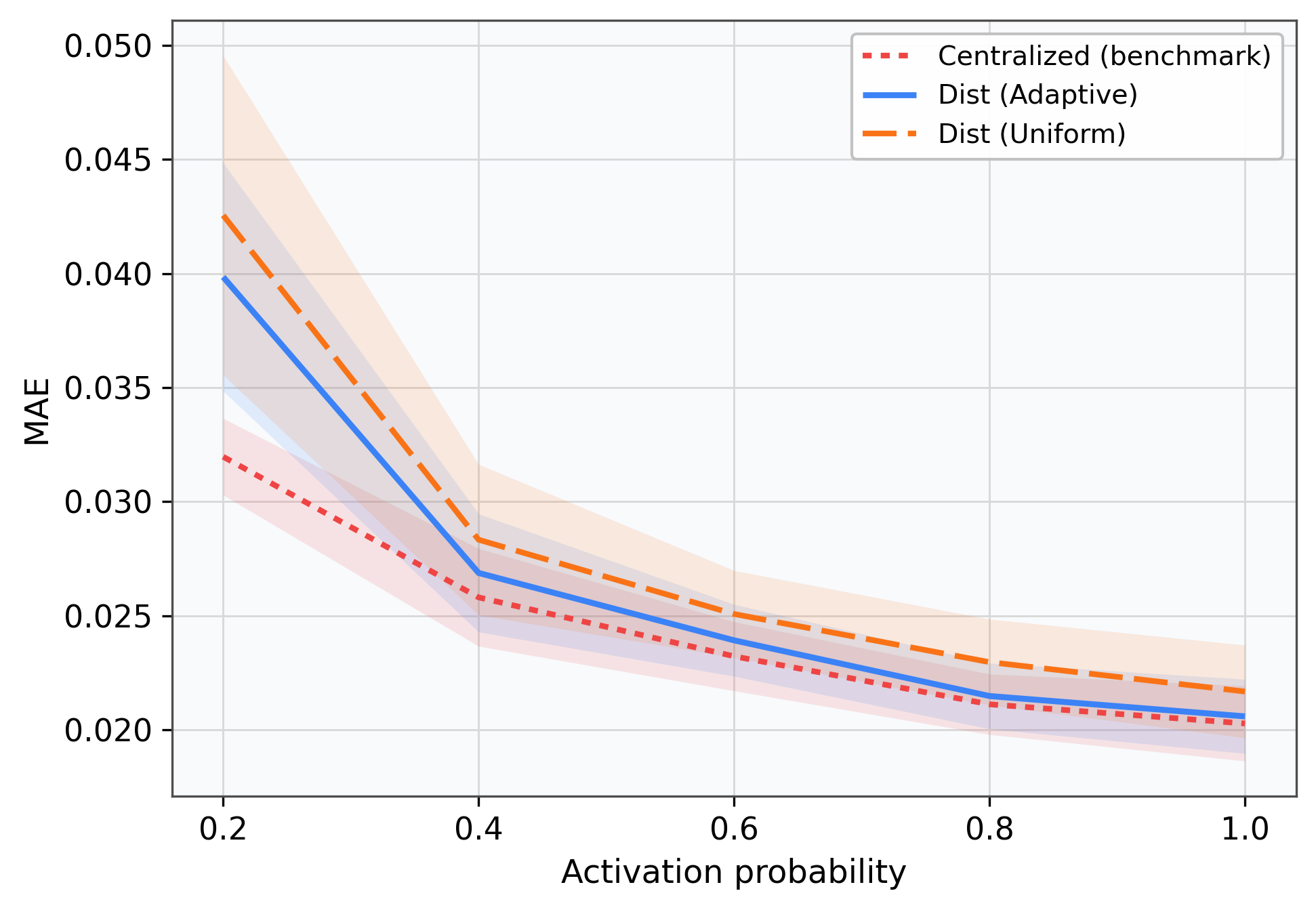}
			\caption{Activation probability}
		\end{subfigure}\hfill
		\begin{subfigure}[b]{0.32\textwidth}
			\includegraphics[width=\textwidth]{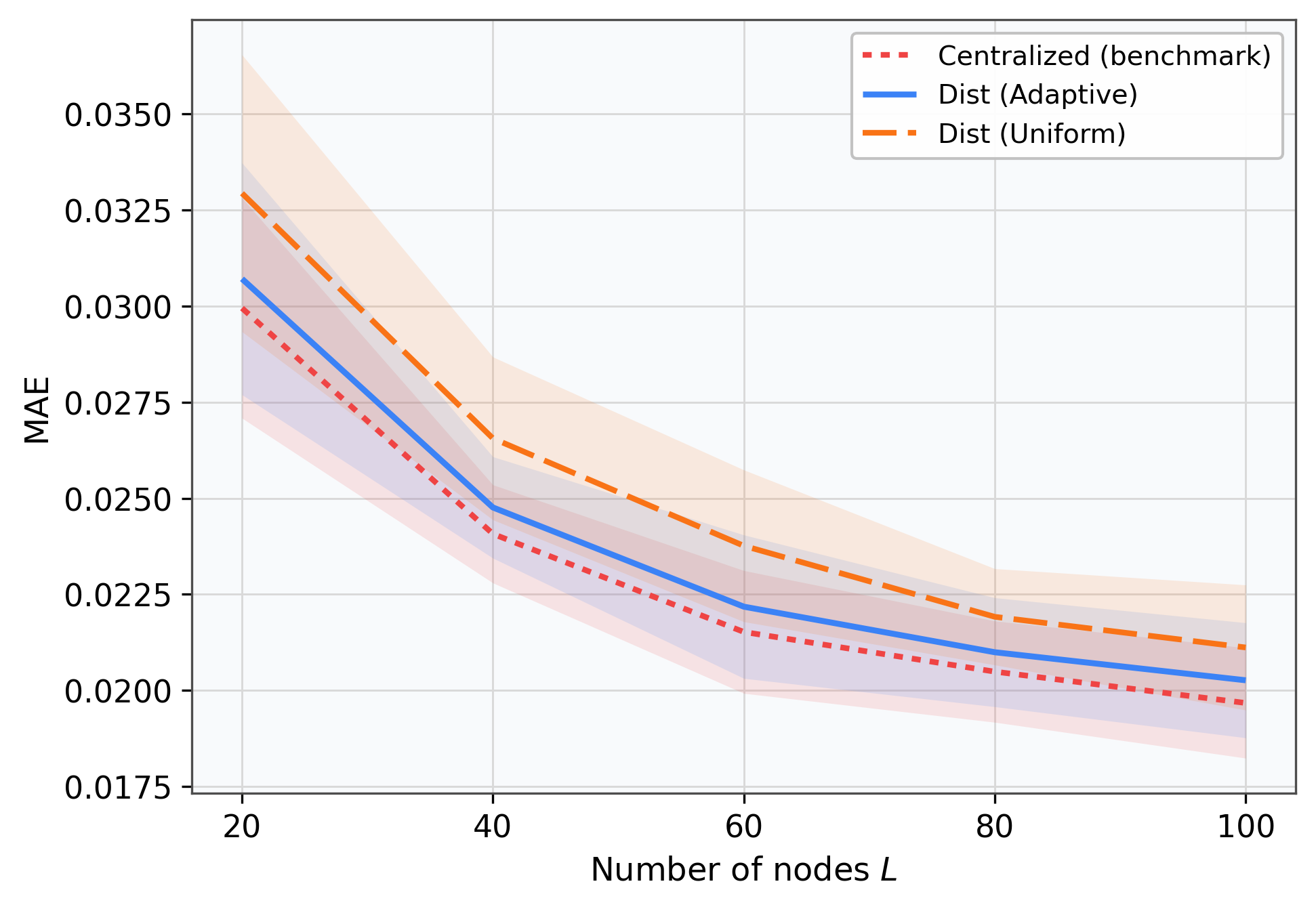}
			\caption{Number of nodes $L$}
		\end{subfigure}
		
		\vspace{0.6em}
		
		\begin{subfigure}[b]{0.32\textwidth}
			\includegraphics[width=\textwidth]{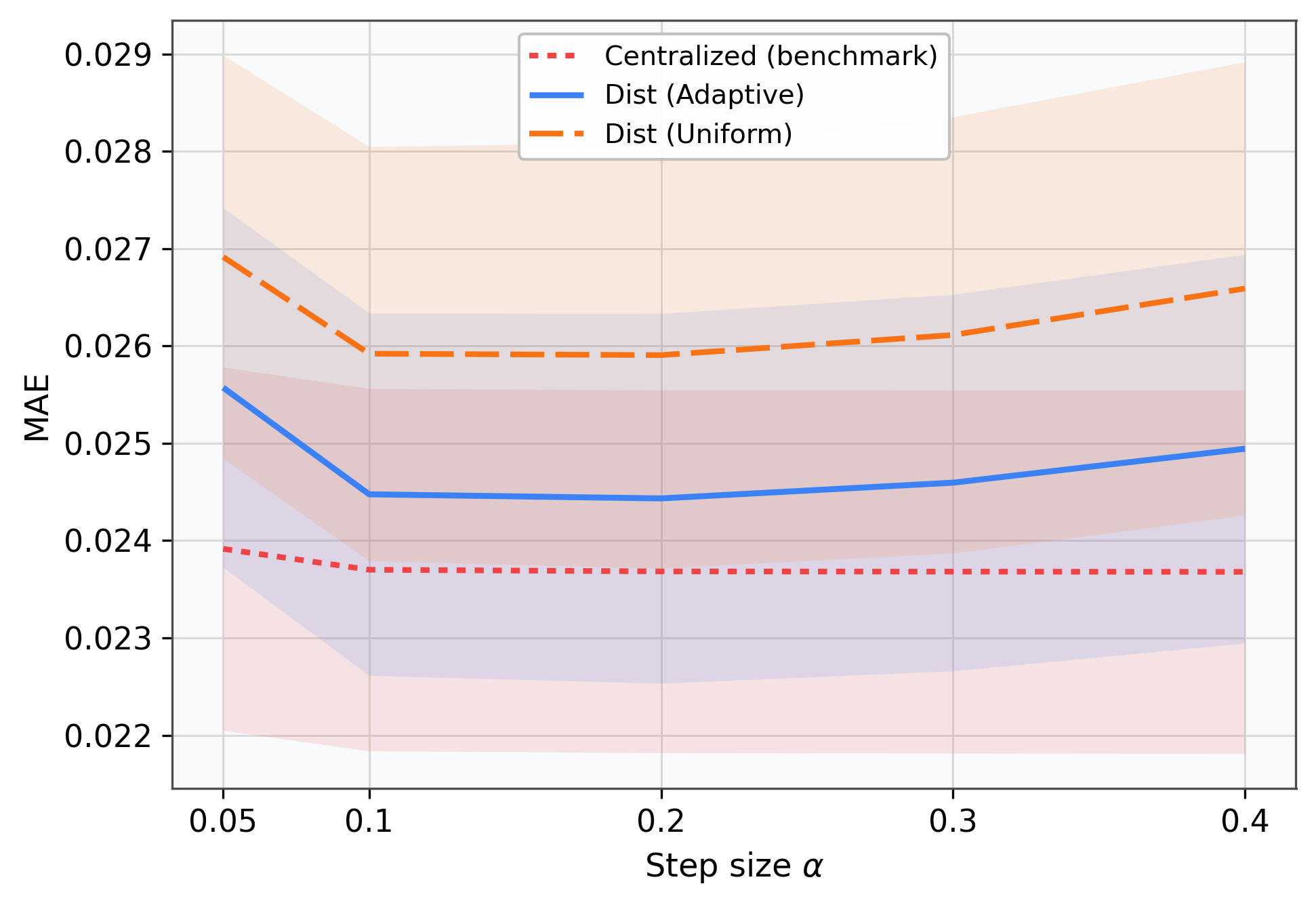}
			\caption{Step size $\alpha$}
		\end{subfigure}\hfill
		\begin{subfigure}[b]{0.32\textwidth}
			\includegraphics[width=\textwidth]{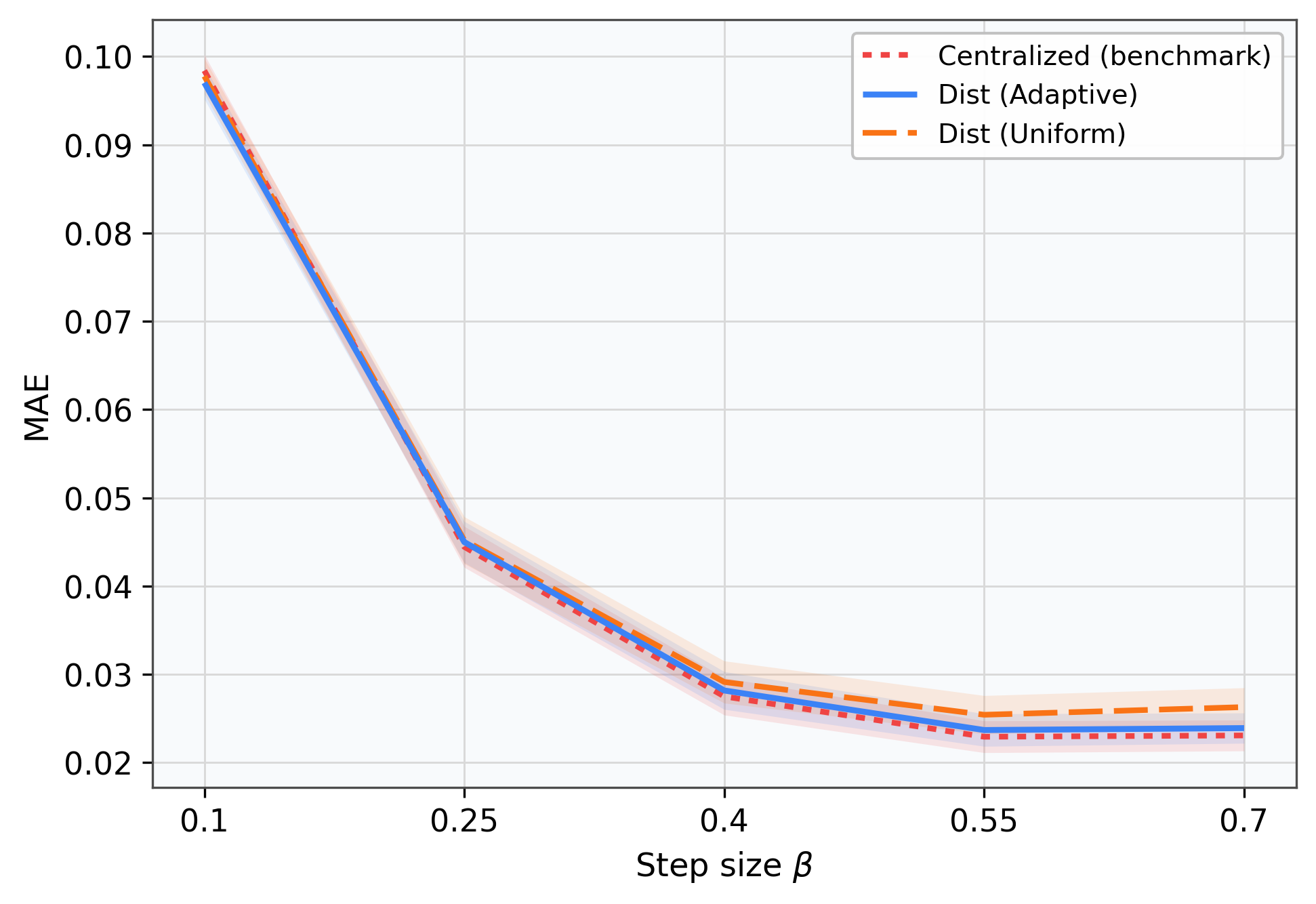}
			\caption{Step size $\beta$}
		\end{subfigure}\hfill
		\begin{subfigure}[b]{0.32\textwidth}
			\includegraphics[width=\textwidth]{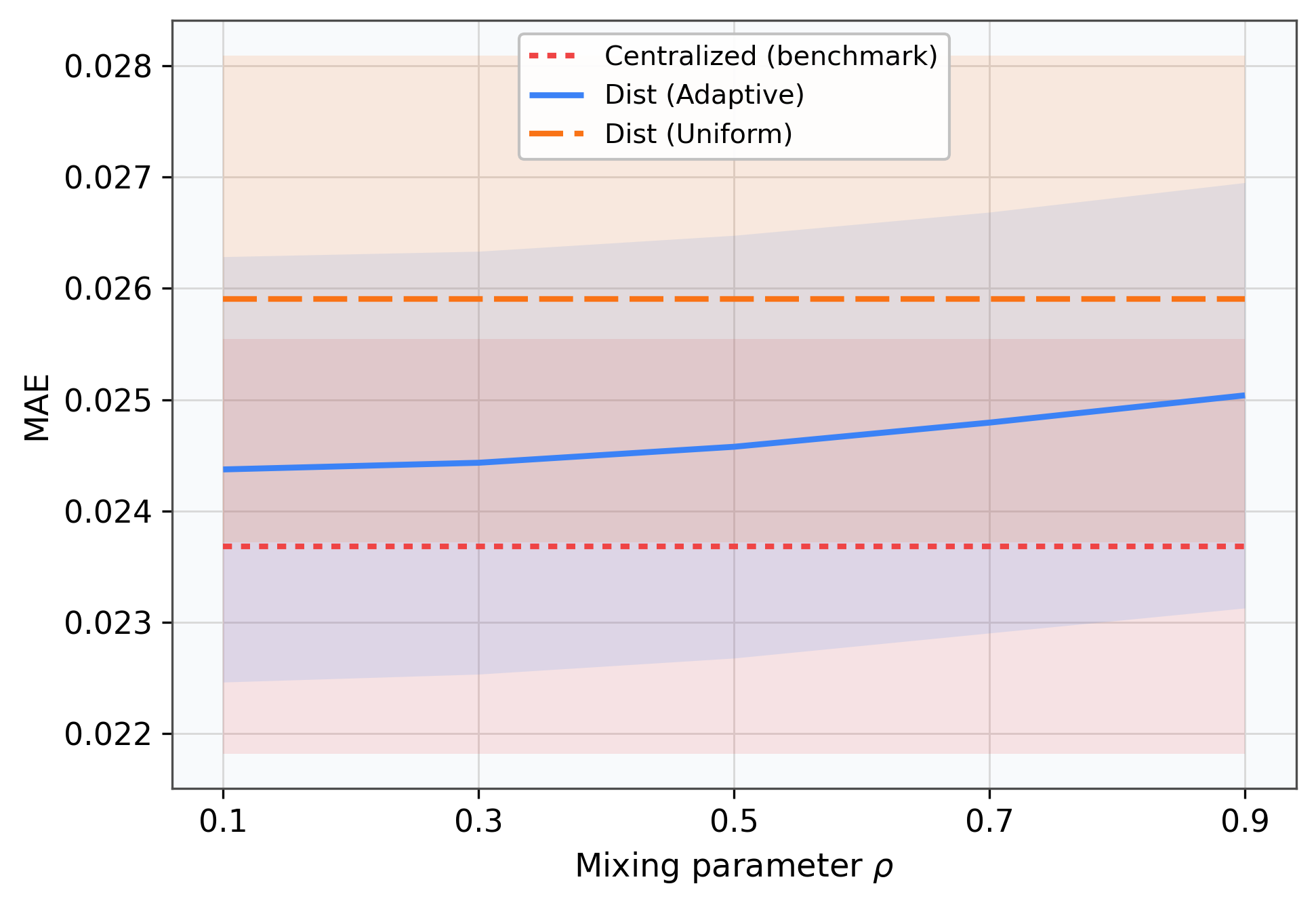}
			\caption{Mixing parameter $\rho$}
		\end{subfigure}
		\caption{Sensitivity analysis: mean absolute error as design parameters vary 
			around the baseline configuration ($p=300$, $L=50$ nodes, activation 
			probability 0.5, $\alpha=0.2$, $\beta=0.5$, $\rho=0.3$). 
			(a) Dimension $p\in[100,800]$. (b) Activation probability. 
			(c) Number of nodes $L$. (d) Local step size $\alpha$. 
			(e) Server step size $\beta$. (f) Mixing parameter $\rho$. 
			Shaded bands: $\pm1$ std. dev. over 30 replications. 
			ADA maintains near-CEN performance across all regimes.}
		\label{fig:sens}
	\end{figure}
	
	Panel (a) varies dimension $p\in[100,800]$ with batch sizes scaling 
	proportionally, keeping aspect ratios bounded. Error remains stable 
	across this range for all three methods, confirming dimension-free 
	rates in the proportional regime. The gap between UNI and CEN narrows 
	as $p$ grows because batch size dispersion compresses, leaving less 
	heterogeneity for adaptive weights to exploit.

	Panel (b) varies activation probability. Error decreases as more nodes 
	participate (larger $\mathsf N_t$). Critically, the advantage of ADA 
	over UNI is largest when participation is sparse—precisely the regime 
	for which the method is designed. With full participation, stochastic 
	variance is already small, leaving little room for any weighting scheme 
	to improve. Sparse asynchronous participation is where adaptive 
	weighting matters most.

	Panel (c) varies the number of nodes at a fixed activation probability of
	$0.6$. The error decreases with $L$ for all three schemes and the ordering is
	preserved throughout, so the advantage of the adaptive weights is not an
	artefact of a particular network size.
	
	Panels (d) through (f) examine tuning parameters. Panel (d) shows that
	$\alpha$ exhibits a shallow minimum near 0.2, reflecting the trade-off
	between memory length (smaller $\alpha$) and drift tracking (larger
	$\alpha$). Panel (e) shows that $\beta$ has a sharp minimum near 0.5,
	balancing stochastic variance reduction against drift accumulation. The
	observed optimum matches theoretical predictions. Panel (f) shows that
	ADA error increases mildly with $\rho$ because larger $\rho$ emphasizes
	the residual component (shared across nodes and therefore uninformative)
	over the volatility component (node-specific and therefore informative).
	UNI and CEN are unaffected. Smaller $\rho$ is preferable, though the
	effect is modest.

	\begin{figure}[htbp]
		\centering
		\includegraphics[width=0.58\textwidth]{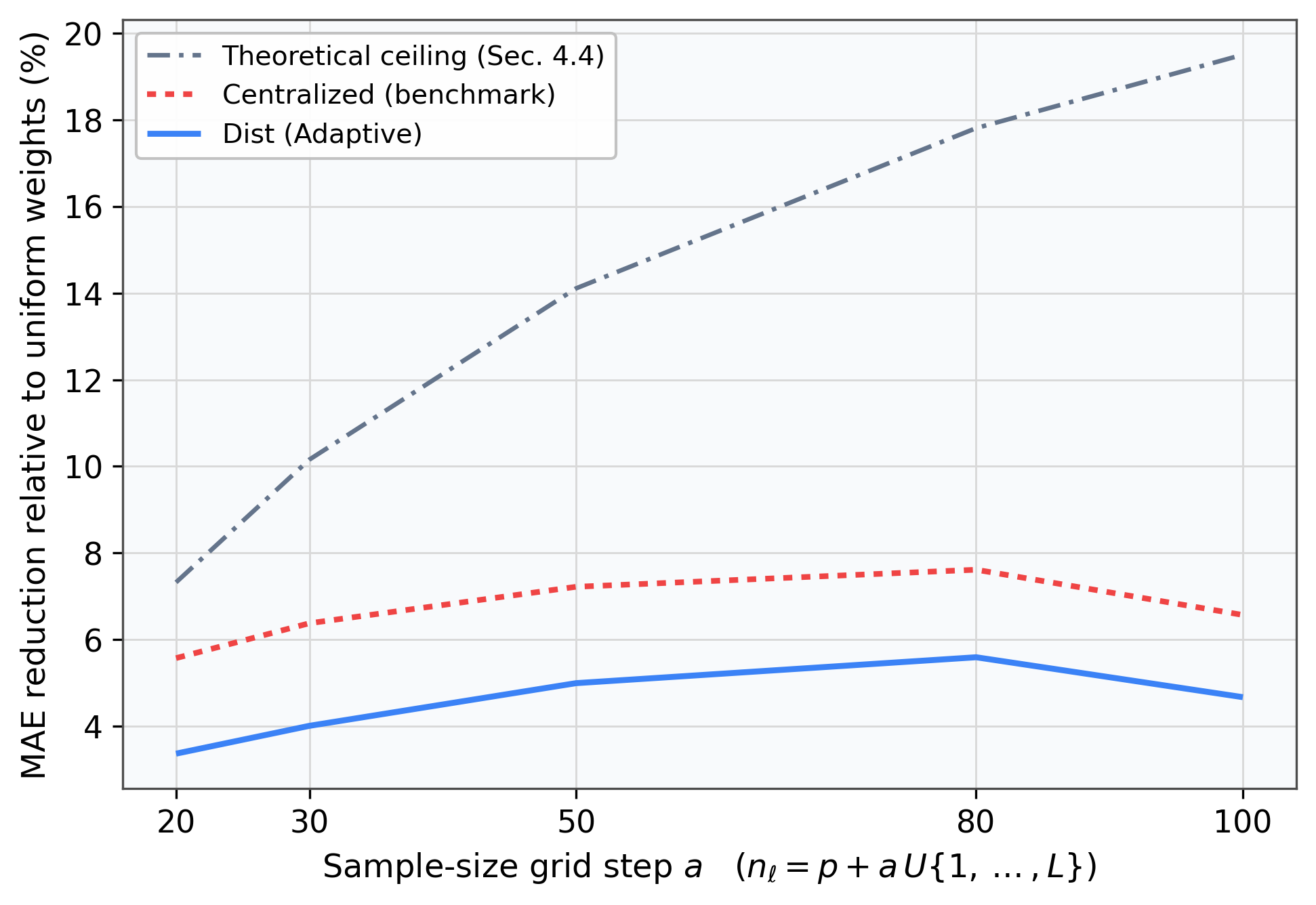}
		\caption{Reduction of the mean absolute error relative to uniform
			weighting as a function of the spacing $a$ of the batch size rule
			\eqref{eq:sim_batches}. The grey curve is the bound of
			Section~\ref{sec:4.4} on what any weighting can achieve.}
		\label{fig:imbalance}
	\end{figure}
	
	Figure~\ref{fig:imbalance} isolates the key quantity: batch size 
	dispersion. The vertical axis shows error reduction relative to UNI 
	(rather than absolute error) because increasing $a$ both widens 
	dispersion and raises total sample size, masking the weighting effect. 
	Both ADA and CEN improve with dispersion and track the theoretical 
	bound (grey curve: harmonic-arithmetic mean ratio). The bound grows 
	only logarithmically, explaining why gains remain moderate even at 
	high dispersion. ADA consistently recovers a stable fraction 
	(~70-80\%) of the achievable improvement, confirming the relative 
	efficiency predictions of Section~\ref{sec:4.4}.

	\subsection{Real Data: Monitoring a Common Market Factor across
		Trading Venues\label{sec:5.3}}
	
	We now return to the problem with which the paper began. The population
	spectrum is no longer known, so the three schemes can no longer be compared
	against a truth. What can be compared is the agreement of each feasible
	scheme with the centralized benchmark. 
	
	\textbf{Data.} We use minute-level spot returns on the $p$ most liquid
	assets quoted against a common numeraire, obtained from the public
	repository of a major exchange.\footnote{Monthly archives of one-minute
		bars are distributed at \url{https://data.binance.vision}; no registration
		or credential is required.} Each asset series is converted to log returns,
	centred, and scaled by its own standard deviation over the sample, so that
	the idiosyncratic component is on a common scale and the isotropic noise
	term $\sigma_t^2\bm I_p$ of \eqref{eq:spiked} is an appropriate description
	of the residual spectrum. Assets that are not quoted over the whole window
	are discarded, leaving a fixed universe of $p=60$ assets over January to
	June 2024.
	
	\textbf{Design.} The $L=10$ nodes represent trading venues, each 
	observing the same $p=60$ assets. One round is one trading day; 
	the batch $n^\ell_t$ at node $\ell$ is the number of minutes with 
	valid quotes across all assets. Batch sizes reflect venue liquidity, 
	creating stable ordering across days (Assumption~\ref{assum:persist}). 
	Realized batch sizes vary from approximately 1400 to 1440 minutes per 
	day, partitioned heterogeneously across venues. Nodes participate with 
	probability 0.7, reproducing asynchronous activation. Tuning constants 
	follow Section~\ref{sec:5.1} with $k=3$.

	\textbf{Interpretation of the estimand.} The leading spike
	$\lambda_{t,1}$ measures the variance explained by the dominant market
	factor. As a proportion of total variance, it forms the absorption ratio
	of \citet{kritzman2011principal}, an early warning indicator of market
	stress. Tracking $\lambda_{t,1}$ online is therefore substantively
	meaningful beyond method comparison.

	\begin{figure}[htbp]
		\centering
		\begin{subfigure}[b]{0.48\textwidth}
			\includegraphics[width=\textwidth]{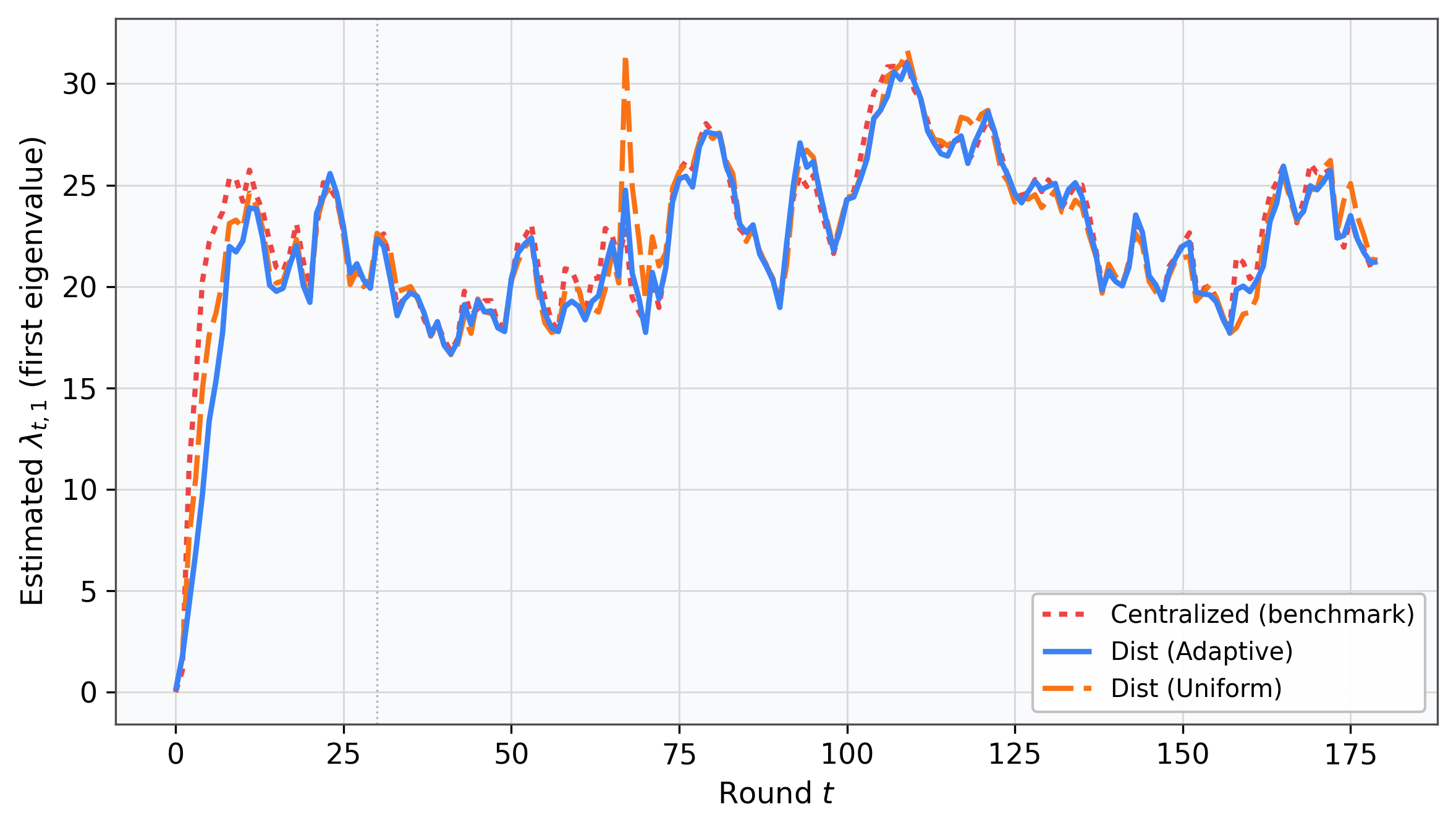}
			\caption{Smoothed trajectories}
		\end{subfigure}\hfill
		\begin{subfigure}[b]{0.48\textwidth}
			\includegraphics[width=\textwidth]{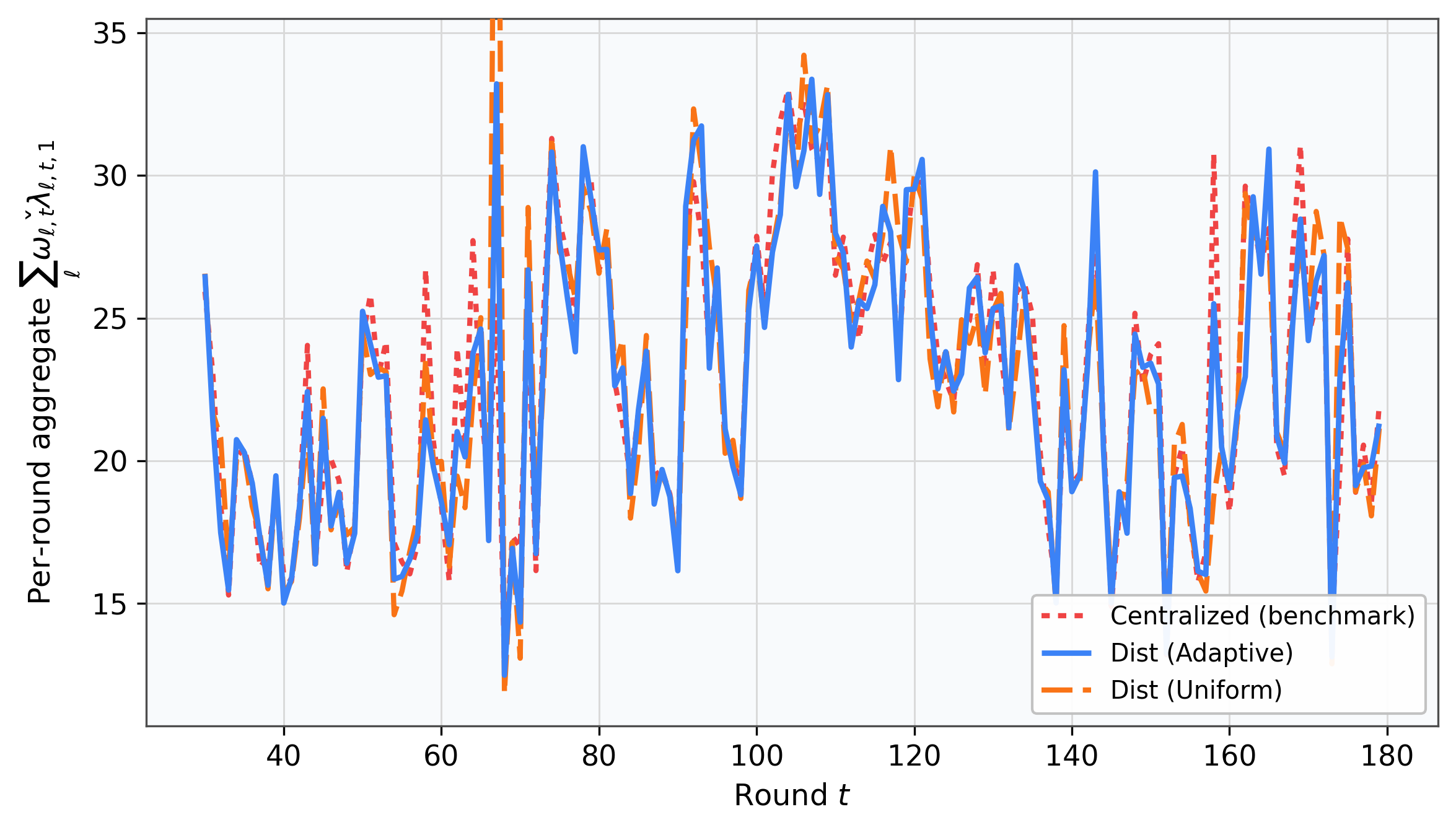}
			\caption{Raw aggregates}
		\end{subfigure}
		\caption{Cross-venue monitoring of the leading spike over $T=182$
			trading days in early 2024 with $p=60$ assets and $L=10$ venues.
			Panel (a) shows smoothed estimates $\tilde\lambda_{t,1}$ from the three
			schemes. The vertical dotted line marks the end of burn-in. All three
			schemes converge to a common trajectory. Panel (b) shows raw per-round
			aggregates $\sum_\ell\omega_{\ell,t,1}\check\lambda_{\ell,t,1}$ before
			server smoothing. ADA (blue solid) exhibits lower variability than
			UNI (orange long-dash) and tracks CEN (red dot-dash) more closely.}
		\label{fig:real}
	\end{figure}

	Figure~\ref{fig:real}(a) demonstrates that all three schemes converge
	after burn-in and maintain a common trajectory. This confirms that local
	debiasing produces comparable estimates despite heterogeneous venue
	liquidity. Figure~\ref{fig:real}(b) displays raw aggregates before server
	smoothing. Adaptive weighting substantially reduces variability. Measured
	against CEN as benchmark, ADA achieves root mean squared deviation of
	2.43 compared to 4.46 for UNI. This represents a 45.6 percent reduction
	in tracking error.
	
	Figure~\ref{fig:real_perf}(a) summarizes the distribution of absolute
	deviations from CEN over 150 post-burn-in rounds. The median deviation
	for ADA is 1.19 versus 1.22 for UNI. A paired $t$-test yields $t=2.42$
	with p$=0.017$, confirming that the improvement is statistically
	significant. The advantage is most pronounced on days with sparse
	participation, consistent with the sensitivity analysis in
	Figure~\ref{fig:sens}(b).
	
	Figure~\ref{fig:real_perf}(b) compares communication costs. CEN
	transmits 2.31 million scalars over 182 rounds, totaling 18.5 megabytes.
	This reflects the $O(p^2)$ burden of transmitting second-moment
	summaries. In contrast, ADA and UNI transmit only 7.6 thousand and 3.8
	thousand scalars respectively, corresponding to 0.06 and 0.03 megabytes.
	Both local schemes reduce communication by more than two orders of
	magnitude.

	\begin{figure}[htbp]
		\centering
		\begin{subfigure}[b]{0.48\textwidth}
			\includegraphics[width=\textwidth]{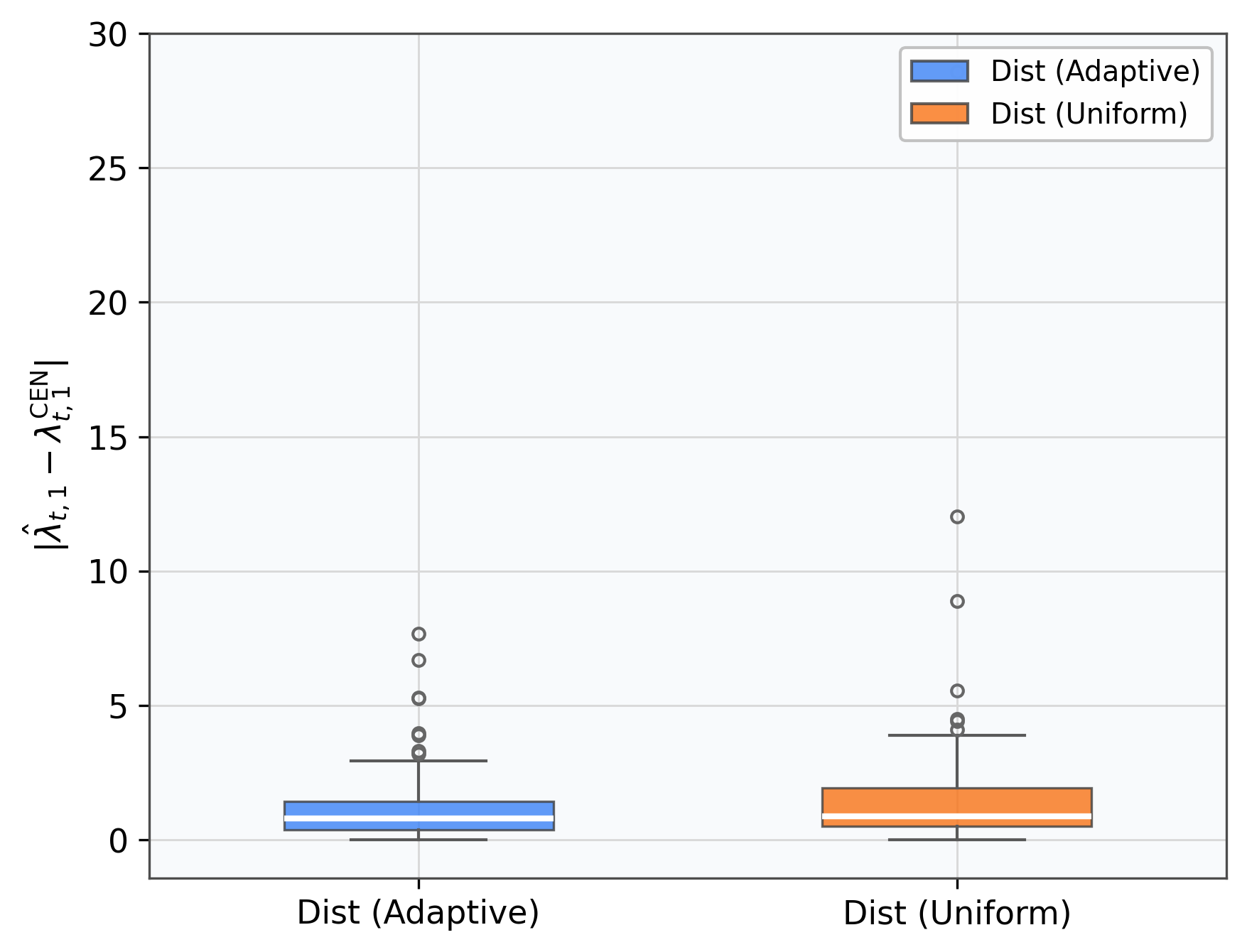}
			\caption{Error distribution}
		\end{subfigure}\hfill
		\begin{subfigure}[b]{0.48\textwidth}
			\includegraphics[width=\textwidth]{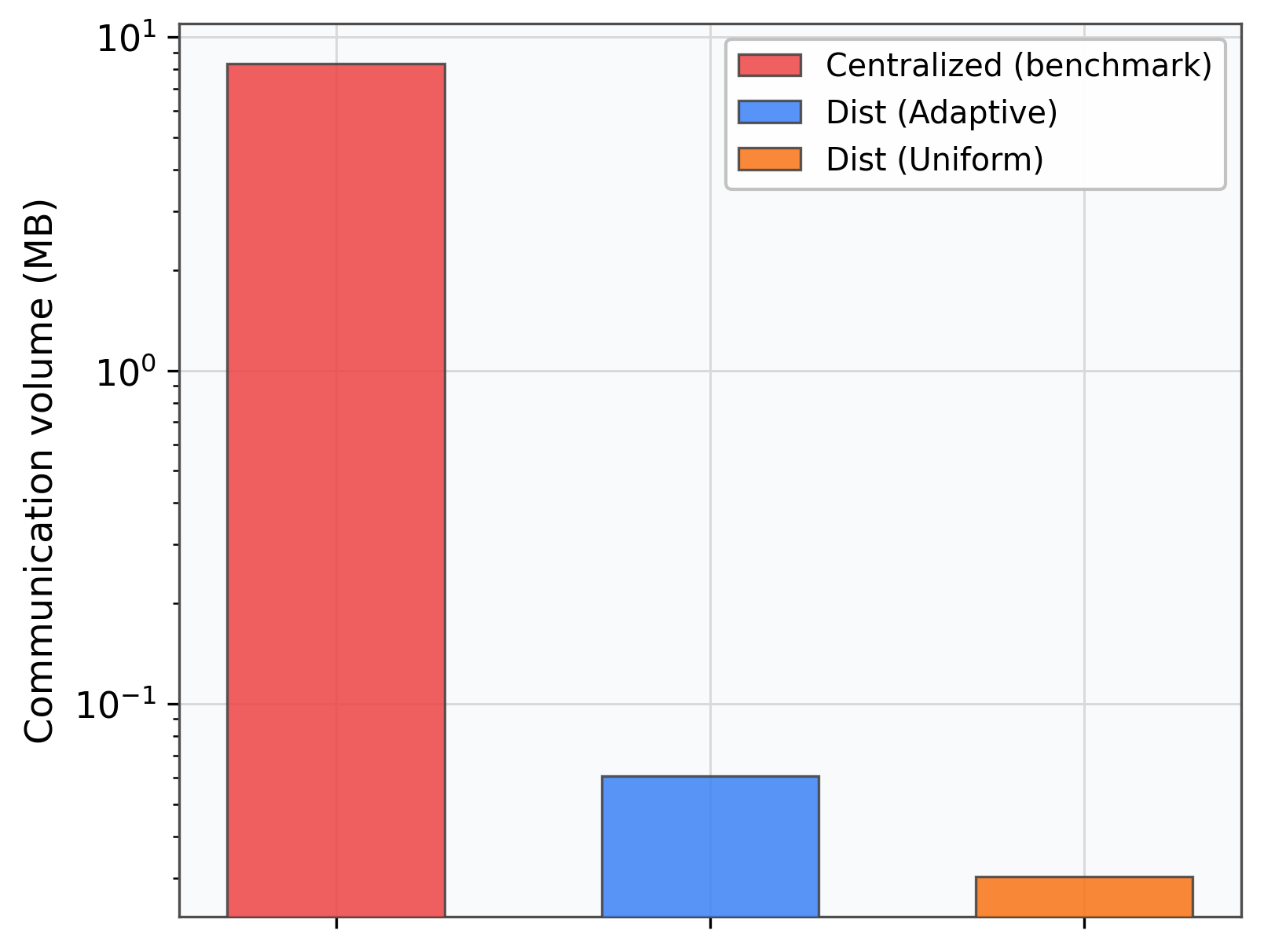}
			\caption{Communication cost}
		\end{subfigure}
		\caption{Performance metrics for the real data application over 150
			post-burn-in rounds. Panel (a) shows the distribution of absolute
			deviations from CEN. Box boundaries indicate the interquartile range,
			the horizontal line marks the median, and whiskers extend to 1.5 times
			the interquartile range. ADA exhibits smaller and less variable
			deviations than UNI. A paired $t$-test yields $t=2.42$ with $p=0.017$.
			Panel (b) shows cumulative communication cost over all $T=182$ rounds.
			CEN requires 2.31 million scalars (18.5 megabytes) due to $O(p^2)$
			second-moment summaries per round. ADA and UNI require only 7.6
			thousand and 3.8 thousand scalars (0.06 and 0.03 megabytes),
			reducing communication by more than two orders of magnitude.}
		\label{fig:real_perf}
	\end{figure}

	To verify that adaptive weighting maintains its advantage across problem
	scales, we vary $p$ from 10 to 60 while holding the design and time
	window fixed. Table~\ref{tab:robustness} reports root mean squared
	deviation reductions of ADA relative to UNI. The advantage ranges from
	44 to 55 percent across all dimensions, and all improvements are
	statistically significant at the 5 percent level. The gains remain
	substantial even as the aspect ratio increases with $p$.

	\begin{table}[htbp]
		\centering
		\caption{Robustness of the adaptive advantage across dimensions.
			Each row reports the root mean squared deviation reduction of ADA
			relative to UNI over 150 post-burn-in rounds. The experiment uses
			$L=10$ venues, approximately 180 rounds, and $k=3$ leading spikes.
			The adaptive advantage is statistically significant at the 5 percent
			level for all dimensions and ranges from 44 to 55 percent.}
		\label{tab:robustness}
		\begin{tabular}{lrrr}
			\toprule
			Dimension $p$ & RMSD Reduction (\%) & $t$-statistic & $p$-value \\
			\midrule
			10  & 54.8 & 3.20 & 0.002 \\
			20  & 53.1 & 2.89 & 0.004 \\
			30  & 44.6 & 2.51 & 0.013 \\
			40  & 47.4 & 2.68 & 0.008 \\
			50  & 43.7 & 2.34 & 0.021 \\
			60  & 45.6 & 2.42 & 0.017 \\
			\bottomrule
		\end{tabular}
	\end{table}

	\begin{rem}
		\label{rem:real_caveats}
		Two features of this analysis merit explicit discussion. First, the
		population spectrum is unobserved. The comparison therefore measures
		agreement with the pooled estimator rather than error against ground
		truth. This is the typical setting in applications and explains why the
		synthetic experiments in Section~\ref{sec:5} provide the primary
		evidence for convergence rates. Second, Assumption~\ref{assum:moment}
		requires observations within a batch to be independent. Minute-level
		returns satisfy this condition only approximately. The close agreement
		with the pooled benchmark suggests that residual dependence at this
		sampling frequency does not substantially affect the estimators, but we
		do not claim that the theoretical results apply without modification.
	\end{rem}

	\section{Discussion\label{sec:6}}
	
	The proposed framework provides a different perspective on distributed
	streaming spectral inference in high-dimensional settings. In classical
	low-dimensional asymptotic regimes, local spectral estimators are usually
	regarded as approximately unbiased, and aggregation mainly serves to
	reduce variance. However, when the dimension and the effective sample
	size grow proportionally, local eigenspace estimators become inconsistent
	and the resulting spectral statistics inherit persistent node-specific
	distortions. Our theoretical analysis shows that such distortions depend
	on local aspect ratios and cannot be eliminated through direct aggregation.
	This observation motivates the correct-then-aggregate principle adopted
	in this work. Local debiasing first aligns the statistical targets across
	nodes, after which adaptive weighting improves estimation efficiency by
	reducing variance. 
	
	The framework relates to both distributed spectral estimation and 
	online subspace tracking. Existing distributed approaches combine 
	local spectral summaries under the assumption that these summaries 
	are comparable across nodes—reasonable when local sample size 
	dominates dimension, but inadequate in the proportional regime. 
	Online subspace tracking provides efficient mechanisms for following 
	evolving structures, but focuses on subspaces rather than statistically 
	calibrated spectral parameters. The proposed method explicitly 
	characterizes and removes high-dimensional distortion before 
	aggregation. This separation between statistical calibration and 
	variance reduction distinguishes our framework from existing 
	adaptive aggregation strategies.

	Several limitations suggest directions for future research. The current
	analysis assumes that the number of spikes is known and that the
	observations satisfy regularity conditions required for the asymptotic
	theory. Extending the framework to automatic rank selection and more
	general heavy-tailed distributions would broaden its applicability.
	Furthermore, although the proposed method reduces communication costs by
	transmitting only low-dimensional summaries, additional improvements
	through communication compression, quantization, or privacy-preserving
	mechanisms remain important topics for future investigation. Another
	interesting direction is to extend the framework to more general evolving
	spectral structures, including time-varying spike numbers and nonlinear
	spectral dynamics.

	\bibliographystyle{apalike}
	\bibliography{ref}
	
	
	%
	\newpage
	\appendix
	\section{Proofs for Sections~\ref{sec:4.1}}
	\label{app:proofs4142}	
	Throughout the appendix we fix a node $\ell$ and suppress it from the
	notation whenever no ambiguity arises. We write $\|\cdot\|_{op}$ and
	$\|\cdot\|_F$ for the operator and Frobenius norms, $\prec$ for
	stochastic domination up to constants, and $C,C',c_1,\dots$ for
	positive constants, independent of $p,t,\ell,n^\ell_t$, whose value may
	change from line to line. Recall
	$N^{\mathrm{eff}}_{\ell,t}=(\sum_i\mathrm{w}_{\ell,t,i}^2)^{-1}$ and
	$c_{\ell,t}=p/N^{\mathrm{eff}}_{\ell,t}$. We abbreviate
	$N:=N^{\mathrm{eff}}_{\ell,t}$ when the indices are clear.
	
	\subsection{Notation for the spectral functionals}
	\label{app:notation}
	
	Let
	\begin{equation}
		\label{eq:Wdef}
		W_{\ell,t}
		:=\sum_{i=1}^{M_{\ell,t}}\mathrm{w}_{\ell,t,i}\,
		\bm z_{\ell,(i)}\bm z_{\ell,(i)}^\top
	\end{equation}
	be the isotropic weighted Wishart matrix, and let $\mu^W_{\ell,t}$ denote the
	limiting empirical spectral distribution (ESD) of $W_{\ell,t}$, a
	compactly supported probability measure on $[0,\infty)$ with right
	edge $y^+_{\ell,t}$. Because $\sum_i\mathrm{w}_{\ell,t,i}=1-\pi_{0,t}\to1$ and
	$\mathbb E W_{\ell,t}=(1-\pi_{0,t})\bm I_p$, the measure
	$\mu^W_{\ell,t}$ has unit first moment. Write
	$b_{\ell,t}=\sigma^2_ty^+_{\ell,t}$ for the right edge of the noise
	spectrum of $\bm S_{\ell,t}$, and, for $x>y^+_{\ell,t}$,
	\begin{equation}
		\label{eq:IJ}
		J_m(x):=\int\frac{\mu^W_{\ell,t}(dy)}{(x-y)^m},
		\qquad
		I_m(x):=\int\frac{y\,\mu^W_{\ell,t}(dy)}{(x-y)^m},
		\qquad m\ge1 ,
	\end{equation}
	all of which are finite and strictly positive, and satisfy
	\begin{equation}
		\label{eq:IJrel}
		I_m=xJ_m-J_{m-1}\ \ (J_0:=1),
		\qquad
		J_m'=-mJ_{m+1},
		\qquad
		I_m'=-mI_{m+1}.
	\end{equation}
	Set
	\begin{equation}
		\label{eq:gdef}
		g_{\ell,t}(x):=I_1(x)=\int\frac{y}{x-y}\,\mu^W_{\ell,t}(dy),
	\end{equation}
	\begin{equation*}
		G_{\ell,t}(x'):=\int\frac{y}{x'-\sigma^2_ty}\,\mu^W_{\ell,t}(dy)
		=\sigma_t^{-2}g_{\ell,t}(x'/\sigma_t^2),
	\end{equation*}
	so that $g_{\ell,t}$ is smooth, strictly decreasing and strictly
	convex on $(y^+_{\ell,t},\infty)$, with $g_{\ell,t}(x)\downarrow0$ as
	$x\to\infty$. Define the {critical spike}
	\begin{equation}
		\label{eq:lamcrit}
		\lambda^{\mathrm{c}}_{\ell,t}
		:=\frac{\sigma^2_t}{g_{\ell,t}(y^{+}_{\ell,t}\!+)}
		\in[0,\infty),
	\end{equation}
	with the convention $\lambda^{\mathrm{c}}_{\ell,t}=0$ when
	$g_{\ell,t}(y^+_{\ell,t}\!+)=+\infty$. In the canonical case
	$H_{\ell,t}\Rightarrow\delta_1$ one has $\mathrm{w}_{\ell,t,i}\equiv
	M_{\ell,t}^{-1}$, hence $c_{\ell,t}=c^{\mathrm{mp}}_{\ell,t}$,
	$\mu^W_{\ell,t}$ is the Mar\v cenko--Pastur law with ratio
	$c_{\ell,t}$, and $\lambda^{\mathrm{c}}_{\ell,t}
	=\sigma^2_t\sqrt{c_{\ell,t}}$. See Step~6 of the proof of
	Lemma~\ref{lem:maps}.
	
	\subsection{Three auxiliary lemmas}
	\label{app:aux}
	
	The first lemma reduces the smoothed covariance to the idealized
	multiplicative model used by random matrix theory. We recall the
	{inflated window drift} \eqref{eq:windowdrift} of
	Assumption~\ref{assum:drift},
	\begin{equation}
		\label{eq:barDelta}
		\bar\Delta_{\ell,t}
		=\sum_{s=1}^{t}\pi_{s,t}
		\Bigl(1+\frac{p}{n^\ell_s}\Bigr)
		\bigl\|\bm\Sigma_s-\bm\Sigma_t\bigr\|_{op},
		\qquad
		\Delta_{\ell,t}=\bar\Delta_{\ell,t}+\pi_{0,t},
	\end{equation}
	and note that it is dominated by a constant multiple of the uninflated
	drift $\sum_{s\le t}\pi_{s,t}\|\bm\Sigma_s-\bm\Sigma_t\|_{op}$
	whenever $\sup_{s\le t}p/n^\ell_s=O(1)$. The inflation factor is what
	the reduction below actually consumes, through
	$\|\hat W_{\ell,s}\|_{op}=O_{\mathbb P}(1+p/n^\ell_s)$.
	
	\begin{lemma}[Reduction to the idealized model]
		\label{lem:A1}
		Under Assumptions~\ref{assum:moment}, \ref{assum:noise},
		\ref{assum:drift} and~\ref{assum:hd},
		\begin{equation}
			\label{eq:A1}
			\bigl\|\bm S_{\ell,t}
			-\bm\Sigma_t^{1/2}W_{\ell,t}\bm\Sigma_t^{1/2}
			\bigr\|_{op}
			=O_{\mathbb P}\bigl(\bar\Delta_{\ell,t}+\pi_{0,t}\bigr),
			\qquad
			\bigl\|\bm S_{\ell,t}\bigr\|_{op}=O_{\mathbb P}(1),
		\end{equation}
		and, for every activation time $t$,
		$\|\bm S_{\ell,t}-\bm S_{\ell,t-1}\|_{op}
		=O_{\mathbb P}(\tilde\alpha_{\ell,t})$ with
		$\tilde\alpha_{\ell,t}=\alpha_t(1+p/n^\ell_t)$. Moreover, for any
		$p\times k$ frame $\bm Q$ with orthonormal columns that is
		$\mathcal F_{t-1}$-measurable,
		\begin{equation}
			\label{eq:A1_dir}
			\bigl\|(\bm S_{\ell,t}-\bm S_{\ell,t-1})\bm Q\bigr\|_F
			=O_{\mathbb P}(\alpha^{\mathrm{lag}}_{\ell,t}),
			\qquad
			\alpha^{\mathrm{lag}}_{\ell,t}=\sqrt{\alpha_t\tilde\alpha_{\ell,t}} ,
		\end{equation}
		with $k$ fixed and $\alpha^{\mathrm{lag}}_{\ell,t}\le\tilde\alpha_{\ell,t}$.
	\end{lemma}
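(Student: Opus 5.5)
We would prove Lemma~\ref{lem:A1} by writing $\bm S_{\ell,t}$ batch by batch and comparing it, term by term, with the idealized matrix. By \eqref{eq:pi} and Assumption~\ref{assum:moment}, $\hat{\bm S}_{\ell,s}=\bm\Sigma_s^{1/2}\hat W_{\ell,s}\bm\Sigma_s^{1/2}$ with $\hat W_{\ell,s}:=(n^\ell_s)^{-1}\sum_i\bm z_{\ell,s,i}\bm z_{\ell,s,i}^\top$. Moreover $W_{\ell,t}=\sum_{s\le t}\pi_{s,t}\hat W_{\ell,s}$. Hence
\[
\bm S_{\ell,t}-\bm\Sigma_t^{1/2}W_{\ell,t}\bm\Sigma_t^{1/2}
=\sum_{s=1}^t\pi_{s,t}\bigl(\bm\Sigma_s^{1/2}\hat W_{\ell,s}\bm\Sigma_s^{1/2}-\bm\Sigma_t^{1/2}\hat W_{\ell,s}\bm\Sigma_t^{1/2}\bigr)+\pi_{0,t}\bm S_{\ell,0}.
\]
We add and subtract $\bm\Sigma_s^{1/2}\hat W_{\ell,s}\bm\Sigma_t^{1/2}$ in each summand. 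This bounds the operator norm of the $s$-th summand by
\[
\|\hat W_{\ell,s}\|_{op}\,\|\bm\Sigma_s^{1/2}-\bm\Sigma_t^{1/2}\|_{op}\bigl(\|\bm\Sigma_s^{1/2}\|_{op}+\|\bm\Sigma_t^{1/2}\|_{op}\bigr).
\]
Assumption~\ref{assum:noise} gives $\bm\Sigma_s\succeq\sigma^2_{\min}\bm I_p$. Operator-monotonicity of the square root then gives $\|\bm\Sigma_s^{1/2}-\bm\Sigma_t^{1/2}\|_{op}\le\|\bm\Sigma_s-\bm\Sigma_t\|_{op}/(2\sigma_{\min})$. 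Together with $\|\bm\Sigma_s\|_{op}\le\lambda_{\max}+\sigma^2_{\max}$, the $s$-th term is $O(\|\hat W_{\ell,s}\|_{op}\|\bm\Sigma_s-\bm\Sigma_t\|_{op})$. The initialization term is $\pi_{0,t}\|\bm I_p\|_{op}=\pi_{0,t}$.

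The key stochastic input is a Bai--Yin type bound holding uniformly over the window, $\|\hat W_{\ell,s}\|_{op}\le C(1+p/n^\ell_s)$. Rather than a union bound over $s$, which would need exponential tails, we would take expectations: under the $4+\epsilon_1$ moment condition, $\mathbb E\|\hat W_{\ell,s}\|_{op}\le C(1+\sqrt{p/n^\ell_s})^2\le 2C(1+p/n^\ell_s)$ by standard moment bounds (e.g.\ Lata{\l}a's or the truncation argument of \citealt{bai1999exact}). Summing with weights gives
\[
\mathbb E\,\|\cdot\|_{op}\le C\bar\Delta_{\ell,t}+\pi_{0,t},
\]
and Markov's inequality gives \eqref{eq:A1}. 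I expect this to be the main obstacle: a genuinely uniform-in-$p$ expectation bound on $\|\hat W\|_{op}$ under only $4+\epsilon_1$ moments. If it is unavailable, the fallback is truncation at level $(pn)^{1/4}$ plus a Bai--Yin almost-sure bound, which requires care because the batches vary.

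The bound $\|\bm S_{\ell,t}\|_{op}=O_{\mathbb P}(1)$ follows the same way, since $\mathbb E\|\bm S_{\ell,t}\|_{op}\le C\sum_s\pi_{s,t}(1+p/n^\ell_s)+\pi_{0,t}$, which is $O(1)$ by \eqref{eq:c_le_alpha}-type control and $\sup\tilde\alpha<\infty$; more simply, $\pi_{s,t}p/n_s\le\tilde\alpha$-bounded quantities with $\sum\pi_{s,t}\le1$. For the increment we write $\bm S_{\ell,t}-\bm S_{\ell,t-1}=\alpha_t(\hat{\bm S}_{\ell,t}-\bm S_{\ell,t-1})$. Then $\|\hat{\bm S}_{\ell,t}\|_{op}=O_{\mathbb P}(1+p/n^\ell_t)$ and $\|\bm S_{\ell,t-1}\|_{op}=O_{\mathbb P}(1)$ yield $O_{\mathbb P}(\tilde\alpha_{\ell,t})$.

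For the directional bound \eqref{eq:A1_dir} we use that $\bm Q$ is independent of the batch at time $t$. Conditionally on $\mathcal F_{t-1}$, $\mathbb E\|\hat{\bm S}_{\ell,t}\bm Q\|_F^2=\sum_{j\le k}\mathbb E\|\hat{\bm S}_{\ell,t}\bm q_j\|_2^2$. Writing $\bm y_i=\bm X_{\ell,t,i}$, this equals
\[
\sum_j\bigl\{\|\bm\Sigma_t\bm q_j\|^2+(n^\ell_t)^{-1}\bigl(\mathbb E[(\bm q_j^\top\bm y)^2\|\bm y\|^2]-\|\bm\Sigma_t\bm q_j\|^2\bigr)\bigr\}=O\bigl(k(1+p/n^\ell_t)\bigr),
\]
by the fourth-moment assumption. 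Hence $\|\hat{\bm S}_{\ell,t}\bm Q\|_F=O_{\mathbb P}(\sqrt{1+p/n^\ell_t})$, while $\|\bm S_{\ell,t-1}\bm Q\|_F\le\sqrt k\,\|\bm S_{\ell,t-1}\|_{op}=O_{\mathbb P}(1)$. Multiplying by $\alpha_t$ gives $O_{\mathbb P}(\alpha_t\sqrt{1+p/n^\ell_t})=O_{\mathbb P}(\alpha^{\mathrm{lag}}_{\ell,t})$. Finally, $\alpha^{\mathrm{lag}}\le\tilde\alpha$ is immediate from $\sqrt{1+x}\le1+x$.
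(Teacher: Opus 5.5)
There is a genuine gap in your proof of $\|\bm S_{\ell,t}\|_{op}=O_{\mathbb P}(1)$, and it propagates to the increment and directional bounds. Your estimate $\mathbb E\|\bm S_{\ell,t}\|_{op}\le C\sum_s\pi_{s,t}(1+p/n^\ell_s)+\pi_{0,t}$ is true, but its right-hand side is not $O(1)$.

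The sum $\sum_s\pi_{s,t}\,p/n^\ell_s$ is a $\pi$-weighted average of $p/n^\ell_s$. By contrast, \eqref{eq:c_le_alpha} controls $p\sum_s\pi_{s,t}^2/n^\ell_s$, which carries one more factor of $\pi_{s,t}$. Bounding each $\pi_{s,t}p/n^\ell_s$ by $\tilde\alpha$ and also using $\sum_s\pi_{s,t}\le1$ spends the weights twice.

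In the regime of the lemma this average actually diverges. Take $n^\ell_s\equiv n$ and a slowly varying $\alpha_t\to0$. Lemma~\ref{lem:crec} gives $c_{\ell,t}\approx\alpha_tp/(2n)$, so $c_\ell\in(0,\infty)$ forces $p/n\asymp c_\ell/\alpha_t\to\infty$. Then $\sum_s\pi_{s,t}p/n=(1-\pi_{0,t})p/n\to\infty$; compare the example in Remark~\ref{rem:feasibility}.

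The triangle inequality over batches discards the averaging across independent batches. Each $\hat W_{\ell,s}$ has norm of order $p/n^\ell_s$, while their weighted average $W_{\ell,t}$ has norm of order $(1+\sqrt{c_{\ell,t}})^2$. For the first bound this loss is harmless, because $\bar\Delta_{\ell,t}$ carries the factor $1+p/n^\ell_s$ by assumption; for $\|\bm S_{\ell,t}\|_{op}$ nothing absorbs it.

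Your increment bound and the second term of your directional bound both use $\|\bm S_{\ell,t-1}\|_{op}=O_{\mathbb P}(1)$. With only the batchwise estimate, $\alpha_t\|\bm S_{\ell,t-1}\bm Q\|_F$ is controlled only as $O_{\mathbb P}(\alpha_tp/n)=O_{\mathbb P}(\tilde\alpha_{\ell,t})$. That rate does not vanish when $c_\ell>0$, so \eqref{eq:A1_dir} would degrade to exactly the rate it is meant to improve on.

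The missing step is to bound $W_{\ell,t}$ as a single matrix. Write $\|\bm S_{\ell,t}\|_{op}\le\|\bm\Sigma_t\|_{op}\|W_{\ell,t}\|_{op}+\|\bm S_{\ell,t}-\bm\Sigma_t^{1/2}W_{\ell,t}\bm\Sigma_t^{1/2}\|_{op}$, and handle the second term by your first bound. There are two ways to control $\|W_{\ell,t}\|_{op}$:
\begin{itemize}
\item \emph{The paper's asymptotic route.} Under Assumption~\ref{assum:hd}, $W_{\ell,t}$ is a generalized Mar\v{c}enko--Pastur matrix with bounded weight profile. Silverstein's law gives a compactly supported limit. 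Bai--Silverstein's no-eigenvalues-outside-the-support theorem then gives $\|W_{\ell,t}\|_{op}\to y^+_{\ell,t}<\infty$.
\item \emph{A non-asymptotic route closer to your plan.} Write $W_{\ell,t}=\bm Z\bm D\bm Z^\top$ with $\bm D=\operatorname{diag}(\mathrm w_{\ell,t,i})$ and apply Lata{\l}a's bound to $\bm Z\bm D^{1/2}$. Its row variances sum to at most $1$, its column variances are $p\,\mathrm w_{\ell,t,i}$, and its fourth moments sum to $O(p\sum_i\mathrm w_{\ell,t,i}^2)=O(c_{\ell,t})$. Hence $\mathbb E\|\bm Z\bm D^{1/2}\|_{op}\le C\{1+(\max_s p\pi_{s,t}/n^\ell_s)^{1/2}+c_{\ell,t}^{1/4}\}$, and Markov gives $\|W_{\ell,t}\|_{op}=O_{\mathbb P}(1)$. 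This is where your control of $\pi_{s,t}p/n^\ell_s$ by $\sup_s\tilde\alpha_{\ell,s}$ (or by the bounded profile in Assumption~\ref{assum:hd}) genuinely enters.
\end{itemize}

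With that step repaired, the rest of your plan matches the paper's proof. The decomposition, the square-root perturbation bound, and the conditional second-moment computation along the $\mathcal F_{t-1}$-measurable frame are the same. Your expectation-plus-Markov treatment of the window sum is a cleaner justification than the paper's appeal to Bai--Yin ``uniformly in $s$''.

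One small point on that step: it needs $\mathbb E\|\hat W_{\ell,s}\|_{op}=(n^\ell_s)^{-1}\mathbb E\|\bm Z_s\|_{op}^2$, a second-moment bound. Lata{\l}a's theorem controls only $\mathbb E\|\bm Z_s\|_{op}$, so cite a second-moment version. That is routine and is not the real obstacle you anticipated.
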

	
	\begin{proof}[Proof of Lemma~\ref{lem:A1}]
		Write 
		$$\hat W_{\ell,s}:=(n^\ell_s)^{-1}
		\sum_{i\le n^\ell_s}\bm z_{\ell,s,i}\bm z_{\ell,s,i}^\top, $$
		so that
		$\hat{\bm S}_{\ell,s}
		=\bm\Sigma_s^{1/2}\hat W_{\ell,s}\bm\Sigma_s^{1/2}$ and, 
		$$\bm S_{\ell,t}
		=\sum_{s\le t}\pi_{s,t}\bm\Sigma_s^{1/2}\hat W_{\ell,s}
		\bm\Sigma_s^{1/2}+\pi_{0,t}\bm S_{\ell,0}, $$ while
		$\bm\Sigma_t^{1/2}W_{\ell,t}\bm\Sigma_t^{1/2}
		=\sum_{s\le t}\pi_{s,t}\bm\Sigma_t^{1/2}\hat W_{\ell,s}
		\bm\Sigma_t^{1/2}$. For symmetric $A,B$ and any $X$,
		$\|AXA-BXB\|_{op}\le\|X\|_{op}(\|A\|_{op}+\|B\|_{op})
		\|A-B\|_{op}$. Applying this with
		$A=\bm\Sigma_s^{1/2}$, $B=\bm\Sigma_t^{1/2}$, $X=\hat W_{\ell,s}$ and
		using the operator inequality
		$$\|A^{1/2}-B^{1/2}\|_{op}\le
		\|A-B\|_{op}/\{\lambda_{\min}(A)^{1/2}+\lambda_{\min}(B)^{1/2}\}
		\le(2\sigma_{\min})^{-1}\|A-B\|_{op},$$ valid because
		$\bm\Sigma_s\succeq\sigma^2_{\min}\bm I_p$ by
		Assumption~\ref{assum:noise}, we obtain
		\[
		\bigl\|\bm S_{\ell,t}-\pi_{0,t}\bm S_{\ell,0}
		-\bm\Sigma_t^{1/2}W_{\ell,t}\bm\Sigma_t^{1/2}\bigr\|_{op}
		\le C\sum_{s\le t}\pi_{s,t}\,
		\|\hat W_{\ell,s}\|_{op}\,
		\|\bm\Sigma_s-\bm\Sigma_t\|_{op}.
		\]
		By the Bai--Yin law and its extensions to i.i.d.\ entries with
		finite fourth moment \citep{baisilverstein1998nolarger},
		$\|\hat W_{\ell,s}\|_{op}=O_{\mathbb P}\{(1+\sqrt{p/n^\ell_s})^2\}
		=O_{\mathbb P}(1+p/n^\ell_s)$ uniformly in $s$. The first bound in
		\eqref{eq:A1} follows from \eqref{eq:barDelta} together with
		$\|\bm S_{\ell,0}\|_{op}=O_{\mathbb P}(1)$.
		
		For the second bound, Assumption~\ref{assum:hd} guarantees
		$\max_iM_{\ell,t}\mathrm{w}_{\ell,t,i}=O(1)$ and
		$p/M_{\ell,t}\to c^{\mathrm{mp}}_\ell<\infty$, so that
		$W_{\ell,t}$ is a generalized Mar\v cenko--Pastur matrix with
		bounded population profile $H_{\ell,t}$. By
		\citet{silverstein1995empirical} its ESD converges to the
		compactly supported $\mu^W_{\ell,t}$ and by
		\citet{baisilverstein1998nolarger} no eigenvalue lies outside any
		neighbourhood of $\operatorname{supp}\mu^W_{\ell,t}$ eventually,
		whence $\|W_{\ell,t}\|_{op}\to y^+_{\ell,t}<\infty$ almost surely
		and $\|\bm S_{\ell,t}\|_{op}\le\|\bm\Sigma_t\|_{op}
		\|W_{\ell,t}\|_{op}+O_{\mathbb P}(\bar\Delta_{\ell,t}+\pi_{0,t})
		=O_{\mathbb P}(1)$.
		
		Next, 
		$\bm S_{\ell,t}-\bm S_{\ell,t-1}
		=\alpha_t(\hat{\bm S}_{\ell,t}-\bm S_{\ell,t-1})$, and
		$\|\hat{\bm S}_{\ell,t}\|_{op}
		\le\|\bm\Sigma_t\|_{op}\|\hat W_{\ell,t}\|_{op}
		=O_{\mathbb P}(1+p/n^\ell_t)$, so that
		$\|\bm S_{\ell,t}-\bm S_{\ell,t-1}\|_{op}
		=O_{\mathbb P}\{\alpha_t(1+p/n^\ell_t)\}
		=O_{\mathbb P}(\tilde\alpha_{\ell,t})$.
		
		For \eqref{eq:A1_dir}, write
		$\bm S_{\ell,t}-\bm S_{\ell,t-1}
		=\alpha_t(\hat{\bm S}_{\ell,t}-\bm\Sigma_t)
		+\alpha_t(\bm\Sigma_t-\bm S_{\ell,t-1})$ and bound the two terms
		separately. The second is harmless. Both $\bm\Sigma_t$ and
		$\bm S_{\ell,t-1}$ have operator norm $O_{\mathbb P}(1)$ by
		Assumption~\ref{assum:noise} and the second bound in
		\eqref{eq:A1}, so with $k$ fixed
		$\alpha_t\|(\bm\Sigma_t-\bm S_{\ell,t-1})\bm Q\|_F
		\le\alpha_t\sqrt k\,\|\bm\Sigma_t-\bm S_{\ell,t-1}\|_{op}
		=O_{\mathbb P}(\alpha_t)$.
		
		For the first term, let $\bm q$ be any column of $\bm Q$. By
		Assumption~\ref{assum:design} the batch
		$\{\bm X_{\ell,t,i}\}_{i\le n^\ell_t}$ is independent of
		$\mathcal F_{t-1}$, and $\bm q$ is $\mathcal F_{t-1}$-measurable
		with $\|\bm q\|=1$. We may therefore condition on
		$\mathcal F_{t-1}$ and treat $\bm q$ as a fixed unit vector.
		Writing $\hat{\bm S}_{\ell,t}-\bm\Sigma_t
		=\bm\Sigma_t^{1/2}(\hat W_{\ell,t}-\bm I_p)\bm\Sigma_t^{1/2}$ and
		$\bm y:=\bm\Sigma_t^{1/2}\bm q$, with $n:=n^\ell_t$,
		\[
		\mathbb E\bigl[\|(\hat W_{\ell,t}-\bm I_p)\bm y\|^2
		\mid\mathcal F_{t-1}\bigr]
		=\frac1n\Bigl\{
		\mathbb E\|\bm z\bm z^\top\bm y\|^2-\|\bm y\|^2\Bigr\}
		=\frac1n\Bigl\{
		\mathbb E\bigl[(\bm z^\top\bm y)^2\|\bm z\|^2\bigr]
		-\|\bm y\|^2\Bigr\} ,
		\]
		because the $n$ summands are i.i.d.\ and centred. Expanding with
		independent entries and using
		$\mathbb E z_m^2=1$, $\mathbb E z_m^4=3+{K}_4$ from
		Assumption~\ref{assum:moment},
		\[
		\mathbb E\bigl[(\bm z^\top\bm y)^2\|\bm z\|^2\bigr]
		=(p+2+{K}_4)\|\bm y\|^2 ,
		\]
		so that
		$\mathbb E[\|(\hat W_{\ell,t}-\bm I_p)\bm y\|^2\mid\mathcal F_{t-1}]
		=(p+1+{K}_4)\|\bm y\|^2/n=O(p/n)$, uniformly in $\bm q$ since
		$\|\bm y\|^2\le\sigma^2_{\max}+\lambda_{\max}$ by
		Assumption~\ref{assum:noise}. Markov's inequality and
		$\|\bm\Sigma_t^{1/2}\|_{op}=O(1)$ give
		$\|(\hat{\bm S}_{\ell,t}-\bm\Sigma_t)\bm q\|
		=O_{\mathbb P}(\sqrt{p/n^\ell_t})$, and summing the $k$ fixed
		columns,
		$\alpha_t\|(\hat{\bm S}_{\ell,t}-\bm\Sigma_t)\bm Q\|_F
		=O_{\mathbb P}\bigl(\alpha_t\sqrt{p/n^\ell_t}\bigr)$.
		Combining the two terms and using
		$\alpha_t^2(1+p/n^\ell_t)=\alpha_t\tilde\alpha_{\ell,t}
		=(\alpha^{\mathrm{lag}}_{\ell,t})^2$,
		\[
		\bigl\|(\bm S_{\ell,t}-\bm S_{\ell,t-1})\bm Q\bigr\|_F
		=O_{\mathbb P}\Bigl(\alpha_t+\alpha_t\sqrt{p/n^\ell_t}\Bigr)
		=O_{\mathbb P}(\alpha^{\mathrm{lag}}_{\ell,t}) ,
		\]
		which is \eqref{eq:A1_dir}. The inequality
		$\alpha^{\mathrm{lag}}_{\ell,t}\le\tilde\alpha_{\ell,t}$ holds because
		$\alpha_t\le\tilde\alpha_{\ell,t}$.
	\end{proof}
	
	The second lemma contains the two integral inequalities on which all
	monotonicity statements of Section~\ref{sec:4.1} rest. They are
	elementary but, to our knowledge, are not available in the form
	required here, so we give complete proofs.
	
	\begin{lemma}[Two symmetrization inequalities]
		\label{lem:A2}
		Let $\mu$ be a probability measure on $[0,\infty)$ with right edge
		$y^+$, let $x>y^+$, and let $I_m,J_m$ be as in \eqref{eq:IJ}. Then
		\begin{align}
			I_2-I_1J_1
			&=\frac{x}{2}\iint
			\frac{(y-y')^2}{(x-y)^2(x-y')^2}\,\mu(dy)\mu(dy')
			\ \ge\ 0,
			\label{eq:A2a}\\[2pt]
			I_3J_1-I_2J_2
			&=\frac{x}{2}\iint
			\frac{(y-y')^2}{(x-y)^3(x-y')^3}\,\mu(dy)\mu(dy')
			\ \ge\ 0,
			\label{eq:A2b}
		\end{align}
		with equality in either display if and only if $\mu$ is a Dirac
		mass. Moreover $I_2^2\le I_1I_3$, and
		$g(x)=I_1(x)\ge(x-1)^{-1}$ whenever $\mu$ has unit first moment,
		with equality only for $\mu=\delta_1$.
	\end{lemma}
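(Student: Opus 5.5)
The plan is to verify each identity directly. We write every product of single integrals as a double integral over $\mu\otimes\mu$, put the integrand over a common denominator, and symmetrize in $(y,y')$. Throughout, $x>y^+\ge0$, so every denominator $x-y$ is strictly positive on the support and all integrals are finite.

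\emph{Step 1: identity \eqref{eq:A2a}.} Write $I_2-I_1J_1=\iint\bigl\{y/(x-y)^2-y/[(x-y)(x-y')]\bigr\}\,\mu(dy)\mu(dy')$. Over the common denominator $(x-y)^2(x-y')^2$ the numerator becomes $y(x-y')(y-y')$. Exchanging $y\leftrightarrow y'$ leaves the double integral unchanged, so we may replace the numerator by the average of it and its swap. That average is $\tfrac12(y-y')\{y(x-y')-y'(x-y)\}$, and since $y(x-y')-y'(x-y)=x(y-y')$ it equals $\tfrac x2(y-y')^2$. This gives \eqref{eq:A2a}.

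\emph{Step 2: identity \eqref{eq:A2b}.} The same computation applies. Over the denominator $(x-y)^3(x-y')^3$, the numerator of $I_3J_1-I_2J_2$ is $y(x-y')^2-y(x-y)(x-y')=y(x-y')(y-y')$. Symmetrizing again yields $\tfrac x2(y-y')^2$.

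\emph{Step 3: equality cases and nonnegativity.} Both integrands are nonnegative, so both differences are $\ge0$. If $x=0$, then $y^+<0$, which is impossible, so $x>0$. The integrand therefore vanishes $\mu\otimes\mu$-a.e. if and only if $y=y'$ a.e., that is, if and only if $\mu$ is a Dirac mass.

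\emph{Step 4: the remaining claims.} The inequality $I_2^2\le I_1I_3$ is Cauchy--Schwarz applied to the factorization
\[
\frac{y}{(x-y)^2}=\sqrt{\frac{y}{x-y}}\cdot\sqrt{\frac{y}{(x-y)^3}} .
\]
For the bound on $g$, write $y/(x-y)=x/(x-y)-1$. This is a strictly convex function of $y$ on $[0,x)$. If $\mu$ has unit mean, then $y^+\ge1$, so $x>1$, and Jensen's inequality gives
\[
g(x)\ \ge\ \frac{x}{x-1}-1\ =\ \frac{1}{x-1}.
\]
By strict convexity, equality holds only when $\mu$ is a point mass, and unit mean forces that point mass to be $\delta_1$.

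The main obstacle is purely algebraic: obtaining the correct numerator before symmetrizing. The key simplification in both Step 1 and Step 2 is the identity $y(x-y')-y'(x-y)=x(y-y')$.
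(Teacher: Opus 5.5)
Your proof is correct and follows the paper's argument essentially step for step. You write each difference as a double integral against $\mu\otimes\mu$, symmetrize in $(y,y')$ using $y(x-y')-y'(x-y)=x(y-y')$, read off nonnegativity and the Dirac-mass equality case, and finish with Cauchy--Schwarz (your explicit factorization $y/(x-y)^2=\sqrt{y/(x-y)}\cdot\sqrt{y/(x-y)^3}$) and Jensen for the strictly convex map $y\mapsto y/(x-y)$, where your observation that unit mean forces $x>y^+\ge 1$ fills in a detail the paper leaves implicit.
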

	
	\begin{proof}[Proof of Lemma~\ref{lem:A2}]
		Inserting $\int\mu(dy')=1$ and using
		$(x-y)^{-1}-(x-y')^{-1}=(y-y')\{(x-y)(x-y')\}^{-1}$,
		\[
		I_2-I_1J_1
		=\iint\frac{y}{x-y}
		\Bigl[\frac{1}{x-y}-\frac{1}{x-y'}\Bigr]\mu(dy)\mu(dy')
		=\iint\frac{y\,(y-y')}{(x-y)^2(x-y')}\,\mu(dy)\mu(dy').
		\]
		Averaging this expression with the one obtained by exchanging the
		roles of $y$ and $y'$ gives
		\[
		I_2-I_1J_1
		=\frac12\iint(y-y')
		\Bigl[\frac{y}{(x-y)^2(x-y')}-\frac{y'}{(x-y')^2(x-y)}\Bigr]
		\mu(dy)\mu(dy').
		\]
		The bracket equals
		$\{(x-y)(x-y')\}^{-1}\{\tfrac{y}{x-y}-\tfrac{y'}{x-y'}\}
		=x(y-y')\{(x-y)^2(x-y')^2\}^{-1}$, because
		$\tfrac{y}{x-y}-\tfrac{y'}{x-y'}
		=\tfrac{y(x-y')-y'(x-y)}{(x-y)(x-y')}
		=\tfrac{x(y-y')}{(x-y)(x-y')}$. This proves \eqref{eq:A2a}. The
		integrand is nonnegative since $x>y^+\ge y$ and $x>0$, and it
		vanishes identically only if $\mu\otimes\mu$ is supported on the
		diagonal, i.e.\ $\mu$ is a Dirac mass. The proof of \eqref{eq:A2b}
		is identical after replacing $(x-y)^{-1}$ by $(x-y)^{-2}$ in the
		first display:
		\begin{align*}
			I_3J_1-I_2J_2
			=&\iint\frac{y}{(x-y)^2(x-y')}
			\Bigl[\frac{1}{x-y}-\frac{1}{x-y'}\Bigr]\mu(dy)\mu(dy')\\ 
			=&\iint\frac{y(y-y')}{(x-y)^3(x-y')^2}\,\mu(dy)\mu(dy'),
		\end{align*}
		and symmetrization produces the same factor
		$x(y-y')\{(x-y)^3(x-y')^3\}^{-1}$.
		
		The bound $I_2^2\le I_1I_3$ is the Cauchy--Schwarz inequality for
		the measure $y\,\mu(dy)$ applied to the functions $(x-y)^{-1}$ and
		$(x-y)^{-3}$. Finally, $y\mapsto y/(x-y)$ is strictly convex on
		$[0,x)$, so Jensen's inequality with $\int y\,\mu(dy)=1$ gives
		$I_1(x)\ge1/(x-1)$, strictly unless $\mu$ is degenerate.
	\end{proof}
	
	The third lemma is the deterministic contraction estimate for one step
	of orthogonal iteration. It is classical
	\citep{golub2013matrix} but we state and prove the exact form used
	below.
	
	\begin{lemma}[One step of orthogonal iteration]
		\label{lem:A3}
		Let $A\in\mathbb R^{p\times p}$ be symmetric with eigenvalues
		$\mu_1\ge\dots\ge\mu_p$, let $\mathring{\bm U}$ be an orthonormal
		basis of the invariant subspace associated with
		$\mu_1,\dots,\mu_k$, and assume $\mu_k>\max(|\mu_{k+1}|,0)$. Let
		$\bm U\in\mathbb R^{p\times k}$ be orthonormal with
		$\|\sin\Theta(\bm U,\mathring{\bm U})\|_{op}<1$, and let
		$\bm U_+$ be an orthonormal basis of $\operatorname{range}(A\bm U)$.
		Then
		\[
		\bigl\|\tan\Theta(\bm U_+,\mathring{\bm U})\bigr\|_F
		\ \le\
		\frac{|\mu_{k+1}|}{\mu_k}\,
		\bigl\|\tan\Theta(\bm U,\mathring{\bm U})\bigr\|_F .
		\]
	\end{lemma}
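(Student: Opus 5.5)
The statement to prove is Lemma~\ref{lem:A3}, the one-step contraction of orthogonal iteration. We work in the eigenbasis of $A$. Write $A=\mathring{\bm U}\bm M_1\mathring{\bm U}^\top+\mathring{\bm U}_\perp\bm M_2\mathring{\bm U}_\perp^\top$ with $\bm M_1=\diag(\mu_1,\dots,\mu_k)$ and $\bm M_2=\diag(\mu_{k+1},\dots,\mu_p)$. For any orthonormal $\bm U$ with $\bm B_1:=\mathring{\bm U}^\top\bm U$ invertible, we use the standard representation
\[
\tan\Theta(\bm U,\mathring{\bm U})\ \text{has the singular values of}\ \bm T:=\bm B_2\bm B_1^{-1},\qquad \bm B_2:=\mathring{\bm U}_\perp^\top\bm U .
\]
The hypothesis $\|\sin\Theta(\bm U,\mathring{\bm U})\|_{op}<1$ is exactly the statement that $\sigma_{\min}(\bm B_1)>0$, so $\bm T$ is well defined. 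The representation holds because $\bm U=\mathring{\bm U}\bm B_1+\mathring{\bm U}_\perp\bm B_2$ and $\bm B_1^\top\bm B_1+\bm B_2^\top\bm B_2=\bm I_k$. Consequently $\bm B_1^{-\top}\bm B_2^\top\bm B_2\bm B_1^{-1}=\bm B_1^{-\top}\bm B_1^{-1}-\bm I$, whose eigenvalues are $\cos^{-2}\theta_i-1=\tan^2\theta_i$.

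Next compute $A\bm U=\mathring{\bm U}\bm M_1\bm B_1+\mathring{\bm U}_\perp\bm M_2\bm B_2$. Since $\mu_k>0$, $\bm M_1$ is invertible, so $\bm M_1\bm B_1$ is invertible. Hence $A\bm U$ has full column rank $k$ and $\operatorname{range}(A\bm U)=\operatorname{range}(\bm U_+)$. The tangent matrix is invariant under right multiplication by an invertible $k\times k$ matrix: if $\bm U_+=A\bm U\bm R^{-1}$, the blocks become $\bm M_1\bm B_1\bm R^{-1}$ and $\bm M_2\bm B_2\bm R^{-1}$. The new tangent representative is therefore
\[
\bm T_+=\bm M_2\bm B_2\bm B_1^{-1}\bm M_1^{-1}=\bm M_2\,\bm T\,\bm M_1^{-1}.
\]
To justify using an arbitrary basis of the range, I will briefly check that the singular values of $\bm B_2'\bm B_1'^{-1}$ depend only on the subspace. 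Replacing $\bm U$ by $\bm U\bm Q$ with $\bm Q$ orthogonal leaves $\bm B_2\bm B_1^{-1}$ unchanged. For a non-orthonormal spanning matrix $\bm Y=\bm U_+\bm R$, the product $\bm Y_2\bm Y_1^{-1}=\bm B_2^+\bm R\bm R^{-1}\bm B_1^{+-1}$ is also unchanged. This is the one point requiring care, and it is immediate.

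Finally, by submultiplicativity $\|\bm X\bm Y\bm Z\|_F\le\|\bm X\|_{op}\|\bm Y\|_F\|\bm Z\|_{op}$,
\[
\|\bm T_+\|_F\le\|\bm M_2\|_{op}\,\|\bm T\|_F\,\|\bm M_1^{-1}\|_{op}=\max_{i>k}|\mu_i|\cdot\|\bm T\|_F/\mu_k .
\]
Here $\max_{i>k}|\mu_i|=\max(|\mu_{k+1}|,|\mu_p|)$, which is not $|\mu_{k+1}|$ in general. This is the main obstacle: the stated ratio $|\mu_{k+1}|/\mu_k$ is correct only if $|\mu_p|\le|\mu_{k+1}|$. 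For positive semidefinite $A$ this holds automatically, since then $\mu_{k+1}\ge\mu_p\ge0$. In the application $A=\bm S_{\ell,t}$ is positive semidefinite, being a convex combination of sample covariances and $\bm S_{\ell,0}=\bm I_p$. I would therefore read $|\mu_{k+1}|$ as $\max_{i>k}|\mu_i|$, or equivalently add the assumption $A\succeq0$, and note this explicitly. With that reading, the displayed inequality gives the claim, and $\|\bm T_+\|_F=\|\tan\Theta(\bm U_+,\mathring{\bm U})\|_F$ completes the argument.
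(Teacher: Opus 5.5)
This is exactly the paper's proof: the eigenbasis block decomposition, the tangent representative $\bm B_2\bm B_1^{-1}$, the identity $\bm T_+=\bm M_2\bm T\bm M_1^{-1}$, and Frobenius--operator submultiplicativity; your extra check that the representative does not depend on the chosen basis of the range is also correct. You are right that the paper's step $\|\bm M_2\|_{op}=|\mu_{k+1}|$ fails for indefinite $A$. For example, $A=\diag(1,\tfrac1{10},-10)$ with $k=1$ and $\bm U\propto\bm e_1+\epsilon\bm e_3$ sends $\tan\theta=\epsilon$ to $10\epsilon$, not $\epsilon/10$. So the lemma needs $\max_{i>k}|\mu_i|$ in place of $|\mu_{k+1}|$, or the assumption $A\succeq0$. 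This is harmless in its only application, since $\bm S_{\ell,t}$ is a nonnegative combination of sample covariances and $\bm I_p$ and hence $\bm S_{\ell,t}\succeq0$.
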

	
	\begin{proof}[Proof of Lemma~\ref{lem:A3}]
		Let $\mathring{\bm U}_\perp$ complete $\mathring{\bm U}$ to an
		orthonormal basis, and write
		$A=\mathring{\bm U}M_1\mathring{\bm U}^\top
		+\mathring{\bm U}_\perp M_2\mathring{\bm U}_\perp^\top$ with
		$M_1=\operatorname{diag}(\mu_1,\dots,\mu_k)$ and
		$M_2=\operatorname{diag}(\mu_{k+1},\dots,\mu_p)$. Put
		$X:=\mathring{\bm U}^\top\bm U$ and
		$Y:=\mathring{\bm U}_\perp^\top\bm U$. The condition
		$\|\sin\Theta(\bm U,\mathring{\bm U})\|_{op}<1$ is exactly the
		invertibility of $X$, and the singular values of $YX^{-1}$ are the
		tangents of the principal angles, so
		$\|\tan\Theta(\bm U,\mathring{\bm U})\|_F=\|YX^{-1}\|_F$. Since
		$A\bm U=\mathring{\bm U}M_1X+\mathring{\bm U}_\perp M_2Y$ and
		$\bm U_+$ spans the same subspace as $A\bm U$, the corresponding
		quantity after one step is $M_2Y(M_1X)^{-1}=M_2(YX^{-1})M_1^{-1}$,
		which is well defined because $\mu_k>0$. Submultiplicativity of
		the Frobenius norm against operator norms gives
		\[
		\|M_2(YX^{-1})M_1^{-1}\|_F
		\le\|M_2\|_{op}\,\|YX^{-1}\|_F\,\|M_1^{-1}\|_{op}
		=\frac{|\mu_{k+1}|}{\mu_k}\|YX^{-1}\|_F ,
		\]
		which is the assertion. (In particular the new angle is again
		strictly less than $\pi/2$, so the iteration is well defined.)
	\end{proof}
	
	We shall also repeatedly use the elementary identity
	\begin{equation}
		\label{eq:metric}
		\bigl\|\sin\Theta(\bm U_1,\bm U_2)\bigr\|_F
		=\tfrac{1}{\sqrt2}\bigl\|\bm P_1-\bm P_2\bigr\|_F,
		\qquad
		\bm P_i=\bm U_i\bm U_i^\top,
	\end{equation}
	valid for orthonormal $\bm U_1,\bm U_2\in\mathbb R^{p\times k}$, which
	follows from $\|\bm P_1-\bm P_2\|_F^2
	=\operatorname{tr}(\bm P_1+\bm P_2-2\bm P_1\bm P_2)
	=2k-2\|\bm U_1^\top\bm U_2\|_F^2
	=2\|\sin\Theta(\bm U_1,\bm U_2)\|_F^2$. Identity \eqref{eq:metric}
	shows that $\|\sin\Theta(\cdot,\cdot)\|_F$ is a metric on
	$\mathrm{Gr}(k,p)$ and legitimizes the triangle inequalities used
	below.
	
	\subsection{Proofs for the weighting profile}
	
	\begin{proof}[Proof of Lemma~\ref{lem:crec}]
		Fix $\ell$ and an activation time $t$, so that
		$\alpha^\ell_t=\alpha_t$. From the definition
		$\pi_{s,t}=\alpha_s\prod_{u=s+1}^t(1-\alpha_u)$ we read off the
		two exact relations
		\begin{equation}
			\label{eq:pirec}
			\pi_{s,t}=(1-\alpha_t)\pi_{s,t-1}\quad(1\le s\le t-1),
			\qquad
			\pi_{t,t}=\alpha_t,
			\qquad
			\pi_{0,t}=(1-\alpha_t)\pi_{0,t-1}.
		\end{equation}
		(The normalization $\sum_{s=0}^t\pi_{s,t}=1$ follows by induction:
		it holds trivially at $t=0$, and \eqref{eq:pirec} gives
		$\sum_{s=0}^{t}\pi_{s,t}
		=(1-\alpha_t)\sum_{s=0}^{t-1}\pi_{s,t-1}+\alpha_t=1$.)
		Substituting the first relation of \eqref{eq:pirec} and separating the term $s=t$,
		\[
		\frac{1}{N^{\mathrm{eff}}_{\ell,t}}
		=\sum_{s=1}^{t}\frac{\pi_{s,t}^2}{n^\ell_s}
		=(1-\alpha_t)^2\sum_{s=1}^{t-1}\frac{\pi_{s,t-1}^2}{n^\ell_s}
		+\frac{\alpha_t^2}{n^\ell_t}
		=\frac{(1-\alpha_t)^2}{N^{\mathrm{eff}}_{\ell,t-1}}
		+\frac{\alpha_t^2}{n^\ell_t},
		\]
		which is the first identity in Lemma~\ref{lem:crec}. Multiplying by $p$
		gives the second. If $\ell\notin\mathcal A_t$ then
		$\alpha^\ell_t=0$, hence $\pi_{s,t}=\pi_{s,t-1}$ for all $s$ and
		$c_{\ell,t}=c_{\ell,t-1}$.
		
		Subtract $c_{\ell,t-1}$ from the recursion:
		\[
		c_{\ell,t}-c_{\ell,t-1}
		=\bigl\{(1-\alpha_t)^2-1\bigr\}c_{\ell,t-1}
		+\alpha_t^2\frac{p}{n^\ell_t}
		=-\alpha_t(2-\alpha_t)c_{\ell,t-1}
		+\alpha_t^2\frac{p}{n^\ell_t},
		\]
		so that, since $\alpha_t\in(0,1]$,
		$|c_{\ell,t}-c_{\ell,t-1}|
		\le2\alpha_tc_{\ell,t-1}+\alpha_t^2p/n^\ell_t$.
		For \eqref{eq:c_le_alpha}, $\pi_{s,t}\le\alpha_s$ and
		$\sum_{s\le t}\pi_{s,t}\le1$ give
		\[
		c_{\ell,t}
		=p\sum_{s\le t}\frac{\pi^2_{s,t}}{n^\ell_s}
		\le\Bigl(\max_{s\le t}\frac{p\,\pi_{s,t}}{n^\ell_s}\Bigr)
		\sum_{s\le t}\pi_{s,t}
		\le\max_{s\le t}\frac{p\,\alpha_s}{n^\ell_s}
		\le\sup_{s\le t}\tilde\alpha_{\ell,s} .
		\]
		Hence if $\bar\alpha:=\sup_{\ell,t}\tilde\alpha_{\ell,t}<\infty$
		then $c_{\ell,t-1}\le\bar\alpha$ and
		$\alpha_t^2p/n^\ell_t\le
		\alpha_t\cdot\alpha_t(1+p/n^\ell_t)
		=\alpha_t\tilde\alpha_{\ell,t}\le\bar\alpha\alpha_t$, whence
		$|c_{\ell,t}-c_{\ell,t-1}|\le3\bar\alpha\alpha_t=O(\alpha_t)$.
		
		If $\alpha_t\equiv\alpha\in(0,1]$ and $n^\ell_t\equiv n$, the map
		$x\mapsto(1-\alpha)^2x+\alpha^2/n$ on $[0,\infty)$ is affine with
		slope $(1-\alpha)^2\in[0,1)$, hence a contraction with a unique
		fixed point, given by
		$x\{1-(1-\alpha)^2\}=\alpha^2/n$, i.e.\
		$x\,\alpha(2-\alpha)=\alpha^2/n$ and
		$x=\alpha\{(2-\alpha)n\}^{-1}$. Therefore
		$N^{\mathrm{eff}}_{\ell,\infty}=(2-\alpha)n/\alpha$ and
		$c_{\ell,\infty}=\alpha p/\{(2-\alpha)n\}$, and the convergence is
		geometric with rate $(1-\alpha)^2$.
		
		For completeness we verify $c_{\ell,t}
		=c^{\mathrm{mp}}_{\ell,t}\int x^{2}\,H_{\ell,t}(dx).$ 
		$$\int x^2H_{\ell,t}(dx)
		=M_{\ell,t}^{-1}\sum_i(M_{\ell,t}\mathrm{w}_{\ell,t,i})^2
		=M_{\ell,t}\sum_i\mathrm{w}_{\ell,t,i}^2
		=M_{\ell,t}/N^{\mathrm{eff}}_{\ell,t}, $$ so
		$c^{\mathrm{mp}}_{\ell,t}\int x^2H_{\ell,t}(dx)
		=(p/M_{\ell,t})(M_{\ell,t}/N^{\mathrm{eff}}_{\ell,t})
		=c_{\ell,t}$. Jensen's inequality gives
		$\int x^2H_{\ell,t}(dx)\ge(\int xH_{\ell,t}(dx))^2
		=(1-\pi_{0,t})^2$, with equality only for a flat profile, which is
		the claim that a rectangular window is the most efficient memory
		kernel of a given length.
	\end{proof}
	
	\subsection{Proofs for Section~\ref{sec:4.1}}
	
	\begin{proof}[Proof of Lemma~\ref{lem:maps}]
		We write $\Sigma:=\bm\Sigma_t$,
		$\sigma^2:=\sigma^2_t$, $W:=W_{\ell,t}$, $\mu^W:=\mu^W_{\ell,t}$,
		$\theta_j:=\lambda_{t,j}/\sigma^2$ and $N:=N^{\mathrm{eff}}_{\ell,t}$.
		
		\medskip\noindent
		\textbf{Step 1 (reduction and the multiplicative spike model).}
		By Lemma~\ref{lem:A1} it suffices to prove all assertions for
		$\tilde{\bm S}:=\Sigma^{1/2}W\Sigma^{1/2}$, the additional error
		$O_{\mathbb P}(\bar\Delta_{\ell,t}+\pi_{0,t})$ being transferred to
		eigenvalues by Weyl's inequality and to eigenvectors by the
		Davis--Kahan theorem, using that the relevant eigengap is bounded
		away from zero (established in Step~2 below). By \eqref{eq:spiked},
		$\Sigma=\sigma^2(\bm I_p+P)$ with
		$P=\sum_{j\le k}\theta_j\bm v_{t,j}\bm v_{t,j}^\top$ of rank $k$. 
		Hence $\tilde{\bm S}=\sigma^2(\bm I+P)^{1/2}W(\bm I+P)^{1/2}$,
		which is the announced finite-rank multiplicative perturbation of
		$\sigma^2W$. Its ESD therefore coincides with that of $\sigma^2W$
		in the limit, i.e.\ with the push-forward $\mu_{\ell,t}$ of
		$\mu^W$ under $y\mapsto\sigma^2y$
		\citep{silverstein1995empirical}, whose right edge is
		$b_{\ell,t}=\sigma^2y^+$, and no eigenvalue other than the $k$
		outliers separates from $\operatorname{supp}\mu_{\ell,t}$
		\citep{baisilverstein1998nolarger,bai1999exact}.
		
		\medskip\noindent
		\textbf{Step 2 (master equation for the outliers).}
		Since $(\bm I+P)^{1/2}W(\bm I+P)^{1/2}$ and $W(\bm I+P)$ are
		similar, $x\notin\operatorname{spec}(W)$ is an eigenvalue of the
		former if and only if
		$0=\det\{W(\bm I+P)-x\bm I\}
		=\det(W-x\bm I)\det\{\bm I+(W-x\bm I)^{-1}WP\}$. Writing
		$\bm V:=\bm V_t$, $\Theta:=\operatorname{diag}(\theta_1,\dots,
		\theta_k)$ and $R(x):=(W-x\bm I)^{-1}$, the Weinstein--Aronszajn
		identity reduces this to the $k\times k$ determinant
		\begin{equation}
			\label{eq:master}
			\det\bigl\{\bm I_k+\Theta\,
			\bm V^\top R(x)W\bm V\bigr\}=0 .
		\end{equation}
		By Assumption~\ref{assum:deloc} the vectors $\bm v_{t,j}$ are
		delocalized and independent of $W$, so the isotropic local law for
		generalized sample covariance matrices
		\citep{knowles2013isotropic,bloemendal2016principal}, applied at
		spectral parameter $x$ at distance $\Omega(1)$ from
		$\operatorname{supp}\mu^W$ and adapted to the weighted design
		\eqref{eq:Wdef} (whose natural fluctuation scale is
		$\sum_i\mathrm{w}_{\ell,t,i}^2=N^{-1}$), yields
		\begin{equation}
			\label{eq:isotropic}
			\bm v_{t,j}^\top R(x)W\bm v_{t,m}
			=-\,\delta_{jm}\,g(x)+O_{\mathbb P}(N^{-1/2}),
			\qquad
			g(x)=\int\frac{y}{x-y}\mu^W(dy),
		\end{equation}
		uniformly on compact subsets of $(y^+,\infty)$. Here we used
		$\bm v^\top R W\bm v\to\int\frac{y}{y-x}\mu^W(dy)=-g(x)$.
		Substituting \eqref{eq:isotropic} into \eqref{eq:master} and using
		Assumption~\ref{assum:bbp} (simple, uniformly separated spikes),
		the determinant factorizes as
		$\prod_{j\le k}\{1-\theta_jg(x)\}+O_{\mathbb P}(N^{-1/2})$, so the
		$j$-th outlier $\mathring x_j$ of $\tilde{\bm S}/\sigma^2$
		satisfies $\theta_jg(\mathring x_j)=1+o_{\mathbb P}(1)$.
		
		Since $g$ is continuous and strictly decreasing on
		$(y^+,\infty)$ with $g(\infty)=0$ and
		$g(y^+\!+)=1/\lambda^{\mathrm c}_{\ell,t}\cdot\sigma^2$, the equation
		$g(x)=1/\theta$ has a (unique) solution $x>y^+$ if and only if
		$1/\theta<g(y^+\!+)$, i.e.\ iff
		$\theta>1/g(y^+\!+)$, equivalently
		$\lambda>\lambda^{\mathrm c}_{\ell,t}$. Define
		$\Phi_{\ell,t}(\lambda):=\sigma^2g^{-1}(\sigma^2/\lambda)$. Then
		$\lambda G_{\ell,t}(\Phi_{\ell,t}(\lambda))=1$ by \eqref{eq:gdef}, and
		$\mathring\lambda_{\ell,t,j}=\sigma^2\mathring x_j
		=\Phi_{\ell,t}(\lambda_{t,j})+o_{\mathbb P}(1)$. The map
		$\Phi_{\ell,t}$ is $C^\infty$ and strictly increasing on
		$(\lambda^{\mathrm c}_{\ell,t},\infty)$, being the composition of
		the strictly decreasing maps $\lambda\mapsto\sigma^2/\lambda$ and
		$g^{-1}$. Moreover, $\Phi_{\ell,t}(\lambda)\downarrow b_{\ell,t}$ as
		$\lambda\downarrow\lambda^{\mathrm c}_{\ell,t}$.
		
		By Assumption~\ref{assum:bbp}, $\lambda_{t,k}\ge(1+\epsilon_0)
		\lambda^{\mathrm c}_{\ell,t}$ uniformly in $\ell,t$, and
		$\lambda^{\mathrm c}_{\ell,t}\ge\sigma^2_{\min}/g(y^+)\ge c_0>0$
		by the compactness of the parameter set under
		Assumption~\ref{assum:hd}. Since $\Phi_{\ell,t}$ is strictly
		increasing and continuous,
		\begin{equation}
			\label{eq:eigengap}
			\inf_{\ell,t}\bigl\{\Phi_{\ell,t}(\lambda_{t,k})-b_{\ell,t}\bigr\}
			\ \ge\ c_1>0
		\end{equation}
		for some constant $c_1$ depending only on $\epsilon_0$ and the
		bounds in Assumptions~\ref{assum:noise} and~\ref{assum:hd}.
		This eigengap bound justifies the use of Davis--Kahan in Step~1.
		
		\medskip\noindent
		\textbf{Step 3 (the alignment map).}
		Let $\bm u$ be a unit eigenvector of
		$(\bm I+P)^{1/2}W(\bm I+P)^{1/2}$ with eigenvalue $x$, and set
		$\hat{\bm u}:=(\bm I+P)^{1/2}\bm u$, so that
		$(\bm I+P)W\hat{\bm u}=x\hat{\bm u}$, i.e.\
		$(W-x\bm I)\hat{\bm u}=-PW\hat{\bm u}$. Consider first a single
		spike, $P=\theta\bm v\bm v^\top$, then
		$PW\hat{\bm u}=\theta\bm v\kappa$ with
		$\kappa:=\bm v^\top W\hat{\bm u}$, so
		$\hat{\bm u}=-\theta\kappa R(x)\bm v$, i.e.\
		$\hat{\bm u}\propto R(x)\bm v$ (the consistency condition
		$1=-\theta\bm v^\top WR(x)\bm v$ is precisely the master equation
		of Step~2). Consequently
		$\bm u\propto(\bm I+P)^{-1/2}R(x)\bm v$, and since
		$(\bm I+P)^{-1/2}=\bm I+\{(1+\theta)^{-1/2}-1\}\bm v\bm v^\top$,
		writing $m_1:=\bm v^\top R\bm v$ and $m_2:=\bm v^\top R^2\bm v$
		we obtain
		\[
		\langle\bm u,\bm v\rangle\propto(1+\theta)^{-1/2}m_1,
		\qquad
		\|\bm u\|^2\propto m_2+\bigl\{(1+\theta)^{-1}-1\bigr\}m_1^2,
		\]
		the second line because
		$2\{(1+\theta)^{-1/2}-1\}+\{(1+\theta)^{-1/2}-1\}^2
		=(1+\theta)^{-1}-1$. Hence
		\begin{equation}
			\label{eq:araw}
			|\langle\bm u,\bm v\rangle|^2
			=\frac{m_1^2}{(1+\theta)m_2-\theta m_1^2}.
		\end{equation}
		By the isotropic local law, $m_1=-J_1(x)+O_{\mathbb P}(N^{-1/2})$
		and $m_2=J_2(x)+O_{\mathbb P}(N^{-1/2})$, with $J_1,J_2$ as in
		\eqref{eq:IJ}. Using $\theta^{-1}=g(x)=I_1(x)=xJ_1-1$ and
		\eqref{eq:IJrel}, the denominator of \eqref{eq:araw} equals
		\[
		(1+\theta)J_2-\theta J_1^2
		=\theta\Bigl\{\frac{J_2}{\theta}+J_2-J_1^2\Bigr\}
		=\theta\Bigl\{(xJ_1-1)J_2+J_2-J_1^2\Bigr\}
		=\theta J_1\bigl(xJ_2-J_1\bigr)
		=\theta J_1I_2 ,
		\]
		where the second equality uses $1/\theta=xJ_1-1$, and the third
		uses $(xJ_1-1)J_2+J_2=xJ_1J_2$. Therefore
		\begin{equation}
			\label{eq:aclean}
			a_{\ell,t}(\lambda)
			:=\lim|\langle\bm u,\bm v\rangle|^2
			=\frac{J_1^2}{\theta J_1I_2}
			=\frac{I_1(x)J_1(x)}{I_2(x)}
			\Bigl|_{\,x=\Phi_{\ell,t}(\lambda)/\sigma^2},
		\end{equation}
		and the error in \eqref{eq:aclean} is $O_{\mathbb P}(N^{-1/2})$,
		because $m_1,m_2$ carry that error and the denominator is bounded
		away from zero on the supercritical range (Step~5). This is the
		second statement in Lemma~\ref{lem:maps}.
		
		When $k>1$, the same computation applies verbatim after
		diagonalizing the $k\times k$ system. By \eqref{eq:isotropic} the
		matrix $\bm V^\top R W\bm V$ is $-g(x)\bm I_k$ up to
		$O_{\mathbb P}(N^{-1/2})$, so at $x=\mathring x_j$ the null vector
		of $\bm I_k+\Theta\bm V^\top RW\bm V$ is
		$\bm e_j+O_{\mathbb P}(N^{-1/2})$, the separation
		$\min_{m\ne j}|1-\theta_mg(\mathring x_j)|\ge cg_{\min}>0$ of
		Assumption~\ref{assum:bbp} providing the required non-degeneracy.
		Consequently $\hat{\bm u}_j\propto R(\mathring x_j)
		(\bm v_{t,j}+O_{\mathbb P}(N^{-1/2}))$, which yields
		\eqref{eq:aclean} for the diagonal alignment and
		$|\langle\mathring{\bm u}_{\ell,t,j},\bm v_{t,m}\rangle|
		=O_{\mathbb P}(N^{-1/2})$, hence
		$|\langle\mathring{\bm u}_{\ell,t,j},\bm v_{t,m}\rangle|^2
		=O_{\mathbb P}(N^{-1})$ for $m\ne j$, the third statement in Lemma~\ref{lem:maps}.
		
		\medskip\noindent
		\textbf{Step 4 (closed relation between $a$ and $\Phi$).}
		Differentiating the master equation $\theta g(x(\theta))=1$ with
		respect to $\theta$ gives
		$g+\theta g'x'(\theta)=0$, i.e.\ $x'(\theta)=-g^2/g'$ (using
		$\theta=1/g$). On the other hand, by \eqref{eq:aclean} and
		$g=I_1$, $g'=-I_2$, $I_1=xJ_1-1$,
		\[
		a=\frac{I_1J_1}{I_2}
		=\frac{g\,\bigl\{(1+g)/x\bigr\}}{-g'}
		=-\frac{g(1+g)}{x\,g'}
		=\frac{(1+g)}{xg}\Bigl(-\frac{g^2}{g'}\Bigr)
		=\frac{(1+\theta)\,x'(\theta)}{x(\theta)} ,
		\]
		where we used $J_1=(1+I_1)/x=(1+g)/x$ and $g=1/\theta$.
		Translating back through $\theta=\lambda/\sigma^2$,
		$x=\Phi_{\ell,t}(\lambda)/\sigma^2$ and
		$x'(\theta)=\Phi'_{\ell,t}(\lambda)$, we obtain the exact identity
		\begin{equation}
			\label{eq:aPhi}
			a_{\ell,t}(\lambda)
			=\frac{(\lambda+\sigma_t^2)\,\Phi'_{\ell,t}(\lambda)}
			{\Phi_{\ell,t}(\lambda)} .
		\end{equation}
		It shows in
		particular that $a_{\ell,t}$ inherits the smoothness of
		$\Phi_{\ell,t}$, hence is continuously differentiable.
		
		\medskip\noindent
		\textbf{Step 5 (range, monotonicity, boundary behaviour).}
		\emph{(a) $0<a<1$.} Positivity is clear from \eqref{eq:aclean}
		since $I_1,J_1,I_2>0$. By \eqref{eq:A2a} of Lemma~\ref{lem:A2},
		$I_1J_1\le I_2$, so $a\le1$, strictly whenever $\mu^W$ is
		non-degenerate, which holds because $c_{\ell,t}>0$.
		
		\emph{(b) Strict monotonicity.} Since $x(\theta)$ is strictly
		increasing, it suffices to prove $da/dx>0$. With
		$a=I_1J_1/I_2$ and \eqref{eq:IJrel},
		\[
		\frac{d}{dx}(I_1J_1)=-I_2J_1-I_1J_2,
		\qquad
		\frac{dI_2}{dx}=-2I_3,
		\]
		hence
		$\frac{da}{dx}=I_2^{-2}\bigl\{2I_1I_3J_1-I_2^2J_1-I_1I_2J_2\bigr\}
		=:I_2^{-2}\,\Xi$. By Cauchy--Schwarz $I_2^2\le I_1I_3$, so
		$I_2^2J_1\le I_1I_3J_1$ and by \eqref{eq:A2b}
		$I_2J_2\le I_3J_1$, so $I_1I_2J_2\le I_1I_3J_1$. Therefore
		$\Xi\ge2I_1I_3J_1-I_1I_3J_1-I_1I_3J_1=0$, with strict inequality
		unless $\mu^W$ is a Dirac mass. Hence $a_{\ell,t}$ is strictly
		increasing, and so is $\lambda\mapsto\lambda a_{\ell,t}(\lambda)$.
		
		\emph{(c) Boundary behaviour.} As
		$\lambda\downarrow\lambda^{\mathrm c}_{\ell,t}$ we have
		$x\downarrow y^+$, whence $I_1,J_1$ stay bounded (they are finite
		at the edge precisely when $\lambda^{\mathrm c}_{\ell,t}>0$) while
		$I_2(x)=-g'(x)\uparrow+\infty$, because the limiting measure
		$\mu^W$ of a generalized Mar\v cenko--Pastur ensemble has a
		square-root vanishing density at a regular right edge
		\citep{silverstein1995empirical,bai1999exact}. Hence, if
		$d\mu^W/dy\asymp(y^+-y)^{1/2}$ near $y^+$, then
		$\int(x-y)^{-2}\mu^W(dy)\asymp(x-y^+)^{-1/2}\to\infty$.
		Consequently $a_{\ell,t}(\lambda)\downarrow0$, simultaneously with
		$\Phi_{\ell,t}(\lambda)\downarrow b_{\ell,t}$. Together with (a)
		and (b) this shows $a_{\ell,t}:(\lambda^{\mathrm c}_{\ell,t},
		\infty)\to(0,1)$ is a strictly increasing $C^1$ bijection onto its
		range, and that on the supercritical range
		$\lambda\ge\lambda_{t,k}$ imposed by
		Assumption~\ref{assum:bbp} the quantity $a_{\ell,t}(\lambda)$ is
		bounded away from $0$, which is the non-degeneracy used in Step~3.
		
		\medskip\noindent
		\textbf{Step 6 (the canonical case).}
		Let $H_{\ell,t}\Rightarrow\delta_1$, so $\mathrm{w}_{\ell,t,i}\equiv
		M_{\ell,t}^{-1}(1+o(1))$, 
		$c_{\ell,t}=c^{\mathrm{mp}}_{\ell,t}=:c$. Then $\mu^W$ is the
		Mar\v cenko--Pastur law with ratio $c$, whose Stieltjes transform
		$J_1(x)=\int(x-y)^{-1}\mu^W(dy)$ satisfies
		\begin{equation}
			\label{eq:MPquad}
			cxJ_1^2-(x+c-1)J_1+1=0 .
		\end{equation}
		We claim that $x=(1+\theta)(1+c/\theta)$ solves the master
		equation $\theta g(x)=1$ for $\theta>\sqrt c$. Indeed
		$g=xJ_1-1$, so $\theta g=1$ is equivalent to
		$J_1=(1+\theta)/(\theta x)$. Substituting the candidate $x$ gives
		$J_1=1/(\theta+c)$, and inserting $x$ and this $J_1$ into
		\eqref{eq:MPquad},
		\[
		\frac{c(1+\theta)}{\theta(\theta+c)}
		-\frac{c+\theta^2+2\theta c}{\theta(\theta+c)}+1
		=\frac{-\theta(\theta+c)}{\theta(\theta+c)}+1
		=0 ,
		\]
		after using $x+c-1=(c+\theta^2+2\theta c)/\theta$. Uniqueness of
		the solution above the edge was established in Step~2. Writing
		$\theta=\lambda/\sigma^2$ and $\Phi=\sigma^2x$ gives
		$\Phi_{\ell,t}(\lambda)=(\lambda+\sigma^2)(\lambda+c\sigma^2)
		/\lambda$, the first formula in \eqref{eq:closedforms}. 
		$\Phi_{\ell,t}$ is defined for $\theta>\sqrt c$, i.e.\
		$\lambda>\sigma^2\sqrt c$, and
		$\Phi_{\ell,t}(\sigma^2\sqrt c)
		=\sigma^2(1+\sqrt c)^2=b_{\ell,t}$, confirming
		$\lambda^{\mathrm c}_{\ell,t}=\sigma^2\sqrt{c_{\ell,t}}$ and the
		third formula in \eqref{eq:closedforms}. Finally
		$\Phi'_{\ell,t}(\lambda)=1-c\sigma^4/\lambda^2$ and \eqref{eq:aPhi}
		give
		\[
		a_{\ell,t}(\lambda)
		=\frac{(\lambda+\sigma^2)(1-c\sigma^4/\lambda^2)\lambda}
		{(\lambda+\sigma^2)(\lambda+c\sigma^2)}
		=\frac{\lambda^2-c\sigma^4}{\lambda(\lambda+c\sigma^2)},
		\]
		which is the second formula in \eqref{eq:closedforms}. Equivalently
		$a=(\theta^2-c)/\{\theta(\theta+c)\}$, i.e.\ the classical
		expression $(1-c/\theta^2)/(1+c/\theta)$ of
		\citet{baik2005phase} and \citet{paul2007asymptotics}. Both
		formulas vanish/attain the edge simultaneously at
		$\lambda=\sigma^2\sqrt c$, as claimed.
	\end{proof}
	
	\begin{coro}[Uniform lower bound for the alignment]
		\label{cor:aunder}
		In the canonical case, under
		Assumptions~\ref{assum:bbp}--\ref{assum:hd},
		\begin{equation}
			\label{eq:aunder}
			\underline a
			:=\inf_{\ell,t}a_{\ell,t}(\lambda_{t,k})
			\ \ge\
			\frac{\epsilon_0(2+\epsilon_0)}
			{(1+\epsilon_0)(1+\epsilon_0+\sqrt{\bar c}\,)}\ >\ 0 .
		\end{equation}
		For a general profile the qualitative bound $\underline a>0$ still
		holds, by the strict monotonicity of $a_{\ell,t}$ established in
		Step~5(b) of the proof of Lemma~\ref{lem:maps} together with
		compactness of the parameter set, but the constant depends on the
		profile.
	\end{coro}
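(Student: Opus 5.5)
The proof uses the closed form \eqref{eq:closedforms} of the alignment map, rescaled to the BBP threshold, and then reduces to monotonicity in two variables. Write $\theta:=\lambda_{t,k}/\sigma_t^2$ and $c:=c_{\ell,t}$. In the canonical case Lemma~\ref{lem:maps} gives
\[
a_{\ell,t}(\lambda_{t,k})=\frac{\theta^2-c}{\theta(\theta+c)} .
\]
Parametrize by the distance to the threshold, $\theta=s\sqrt c$ with $s>1$. Then the factor $c$ cancels and
\[
a_{\ell,t}(\lambda_{t,k})=\frac{c(s^2-1)}{s\sqrt c\,(s\sqrt c+c)}=\frac{s^2-1}{s\,(s+\sqrt c)} .
\]
This quantity depends on $(s,c)$ only, so it suffices to control $s$ from below and $c$ from above.

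For the lower bound on $s$ I use the quantitative form of Assumption~\ref{assum:bbp}, already invoked in Step~2 of the proof of Lemma~\ref{lem:maps}: $\lambda_{t,k}\ge(1+\epsilon_0)\lambda^{\mathrm c}_{\ell,t}$. In the canonical case $\lambda^{\mathrm c}_{\ell,t}=\sigma_t^2\sqrt{c_{\ell,t}}$, so $s\ge1+\epsilon_0$ uniformly in $\ell,t$.

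Next comes monotonicity. The map $s\mapsto(s^2-1)/\{s(s+\sqrt c)\}$ is increasing on $(1,\infty)$, which is Step~5(b) of the proof of Lemma~\ref{lem:maps} expressed in the variable $s$. It can also be checked directly: the derivative has numerator $s^2\sqrt c+2s+\sqrt c>0$. The same expression is decreasing in $\sqrt c$ at fixed $s$, since only the denominator depends on $c$ and it increases. Hence
\[
a_{\ell,t}(\lambda_{t,k})\ \ge\ \frac{(1+\epsilon_0)^2-1}{(1+\epsilon_0)(1+\epsilon_0+\sqrt{c_{\ell,t}})}\ \ge\ \frac{\epsilon_0(2+\epsilon_0)}{(1+\epsilon_0)(1+\epsilon_0+\sqrt{\bar c})},
\]
and taking the infimum over $\ell,t$ gives \eqref{eq:aunder}.

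The one delicate point is replacing $c_{\ell,t}$ by $\bar c$. Assumption~\ref{assum:hd} bounds only the limits, $\sup_\ell c_\ell\le\bar c$. So I read $\inf_{\ell,t}$ over all large $t$, using $c_{\ell,t}\to c_\ell$, or equivalently replace $\bar c$ by $\sup_{\ell,t}c_{\ell,t}$. The latter is finite by \eqref{eq:c_le_alpha} and Assumption~\ref{assum:step}.

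For a general profile there is no closed form, so the argument is qualitative. The map $a_{\ell,t}$ is continuous and strictly increasing with $a_{\ell,t}>0$ on $(\lambda^{\mathrm c}_{\ell,t},\infty)$, by Step~5. The argument $\lambda_{t,k}$ stays a factor $1+\epsilon_0$ above $\lambda^{\mathrm c}_{\ell,t}$. The map depends continuously on $(c^{\mathrm{mp}},\sigma^2,H)$, and these range over a compact set under Assumptions~\ref{assum:noise} and~\ref{assum:hd}, with $H$ having unit mean and bounded support. A continuous positive function on a compact set attains a positive minimum, so $\underline a>0$, with a constant that depends on the profile. Establishing this continuity in $H$ is the only real obstacle. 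I would take it from the continuity of $\mu^W$, and hence of $g$ and $g^{-1}$, with respect to the weak convergence $H_{\ell,t}\Rightarrow H_\ell$.
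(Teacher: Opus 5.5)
Your proof is correct and follows the paper's argument: the closed form from Step~6, monotonicity in $\theta$ from Step~5(b), evaluation at $\theta=(1+\epsilon_0)\sqrt c$, then monotonicity in $c$. Your substitution $\theta=s\sqrt c$ only makes the cancellation of $c$ explicit. Your remark that Assumption~\ref{assum:hd} bounds only the limits $c_\ell$ is why the paper takes a $\liminf$ over $\ell,t$ in its last step; replacing $\bar c$ by $\sup_{\ell,t}c_{\ell,t}$ is also valid but changes the constant, so it is not strictly "equivalent". Your sketch for general profiles goes beyond the paper, which asserts that part without proof.
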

	
	\begin{proof}[Proof of Corollary~\ref{cor:aunder}]
		Write $\theta:=\lambda_{t,k}/\sigma_t^2$ and $c:=c_{\ell,t}$. By Assumption~\ref{assum:bbp}, $\theta\ge(1+\epsilon_0)\sqrt c$. The
		closed form $a=(\theta^2-c)\{\theta(\theta+c)\}^{-1}$ comes from
		Step~6 of the proof of Lemma~\ref{lem:maps}. By Step~5(b) of that
		proof, $a$ is increasing in $\theta$, so it is
		minimized at $\theta_0=(1+\epsilon_0)\sqrt c$:
		\[
		a\ \ge\
		\frac{(1+\epsilon_0)^2c-c}
		{(1+\epsilon_0)\sqrt c\,\{(1+\epsilon_0)\sqrt c+c\}}
		=\frac{c\{(1+\epsilon_0)^2-1\}}
		{(1+\epsilon_0)\,c\,\{(1+\epsilon_0)+\sqrt c\}}
		=\frac{\epsilon_0(2+\epsilon_0)}
		{(1+\epsilon_0)(1+\epsilon_0+\sqrt c)} .
		\]
		The right-hand side is decreasing in $c$, since
		$c_{\ell,t}\to c_\ell\le\bar c$ as $p\to\infty$ by
		Assumption~\ref{assum:hd}, taking $\liminf$ over $\ell,t$ yields
		the claim.
	\end{proof}
	
	\begin{proof}[Proof of Lemma~\ref{lem:contraction}]
		\textbf{Step 1 ($\xi^\star<1$).}
		By Lemma~\ref{lem:maps} and \eqref{eq:closedforms},
		\[
		\xi_{\ell,t}
		=\frac{b_{\ell,t}}{\Phi_{\ell,t}(\lambda_{t,k})}
		=\frac{\sigma_t^2(1+\sqrt{c_{\ell,t}})^2\lambda_{t,k}}
		{(\lambda_{t,k}+\sigma_t^2)(\lambda_{t,k}
			+c_{\ell,t}\sigma_t^2)} .
		\]
		To bound $\xi_{\ell,t}$ uniformly away from one, we exploit the 
		supercritical assumption. Since spikes exceed the BBP threshold by
		a factor $1+\epsilon_0$, the sample eigenvalue $\Phi_{\ell,t}(\lambda_{t,k})$ 
		separates sufficiently from the bulk edge $b_{\ell,t}$.
		
		Because $\Phi_{\ell,t}$ is strictly increasing (Step~2 of the
		proof of Lemma~\ref{lem:maps}), $\xi_{\ell,t}$ is decreasing in
		$\lambda_{t,k}$, so by Assumption~\ref{assum:bbp} it is bounded by
		its value at $\lambda=(1+\epsilon_0)\lambda^{\mathrm{c}}_{\ell,t}$. 
		In the canonical case $H_{\ell,t}\Rightarrow\delta_1$, this becomes 
		$\lambda=(1+\epsilon_0)\sigma^2_t\sqrt{c_{\ell,t}}$.
		
		Putting $u:=\sqrt{c_{\ell,t}}$ and $\epsilon:=\epsilon_0$, that
		value equals $(1+u)^2/\{(1+(1+\epsilon)u)(1+u/(1+\epsilon))\}$,
		and
		\[
		\bigl(1+(1+\epsilon)u\bigr)\Bigl(1+\frac{u}{1+\epsilon}\Bigr)
		-(1+u)^2
		=u\Bigl\{(1+\epsilon)+\frac{1}{1+\epsilon}-2\Bigr\}
		=\frac{u\,\epsilon^2}{1+\epsilon}\ >\ 0 ,
		\]
		so that
		\begin{equation}
			\label{eq:xibound}
			\xi_{\ell,t}
			\ \le\
			\frac{(1+u)^2}{(1+u)^2+u\epsilon_0^2/(1+\epsilon_0)}
			\ <\ 1 .
		\end{equation}
		For a general profile, the bound $\xi_{\ell,t}<1$ follows by the same 
		algebraic manipulation using $\lambda^{\mathrm{c}}_{\ell,t}=\sigma_t^2/g_{\ell,t}(y^+_{\ell,t})$ 
		in place of $\sigma_t^2\sqrt{c_{\ell,t}}$, since $\Phi_{\ell,t}(\lambda^{\mathrm{c}}_{\ell,t})
		=b_{\ell,t}$ by definition (Step~1 of Lemma~\ref{lem:maps}).
		The right-hand side of \eqref{eq:xibound} is a continuous function
		of $u$ on the compact set $[\underline c^{1/2},\bar c^{1/2}]$,
		where $\underline c:=\inf_\ell c_\ell>0$ (finite node set). Hence
		$\limsup_{\ell,t}\xi_{\ell,t}\le\xi^\star<1$ with $\xi^\star$
		depending only on $\epsilon_0,\underline c,\bar c$. (If one prefers
		not to assume $\underline c>0$, the same conclusion holds with
		$\xi^\star$ depending on $\epsilon_0,\sigma^2_{\min},
		\sigma^2_{\max},\bar c$ and
		$\lambda_{\min}:=\inf_t\lambda_{t,k}>0$, since then
		$\xi_{\ell,t}\le\sigma^2_{\max}(1+\sqrt{\bar c})^2\lambda_{\min}
		\{(\lambda_{\min}+\sigma^2_{\min})\lambda_{\min}\}^{-1}$ tends to
		$\sigma^2_{\max}/(\lambda_{\min}+\sigma^2_{\min})<1$ as
		$\bar c\to0$.)
		
		\medskip\noindent
		\textbf{Step 2 (movement of the target).}
		Using the contraction factor $\xi^\star<1$ from Step~1, we now establish
		the eigengap that justifies the Davis--Kahan theorem.
		The relevant bound is the directional one
		$\|(\bm S_{\ell,t}-\bm S_{\ell,t-1})
		\mathring{\bm U}_{\ell,t-1}\|_F=O_{\mathbb P}(\alpha^{\mathrm{lag}}_{\ell,t})$ of
		\eqref{eq:A1_dir} in Lemma~\ref{lem:A1}, applicable because
		$\mathring{\bm U}_{\ell,t-1}$ is a $\mathcal F_{t-1}$-measurable
		orthonormal frame. By
		Lemma~\ref{lem:maps} the $k$-th and $(k+1)$-st eigenvalues of
		$\bm S_{\ell,t}$ satisfy
		$\mathring\lambda_{\ell,t,k}-\mathring\lambda_{\ell,t,k+1}
		=\Phi_{\ell,t}(\lambda_{t,k})-b_{\ell,t}+o_{\mathbb P}(1)
		=(\xi_{\ell,t}^{-1}-1)b_{\ell,t}+o_{\mathbb P}(1)
		\ge((\xi^{\star})^{-1}-1)b_{\ell,t}+o_{\mathbb P}(1)
		\ge c_1>0$ with probability tending to one, using
		$\xi_{\ell,t}\le\xi^\star$ from \eqref{eq:xibound}, 
		$b_{\ell,t}\ge\sigma^2_{\min}$, and $\xi^\star<1$.
		The
		Davis--Kahan $\sin\Theta$ theorem therefore gives, for the exact
		top-$k$ eigenspaces of two consecutive smoothed covariances,
		\begin{equation}
			\label{eq:targetmove}
			\delta_{\ell,t}
			:=\bigl\|\sin\Theta(\mathring{\bm U}_{\ell,t},
			\mathring{\bm U}_{\ell,t-1})\bigr\|_F
			\ \le\ \frac{\sqrt2\,
				\|(\bm S_{\ell,t}-\bm S_{\ell,t-1})
				\mathring{\bm U}_{\ell,t-1}\|_F}{c_1}
			\ =\ O_{\mathbb P}(\alpha^{\mathrm{lag}}_{\ell,t}) .
		\end{equation}
		Only the off-diagonal block of the increment enters here. Let 
		$\bm P_{\ell,t-1}=\mathring{\bm U}_{\ell,t-1}
		\mathring{\bm U}_{\ell,t-1}^\top$, the numerator is unchanged if
		$\bm S_{\ell,t}-\bm S_{\ell,t-1}$ is replaced by
		$\bm P^\perp_{\ell,t-1}(\bm S_{\ell,t}-\bm S_{\ell,t-1})
		\bm P_{\ell,t-1}$, because the component of the perturbation
		inside $\operatorname{range}(\mathring{\bm U}_{\ell,t-1})$ does not
		rotate that subspace. Bounding the numerator instead by
		$\sqrt{2k}\,\|\bm S_{\ell,t}-\bm S_{\ell,t-1}\|_{op}$ would replace
		$\alpha^{\mathrm{lag}}_{\ell,t}$ by $\tilde\alpha_{\ell,t}$, which by
		\eqref{eq:c_le_alpha} does not vanish when $c_\ell>0$. Adding the
		population-drift bound of Assumption~\ref{assum:drift} we may and
		do write
		$\delta_{\ell,t}=O_{\mathbb P}(\alpha^{\mathrm{lag}}_{\ell,t}+\zeta_t)$.
		
		\medskip\noindent
		\textbf{Step 3 (one-step recursion).}
		Apply Lemma~\ref{lem:A3} with $A=\bm S_{\ell,t}$,
		$\mathring{\bm U}=\mathring{\bm U}_{\ell,t}$ and
		$\bm U=\bm U_{\ell,t-1}$. Its hypothesis
		$\mu_k>|\mu_{k+1}|$ holds with probability tending to one by
		Step~2, with $|\mu_{k+1}|/\mu_k=\xi_{\ell,t}+o_{\mathbb P}(1)$.
		Since $\|\sin\Theta\|_F\le\|\tan\Theta\|_F$ and, whenever
		$\|\sin\Theta(\bm U,\mathring{\bm U})\|_{op}\le\delta_0<1$,
		$\|\tan\Theta\|_F\le\|\sin\Theta\|_F/\sqrt{1-\delta_0^2}$, we get
		\begin{equation}
			\label{eq:onestep_raw}
			e_{\ell,t}
			\ \le\ \frac{\xi_{\ell,t}}{\sqrt{1-\delta_0^2}}\,
			\bigl\|\sin\Theta(\bm U_{\ell,t-1},
			\mathring{\bm U}_{\ell,t})\bigr\|_F
			\ \le\ \frac{\xi_{\ell,t}}{\sqrt{1-\delta_0^2}}
			\bigl(e_{\ell,t-1}+\delta_{\ell,t}\bigr),
		\end{equation}
		the last step by the triangle inequality for the metric
		\eqref{eq:metric}. To close the induction, 
		we need to ensure that the right-hand side of 
		\eqref{eq:onestep_raw} remains bounded by $\delta_0$ when the left-hand 
		side is. The factor $(1-\delta_0^2)^{-1/2}$ inflates $\xi^\star$, so we 
		must choose $\delta_0$ to keep the inflated contraction factor below one.
		Because $\xi^\star<1$, we may choose 
		$\delta_0\in(0,\sqrt{1-(\xi^\star)^2})$ 
		so that $\xi^{\star\star}:=\xi^\star/\sqrt{1-\delta_0^2}<1$ 
		(e.g., $\delta_0=\sqrt{(1-(\xi^\star)^2)/2}$ gives 
		$\xi^{\star\star}=\xi^\star\sqrt{2/(1+\xi^\star)}<1$).
		We verify by induction that
		$\|\sin\Theta(\bm U_{\ell,t},\mathring{\bm U}_{\ell,t})\|_{op}
		\le\delta_0$ for all $t\ge t_0$ for some sufficiently large $t_0$. 
		For the base case, Assumption~\ref{assum:init} provides $e_{\ell,0}\le e_0$
		for some constant $e_0$. Applying \eqref{eq:onestep_raw} iteratively,
		$e_{\ell,t}\le(\xi^\star)^t e_0+O_{\mathbb P}(\sum_{s=1}^t(\xi^\star)^{t-s}
		(\alpha^{\mathrm{lag}}_{\ell,s}+\zeta_s))$. Since $\xi^\star<1$ and
		$\alpha^{\mathrm{lag}}_{\ell,s}+\zeta_s\to0$, we have $e_{\ell,t}\le\delta_0$
		for all $t\ge t_0$ with probability tending to one, provided $t_0$ 
		is chosen large enough. For the inductive step, suppose that
		$\|\sin\Theta(\bm U_{\ell,t-1},
		\mathring{\bm U}_{\ell,t-1})\|_{op}\le\delta_0$. Then \eqref{eq:onestep_raw}
		gives $e_{\ell,t}\le\xi^{\star\star}(e_{\ell,t-1}+\delta_{\ell,t})
		\le\xi^{\star\star}\delta_0+O_{\mathbb P}(\alpha^{\mathrm{lag}}_{\ell,t}+\zeta_t)\le\delta_0$
		for all large $t$, completing the induction.
		Absorbing
		$(1-\delta_0^2)^{-1/2}$ into a constant and using
		$\xi^{\star\star}\le\xi^\star/(1-\xi^\star)\vee1$ we obtain
		\eqref{eq:onestep}:
		\begin{equation}
			\label{eq:onestep}
			e_{\ell,t}\ \le\ \xi_{\ell,t}e_{\ell,t-1}
			+\frac{C}{1-\xi^\star}\bigl(\alpha^{\mathrm{lag}}_{\ell,t}+\zeta_t\bigr)
		\end{equation}
		with probability tending to one, for all large $t$.
		
		\medskip\noindent
		\textbf{Step 4 (unrolling).}
		Write $v_t:=C(1-\xi^\star)^{-1}(\alpha^{\mathrm{lag}}_{\ell,t}+\zeta_t)$.
		Iterating \eqref{eq:onestep} and using
		$\xi_{\ell,s}\le\xi^\star$ for $s\ge t_0$,
		\[
		e_{\ell,t}\ \le\ (\xi^\star)^{t-t_0}e_{\ell,t_0}
		+\sum_{s=t_0+1}^{t}(\xi^\star)^{t-s}v_s .
		\]
		Splitting the sum at $s=\lceil t/2\rceil$ and using
		$\sum_{s}(\xi^\star)^{t-s}\le(1-\xi^\star)^{-1}$ and
		$v_s\le\sup_{s\ge t/2}v_s$,
		\[
		e_{\ell,t}
		\ \le\ (\xi^\star)^{t-t_0}e_{\ell,t_0}
		+\Bigl(\sup_{s\le t}v_s\Bigr)\frac{(\xi^\star)^{t/2}}
		{1-\xi^\star}
		+\frac{\sup_{s\ge t/2}v_s}{1-\xi^\star}
		\ =\ O\bigl((\xi^\star)^{t}\bigr)
		+O\bigl(\alpha^{\mathrm{lag}}_{\ell,t}+\zeta_t\bigr),
		\]
		The last equality holds whenever $v_t$ decays in a controlled manner without
		large oscillations. More precisely, if $v_t$ is eventually non-increasing
		or satisfies $\sup_{s\ge t/2}v_s=O(v_t)$ (regular variation in the weak sense), 
		then $\sup_{s\ge t/2}v_s=O(v_t)$, which includes all monotone vanishing sequences and 
		polynomially decaying schedules $\alpha_t\asymp t^{-\gamma}$, $\zeta_t\asymp t^{-\gamma'}$. 
		For a general vanishing $v_t$, the conclusion holds with
		$\alpha^{\mathrm{lag}}_{\ell,t}+\zeta_t$ replaced by $\sup_{s\ge t/2}(\alpha^{\mathrm{lag}}_{\ell,s}+\zeta_s)$.
	\end{proof}
	
	\begin{proof}[Proof of Theorem~\ref{thm:tracking}]
		\textbf{Step 1 (separating algorithmic from statistical error).}
		By \eqref{eq:metric}, $\|\sin\Theta(\cdot,\cdot)\|_F$ is a metric
		on $\mathrm{Gr}(k,p)$, so
		\begin{equation}
			\label{eq:tri}
			\bigl|d_t^{(\ell)}-\mathring d_{\ell,t}\bigr|
			\ \le\ e_{\ell,t},
			\qquad
			\mathring d_{\ell,t}
			:=\bigl\|\sin\Theta(\mathring{\bm U}_{\ell,t},
			\bm V_t)\bigr\|_F ,
		\end{equation}
		and by Lemma~\ref{lem:contraction}
		$e_{\ell,t}=O((\xi^\star)^t)
		+O(\alpha^{\mathrm{lag}}_{\ell,t}+\zeta_t)$.
		Since both terms appear in the final bound \eqref{eq:tracking}, 
		it suffices to analyse $\mathring d_{\ell,t}$, i.e.\ the
		exact top-$k$ eigenspace of $\bm S_{\ell,t}$, and then add $e_{\ell,t}$
		via the triangle inequality.

		\medskip\noindent
		\textbf{Step 2 (evaluating the statistical misalignment).}
		Both $\mathring{\bm U}_{\ell,t}$ and $\bm V_t$ are orthonormal $p\times k$ frames, so
		\[
		\mathring d_{\ell,t}^{\,2}
		=k-\bigl\|\bm V_t^\top\mathring{\bm U}_{\ell,t}\bigr\|_F^2
		=\sum_{j=1}^{k}
		\Bigl(1-\sum_{m=1}^{k}
		\bigl\langle\mathring{\bm u}_{\ell,t,j},
		\bm v_{t,m}\bigr\rangle^2\Bigr).
		\]
		Recognizing that $\sum_{m=1}^k\langle\mathring{\bm u}_{\ell,t,j},\bm v_{t,m}\rangle^2$ 
		measures how much of $\mathring{\bm u}_{\ell,t,j}$ lies in the population 
		subspace $\mathrm{span}(\bm V_t)$, the deficiency $1-\sum_{m}\langle\cdot\rangle^2$ 
		quantifies the perpendicular component.
		
		Lemma~\ref{lem:maps} gives
		$$\langle\mathring{\bm u}_{\ell,t,j},\bm v_{t,j}\rangle^2
		=a_{\ell,t}(\lambda_{t,j})+O_{\mathbb P}((N^{\mathrm{eff}}_{\ell,t})^{-1/2})$$ and
		$\langle\mathring{\bm u}_{\ell,t,j},\bm v_{t,m}\rangle^2
		=O_{\mathbb P}((N^{\mathrm{eff}}_{\ell,t})^{-1})$ for $m\ne j$, 
		where $N^{\mathrm{eff}}_{\ell,t}$ is the effective sample size.
		Since $k$ is fixed, it follows that
		\begin{equation}
			\label{eq:dsq}
			\mathring d_{\ell,t}^{\,2}
			=\sum_{j=1}^{k}\bigl\{1-a_{\ell,t}(\lambda_{t,j})\bigr\}
			+O_{\mathbb P}((N^{\mathrm{eff}}_{\ell,t})^{-1/2})
			=\bigl(D^\star_{\ell,t}\bigr)^2+O_{\mathbb P}((N^{\mathrm{eff}}_{\ell,t})^{-1/2}).		\end{equation}
		Here Lemma~\ref{lem:maps} was applied to
		$\tilde{\bm S}_{\ell,t}:=\bm\Sigma_t^{1/2}W_{\ell,t}\bm\Sigma_t^{1/2}$. 
		Passing to the eigenvectors of $\bm S_{\ell,t}$ introduces an additional
		$O_{\mathbb P}(\bar\Delta_{\ell,t}+\pi_{0,t})$ error by Lemma~\ref{lem:A1}, 
		which bounds $\|\bm S_{\ell,t}-\tilde{\bm S}_{\ell,t}\|_{op}$, 
		combined with the Davis--Kahan $\sin\Theta$ theorem and the
		uniformly positive eigengap $c_1>0$ established in Step~2 of the proof of
		Lemma~\ref{lem:contraction}. Since the alignment coefficients 
		$a_{\ell,t}(\lambda)$ are continuously differentiable in 
		$(c_{\ell,t},\sigma^2_t,\lambda)$, replacing time-varying parameters by
		their time-$t$ values costs $O(\zeta_t)$ by
		Assumption~\ref{assum:drift}.

		\medskip\noindent
		\textbf{Step 3 (from squares to the metric itself).}
		We first establish a uniform lower bound for $D^\star_{\ell,t}$. 
		In the canonical case,
		\[
		1-a_{\ell,t}(\lambda)
		=1-\frac{\lambda^2-c_{\ell,t}\sigma_t^4}
		{\lambda(\lambda+c_{\ell,t}\sigma_t^2)}
		=\frac{\lambda^2+c_{\ell,t}\sigma_t^2\lambda-\lambda^2
			+c_{\ell,t}\sigma_t^4}{\lambda(\lambda+c_{\ell,t}\sigma_t^2)}
		=\frac{c_{\ell,t}\sigma_t^2(\lambda+\sigma_t^2)}
		{\lambda(\lambda+c_{\ell,t}\sigma_t^2)}. 
		\]
		This verifies the
		closed form of $D^\star_{\ell,t}$. Moreover, by
		Assumptions~\ref{assum:noise} and~\ref{assum:hd},
		\[
		\bigl(D^\star_{\ell,t}\bigr)^2
		\ \ge\ 1-a_{\ell,t}(\lambda_{t,1})
		\ \ge\ \frac{\underline c\,\sigma^2_{\min}}
		{\lambda_{\max}+\bar c\,\sigma^2_{\max}}
		\ =:\ c_2>0
		\]
		uniformly in $\ell,t$. For a general profile, a similar uniform lower 
		bound holds by the monotonicity of $a_{\ell,t}$ (Lemma~\ref{lem:maps}, 
		Step~5(a)) and the compactness of the parameter set under 
		Assumption~\ref{assum:hd}. 
		
		Having established $(D^\star_{\ell,t})^2\ge c_2>0$ uniformly, we now
		apply the mean value theorem. Since \eqref{eq:dsq} gives 
		$\mathring d_{\ell,t}^{\,2}=(D^\star_{\ell,t})^2+
		O_{\mathbb P}((N^{\mathrm{eff}}_{\ell,t})^{-1/2})$,
		we have $\mathring d_{\ell,t}^{\,2}\ge c_2/2$ with probability tending to one. 
		Hence, by the mean value theorem applied to $s\mapsto\sqrt s$ on $[c_2/2,\infty)$, 
		\eqref{eq:dsq} yields
		$\mathring d_{\ell,t}=D^\star_{\ell,t}+O_{\mathbb P}((N^{\mathrm{eff}}_{\ell,t})^{-1/2})$.
		
		Combining with \eqref{eq:tri} and Lemma~\ref{lem:contraction}
		gives \eqref{eq:tracking}:
		\[
		d_t^{(\ell)}
		=D^\star_{\ell,t}
		+O_{\mathbb P}\bigl((N^{\mathrm{eff}}_{\ell,t})^{-1/2}\bigr)
		+O\bigl(\alpha^{\mathrm{lag}}_{\ell,t}+\zeta_t+\Delta_{\ell,t}\bigr)
		+O\bigl((\xi^\star)^t\bigr),
		\]
		with $\Delta_{\ell,t}$ read as $\bar\Delta_{\ell,t}+\pi_{0,t}$ of
		\eqref{eq:barDelta}.
		
		\medskip\noindent
		\textbf{Step 4 (inconsistency).}
		All remainder terms vanish as $t\to\infty$, and
		$a_{\ell,t}(\lambda_{t,j})\to a_\ell(\lambda_j)<1$ strictly by
		Step~5(a) of the proof of Lemma~\ref{lem:maps}, because
		$c_\ell>0$ makes $\mu^W_\ell$ non-degenerate. The condition $c_\ell>0$ 
		is essential. By the monotonicity of $a_{\ell,t}$
		(Lemma~\ref{lem:maps}, Step~5(a)), $a_\ell(\lambda_j)\to 1$ as $c_\ell\to 0$,
		so $D^\star_\ell\to 0$, recovering classical consistency in the 
		fixed-$p$ regime.
		Hence
		$d_t^{(\ell)}\to D^\star_\ell
		=(\sum_j\{1-a_\ell(\lambda_j)\})^{1/2}\ge\sqrt{c_2}>0$ in
		probability, and in particular
		$\liminf_td_t^{(\ell)}>0$ in probability. Since
		$D^\star_{\ell,t}$ is attained by the exact eigendecomposition of
		$\bm S_{\ell,t}$ and $\bm S_{\ell,t}$ is the sufficient statistic 
		for $(\bm\Sigma_t,W_{\ell,t})$, the floor $D^\star_{\ell,t}$ 
		cannot be reduced by any refinement of the iteration scheme or 
		by additional local computation given only $\bm S_{\ell,t}$.
		
	\end{proof}

	\begin{proof}[Proof of Lemma~\ref{lem:decomp}]
		Fix $j\le k$ and abbreviate $\bm u:=\bm u_{\ell,t-1,j}$, which is
		$\mathcal F_{\ell,t-1}$-measurable, and
		$\bm b:=\bm\Sigma_t^{1/2}\bm u$. By
		Assumption~\ref{assum:design} the batch size $n^\ell_t$ and the
		activation indicator are $\mathcal F_{\ell, t-1}$-measurable, and by
		Assumption~\ref{assum:moment} the vectors
		$\bm z_{\ell,t,i}$, $i\le n^\ell_t$, are i.i.d.\ and independent of
		$\mathcal F_{\ell,t-1}$ with mean zero and identity covariance.
		Moreover, under Assumption~\ref{assum:drift}, $\bm\Sigma_t$ is known 
		at time $t-1$ (either deterministic or $\mathcal F_{t-1}$-measurable).

		Hence $\mathbb E(\hat{\bm S}_{\ell,t}\mid\mathcal F_{\ell,t-1})
		=\bm\Sigma_t$ {exactly}, so
		\begin{align*}
			\mathbb E\bigl(r_{\ell,t,j}\mid\mathcal F_{\ell,t-1}\bigr)
			=&\bm u^\top\bm\Sigma_t\bm u
			=\bm u^\top\Bigl(\sigma_t^2\bm I_p + \sum_{m=1}^k\lambda_{t,m}\bm v_{t,m}
			\bm v_{t,m}^\top\Bigr)\bm u\\
			=&\sigma_t^2\|\bm u\|^2 + \sum_{m=1}^k\lambda_{t,m}(\bm u^\top\bm v_{t,m})^2
			=\sigma_t^2+\sum_{m=1}^k\lambda_{t,m}\langle\bm u,\bm v_{t,m}\rangle^2
			=\bar r_{\ell,t,j},
		\end{align*}
		using \eqref{eq:spiked} and $\|\bm u\|=1$. This proves the
		decomposition \eqref{eq:decomp} and
		$\mathbb E(\varpi_{\ell,t,j}\mid\mathcal F_{\ell,t-1})=0$ exactly.
		
		For the conditional variance, note that
		\[
		r_{\ell,t,j} = \bm u^\top\hat{\bm S}_{\ell,t}\bm u
		= \bm u^\top\Bigl(\frac{1}{n^\ell_t}\sum_{i=1}^{n^\ell_t}\bm X_{\ell,t,i}
		\bm X_{\ell,t,i}^\top\Bigr)\bm u
		= \frac{1}{n^\ell_t}\sum_{i=1}^{n^\ell_t}(\bm u^\top\bm X_{\ell,t,i})^2.
		\]
		By Assumption~\ref{assum:moment}, the observations $\bm X_{\ell,t,i}$ satisfy 
		$\bm X_{\ell,t,i} = \bm\Sigma_t^{1/2}\bm z_{\ell,t,i}$ where $\bm z_{\ell,t,i}
		\sim\mathcal N(\bm 0, \bm I_p)$.
		Thus 
		\[
		r_{\ell,t,j}=(n^\ell_t)^{-1}\sum_{i=1}^{n^\ell_t}(\bm b^\top\bm z_{\ell,t,i})^2. 
		\]
		These are i.i.d.\ terms conditionally on
		$\mathcal F_{\ell,t-1}$, thus
		\[
		\operatorname{Var}(\varpi_{\ell,t,j}\mid\mathcal F_{\ell,t-1})
		= \operatorname{Var}(r_{\ell,t,j}\mid\mathcal F_{\ell,t-1})
		=(n^\ell_t)^{-1}\operatorname{Var}\{(\bm b^\top\bm z)^2\}.
		\]
		
		For a vector $\bm z$ with independent, mean-zero, unit-variance entries
		and $\mathbb Ez_m^4=3+{K}_4$,
		\begin{align*}
			\mathbb E(\bm b^\top\bm z)^4
			=& \mathbb E\Bigl(\sum_m b_m z_m\Bigr)^4
			= \sum_{m_1,m_2,m_3,m_4} b_{m_1}b_{m_2}b_{m_3}b_{m_4}
			\mathbb E[z_{m_1}z_{m_2}z_{m_3}z_{m_4}]\\
			=& \sum_m b_m^4\mathbb E z_m^4 + 3\sum_{m\ne m'}b_m^2 b_{m'}^2
			\mathbb E z_m^2\mathbb E z_{m'}^2
			= (3+{K}_4)\sum_m b_m^4 + 3\sum_{m\ne m'}b_m^2 b_{m'}^2 \\
			=& (3+{K}_4)\sum_m b_m^4 + 3\Bigl(\sum_m b_m^2\Bigr)^2 - 3\sum_m b_m^4\\
			=& (3+{K}_4)\sum_m b_m^4 + 3\|\bm b\|^4 - 3\sum_m b_m^4
			= 3\|\bm b\|^4 + {K}_4\sum_m b_m^4,
		\end{align*}
		because all mixed moments containing an odd power vanish. 
		Since $\mathbb E(\bm b^\top\bm z)^2=\|\bm b\|^2=
		\bm u^\top\bm\Sigma_t\bm u=\bar r_{\ell,t,j}$,
		\begin{align*}
			\operatorname{Var}\bigl\{(\bm b^\top\bm z)^2\bigr\}
			=& \mathbb E(\bm b^\top\bm z)^4 - \bigl[\mathbb E(\bm b^\top\bm z)^2\bigr]^2
			= 3\|\bm b\|^4 + {K}_4\sum_m b_m^4 - \|\bm b\|^4\\
			=& 2\|\bm b\|^4 + {K}_4\sum_m b_m^4
			= 2\bar r_{\ell,t,j}^2 + {K}_4\sum_{m=1}^p(\bm e_m^\top\bm\Sigma_t^{1/2}\bm u)^4,
		\end{align*}
		which is the first equality in \eqref{eq:var}.

		For the second equality, note that
		$\sum_mb_m^4\le\|\bm b\|_\infty^2\|\bm b\|^2
		=\|\bm\Sigma_t^{1/2}\bm u\|_\infty^2\,\bar r_{\ell,t,j}$.
		Since $\bm\Sigma_t = \sigma_t^2\bm I_p + 
		\sum_{m=1}^k\lambda_{t,m}\bm v_{t,m}\bm v_{t,m}^\top$
		by \eqref{eq:spiked}, and noting that the eigenvectors 
		$\{\bm v_{t,m}\}_{m=1}^k$ are orthonormal, 
		we have for any unit vector $\bm u$,
		\[
		\bm\Sigma_t^{1/2}\bm u = \sigma_t\bm u + 
		\sum_{m=1}^k\left[\sqrt{\lambda_{t,m}+\sigma_t^2} 
		- \sigma_t\right](\bm v_{t,m}^\top\bm u)\bm v_{t,m}.
		\]
		Taking component-wise absolute values and using the triangle inequality,
		\[
		\bigl|\bigl[\bm\Sigma_t^{1/2}\bm u\bigr]_i\bigr|
		\le \sigma_t|\bm u_i| + \sum_{m=1}^k\sqrt{\lambda_{t,m}+\sigma_t^2}\,
		|\bm v_{t,m,i}|\,|\bm v_{t,m}^\top\bm u|
		\le \sigma_t|\bm u_i| + \sum_{m=1}^k\sqrt{\lambda_{t,m}+\sigma_t^2}\,
		|\bm v_{t,m,i}|,
		\]
		where the last inequality uses $|\bm v_{t,m}^\top\bm u|\le\|\bm v_{t,m}\|\,
		\|\bm u\|=1$.
		Therefore,
		\[
		\|\bm\Sigma_t^{1/2}\bm u\|_\infty
		\le \sigma_t\|\bm u\|_\infty + \sum_{m=1}^k\sqrt{\lambda_{t,m}+\sigma_t^2}\,
		\|\bm v_{t,m}\|_\infty
		\stackrel{p}{\longrightarrow} 0
		\]
		as $p\to\infty$, by Assumption~\ref{assum:deloc} applied to the population
		eigenvectors $\bm v_{t,m}$, and to $\bm u$, which is the top eigenvector of 
		$\bm S_{\ell,t-1}$ and hence satisfies the delocalization condition as a 
		perturbation of the corresponding eigenvector of $\mathbb E[\bm S_{\ell,t-1}]$ 
		of size $O_{\mathbb P}(e_{\ell,t-1})$ by Lemma~\ref{lem:contraction}, combined 
		with Assumption~\ref{assum:noise} which ensures $\sigma_t$ remains bounded.
		
		Since $\|\bm\Sigma_t^{1/2}\bm u\|_\infty\stackrel{p}{\longrightarrow}0$
		and $\sigma^2_{\min}\le\bar r_{\ell,t,j}\le\lambda_{\max}+\sigma^2_{\max}$
		by Assumptions~\ref{assum:noise} and~\ref{assum:bbp}, we have
		\[
		\frac{{K}_4\sum_m b_m^4}{2\bar r^2_{\ell,t,j}}
		\le \frac{{K}_4\|\bm\Sigma_t^{1/2}\bm u\|_\infty^2\bar r_{\ell,t,j}}
		{2\bar r^2_{\ell,t,j}}
		= \frac{{K}_4\|\bm\Sigma_t^{1/2}\bm u\|_\infty^2}{2\bar r_{\ell,t,j}}
		\le \frac{{K}_4\|\bm\Sigma_t^{1/2}\bm u\|_\infty^2}{2\sigma^2_{\min}}
		\stackrel{p}{\longrightarrow} 0.
		\]
		Therefore, the correction term ${K}_4\sum_m b_m^4/n_t^\ell$ is
		$o_{\mathbb P}(1)$ relative to $2\bar r^2_{\ell,t,j}/n_t^\ell$, 
		which proves \eqref{eq:var}.
	\end{proof}
	
	\begin{proof}[Proof of Theorem~\ref{thm:attenuation}]
		\textbf{Step 1 (the conditional mean).}
		By Lemma~\ref{lem:decomp},
		$r_{\ell,t,j}=\bar r_{\ell,t,j}+\varpi_{\ell,t,j}$ with
		$\bar r_{\ell,t,j}=\sigma^2_t+\sum_m\lambda_{t,m}
		\langle\bm u_{\ell,t-1,j},\bm v_{t,m}\rangle^2$. We evaluate the
		alignments. By Lemma~\ref{lem:contraction},
		$\|\sin\Theta(\bm U_{\ell,t-1},\mathring{\bm U}_{\ell,t-1})\|_F
		=O_{\mathbb P}(\alpha^{\mathrm{lag}}_{\ell,t}+\zeta_t
		+(\xi^\star)^t)$, and
		since the spikes are simple with gap $g_{\min}>0$ the individual
		eigenvector directions inherit the same bound (Davis--Kahan
		applied coordinatewise, the relevant gaps
		$\mathring\lambda_{\ell,t-1,j}-\mathring\lambda_{\ell,t-1,j+1}
		=\Phi_{\ell,t-1}(\lambda_{t-1,j})
		-\Phi_{\ell,t-1}(\lambda_{t-1,j+1})+o_{\mathbb P}(1)$ being bounded
		below because $\Phi_{\ell,t-1}$ is strictly increasing with
		derivative bounded below on the supercritical range). 
		By the Davis-Kahan $\sin\Theta$ theorem applied to individual eigenvectors, 
		\[
		\|\sin\Theta(\bm u_{\ell,t-1,j},\mathring{\bm u}_{\ell,t-1,j})\|
		\le \frac{\|\sin\Theta(\bm U_{\ell,t-1},\mathring{\bm U}_{\ell,t-1})\|_F}
		{\min_{j'\ne j}|\mathring\lambda_{\ell,t-1,j}-\mathring\lambda_{\ell,t-1,j'}|}
		= O_{\mathbb P}\bigl(\alpha^{\mathrm{lag}}_{\ell,t}+\zeta_t
		+(\xi^\star)^t\bigr),
		\]
		where the denominator is bounded below by $g_{\min}$ uniformly.
		Hence, for each $m$,
		\[
		\langle\bm u_{\ell,t-1,j},\bm v_{t-1,m}\rangle^2
		=\langle\mathring{\bm u}_{\ell,t-1,j},\bm v_{t-1,m}\rangle^2
		+O_{\mathbb P}\bigl(\alpha^{\mathrm{lag}}_{\ell,t}+\zeta_t
		+(\xi^\star)^t\bigr).
		\]
		Lemma~\ref{lem:maps} (applied at time $t-1$) then gives
		$\langle\mathring{\bm u}_{\ell,t-1,j},\bm v_{t-1,j}\rangle^2
		=a_{\ell,t-1}(\lambda_{t-1,j})
		+O_{\mathbb P}(N^{-1/2})$ and
		$\langle\mathring{\bm u}_{\ell,t-1,j},\bm v_{t-1,m}\rangle^2
		=O_{\mathbb P}(N^{-1})$ for $m\ne j$, where
		$N=N^{\mathrm{eff}}_{\ell,t}$ (which is interchangeable with
		$N^{\mathrm{eff}}_{\ell,t-1}$ by Lemma~\ref{lem:crec}). 
		To replace $\bm v_{t-1,m}$ by $\bm v_{t,m}$, note that by Assumption~\ref{assum:drift}, 
		$\|\bm\Sigma_t-\bm\Sigma_{t-1}\|\le\zeta_t$, which by the Davis-Kahan theorem gives
		$\|\bm v_{t,m}-\bm v_{t-1,m}\|=O(\zeta_t)$ (the eigenvector perturbation is of the 
		same order as the operator perturbation divided by the eigengap). Therefore,
		\begin{align*}
			|\langle\bm u_{\ell,t-1,j},\bm v_{t,m}\rangle^2 - \langle\bm u_{\ell,t-1,j},\bm v_{t-1,m}\rangle^2|
			&= |\langle\bm u_{\ell,t-1,j},\bm v_{t,m}\rangle + \langle\bm u_{\ell,t-1,j},\bm v_{t-1,m}\rangle|\\
			&\quad \times |\langle\bm u_{\ell,t-1,j},\bm v_{t,m}\rangle - \langle\bm u_{\ell,t-1,j},\bm v_{t-1,m}\rangle|\\
			&\le 2 \|\bm v_{t,m}-\bm v_{t-1,m}\| = O(\zeta_t),
		\end{align*}
		where we used $|\langle\bm u_{\ell,t-1,j},\bm v_{t,m}\rangle|, 
		|\langle\bm u_{\ell,t-1,j},\bm v_{t-1,m}\rangle| \le 1$ and the Cauchy-Schwarz inequality.
		Similarly, replacing $(\sigma^2_{t-1},\lambda_{t-1,j})$ by
		$(\sigma^2_t,\lambda_{t,j})$ inside $a$ costs $O(\zeta_t)$, because
		$(\lambda,c,\sigma^2)\mapsto a$ is continuously differentiable on
		the compact parameter set determined by
		Assumptions~\ref{assum:bbp}--\ref{assum:hd} (Step~5 of the proof of
		Lemma~\ref{lem:maps}) and both arguments move by at most $\zeta_t$
		under Assumption~\ref{assum:drift}. The aspect ratio needs no
		replacement, since $a_{\ell,t-1}$ carries $c_{\ell,t-1}$ and this is
		the value at which \eqref{eq:chi} is defined.
		
		Combining these estimates, we have
		\begin{align*}
			\bar r_{\ell,t,j}
			&= \sigma^2_t + \sum_{m=1}^k\lambda_{t,m}\langle\bm u_{\ell,t-1,j},\bm v_{t,m}\rangle^2\\
			&= \sigma^2_t + \lambda_{t,j}\langle\bm u_{\ell,t-1,j},\bm v_{t,j}\rangle^2 
			+ \sum_{m\ne j}\lambda_{t,m}\langle\bm u_{\ell,t-1,j},\bm v_{t,m}\rangle^2\\
			&= \sigma^2_t + \lambda_{t,j}[a(\lambda_{t,j};c_{\ell,t-1},\sigma^2_t)
			+ O_{\mathbb P}(N^{-1/2})
			+ O(\alpha^{\mathrm{lag}}_{\ell,t}+\zeta_t)]\\
			&\quad + O_{\mathbb P}(\lambda_{\max}\cdot N^{-1})\\
			&= \sigma^2_t + \lambda_{t,j}a(\lambda_{t,j};c_{\ell,t-1},\sigma^2_t)
			+ O_{\mathbb P}(N^{-1/2})
			+ O(\alpha^{\mathrm{lag}}_{\ell,t}+\zeta_t
			+\Delta_{\ell,t}+(\xi^\star)^t),
		\end{align*}
		where $\Delta_{\ell,t}$ collects the drift and lag terms. By definition \eqref{eq:chi}, 
		this can be written as
		\begin{equation}
			\label{eq:barr}
			\bar r_{\ell,t,j}
			=\chi_{\ell,t}(\lambda_{t,j})
			+O_{\mathbb P}\bigl(N^{-1/2}\bigr)
			+O\bigl(\alpha^{\mathrm{lag}}_{\ell,t}+\zeta_t+\Delta_{\ell,t}
			+(\xi^\star)^t\bigr),
		\end{equation}
		
		\medskip\noindent
		\textbf{Step 2 (the fluctuation).}
		By \eqref{eq:var} and Chebyshev's inequality conditionally on
		$\mathcal F_{\ell,t-1}$,
		$\varpi_{\ell,t,j}=O_{\mathbb P}((n^\ell_t)^{-1/2})$. 
		Indeed, by \eqref{eq:var},
		\[
		\operatorname{Var}(\varpi_{\ell,t,j}\mid\mathcal F_{\ell,t-1})
		= \frac{2\bar r_{\ell,t,j}^2}{n^\ell_t}(1+o_{\mathbb P}(1))
		\le \frac{C}{n^\ell_t}
		\]
		where $C$ is deterministic because $\bar r_{\ell,t,j}$ is bounded. Indeed, by
		definition,
		$$\bar r_{\ell,t,j} = \sigma^2_t + \sum_m\lambda_{t,m}\langle\bm u_{\ell,t-1,j},
		\bm v_{t,m}\rangle^2\le \sigma^2_{\max} + \lambda_{\max}$$
		by 
		Assumptions~\ref{assum:noise} and~\ref{assum:bbp}.
		Applying Chebyshev's inequality conditionally on $\mathcal F_{\ell,t-1}$ gives
		$\varpi_{\ell,t,j}=O_{\mathbb P}((n^\ell_t)^{-1/2})$.

		\medskip\noindent
		\textbf{Step 3 (the direction of the bias).}
		By Step~5(a) of the proof of Lemma~\ref{lem:maps},
		$a(\lambda;c,\sigma^2)<1$ strictly for every $c>0$, hence
		\[
		\chi_{\ell,t}(\lambda)
		=\sigma^2_t+\lambda\,a(\lambda;c_{\ell,t-1},\sigma^2_t)
		<\lambda+\sigma^2_t .
		\]
		For the right-hand inequality in \eqref{eq:sandwich}, recall from
		Step~2 of the proof of Lemma~\ref{lem:maps} that
		$\Phi_{\ell,t}(\lambda)=\sigma^2_tx$ where $\theta g(x)=1$ and
		$\theta=\lambda/\sigma^2_t$. Since $\mu^W_{\ell,t}$ has unit first
		moment, the last assertion of Lemma~\ref{lem:A2} gives
		$1/\theta=g(x)\ge(x-1)^{-1}$, i.e.\ $x\ge1+\theta$, strictly
		unless $\mu^W_{\ell,t}=\delta_1$. Therefore
		$\Phi_{\ell,t}(\lambda)\ge\sigma^2_t(1+\theta)
		=\lambda+\sigma^2_t$, strictly whenever $c_{\ell,t}>0$. This
		proves \eqref{eq:sandwich}.
		
		\medskip\noindent
		\textbf{Step 4 (duality).}
		In the canonical case, by \eqref{eq:chi_closed} and
		\eqref{eq:closedforms}, for any $c>0$ and $\sigma^2>0$,
		\[
		\chi(\lambda;c,\sigma^2)\,\Phi(\lambda;c,\sigma^2)
		=\frac{\lambda(\lambda+\sigma^2)}
		{\lambda+c\sigma^2}\cdot
		\frac{(\lambda+\sigma^2)(\lambda+c\sigma^2)}
		{\lambda}
		=(\lambda+\sigma^2)^2 ,
		\]
		which is \eqref{eq:duality}. The two factors must be taken at the same
		aspect ratio, and the identity is used below only in that form.
		
		Equivalently, and without using the closed forms, substitute
		$\chi=\sigma^2+\lambda a$ from \eqref{eq:chi} and then
		$a=(\lambda+\sigma^2)\Phi'/\Phi$ from \eqref{eq:aPhi}. The relation
		$\chi\Phi=(\lambda+\sigma^2)^2$ becomes the differential identity
		\[
		\sigma^2\Phi(\lambda)
		+\lambda(\lambda+\sigma^2)\Phi'(\lambda)
		=(\lambda+\sigma^2)^2 ,
		\]
		which \eqref{eq:closedforms} satisfies directly, since there
		$\Phi'(\lambda)=1-c\sigma^4/\lambda^2$. Finally, the closed form
		\eqref{eq:chi_closed} follows from \eqref{eq:chi} and
		\eqref{eq:closedforms} evaluated at $c=c_{\ell,t-1}$ and
		$\sigma^2=\sigma^2_t$:
		\[
		\chi_{\ell,t}(\lambda)
		=\sigma^2_t
		+\lambda\frac{\lambda^2-c_{\ell,t-1}\sigma^4_t}
		{\lambda(\lambda+c_{\ell,t-1}\sigma^2_t)}
		=\frac{\sigma^2_t\lambda+c_{\ell,t-1}\sigma^4_t
			+\lambda^2-c_{\ell,t-1}\sigma^4_t}
		{\lambda+c_{\ell,t-1}\sigma^2_t}
		=\frac{\lambda(\lambda+\sigma^2_t)}
		{\lambda+c_{\ell,t-1}\sigma^2_t}.
		\]
	\end{proof}

	\begin{lemma}[Bijection, closed-form inverse, and Lipschitz bounds]
		\label{lem:inverse}
		Let $\chi(\lambda)=\chi(\lambda;c,\sigma^2)$ for the canonical
		map \eqref{eq:chi_closed}. Then:
		\begin{enumerate}[label=\textup{(\roman*)}]
			\item \textup{(Derivative identity)}
			\begin{equation}
				\label{eq:chiprime}
				\chi'(\lambda)
				=\frac{\lambda^{2}+2c\sigma^{2}\lambda+c\sigma^{4}}
				{(\lambda+c\sigma^{2})^{2}}
				=1+\frac{c(1-c)\,\sigma^{4}}{(\lambda+c\sigma^{2})^{2}}
				\ >\ 0
				\qquad\text{for all }\lambda\ge\sigma^2\sqrt{c}.
			\end{equation}
			\item \textup{(Bijection and inverse)} $\chi$ is a strictly
			increasing bijection from $[\sigma^{2}\sqrt{c},\infty)$ onto
			$[\sigma^{2},\infty)$, with
			\begin{equation}
				\label{eq:chiinv}
				\chi^{-1}(x)
				=\tfrac{1}{2}\Bigl\{(x-\sigma^{2})
				+\sqrt{(x-\sigma^{2})^{2}+4c\sigma^{2}x}\Bigr\},
				\qquad x\ge\sigma^{2}.
			\end{equation}
			In particular $\chi^{-1}(\sigma^2)=\sigma^2\sqrt{c}$, the BBP
			critical spike, and $\chi^{-1}(x)\to x-\sigma^2$ as $c\to0$,
			recovering the classical correction.
			\item \textup{(Global conditioning)}
			\begin{equation}
				\label{eq:chilip}
				\inf_{\lambda\ge\sigma^2\sqrt c}\chi'(\lambda)
				=\min\Bigl\{1,\ \frac{2}{1+\sqrt c}\Bigr\},
			\end{equation}
			hence 
			\[			
			\mathrm{Lip}\bigl(\chi^{-1}\bigr)
			=L_c:=\max\Bigl\{1,\ \frac{1+\sqrt c}{2}\Bigr\}<\infty .
			\]
			Moreover, viewing $\chi^{-1}(x;\sigma^2,c)$ as a function 
			of both $x$ and $\sigma^2$ with $c$ fixed, 
			\[
			\Bigl|\frac{\partial\chi^{-1}}{\partial\sigma^2}(x;\sigma^2,c)\Bigr|\le C(1+c)
			\]
			uniformly for $\sigma^2\in[\sigma^2_{\min},\sigma^2_{\max}]$ 
			and $x\in[\sigma^2,\lambda_{\max}+\sigma^2_{\max}]$,
			where $C$ depends only on $\sigma^2_{\min}$, $\sigma^2_{\max}$, and $\lambda_{\max}$. 
			
			\item \textup{(General profiles)} For a general $H_{\ell,t}$,
			$\chi_{\ell,t}=\sigma_t^2+\lambda a_{\ell,t}(\lambda)$ is a
			strictly increasing bijection from $[\lambda^{\mathrm{c}}_{\ell,t}, \infty)$ 
			onto $[\sigma_t^2, \infty)$ by Lemma~\ref{lem:maps}, with
			$\chi'_{\ell,t}(\lambda)\ge a_{\ell,t}(\lambda)$. On the
			supercritical range $\lambda\ge\lambda_{t,k}$ this gives
			$\chi'_{\ell,t}\ge\underline a>0$ with $\underline a$ as in
			\eqref{eq:aunder}. Its inverse is evaluated by a
			one-dimensional monotone root search.
		\end{enumerate}
	\end{lemma}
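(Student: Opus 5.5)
The plan is to work entirely with the canonical closed form \eqref{eq:chi_closed}, $\chi(\lambda)=\lambda(\lambda+\sigma^2)/(\lambda+c\sigma^2)$, for parts (i)--(iii), treating each part as an elementary one-variable computation. For part (iv) I will instead use only the structural properties of $a_{\ell,t}$ established in the proof of Lemma~\ref{lem:maps}.

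\textbf{Part (i).} I will differentiate by the quotient rule. The numerator becomes $(2\lambda+\sigma^2)(\lambda+c\sigma^2)-\lambda(\lambda+\sigma^2)=\lambda^2+2c\sigma^2\lambda+c\sigma^4$, which is the first form in \eqref{eq:chiprime}. Subtracting $(\lambda+c\sigma^2)^2$ from this numerator leaves $c\sigma^4-c^2\sigma^4$, which gives the second form. Positivity follows directly from the first form, since its numerator is a sum of positive terms.

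\textbf{Part (ii).} Evaluating at $\lambda=\sigma^2\sqrt c$ gives $\chi=\sigma^2\sqrt c\,\sigma^2(1+\sqrt c)/\{\sigma^2\sqrt c(1+\sqrt c)\}=\sigma^2$. As $\lambda\to\infty$ one has $\chi(\lambda)\sim\lambda\to\infty$. Together with strict monotonicity from (i) and continuity, this gives the bijection onto $[\sigma^2,\infty)$. For the inverse I will solve $\chi(\lambda)=x$, which clears to the quadratic $\lambda^2+(\sigma^2-x)\lambda-c\sigma^2x=0$. Its constant term is negative, so the roots have opposite signs, and the positive root is \eqref{eq:chiinv}. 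Substituting $x=\sigma^2$ gives $\tfrac12\sqrt{4c\sigma^4}=\sigma^2\sqrt c$. Setting $c=0$ gives $\chi^{-1}(x)=x-\sigma^2$.

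\textbf{Part (iii).} I will split on the sign of $c(1-c)$ in the second form of \eqref{eq:chiprime}. If $c\le1$, the correction term is nonnegative and decreases to $0$ as $\lambda\to\infty$, so the infimum of $\chi'$ is $1$. If $c>1$, the correction term is negative and its magnitude is largest at the left endpoint $\lambda=\sigma^2\sqrt c$. There,
\[
1+\frac{1-c}{(1+\sqrt c)^2}=1+\frac{1-\sqrt c}{1+\sqrt c}=\frac{2}{1+\sqrt c}.
\]
Both cases combine into $\min\{1,2/(1+\sqrt c)\}$, and the Lipschitz constant of $\chi^{-1}$ is the reciprocal of this infimum, by the inverse function theorem.

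For the $\sigma^2$-derivative I will differentiate \eqref{eq:chiinv} directly:
\[
\partial_{\sigma^2}\chi^{-1}=\tfrac12\bigl\{-1+\bigl(2cx-(x-\sigma^2)\bigr)/\sqrt{\cdot}\,\bigr\}.
\]
The square root dominates both $|x-\sigma^2|$ and $2\sqrt{c\sigma^2x}$. Hence
\[
|\partial_{\sigma^2}\chi^{-1}|\le 1+\frac{\sqrt{cx}}{2\sigma}.
\]
On the stated compact ranges of $x$ and $\sigma^2$ this is bounded by a constant times $1+\sqrt c\le 2(1+c)$.

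\textbf{Part (iv).} This is the only step that needs more than algebra. Writing $\chi_{\ell,t}(\lambda)=\sigma_t^2+\lambda a_{\ell,t}(\lambda)$, we have $\chi'_{\ell,t}=a_{\ell,t}+\lambda a'_{\ell,t}\ge a_{\ell,t}>0$, because $a_{\ell,t}$ is $C^1$ and strictly increasing by Step~5(b) of the proof of Lemma~\ref{lem:maps}. Taking $\lambda\ge\lambda_{t,k}$ then gives $\chi'_{\ell,t}\ge\underline a$ via Corollary~\ref{cor:aunder}.

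The main obstacle is surjectivity onto $[\sigma^2_t,\infty)$, which requires control at both ends.
\begin{itemize}
\item At the lower end, Step~5(c) gives $a_{\ell,t}\downarrow0$ as $\lambda\downarrow\lambda^{\mathrm c}_{\ell,t}$, so $\chi_{\ell,t}\to\sigma_t^2$. This uses the boundedness of $\lambda$ near $\lambda^{\mathrm c}_{\ell,t}$, and I would handle the degenerate case $\lambda^{\mathrm c}_{\ell,t}=0$ separately using the same limit.
\item At the upper end, monotonicity gives $a_{\ell,t}(\lambda)\ge a_{\ell,t}(\lambda_0)>0$ for all $\lambda\ge\lambda_0$, so $\lambda a_{\ell,t}(\lambda)\to\infty$.
\end{itemize}
Continuity and the intermediate value theorem then yield the bijection. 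Finally, since $\chi_{\ell,t}$ is strictly monotone, bisection on $\chi_{\ell,t}(\lambda)=x$ computes the inverse; this is the one-dimensional root search referred to in the statement.
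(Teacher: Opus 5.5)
Your proposal is correct and follows the paper's proof essentially step by step. Part (i) uses the quotient rule and then subtracts $(\lambda+c\sigma^2)^2$; part (ii) uses the endpoint evaluation and the quadratic $\lambda^2+(\sigma^2-x)\lambda-c\sigma^2x=0$; part (iii) splits on the sign of $c(1-c)$ and reaches the same bound $1+\sqrt{cx}/(2\sigma)$ for the $\sigma^2$-derivative; part (iv) uses $\chi'_{\ell,t}=a_{\ell,t}+\lambda a'_{\ell,t}\ge a_{\ell,t}$ together with the two endpoint limits. Your separate treatment of the case $\lambda^{\mathrm c}_{\ell,t}=0$ is a small but genuine refinement. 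There one simply uses $\lambda a_{\ell,t}(\lambda)\le\lambda\to0$, since $a_{\ell,t}<1$. This matters because Step~5(c) of the proof of Lemma~\ref{lem:maps} only yields $a_{\ell,t}\downarrow0$ when $\lambda^{\mathrm c}_{\ell,t}>0$, and the paper does not address the other case.
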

	
	\begin{rem}
		\label{rem:conditioning}
		Part~(iii) is the analytic reason for preferring the Rayleigh route
		to the eigenvalue route. Differentiating \eqref{eq:closedforms}
		gives $\Phi'(\lambda)=1-c\sigma^4/\lambda^2$, so
		$\Phi'(\lambda)\downarrow0$ as
		$\lambda\downarrow\sigma^2\sqrt c$. Inverting $\Phi$ is
		arbitrarily ill-conditioned near the BBP threshold, and any
		estimator built by inverting the sample eigenvalue has a variance
		that blows up there. By contrast \eqref{eq:chilip} shows that
		$\chi'$ stays bounded below by $\min\{1,2/(1+\sqrt c)\}$ uniformly
		over the whole supercritical range, hence bounded below by a
		positive constant for every $c\le\bar c$. The threshold value
		$\chi'(\sigma^2\sqrt c)=2/(1+\sqrt c)$ holds exactly for every $c$,
		but it is the infimum only when $c\ge1$. By \eqref{eq:chiprime}
		$\chi'$ decreases from $2/(1+\sqrt c)>1$ to $1$ when $c<1$ and
		increases from $2/(1+\sqrt c)<1$ to $1$ when $c>1$, and
		$\chi'\equiv1$ when $c=1$. In either regime the conditioning is
		uniformly benign, which is the only property used below. The two statements
		are consistent with \eqref{eq:duality}, whose derivative
		$\chi'\Phi+\chi\Phi'=2(\lambda+\sigma^2)$ reduces at
		$\lambda=\sigma^2\sqrt c$ to
		$\tfrac{2}{1+\sqrt c}\cdot\sigma^2(1+\sqrt c)^2
		=2\sigma^2(1+\sqrt c)$.
	\end{rem}
	
	\begin{proof}[Proof of Lemma~\ref{lem:inverse}]
		Write $\chi(\lambda)=\lambda(\lambda+\sigma^2)/(\lambda+c\sigma^2)$
		with $c>0$, $\sigma^2>0$ fixed.
		
		{(i)} By the quotient rule,
		\[
		\chi'(\lambda)
		=\frac{(2\lambda+\sigma^2)(\lambda+c\sigma^2)
			-(\lambda^2+\sigma^2\lambda)}
		{(\lambda+c\sigma^2)^2}
		=\frac{\lambda^2+2c\sigma^2\lambda+c\sigma^4}
		{(\lambda+c\sigma^2)^2},
		\]
		which is the first expression in \eqref{eq:chiprime}. To obtain the second expression, 
		note that $(\lambda+c\sigma^2)^2=\lambda^2+2c\sigma^2\lambda+c^2\sigma^4$, 
		hence the numerator minus the denominator equals
		\[
		(\lambda^2+2c\sigma^2\lambda+c\sigma^4) - (\lambda^2+2c\sigma^2\lambda+c^2\sigma^4) = 
		c(1-c)\sigma^4.
		\]
		Therefore,
		\[
		\chi'(\lambda) = \frac{(\lambda+c\sigma^2)^2 + c(1-c)\sigma^4}{(\lambda+c\sigma^2)^2}
		= 1 + \frac{c(1-c)\sigma^4}{(\lambda+c\sigma^2)^2},
		\]
		giving the second expression in \eqref{eq:chiprime}.
		
		Positivity for all $\lambda\ge0$ follows from the first expression. All
		three terms $\lambda^2$, $2c\sigma^2\lambda$, and $c\sigma^4$ in the numerator 
		are nonnegative, and at least one is strictly positive 
		(namely $c\sigma^4>0$ when $\lambda=0$, and $\lambda^2>0$ when $\lambda>0$),
		while the denominator is always positive.

		{(ii)} At the BBP threshold,
		\[
		\chi(\sigma^2\sqrt c)
		=\frac{\sigma^2\sqrt c\,(\sigma^2\sqrt c+\sigma^2)}
		{\sigma^2\sqrt c+c\sigma^2}
		=\frac{\sigma^2\sqrt c\cdot\sigma^2(1+\sqrt c)}
		{\sigma^2\sqrt c\,(1+\sqrt c)}
		=\sigma^2 ,
		\]
		and $\chi(\lambda)\to\infty$ as $\lambda\to\infty$. Being
		continuous and strictly increasing by (i), $\chi$ is a bijection
		from $[\sigma^2\sqrt c,\infty)$ onto $[\sigma^2,\infty)$. 
		
		To invert, solve $\chi(\lambda)=x$, i.e.,
		\[
		\frac{\lambda(\lambda+\sigma^2)}{\lambda+c\sigma^2}=x
		\quad\Longleftrightarrow\quad
		\lambda(\lambda+\sigma^2) = x(\lambda+c\sigma^2)
		\quad\Longleftrightarrow\quad
		\lambda^2+\sigma^2\lambda = x\lambda + cx\sigma^2.
		\]
		Rearranging gives the quadratic equation
		\[
		\lambda^2 + (\sigma^2-x)\lambda - c\sigma^2x = 0,
		\]
		or equivalently, $\lambda^2-(x-\sigma^2)\lambda-c\sigma^2x=0$.
		By the quadratic formula, the unique nonnegative root is
		\[
		\lambda=\tfrac{1}{2}\Bigl\{(x-\sigma^2)+\sqrt{(x-\sigma^2)^2+4c\sigma^2x}\Bigr\},
		\]
		which is \eqref{eq:chiinv}.
		
		Setting $x=\sigma^2$ gives 
		\[
		\chi^{-1}(\sigma^2)=\tfrac{1}{2}\Bigl\{0+\sqrt{0+4c\sigma^4}\Bigr\}
		=\tfrac{1}{2}\cdot 2\sigma^2\sqrt{c}=\sigma^2\sqrt{c},
		\]
		which is the BBP critical spike. 
		
		As $c\downarrow 0$, for fixed $x\ge\sigma^2$, we have
		\[
		\sqrt{(x-\sigma^2)^2+4c\sigma^2x} = |x-\sigma^2|\sqrt{1+\frac{4c\sigma^2x}
			{(x-\sigma^2)^2}}\to |x-\sigma^2| = x-\sigma^2 
		\quad(\text{since }x\ge\sigma^2),
		\]
		so $\chi^{-1}(x)\to\tfrac{1}{2}\{(x-\sigma^2)+(x-\sigma^2)\}=x-\sigma^2$, 
		recovering the classical correction.
		
		{(iii)} From the second expression in \eqref{eq:chiprime},
		\[
		\chi'(\lambda) = 1 + \frac{c(1-c)\sigma^4}{(\lambda+c\sigma^2)^2},
		\]
		we see that $\lambda\mapsto\chi'(\lambda)$ is strictly decreasing 
		if $c<1$ (since $c(1-c)>0$ and the fraction decreases in $\lambda$),
		constant if $c=1$ (since $c(1-c)=0$),
		and strictly increasing if $c>1$ (since $c(1-c)<0$ and the negative 
		fraction increases in $\lambda$).
		If $c\le1$
		the infimum over $[\sigma^2\sqrt c,\infty)$ is therefore the limit
		at $\lambda=\infty$, namely $1$. If $c>1$ the infimum is attained
		at the left endpoint, and
		\begin{align*}
			\chi'(\sigma^2\sqrt c)
			&=1+\frac{c(1-c)\sigma^4}{[(\sigma^2\sqrt c)+c\sigma^2]^2}
			=1+\frac{c(1-c)\sigma^4}{\sigma^4(\sqrt c+c)^2}\\
			&=1+\frac{c(1-c)}{(\sqrt c)^2(1+\sqrt c)^2}
			=1+\frac{c(1-c)}{c(1+\sqrt c)^2}
			=1+\frac{1-c}{(1+\sqrt c)^2}\\
			&=1+\frac{(1-\sqrt c)(1+\sqrt c)}{(1+\sqrt c)^2}
			=1+\frac{1-\sqrt c}{1+\sqrt c}\\
			&=\frac{1+\sqrt c + 1-\sqrt c}{1+\sqrt c}
			=\frac{2}{1+\sqrt c},
		\end{align*}
		
		which is $<1$ exactly when $c>1$. In both cases
		$\inf_{\lambda\ge\sigma^2\sqrt c}\chi'
		=\min\{1,2/(1+\sqrt c)\}$, and since $\chi^{-1}$ is
		differentiable with $(\chi^{-1})'(x)=1/\chi'(\chi^{-1}(x))$, its
		Lipschitz constant is the reciprocal, $L_c=\max\{1,
		(1+\sqrt c)/2\}$, proving \eqref{eq:chilip}. 
		For the sensitivity to $\sigma^2$, differentiate \eqref{eq:chiinv} with respect 
		to $\sigma^2$, treating $x$ and $c$ as constants. 
		Let $D:=(x-\sigma^2)^2+4c\sigma^2x$, so
		\[
		\chi^{-1}(x;\sigma^2,c) = \tfrac{1}{2}(x-\sigma^2) + \tfrac{1}{2}\sqrt{D}.
		\]
		Then
		\begin{align*}
			\frac{\partial\chi^{-1}}{\partial\sigma^2}
			&= \tfrac{1}{2}(-1) + \tfrac{1}{2}\cdot\frac{1}{2\sqrt{D}}\cdot\frac{\partial D}{\partial\sigma^2}\\
			&= -\tfrac{1}{2} + \frac{1}{4\sqrt{D}}\cdot[2(x-\sigma^2)(-1) + 4cx]\\
			&= -\tfrac{1}{2} + \frac{-2(x-\sigma^2) + 4cx}{4\sqrt{D}}\\
			&= \tfrac{1}{2}\Bigl\{-1 + \frac{-(x-\sigma^2) + 2cx}{\sqrt{D}}\Bigr\}.
		\end{align*}
		Taking absolute values and using the triangle inequality,
		\begin{align*}
			\Bigl|\frac{\partial\chi^{-1}}{\partial\sigma^2}\Bigr|
			&\le\frac{1}{2}\Bigl\{1+\Bigl|\frac{-(x-\sigma^2)+2cx}{\sqrt{D}}\Bigr|\Bigr\}
			\le\frac{1}{2}\Bigl\{1+\frac{|x-\sigma^2|}{\sqrt{D}}+\frac{2cx}{\sqrt{D}}\Bigr\}.
		\end{align*}
		Since $D\ge(x-\sigma^2)^2$, we have $\frac{|x-\sigma^2|}{\sqrt{D}}\le 1$.
		Since $D\ge 4c\sigma^2x$, we have $\frac{2cx}{\sqrt{D}}\le\frac{2cx}{2\sigma\sqrt{cx}}=
		\frac{\sqrt{cx}}{\sigma}$. Therefore,
		\[
		\Bigl|\frac{\partial\chi^{-1}}{\partial\sigma^2}\Bigr|
		\le\frac{1}{2}\Bigl\{1+1+\frac{\sqrt{cx}}{\sigma}\Bigr\}
		= 1 + \frac{\sqrt{cx}}{2\sigma}.
		\]		
		On the stated set, $x\le\lambda_{\max}+\sigma^2_{\max}$ and
		$\sigma\ge\sigma_{\min}$, so 
		\[
		1 + \frac{\sqrt{cx}}{2\sigma}
		\le 1 + \frac{\sqrt{c(\lambda_{\max}+\sigma^2_{\max})}}{2\sigma_{\min}}
		\le C(1+\sqrt c)\le C(1+c),
		\]
		where $C$ depends only on $\lambda_{\max}$, $\sigma^2_{\min}$, and $\sigma^2_{\max}$.
		
		{(iv)} For a general profile, $\chi_{\ell,t}
		=\sigma^2_t+\lambda a_{\ell,t}(\lambda)$ with $a_{\ell,t}$
		strictly increasing and positive by Step~5 of the proof of
		Lemma~\ref{lem:maps}. Hence
		$\chi'_{\ell,t}=a_{\ell,t}+\lambda a'_{\ell,t}\ge a_{\ell,t}>0$,
		so $\chi_{\ell,t}$ is strictly increasing, and it maps
		the BBP critical value $\lambda^{\mathrm c}_{\ell,t}$ to $\sigma^2_t$ (because
		$a_{\ell,t}(\lambda^{\mathrm c}_{\ell,t}+)=0$ by Lemma~\ref{lem:maps})
		and diverges as $\lambda\to\infty$ (because $\chi_{\ell,t}(\lambda)\ge
		\sigma^2_t+\lambda a_{\ell,t}(\lambda_0)$ for
		$\lambda\ge\lambda_0>\lambda^{\mathrm c}_{\ell,t}$). Continuity
		then gives a bijection onto $[\sigma^2_t,\infty)$. On
		$\lambda\ge\lambda_{t,k}$, monotonicity of $a_{\ell,t}$ and
		Corollary~\ref{cor:aunder} give
		$\chi'_{\ell,t}\ge a_{\ell,t}(\lambda_{t,k})\ge\underline a>0$,
		so the inverse is well conditioned and computable by a
		one-dimensional monotone root search (bisection converges
		geometrically because $\chi_{\ell,t}$ is strictly increasing and
		Lipschitz on compacts).
	\end{proof}
	
	\begin{proof}[Proof of Lemma~\ref{lem:sigma}]
		Abbreviate $n:=n^\ell_t$ and condition on
		$\mathcal F_{\ell,t-1}$ throughout, so that the directions
		$\bm u_{\ell,t-1,j}$ are fixed.
		
		{Bias.} By Lemma~\ref{lem:decomp}, $\mathbb E(r_{\ell,t,j}\mid\mathcal F_{\ell,t-1})=\bar r_{\ell,t,j}$
		and $\mathbb E(\operatorname{tr}\hat{\bm S}_{\ell,t}\mid\mathcal F_{\ell,t-1})=\operatorname{tr}\bm\Sigma_t$. 
		Therefore,
		\begin{align*}
			\mathbb E\Bigl(\operatorname{tr}\hat{\bm S}_{\ell,t}
			-\sum_{j\le k}r_{\ell,t,j}\ \Big|\ \mathcal F_{\ell,t-1}\Bigr)
			&=\operatorname{tr}\bm\Sigma_t-\sum_{j\le k}\bar r_{\ell,t,j}.
		\end{align*}
		By \eqref{eq:spiked}, $\operatorname{tr}\bm\Sigma_t=p\sigma^2_t+\sum_{m=1}^{k}\lambda_{t,m}$.
		By the definition of $\bar r_{\ell,t,j}$ from Lemma~\ref{lem:decomp},
		\[
		\sum_{j=1}^{k}\bar r_{\ell,t,j}
		=\sum_{j=1}^{k}\Bigl(\sigma^2_t+\sum_{m=1}^{k}\lambda_{t,m}
		\langle\bm u_{\ell,t-1,j},\bm v_{t,m}\rangle^2\Bigr)
		=k\sigma^2_t+\sum_{m=1}^{k}\lambda_{t,m}\sum_{j=1}^{k}
		\langle\bm u_{\ell,t-1,j},\bm v_{t,m}\rangle^2.
		\]
		Therefore,
		\begin{align*}
			\operatorname{tr}\bm\Sigma_t-\sum_{j\le k}\bar r_{\ell,t,j}
			&=(p\sigma^2_t+\sum_{m}\lambda_{t,m})-\Bigl(k\sigma^2_t+\sum_m\lambda_{t,m}\sum_j
			\langle\bm u_{\ell,t-1,j},\bm v_{t,m}\rangle^2\Bigr)\\
			&=(p-k)\sigma^2_t
			+\sum_{m=1}^{k}\lambda_{t,m}
			\Bigl(1-\sum_{j=1}^{k}
			\langle\bm u_{\ell,t-1,j},\bm v_{t,m}\rangle^2\Bigr),
		\end{align*}

		To bound this expectation, note that each bracket 
		$\bigl(1-\sum_{j=1}^{k}\langle\bm u_{\ell,t-1,j},\bm v_{t,m}\rangle^2\bigr)$ 
		lies in $[0,1]$ because, writing $\bm U_{\ell,t-1}=[\bm u_{\ell,t-1,1},\ldots,\bm u_{\ell,t-1,k}]$,
		\[
		\sum_{j=1}^{k}\langle\bm u_{\ell,t-1,j},\bm v_{t,m}\rangle^2
		=\|\bm U_{\ell,t-1}^\top\bm v_{t,m}\|^2\in[0,1]
		\]
		(since the columns of $\bm U_{\ell,t-1}$ are orthonormal and $\|\bm v_{t,m}\|=1$). 
		Therefore,
		\[
		0 \le \mathbb E\Bigl(\operatorname{tr}\hat{\bm S}_{\ell,t}
		-\sum_{j\le k}r_{\ell,t,j}\ \Big|\ \mathcal F_{\ell,t-1}\Bigr)
		\le (p-k)\sigma^2_t + k\lambda_{\max},
		\]
		so the bias of $\check\sigma^2_{\ell,t}=(p-k)^{-1}(\operatorname{tr}\hat{\bm S}_{\ell,t}-\sum_j r_{\ell,t,j})$ 
		satisfies
		\[
		\Bigl|\mathbb E(\check\sigma^2_{\ell,t}\mid\mathcal F_{\ell,t-1})-\sigma^2_t\Bigr|
		\le \frac{k\lambda_{\max}}{p-k}=O\Bigl(\frac{k}{p}\Bigr).
		\]

		{Fluctuation.} Write $\operatorname{tr}\hat{\bm S}_{\ell,t}
		=n^{-1}\sum_i\bm z_i^\top\bm\Sigma_t\bm z_i$. For $\bm z$ with
		independent standardized entries (i.e., $\mathbb E z_m=0$, 
		$\mathbb E z_m^2=1$, $\mathbb E z_m^4=3+{K}_4$) and any symmetric $A$,
		\[
		\operatorname{Var}(\bm z^\top A\bm z)
		=\mathbb E(\bm z^\top A\bm z)^2-[\mathbb E(\bm z^\top A\bm z)]^2
		=2\operatorname{tr}(A^2)+{K}_4\sum_{m=1}^{p}A_{mm}^2
		\]
		(by the same computation as in Lemma~\ref{lem:decomp}). 
		With $A=\bm\Sigma_t$ this is
		\begin{align*}
			2\operatorname{tr}(\bm\Sigma_t^2)+{K}_4\sum_{m=1}^{p}(\bm\Sigma_t)_{mm}^2
			&= O(p).
		\end{align*}
		Indeed, by \eqref{eq:spiked}, 
		$\bm\Sigma_t=\sigma^2_t\bm I_p+\sum_{m=1}^{k}\lambda_{t,m}\bm v_{t,m}\bm v_{t,m}^\top$, so
		\begin{align*}
			\operatorname{tr}(\bm\Sigma_t^2)
			&=\operatorname{tr}\Bigl[\Bigl(\sigma^2_t\bm I_p+\sum_{m}\lambda_{t,m}
			\bm v_{t,m}\bm v_{t,m}^\top\Bigr)^2\Bigr]\\
			&=p\sigma^4_t + 2\sigma^2_t\sum_m\lambda_{t,m} + \sum_m\lambda_{t,m}^2
			=p\sigma^4_t+O(1),
		\end{align*}
		by Assumption~\ref{assum:bbp} (since $k$ and $\lambda_{t,m}$ are bounded).
		For the diagonal entries,
		\[
		(\bm\Sigma_t)_{mm}
		=\sigma^2_t+\sum_{j=1}^{k}\lambda_{t,j}|\bm v_{t,j,m}|^2
		\le\sigma^2_t+k\lambda_{\max}\max_{j}\|\bm v_{t,j}\|^2_\infty
		=O(1)
		\]
		by Assumptions~\ref{assum:noise}, \ref{assum:bbp}, and~\ref{assum:deloc}. 
		Therefore, $\sum_m(\bm\Sigma_t)_{mm}^2\le p\cdot O(1)=O(p)$.
		
		Combining the above bounds, we have 
		$\operatorname{Var}(\operatorname{tr}\hat{\bm S}_{\ell,t}\mid
		\mathcal F_{\ell,t-1})=O(p/n)$, and after division by $p-k$ its
		contribution is $O_{\mathbb P}(\sqrt{p/n}/p)
		=O_{\mathbb P}((pn)^{-1/2})$. 
		For the $r_{\ell,t,j}$ terms, by Lemma~\ref{lem:decomp}, 
		$r_{\ell,t,j}=\bar r_{\ell,t,j}+\varpi_{\ell,t,j}$
		where $\bar r_{\ell,t,j}$ is $\mathcal F_{\ell,t-1}$-measurable, so
		$\operatorname{Var}(r_{\ell,t,j}\mid\mathcal F_{\ell,t-1})
		=\operatorname{Var}(\varpi_{\ell,t,j}\mid\mathcal F_{\ell,t-1})=O(1/n)$ by \eqref{eq:var}.
		Since the $r_{\ell,t,j}$ terms share the same fresh batch, they are not independent, but by
		Cauchy-Schwarz,
		\[
		\operatorname{Var}\Bigl(\sum_{j=1}^{k}r_{\ell,t,j}\mid\mathcal F_{\ell,t-1}\Bigr)
		\le k\sum_{j=1}^{k}\operatorname{Var}(r_{\ell,t,j}\mid\mathcal F_{\ell,t-1})
		=O(k^2/n).
		\]
		After division by $p-k$, its contribution to $\operatorname{Var}(\check\sigma^2_{\ell,t}\mid\mathcal F_{\ell,t-1})$ is
		\[
		\frac{O(k^2/n)}{(p-k)^2}=O\Bigl(\frac{k^2}{p^2 n}\Bigr).
		\]
		By Chebyshev's inequality, this contributes $O_{\mathbb P}(k/(p\sqrt{n}))$ 
		to the standard deviation of $\check\sigma^2_{\ell,t}$.
		Since $k/(p\sqrt{n})=k(pn)^{-1/2}p^{-1/2}$, this is dominated by the trace 
		term $O_{\mathbb P}((pn)^{-1/2})$ whenever $k=O(\sqrt{p})$, which is implied 
		by $k\sqrt{n^\ell_t}=o(p)$ for $n^\ell_t\gtrsim 1$.
		Combining the bias bound $O(k/p)$ and the fluctuation bound 
		(standard deviation) $O_{\mathbb P}((pn)^{-1/2})$, we obtain 
		$\check\sigma^2_{\ell,t}-\sigma^2_t
		=O_{\mathbb P}((p\,n^\ell_t)^{-1/2})+O(k/p)$.
		
		{Consequence.} The stated result follows. Multiplying by $\sqrt{n^\ell_t}$, 
		\[
		\sqrt{n^\ell_t}|\check\sigma^{2}_{\ell,t}-\sigma_t^{2}|
		=O_{\mathbb P}\Bigl(\frac{\sqrt{n^\ell_t}}{\sqrt{p n^\ell_t}}\Bigr)
		+O\Bigl(\frac{k\sqrt{n^\ell_t}}{p}\Bigr)
		=O_{\mathbb P}(p^{-1/2})+O\Bigl(\frac{k\sqrt{n^\ell_t}}{p}\Bigr)
		=o_{\mathbb P}(1)
		\]
		precisely when $k\sqrt{n^\ell_t}=o(p)$, proving the second assertion of the lemma.
	\end{proof}
	
	\begin{proof}[Proof of Theorem~\ref{thm:local_clt}]
		Fix $j\le k$, abbreviate $n:=n^\ell_t$,
		$\lambda:=\lambda_{t,j}$, $\chi:=\chi_{\ell,t}$,
		$r:=r_{\ell,t,j}$, and write
		$\mathrm{err}_{\ell,t}:=(N^{\mathrm{eff}}_{\ell,t})^{-1/2}
		+\alpha^{\mathrm{lag}}_{\ell,t}+\zeta_t+\Delta_{\ell,t}
		+(\xi^\star)^t$, so
		that condition \eqref{eq:clt_cond} reads
		$\sqrt n\,\mathrm{err}_{\ell,t}\to0$.
		
		\medskip\noindent
		\textbf{Step 1 (the truncation is inactive).}
		By Theorem~\ref{thm:attenuation},
		$r=\chi(\lambda)+O_{\mathbb P}(n^{-1/2}+\mathrm{err}_{\ell,t})
		\to\chi(\lambda)$ in probability, and
		$\chi(\lambda)-\sigma^2_t=\lambda a_{\ell,t}(\lambda)
		\ge\lambda_{t,k}\underline a>0$ uniformly by
		Corollary~\ref{cor:aunder} and Assumption~\ref{assum:bbp}, while
		$\check\sigma^2_{\ell,t}\to\sigma^2_t$ by
		Lemma~\ref{lem:sigma}. Hence
		$\mathbb P(r>\check\sigma^2_{\ell,t})\to1$ and the truncation in
		\eqref{eq:local_debias} may be ignored.
		
		\medskip\noindent
		\textbf{Step 2 (removing the plug-in of $\sigma^2$).}
		By Lemma~\ref{lem:inverse}(iii) the map $\sigma^2\mapsto
		\chi^{-1}(x;c,\sigma^2)$ is Lipschitz with constant $C(1+c)$ uniformly over 
		the relevant compact set (determined by Assumptions~\ref{assum:noise}--\ref{assum:hd}), so
		\[
		\chi^{-1}\bigl(r;c_{\ell,t-1},\check\sigma^2_{\ell,t}\bigr)
		-\chi^{-1}\bigl(r;c_{\ell,t-1},\sigma^2_t\bigr)
		=O_{\mathbb P}\bigl(|\check\sigma^2_{\ell,t}-\sigma^2_t|\bigr)
		=o_{\mathbb P}(n^{-1/2})
		\]
		by Lemma~\ref{lem:sigma} and $k\sqrt n=o(p)$. The aspect ratio needs
		no such step. It is not a plug-in, since $c_{\ell,t-1}$ is
		deterministic, known at the node, and is the ratio carried by
		$\chi_{\ell,t}$ itself (Remark~\ref{rem:indexshift}).
		
		\medskip\noindent
		\textbf{Step 3 (linearization).}
		By Lemma~\ref{lem:inverse}(ii)--(iii), $\chi^{-1}$ is
		continuously differentiable with derivative
		$(\chi^{-1})'(x)=1/\chi'(\chi^{-1}(x))$, bounded above by $L_{\bar c}$ and
		below by $\{\sup\chi'\}^{-1}>0$ on the relevant range, and, as
		computed in the proof of Theorem~\ref{thm:ordering} below,
		$(\chi^{-1})''$ is uniformly bounded there as well. 
		
		By Taylor's theorem, for any $x$ in a neighborhood of $\chi(\lambda)$,
		\[
		\chi^{-1}(x) = \chi^{-1}(\chi(\lambda)) + (\chi^{-1})'(\chi(\lambda))\cdot(x-\chi(\lambda))
		+ \tfrac{1}{2}(\chi^{-1})''(\xi)\cdot(x-\chi(\lambda))^2
		\]
		for some $\xi$ between $x$ and $\chi(\lambda)$. Since $\chi^{-1}(\chi(\lambda))=\lambda$ and
		$(\chi^{-1})'(\chi(\lambda))=1/\chi'(\lambda)$, a second-order Taylor
		expansion around $\chi(\lambda)$ therefore gives
		\begin{equation}
			\label{eq:lin}
			\check\lambda_{\ell,t,j}-\lambda
			=\frac{r-\chi(\lambda)}{\chi'(\lambda)}
			+O_{\mathbb P}\bigl(\{r-\chi(\lambda)\}^2\bigr)
			+o_{\mathbb P}(n^{-1/2})
			=\frac{r-\chi(\lambda)}{\chi'(\lambda)}
			+O_{\mathbb P}(n^{-1})+o_{\mathbb P}(n^{-1/2}).
		\end{equation}
		By Theorem~\ref{thm:attenuation} and Lemma~\ref{lem:decomp},
		$r-\chi(\lambda)=\varpi_{\ell,t,j}
		+O_{\mathbb P}(\mathrm{err}_{\ell,t})$, and
		$\sqrt n\,O_{\mathbb P}(\mathrm{err}_{\ell,t})=o_{\mathbb P}(1)$
		by \eqref{eq:clt_cond}. Hence
		\begin{equation}
			\label{eq:lin2}
			\sqrt n\bigl(\check\lambda_{\ell,t,j}-\lambda_{t,j}\bigr)
			=\frac{\sqrt n\,\varpi_{\ell,t,j}}
			{\chi'_{\ell,t}(\lambda_{t,j})}+o_{\mathbb P}(1).
		\end{equation}
		
		\medskip\noindent
		\textbf{Step 4 (conditional CLT for $\varpi$).}
		Conditionally on $\mathcal F_{\ell,t-1}$,
		$\sqrt n\,\varpi_{\ell,t,j}
		=n^{-1/2}\sum_{i\le n}\{(\bm b^\top\bm z_i)^2-\|\bm b\|^2\}$ is a
		normalized sum of i.i.d.\ centred variables with variance
		$s^2_n:=2\bar r^2_{\ell,t,j}+{K}_4\sum_mb_m^4$, by
		Lemma~\ref{lem:decomp}. 
		Lyapunov's condition holds with exponent
		$2+\epsilon_1/2$. Indeed, by the Marcinkiewicz--Zygmund and Rosenthal
		inequalities and Assumption~\ref{assum:moment},
		\[
		\mathbb E\bigl|(\bm b^\top\bm z)^2-\|\bm b\|^2\bigr|
		^{2+\epsilon_1/2}
		\le C\,\mathbb E|\bm b^\top\bm z|^{4+\epsilon_1}
		\le C'\|\bm b\|^{4+\epsilon_1}
		\le C''(\lambda_{\max}+\sigma^2_{\max})^{2+\epsilon_1/2},
		\]
		uniformly in $p$. Lyapunov's condition requires
		\[
		\frac{1}{n\cdot s_n^{2+\epsilon_1/2}}\sum_{i=1}^{n}\mathbb E\bigl|(\bm b^\top\bm z_i)^2-\|\bm b\|^2\bigr|^{2+\epsilon_1/2}
		=\frac{n\cdot C''(\lambda_{\max}+\sigma^2_{\max})^{2+\epsilon_1/2}}{n\cdot s_n^{2+\epsilon_1/2}}
		\to 0
		\]
		as $n\to\infty$, since $s^2_n\ge2\sigma^4_{\min}(1+o(1))$ is bounded away from zero.
		Therefore, 
		$\sqrt n\,\varpi_{\ell,t,j}\mid\mathcal F_{\ell,t-1}
		\Rightarrow\mathcal N(0,s^2_\infty)$, where by
		Lemma~\ref{lem:decomp} $\sum_mb_m^4\to0$ and, by \eqref{eq:barr},
		$\bar r_{\ell,t,j}\to\chi_{\ell,t}(\lambda_{t,j})$ in probability,
		so $s^2_n\to2\chi_{\ell,t}(\lambda_{t,j})^2$. Since the limit is
		deterministic, the conditional characteristic function converges
		in probability to that of
		$\mathcal N(0,2\chi_{\ell,t}(\lambda_{t,j})^2)$, and dominated
		convergence upgrades this to unconditional weak convergence.
		Combining with \eqref{eq:lin2} and Slutsky's theorem,
		\[
		\sqrt{n^\ell_t}\bigl(\check\lambda_{\ell,t,j}-\lambda_{t,j}\bigr)
		\Longrightarrow
		\mathcal N\Bigl(0,\ \frac{2\chi_{\ell,t}(\lambda_{t,j})^2}
		{\chi'_{\ell,t}(\lambda_{t,j})^2}\Bigr),
		\]
		which is \eqref{eq:local_clt}. In the canonical case,
		\eqref{eq:chi_closed} and \eqref{eq:chiprime} give
		\[
		\frac{\chi(\lambda)}{\chi'(\lambda)}
		=\frac{\lambda(\lambda+\sigma^2)}{\lambda+c\sigma^2}\cdot
		\frac{(\lambda+c\sigma^2)^2}
		{\lambda^2+2c\sigma^2\lambda+c\sigma^4}
		=\frac{\lambda(\lambda+\sigma^2)(\lambda+c\sigma^2)}
		{\lambda^2+2c\sigma^2\lambda+c\sigma^4},
		\]
		whose square, multiplied by $2$, is the explicit expression in
		\eqref{eq:vartheta}.
		
		\medskip\noindent
		\textbf{Step 5 (joint behaviour).}
		For $j\ne j'$, conditionally on $\mathcal F_{\ell,t-1}$,
		\[
		\operatorname{Cov}\bigl(\varpi_{\ell,t,j},
		\varpi_{\ell,t,j'}\mid\mathcal F_{\ell,t-1}\bigr)
		=\frac{1}{n}\Bigl\{2\langle\bm b_j,\bm b_{j'}\rangle^2
		+{K}_4\sum_mb_{j,m}^2b_{j',m}^2\Bigr\},
		\]
		by the same fourth-moment computation as in
		Lemma~\ref{lem:decomp}. Since
		$$\langle\bm b_j,\bm b_{j'}\rangle
		=\bm u_{\ell,t-1,j}^\top\bm\Sigma_t\bm u_{\ell,t-1,j'}
		=\sum_m\lambda_{t,m}\langle\bm u_{\ell,t-1,j},\bm v_{t,m}\rangle
		\langle\bm u_{\ell,t-1,j'},\bm v_{t,m}\rangle. $$ 
		For $j\ne j'$, each product $\langle\bm u_{\ell,t-1,j},\bm v_{t,m}\rangle
		\langle\bm u_{\ell,t-1,j'},\bm v_{t,m}\rangle$ involves either
		$\langle\bm u_{\ell,t-1,j},\bm v_{t,m}\rangle$ with $m\ne j$ or
		$\langle\bm u_{\ell,t-1,j'},\bm v_{t,m}\rangle$ with $m\ne j'$
		(at least one is an off-diagonal alignment), which is
		$O_{\mathbb P}((N^{\mathrm{eff}}_{\ell,t})^{-1/2})$ by
		Lemma~\ref{lem:maps}. Therefore, $\langle\bm b_j,\bm b_{j'}\rangle
		=O_{\mathbb P}((N^{\mathrm{eff}}_{\ell,t})^{-1/2})$
		and hence
		cross-covariances of order
		$(nN^{\mathrm{eff}}_{\ell,t})^{-1}$, i.e.\
		$O_{\mathbb P}((N^{\mathrm{eff}}_{\ell,t})^{-1})$ relative to the
		diagonal entries, which are of order $n^{-1}$. The Cram\'er--Wold
		device applied to the same Lyapunov argument gives joint normality
		with asymptotically diagonal covariance. Finally, given
		$\mathcal F_{t-1}$ the fresh batches
		$\{\bm X_{\ell,t,i}\}_{i\le n^\ell_t}$ are independent across
		$\ell\in\mathcal A_t$ (Assumptions~\ref{assum:moment}
		and~\ref{assum:design}), and $\check{\bm\lambda}_{\ell,t}$ is a
		measurable function of $\mathcal F_{\ell,t-1}$ and of the $\ell$-th
		fresh batch alone. Hence the limits are jointly independent across
		nodes.
	\end{proof}
	
	\begin{rem}
		\label{rem:varbounds}
		The asymptotic variance $\vartheta^2_{\ell,t,j}$ in \eqref{eq:vartheta}
		is bounded above and below by constants that depend only on the model
		parameters and not on the particular node, spike index, or time step.
		By \eqref{eq:sandwich} and Lemma~\ref{lem:inverse}(ii),
		$\sigma^2_{\min}\le\chi_{\ell,t}(\lambda_{t,j})\le\lambda_{\max}+\sigma^2_{\max}$.
		By \eqref{eq:chiprime}, the derivative satisfies
		$\min\{1,2/(1+\sqrt{c_{\ell,t-1}})\}
		\le\chi'_{\ell,t}(\lambda_{t,j})
		\le\max\{1,2/(1+\sqrt{c_{\ell,t-1}})\}\le2$.
		Combining these estimates gives, under
		Assumptions~\ref{assum:bbp}--\ref{assum:noise} and~\ref{assum:hd},
		\begin{equation}
			\label{eq:varthetabounds}
			\frac{\sigma^4_{\min}}{2}
			\;\le\;
			2\sigma^4_{\min}
			\min\Bigl\{1,\tfrac{(1+\sqrt{c_{\ell,t-1}})^{2}}{4}\Bigr\}
			\;\le\; \vartheta^2_{\ell,t,j}
			\;\le\; 2(\lambda_{\max}+\sigma^2_{\max})^2
			\max\Bigl\{1,\tfrac{(1+\sqrt{\bar c})^2}{4}\Bigr\}.
		\end{equation}
		The outer lower bound $\sigma^4_{\min}/2$ follows from the estimate
		$\sup_\lambda\chi'\le2$ and holds without any lower bound on the aspect ratio.
		The middle expression is the sharper constant that reflects the exact value
		of $c_{\ell,t-1}$.
		These bounds confirm that $\vartheta^2_{\ell,t,j}$ is bounded away from
		both zero and infinity uniformly across all nodes, spikes, and time steps
		permitted by the assumptions, a property that is used in the aggregation
		analysis of Section~\ref{sec:3}.
	\end{rem}
	
	\subsection{Comparison of aggregation orderings\label{sec:a.vs.b}}
	
	Two architectures are available for combining local information. Let
	$\{\omega_{\ell,t}\}_{\ell\in\mathcal A_t}$ be aggregation weights
	with $\omega_{\ell,t}\ge0$, $\sum_\ell\omega_{\ell,t}=1$, and write
	$c^{\omega}_t=\sum_\ell\omega_{\ell,t}c_{\ell,t-1}$. Define
	\begin{align}
		\text{(A) correct-then-aggregate:}\qquad
		&\check\lambda_{t,j}^{\mathrm{A}}
		=\sum_{\ell\in\mathcal A_t}\omega_{\ell,t}\,
		\chi_{\ell,t}^{-1}(r_{\ell,t,j}),
		\label{eq:orderA}\\
		\text{(B) aggregate-then-correct:}\qquad
		&\check\lambda_{t,j}^{\mathrm{B}}
		=\bar\chi_{t}^{-1}\Bigl(
		\sum_{\ell\in\mathcal A_t}\omega_{\ell,t}r_{\ell,t,j}\Bigr),
		\label{eq:orderB}
	\end{align}
	where $\bar\chi_t:=\chi(\cdot\,;c^{\omega}_t,\sigma^2_t)$. Ordering~(B) is
	the natural analogue of the classical practice of averaging first and
	correcting once at the server. 
	
	\begin{theo}[Ordering comparison]
		\label{thm:ordering}
		Assume the conditions of Theorem~\ref{thm:local_clt} for every
		$\ell\in\mathcal A_t$, work in the canonical case, and let
		\begin{equation}
			\label{eq:dispersion}
			\mathcal D_t
			:=\sum_{\ell\in\mathcal A_t}
			\omega_{\ell,t}\bigl(c_{\ell,t-1}-c^{\omega}_t\bigr)^{2}
		\end{equation}
		denote the weighted dispersion of the local aspect ratios. Then for
		each fixed $j\le k$:
		\begin{enumerate}[label=\textup{(\roman*)}]
			\item \textup{(Ordering A)}
			$\mathbb E\bigl(\check\lambda_{t,j}^{\mathrm{A}}\bigr)
			-\lambda_{t,j}
			=O\bigl(\sum_{\ell}\omega_{\ell,t}/n_t^\ell\bigr)
			+O(\max_\ell|b^{\mathrm{lag}}_{\ell,t}|)\to0$, and
			\[
			\check\lambda_{t,j}^{\mathrm{A}}-\lambda_{t,j}
			=O_{\mathbb P}\Bigl(
			\bigl(\textstyle\sum_{\ell}\omega_{\ell,t}^{2}
			\vartheta_{\ell,t,j}^{2}/n_t^\ell\bigr)^{1/2}\Bigr)
			+O(\max_\ell|b^{\mathrm{lag}}_{\ell,t}|). 
			\]
			\item \textup{(Ordering B)} there is a {non-vanishing} bias
			\begin{align}
				\label{eq:biasB}
				\mathbb E\bigl(\check\lambda_{t,j}^{\mathrm{B}}\bigr)
				-\lambda_{t,j}
				=&\frac{\sigma_t^{4}\lambda_{t,j}
					(\lambda_{t,j}+\sigma_t^{2})}
				{(\lambda_{t,j}+c^{\omega}_t\sigma_t^{2})
					\bigl(\lambda_{t,j}^{2}
					+2c^{\omega}_t\sigma_t^{2}\lambda_{t,j}
					+c^{\omega}_t\sigma_t^{4}\bigr)}
				\ \mathcal D_t \\
				& +\ o(\mathcal D_t)
				+\ O\Bigl(\sum_{\ell}\omega_{\ell,t}^{2}/n_t^\ell\Bigr),\nonumber
			\end{align}
			which is strictly positive whenever $\mathcal D_t>0$ and does
			not decrease as $A_t\to\infty$, $t\to\infty$ or
			$\min_\ell n_t^\ell\to\infty$.
			\item \textup{(Homogeneous case)} If
			$c_{\ell,t-1}\equiv c^{\omega}_t$
			then $\mathcal D_t=0$ and the two orderings are first-order
			equivalent. Ordering~B then has the smaller second-order
			nonlinearity bias, $O(1/N_t)$ against $O(A_t/N_t)$ for
			Ordering~A.
		\end{enumerate}
	\end{theo}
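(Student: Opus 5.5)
The approach is to reduce both orderings to second-order Taylor expansions around the deterministic attenuation point $\chi_{\ell,t}(\lambda_{t,j})$, working conditionally on $\mathcal F_{t-1}$. Conditioning makes $c_{\ell,t-1}$, $\omega_{\ell,t}$ and $n^\ell_t$ fixed, and makes the fresh batches independent across $\ell\in\mathcal A_t$ (Assumptions~\ref{assum:moment}, \ref{assum:design}). By Lemma~\ref{lem:decomp} and \eqref{eq:barr}, each statistic decomposes as $r_{\ell,t,j}=\chi_{\ell,t}(\lambda_{t,j})+\varpi_{\ell,t,j}+O(b^{\mathrm{lag}}_{\ell,t})$. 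The noise $\varpi_{\ell,t,j}$ is conditionally centred, with conditional variance $2\chi_{\ell,t}(\lambda_{t,j})^2/n^\ell_t\,(1+o(1))$. The plug-in of $\check\sigma^2_{\ell,t}$ is absorbed through Lemma~\ref{lem:sigma} and Lemma~\ref{lem:inverse}(iii), exactly as in Step~2 of the proof of Theorem~\ref{thm:local_clt}; I will state the result for $\sigma^2_t$ known.

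The first auxiliary computation is the second derivative of the inverse map. From \eqref{eq:chiprime}, $(\chi^{-1})''(x)=-\chi''(\lambda)/\chi'(\lambda)^3$ with $\lambda=\chi^{-1}(x)$, and
$$\chi''(\lambda)=-\frac{2c(1-c)\sigma^4}{(\lambda+c\sigma^2)^3}.$$
Since $\chi'$ is bounded below by Lemma~\ref{lem:inverse}(iii), this second derivative is uniformly bounded on the compact range fixed by Assumptions~\ref{assum:bbp}--\ref{assum:hd}. I will also use this bound in Step~3 of Theorem~\ref{thm:local_clt}.

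For Ordering~A, I apply \eqref{eq:lin} node by node:
$$\chi_{\ell,t}^{-1}(r_{\ell,t,j})-\lambda_{t,j}=\frac{\varpi_{\ell,t,j}}{\chi'_{\ell,t}}+\tfrac12(\chi^{-1}_{\ell,t})''\,\varpi^2_{\ell,t,j}+O(b^{\mathrm{lag}}_{\ell,t})+\text{cubic remainder}.$$
The linear term has mean zero. The quadratic term has conditional mean $O(1/n^\ell_t)$. Averaging with weights summing to one gives the bias $O(\sum_\ell\omega_{\ell,t}/n^\ell_t)+O(\max_\ell b^{\mathrm{lag}}_{\ell,t})$. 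Conditional independence across nodes gives the variance $\sum_\ell\omega^2_{\ell,t}\vartheta^2_{\ell,t,j}/n^\ell_t$, and Chebyshev's inequality then yields the $O_{\mathbb P}$ statement.

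For Ordering~B, let $\bar x:=\sum_\ell\omega_{\ell,t}r_{\ell,t,j}$. I expand $c\mapsto\chi(\lambda;c,\sigma^2)$ around $c^\omega_t$. The linear term $\partial_c\chi\sum_\ell\omega_{\ell,t}(c_{\ell,t-1}-c^\omega_t)$ vanishes by the definition of $c^\omega_t$. Using
$$\partial_c^2\chi=\frac{2\lambda(\lambda+\sigma^2)\sigma^4}{(\lambda+c\sigma^2)^3},$$
this gives
$$\bar x=\bar\chi_t(\lambda_{t,j})+\tfrac12\partial_c^2\chi(\lambda_{t,j};c^\omega_t)\,\mathcal D_t+o(\mathcal D_t)+\bar\varpi+O(\max_\ell b^{\mathrm{lag}}_{\ell,t}),$$
where $\bar\varpi:=\sum_\ell\omega_{\ell,t}\varpi_{\ell,t,j}$. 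The remainder is $o(\mathcal D_t)$ because $\partial_c^3\chi$ is bounded and the aspect ratios lie in a compact set. I then expand $\bar\chi_t^{-1}$ to second order around $\bar\chi_t(\lambda_{t,j})$ and take expectations. The deterministic shift divided by $\bar\chi_t'(\lambda_{t,j})$ reproduces exactly the coefficient in \eqref{eq:biasB}, after cancelling one factor $(\lambda+c^\omega_t\sigma^2)^2$ against the denominator of \eqref{eq:chiprime}. The term $\bar\varpi$ has mean zero. The quadratic term contributes $O(\mathbb E\bar\varpi^2)=O(\sum_\ell\omega^2_{\ell,t}/n^\ell_t)$. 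The coefficient is continuous and strictly positive on the compact parameter set, and it involves neither $A_t$, $t$ nor $n^\ell_t$; this gives positivity and non-vanishing whenever $\mathcal D_t$ stays positive.

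For part (iii), $\mathcal D_t=0$ makes both orderings share the linear term $\bar\varpi/\chi'$, so they are first-order equivalent. What remains is to compare the curvature biases. Ordering~B has $\tfrac12(\chi^{-1})''\operatorname{Var}(\bar\varpi)\asymp\sum_\ell\omega^2_{\ell,t}/n^\ell_t$, which is $O(1/N_t)$ by Lemma~\ref{lem:weights} and Assumption~\ref{assum:balance}. Ordering~A has $\sum_\ell\omega_{\ell,t}/n^\ell_t\asymp A_t/N_t$ under the same assumptions.

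The main obstacle is making "$\mathbb E$" rigorous, since the expansions above hold only in probability and the Taylor remainders need integrable control. I would first work on the event where the truncation is inactive and $|r_{\ell,t,j}-\chi_{\ell,t}(\lambda_{t,j})|\le\delta$, whose complement has vanishing probability by Step~1 of Theorem~\ref{thm:local_clt}. On that event the remainders are bounded by $C|\varpi|^3$, and Rosenthal's inequality with the $4+\epsilon_1$ moment of Assumption~\ref{assum:moment} gives $\mathbb E|\varpi|^3=O(n^{-3/2})$. On the complement I would use the Lipschitz bound $|\chi^{-1}(x)|\le L_{\bar c}x+C$ together with Cauchy--Schwarz. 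If the complement contribution cannot be shown to be $O(1/n)$, I would reinterpret the expectations as expectations of the second-order linearizations, which is the form in which Theorem~\ref{thm:ordering} is used later.
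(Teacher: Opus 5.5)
Your plan follows the paper's proof essentially step for step. Both use a node-by-node second-order expansion of $\chi^{-1}_{\ell,t}$ for Ordering~A. For Ordering~B, both expand $c\mapsto\chi(\lambda;c,\sigma^2)$ about $c^{\omega}_t$, where the linear term cancels, and then expand $\bar\chi_t^{-1}$ once more. For part~(iii), both compare $\sum_\ell\omega_{\ell,t}/n^\ell_t$ with $\sum_\ell\omega^2_{\ell,t}/n^\ell_t$. Two points need fixing.

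First, the claim that the remainder is $o(\mathcal D_t)$ ``because $\partial_c^3\chi$ is bounded and the aspect ratios lie in a compact set'' is not correct. A bounded third derivative only bounds the remainder by $\max_\ell|c_{\ell,t-1}-c^{\omega}_t|\,\mathcal D_t$. That is guaranteed to be $o(\mathcal D_t)$ only when the spread of the aspect ratios shrinks. The paper's Taylor step has the same small-dispersion character, since it writes a Peano remainder $o((c_{\ell,t-1}-c^{\omega}_t)^2)$.

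Consequently you cannot get ``strictly positive whenever $\mathcal D_t>0$'', or non-vanishing at order-one dispersion, from positivity of the coefficient alone. The paper backs these claims with the exact Jensen inequality, and you should add that step. Strict convexity of $c\mapsto\chi(\lambda;c,\sigma^2)$ gives $\sum_\ell\omega_{\ell,t}\chi(\lambda;c_{\ell,t-1},\sigma^2)>\bar\chi_t(\lambda)$ whenever $\mathcal D_t>0$. Since $\bar\chi_t^{-1}$ is strictly increasing, the deterministic part $\bar\chi_t^{-1}\bigl(\sum_\ell\omega_{\ell,t}\chi(\lambda;c_{\ell,t-1},\sigma^2)\bigr)-\lambda$ of the bias is strictly positive and does not involve $n^\ell_t$.

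You can even make this quantitative. Using $\partial_c^2\chi\ge m>0$ on the compact range of aspect ratios and $\bar\chi_t'\le2$, the bias is at least $m\mathcal D_t/4$ for any dispersion. The vanishing $O(\sum_\ell\omega^2_{\ell,t}/n^\ell_t)$ and lag terms therefore cannot cancel it.

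Second, your ``main obstacle'' disappears once you see that the curvature bound is global, not confined to a compact range. The paper's Step~0 computes $(\chi^{-1})''(x)=2c(1-c)\sigma^4/D(x)^{3/2}$ with $D(x)=(x-\sigma^2)^2+4c\sigma^2x$. Since $D\ge4c\sigma^4$ on $x\ge\sigma^2$, this gives $|(\chi^{-1})''|\le|1-c|/(4\sqrt c\,\sigma^2)$ there. Moreover $D$ stays bounded away from zero on all of $[0,\infty)$, so values of $r$ below $\sigma^2$ are covered too. Your formula $-\chi''/(\chi')^3$ gives the same conclusion, because $\chi''$ is bounded and $\chi'$ is bounded below on the whole half-line.

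With a global constant $K$, the Lagrange remainder gives $|\mathbb E\chi^{-1}(r)-\chi^{-1}(\mathbb E r)|\le\tfrac K2\operatorname{Var}(r)$ directly. The event split, the third-moment bounds and the fallback reinterpretation of the expectations are then unnecessary.
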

	
	\begin{rem}
		\label{rem:sec3link}
		Theorem~\ref{thm:ordering} is decisive because $\mathcal D_t>0$ is
		the rule rather than the exception in the setting of
		Section~\ref{sec:2}. Asynchronous activation, unequal batch sizes
		and node-specific memory histories make
		$N^{\mathrm{eff}}_{\ell,t}$, hence $c_{\ell,t-1}$, genuinely
		heterogeneous. The mechanism behind \eqref{eq:biasB} is that
		$c\mapsto\chi(\lambda;c,\sigma^2)$ is strictly convex,
		\[
		\frac{\partial^2\chi}{\partial c^2}
		=\frac{2\lambda(\lambda+\sigma^2)\sigma^4}
		{(\lambda+c\sigma^2)^3}>0,
		\]
		so averaging attenuated statistics with heterogeneous attenuation
		levels and then applying a single pooled correction produces a
		Jensen gap of order $\mathcal D_t$ that no feasible choice of
		pooled ratio can remove. The qualification is needed because
		$c\mapsto\chi(\lambda;c,\sigma^2)$ is strictly decreasing, so for
		each fixed $j$ there does exist a ratio $c^{\dagger}_{t,j}$ solving
		$\chi(\lambda_{t,j};c^{\dagger}_{t,j},\sigma^2_t)
		=\sum_\ell\omega_{\ell,t}\chi(\lambda_{t,j};c_{\ell,t-1},
		\sigma^2_t)$. 
		But it depends on the unknown $\lambda_{t,j}$ and differs across
		$j$, so it is neither computable at the server nor simultaneously
		valid for the $k$ coordinates transmitted in one round. For any
		$j$-independent, data-free pooled ratio the bias is $O(1)$. It is not reduced by adding nodes,
		lengthening the stream, or enlarging batches, and therefore
		dominates the $O(N_t^{-1/2})$ stochastic error asymptotically.
		Ordering~A avoids it entirely at no communication cost, because
		$c_{\ell,t-1}$ is deterministic and locally known and
		$\check\sigma^2_{\ell,t}$ is locally computable. Each node still
		transmits exactly $k$ scalars. The advantage of Ordering~B in
		part~(iii) is a second-order effect confined to the homogeneous
		case, and is dominated by the stochastic error whenever
		$A_t=o(\sqrt{N_t})$. Accordingly the framework is organised as
		{correct-then-aggregate}, and Section~\ref{sec:3} is
		implemented in this order.
	\end{rem}
	
	\begin{rem}
		\label{rem:twobenefits}
		Beyond the bias comparison, Ordering~A confers two structural
		advantages exploited in Section~\ref{sec:4.2}. 
		First, after local correction all transmitted statistics are
		centred at the {same} quantity $\lambda_{t,j}$, so the
		adaptive fluctuation-based weights measure genuine sampling
		variability rather than systematic inter-node differences in
		attenuation. Under Ordering~B a node with large $c_{\ell,t-1}$ would
		be penalised for a deterministic bias, confounding bias with
		variance. Second, Theorem~\ref{thm:local_clt} identifies the exact
		local variance $\vartheta^2_{\ell,t,j}$, computable at the node
		from $(\check\lambda_{\ell,t,j},c_{\ell,t-1},
		\check\sigma^2_{\ell,t},n_t^\ell)$, providing a concrete efficiency
		benchmark. Ordering~B admits no such benchmark, since $\bar\chi_t$
		is the correct transfer map for no individual node. Finally, local
		correction accommodates node-specific noise levels
		$\sigma^2_{\ell,t}$ without modification, whereas Ordering~B
		presupposes a single global $\sigma^2_t$ for $\bar\chi_t$ to be
		well defined at all.
	\end{rem}

	\begin{proof}[Proof of Theorem~\ref{thm:ordering}]
		Throughout we work in the canonical case, fix $j\le k$, write
		$\lambda:=\lambda_{t,j}$, $\sigma^2:=\sigma^2_t$,
		$\chi_\ell(\cdot):=\chi(\cdot\,;c_{\ell,t-1},\sigma^2)$,
		$\bar\chi:=\chi(\cdot\,;c^{\omega}_t,\sigma^2)$, and
		$b^{\mathrm{lag}}_{\ell,t}
		=O(\alpha^{\mathrm{lag}}_{\ell,t}+\zeta_t+\Delta_{\ell,t}
		+(\xi^\star)^t)$
		for the deterministic lag term of
		Theorem~\ref{thm:attenuation}. All expectations are conditional on
		$\mathcal F_{t-1}$. They are well defined because, by
		Lemma~\ref{lem:inverse}(iii), $\chi^{-1}$ is globally Lipschitz,
		so $|\chi^{-1}(x)|\le\sigma^2\sqrt c+L_c|x-\sigma^2|$ and
		$\mathbb E|r_{\ell,t,j}|<\infty$.
		
		\medskip\noindent
		\textbf{Step 0 (a global bound on the curvature of
			$\chi^{-1}$).}
		With $D(x):=(x-\sigma^2)^2+4c\sigma^2x$, \eqref{eq:chiinv} gives
		$\chi^{-1}=\tfrac12\{(x-\sigma^2)+\sqrt D\}$, and
		\[
		2DD''-(D')^2
		=4\bigl\{(x-\sigma^2)^2+4c\sigma^2x\bigr\}
		-4\bigl\{(x-\sigma^2)+2c\sigma^2\bigr\}^2
		=16c(1-c)\sigma^4 ,
		\]
		so that
		\begin{equation}
			\label{eq:chiinv2}
			\bigl(\chi^{-1}\bigr)''(x)
			=\frac{2DD''-(D')^2}{8D^{3/2}}
			=\frac{2c(1-c)\sigma^4}{D(x)^{3/2}},
			\qquad
			\bigl|\bigl(\chi^{-1}\bigr)''(x)\bigr|
			\le\frac{2c|1-c|\sigma^4}{(4c\sigma^4)^{3/2}}
			=\frac{|1-c|}{4\sqrt c\,\sigma^2},
		\end{equation}
		using $D(x)\ge4c\sigma^2x\ge4c\sigma^4$ for $x\ge\sigma^2$. Thus
		$(\chi^{-1})''$ is bounded uniformly over the parameter range
		determined by Assumptions~\ref{assum:noise} and~\ref{assum:hd},
		which makes all Taylor expansions below uniform in $\ell$ and $t$.
		
		\medskip\noindent
		\textbf{(i) Ordering~A.}
		By Theorem~\ref{thm:attenuation},
		$\mathbb E r_{\ell,t,j}=\chi_\ell(\lambda)
		+O(|b^{\mathrm{lag}}_{\ell,t}|)$ and
		$\operatorname{Var}(r_{\ell,t,j})=2\chi_\ell(\lambda)^2/n^\ell_t
		\{1+o(1)\}$ by \eqref{eq:var}. A second-order Taylor expansion of
		$\chi^{-1}_\ell$ around $\mathbb E r_{\ell,t,j}$, with the
		uniformly bounded remainder \eqref{eq:chiinv2}, gives
		\[
		\bigl|\mathbb E\chi^{-1}_\ell(r_{\ell,t,j})
		-\chi^{-1}_\ell(\mathbb Er_{\ell,t,j})\bigr|
		\le\tfrac12\sup_x\bigl|(\chi^{-1}_\ell)''(x)\bigr|
		\operatorname{Var}(r_{\ell,t,j})
		=O\bigl(1/n^\ell_t\bigr),
		\]
		while $|\chi^{-1}_\ell(\mathbb Er_{\ell,t,j})-\lambda|
		\le L_{\bar c}|b^{\mathrm{lag}}_{\ell,t}|$ by
		Lemma~\ref{lem:inverse}(iii). Averaging with the weights
		$\omega_{\ell,t}$, which are $\mathcal F_{t-1}$-measurable by
		Assumption~\ref{assum:design}, yields
		\[
		\mathbb E\bigl(\check\lambda^{\mathrm A}_{t,j}\bigr)-\lambda
		=O\Bigl(\sum_\ell\omega_{\ell,t}/n^\ell_t\Bigr)
		+O\bigl(\max_\ell|b^{\mathrm{lag}}_{\ell,t}|\bigr)
		\longrightarrow0 .
		\]
		For the stochastic order, Theorem~\ref{thm:local_clt} gives
		$\chi^{-1}_\ell(r_{\ell,t,j})-\lambda
		=\vartheta_{\ell,t,j}(n^\ell_t)^{-1/2}\varepsilon_\ell
		+O(|b^{\mathrm{lag}}_{\ell,t}|)+o_{\mathbb P}((n^\ell_t)^{-1/2})$
		with $\varepsilon_\ell$ asymptotically standard normal and, by
		Step~5 of that proof, independent across $\ell$ given
		$\mathcal F_{t-1}$. Hence the weighted average has conditional
		variance $\sum_\ell\omega_{\ell,t}^2\vartheta^2_{\ell,t,j}
		/n^\ell_t\{1+o(1)\}$ and Chebyshev's inequality gives the stated
		rate. (The variances $\vartheta^2_{\ell,t,j}$ are bounded above
		and below by \eqref{eq:varthetabounds}, so the rate is genuinely
		$\asymp(\sum_\ell\omega^2_{\ell,t}/n^\ell_t)^{1/2}$.)
		
		\medskip\noindent
		\textbf{(ii) Ordering~B.}
		Let $\bar r_{t,j}:=\sum_{\ell\in\mathcal A_t}\omega_{\ell,t}
		r_{\ell,t,j}$. By Theorem~\ref{thm:attenuation},
		\[
		\mathbb E\bar r_{t,j}
		=\sum_{\ell}\omega_{\ell,t}\chi(\lambda;c_{\ell,t-1},\sigma^2)
		+O\bigl(\max_\ell|b^{\mathrm{lag}}_{\ell,t}|\bigr).
		\]
		The map $c\mapsto\chi(\lambda;c,\sigma^2)
		=\lambda(\lambda+\sigma^2)/(\lambda+c\sigma^2)$ is smooth and
		strictly convex on $[0,\bar c]$, with
		\begin{equation}
			\label{eq:convex}
			\frac{\partial\chi}{\partial c}
			=-\frac{\lambda(\lambda+\sigma^2)\sigma^2}
			{(\lambda+c\sigma^2)^2},
			\qquad
			\frac{\partial^2\chi}{\partial c^2}
			=\frac{2\lambda(\lambda+\sigma^2)\sigma^4}
			{(\lambda+c\sigma^2)^3}\ >\ 0 .
		\end{equation}
		Expanding each $\chi(\lambda;c_{\ell,t-1},\sigma^2)$ around 
		$c^{\omega}_t=\sum_\ell\omega_{\ell,t}c_{\ell,t-1}$ via Taylor's theorem,
		\begin{align*}
			\chi(\lambda;c_{\ell,t-1},\sigma^2)
			&= \chi(\lambda;c^{\omega}_t,\sigma^2) 
			+ \frac{\partial\chi}{\partial c}(\lambda;c^{\omega}_t,\sigma^2)(c_{\ell,t-1}-c^{\omega}_t)\\
			&\quad + \frac{1}{2}\frac{\partial^2\chi}{\partial c^2}(\lambda;c^{\omega}_t,\sigma^2)(c_{\ell,t-1}-c^{\omega}_t)^2
			+ o((c_{\ell,t-1}-c^{\omega}_t)^2).
		\end{align*}
		Multiplying by $\omega_{\ell,t}$ and summing over $\ell$,
		\begin{align*}
			\sum_\ell\omega_{\ell,t}\chi(\lambda;c_{\ell,t-1},\sigma^2)
			&= \chi(\lambda;c^{\omega}_t,\sigma^2)\sum_\ell\omega_{\ell,t}
			+ \frac{\partial\chi}{\partial c}(\lambda;c^{\omega}_t,\sigma^2)\sum_\ell\omega_{\ell,t}(c_{\ell,t-1}-c^{\omega}_t)\\
			&\quad + \frac{1}{2}\frac{\partial^2\chi}{\partial c^2}(\lambda;c^{\omega}_t,\sigma^2)
			\sum_\ell\omega_{\ell,t}(c_{\ell,t-1}-c^{\omega}_t)^2 + o(\mathcal D_t).
		\end{align*}
		Since $\sum_\ell\omega_{\ell,t}=1$ and 
		$\sum_\ell\omega_{\ell,t}(c_{\ell,t-1}-c^{\omega}_t)
		=\sum_\ell\omega_{\ell,t}c_{\ell,t-1}-c^{\omega}_t\sum_\ell\omega_{\ell,t}=0$,
		the first-order term cancels, leaving
		\[
		\sum_\ell\omega_{\ell,t}\chi(\lambda;c_{\ell,t-1},\sigma^2)
		=\bar\chi(\lambda)
		+\frac12\frac{\partial^2\chi}{\partial c^2}
		(\lambda;c^{\omega}_t,\sigma^2)\,\mathcal D_t
		+o(\mathcal D_t),
		\]
		where $\mathcal D_t=\sum_\ell\omega_{\ell,t}(c_{\ell,t-1}-c^{\omega}_t)^2$ by \eqref{eq:dispersion}.
		
		Applying $\bar\chi^{-1}$ and expanding once more,
		with $(\bar\chi^{-1})'(\bar\chi(\lambda))
		=1/\bar\chi'(\lambda)$ and using \eqref{eq:chiinv2} to control the
		quadratic remainder induced by the stochastic fluctuation of
		$\bar r_{t,j}$ (whose variance is
		$\sum_\ell\omega^2_{\ell,t}2\chi_\ell(\lambda)^2/n^\ell_t$), we
		obtain
		\[
		\mathbb E\bigl(\check\lambda^{\mathrm B}_{t,j}\bigr)-\lambda
		=\frac{\tfrac12\partial^2_c\chi(\lambda;c^{\omega}_t,\sigma^2)}
		{\bar\chi'(\lambda)}\,\mathcal D_t
		+o(\mathcal D_t)
		+O\Bigl(\sum_\ell\omega^2_{\ell,t}/n^\ell_t\Bigr)
		+O\bigl(\max_\ell|b^{\mathrm{lag}}_{\ell,t}|\bigr).
		\]
		By \eqref{eq:convex} and \eqref{eq:chiprime},
		\[
		\frac{\tfrac12\partial^2_c\chi}{\bar\chi'}
		=\frac{\lambda(\lambda+\sigma^2)\sigma^4}
		{(\lambda+c^{\omega}_t\sigma^2)^3}\cdot
		\frac{(\lambda+c^{\omega}_t\sigma^2)^2}
		{\lambda^2+2c^{\omega}_t\sigma^2\lambda+c^{\omega}_t\sigma^4}
		=\frac{\sigma^4\lambda(\lambda+\sigma^2)}
		{(\lambda+c^{\omega}_t\sigma^2)
			\bigl(\lambda^2+2c^{\omega}_t\sigma^2\lambda
			+c^{\omega}_t\sigma^4\bigr)},
		\]
		which is exactly the coefficient in \eqref{eq:biasB}. It is
		strictly positive under Assumptions~\ref{assum:bbp}
		and~\ref{assum:noise}, so the leading bias is strictly positive
		whenever $\mathcal D_t>0$. Crucially, this coefficient depends
		only on $(\lambda,\sigma^2,c^{\omega}_t)$, and $\mathcal D_t$ depends
		only on the deterministic aspect ratios and weights. Neither
		quantity involves $n^\ell_t$, $A_t$ or $t$. Hence the bias does not decrease as
		$A_t\to\infty$, $t\to\infty$ or $\min_\ell n^\ell_t\to\infty$,
		whereas the stochastic error of $\check\lambda^{\mathrm B}_{t,j}$
		is $O_{\mathbb P}((\sum_\ell\omega^2_{\ell,t}/n^\ell_t)^{1/2})
		=O_{\mathbb P}(N_t^{-1/2})$ for the natural weights
		$\omega_{\ell,t}\propto n^\ell_t$. The bias therefore dominates
		asymptotically. Since no choice of pooled ratio $\bar c$ can
		annihilate a Jensen gap generated by a strictly convex function
		evaluated at heterogeneous arguments, Ordering~B is inconsistent
		for the target $\lambda_{t,j}$ unless the aspect ratios are
		homogeneous. Indeed, for any $\bar c$ one has
		$\sum_\ell\omega_{\ell,t}\chi(\lambda;c_{\ell,t-1})
		>\chi(\lambda;\sum_\ell\omega_{\ell,t}c_{\ell,t-1})$ whenever
		$\mathcal D_t>0$, and $\chi(\lambda;\cdot)$ is injective.
		
		\medskip\noindent
		\textbf{(iii) Homogeneous case.}
		If $c_{\ell,t-1}\equiv c^{\omega}_t$ then $\mathcal D_t=0$,
		$\chi_\ell=\bar\chi$ for every $\ell$, and both estimators are
		$\bar\chi^{-1}$ applied to statistics with the same probability
		limit. By the delta method both satisfy
		$\check\lambda^{\mathrm A}_{t,j}-\lambda
		=\{\bar\chi'(\lambda)\}^{-1}\sum_\ell\omega_{\ell,t}
		\varpi_{\ell,t,j}+o_{\mathbb P}(\cdot)$ and identically for
		$\check\lambda^{\mathrm B}_{t,j}$, so they are first-order
		equivalent. The second-order (Jensen) terms differ. By
		\eqref{eq:chiinv2},
		\begin{align*}
			\text{Ordering A:}\quad &
			\tfrac12(\bar\chi^{-1})''\!\sum_\ell\omega_{\ell,t}
			\operatorname{Var}(r_{\ell,t,j})
			\asymp\sum_\ell\frac{\omega_{\ell,t}}{n^\ell_t},
			\\
			\text{Ordering B:}\quad &
			\tfrac12(\bar\chi^{-1})''
			\operatorname{Var}\Bigl(\sum_\ell\omega_{\ell,t}r_{\ell,t,j}
			\Bigr)
			\asymp\sum_\ell\frac{\omega^2_{\ell,t}}{n^\ell_t}.
		\end{align*}

		For balanced batches and weights, $n^\ell_t\asymp N_t/A_t$ and
		$\omega_{\ell,t}\asymp1/A_t$, so the former is
		$\asymp A_t/N_t$ and the latter $\asymp1/N_t$. Ordering~B has the
		smaller second-order bias by the factor $A_t$. This advantage is
		dominated by the common stochastic error
		$O_{\mathbb P}(N_t^{-1/2})$ as soon as $A_t=o(\sqrt{N_t})$.
	\end{proof}

	
	\section{Proofs for Section~\ref{sec:4.2}}
	\label{app:proofs4344}
	
	\subsection{Design choices for adaptive weighting}
	\label{app:design_choices}
	
	This subsection collects technical remarks on the design of the adaptive weighting scheme.
	
	\begin{rem}[Predictable weights and smoothing]
		\label{rem:design_choices}
		Both departures from the naive scheme are essential to the
		analysis.
		\begin{enumerate}[label=\textup{(\alph*)}]
			\item \emph{Predictable weights.} The weight at time $t$ uses
			$V_{\ell,t-1,j}$, not $V_{\ell,t,j}$, and is therefore
			$\mathcal F_{t-1}$-measurable by
			Assumption~\ref{assum:design}. Had $\omega_{\ell,t,j}$ depended
			on the current batch through $\check{\bm\lambda}_{\ell,t}$, the
			product $\omega_{\ell,t,j}\varepsilon_{\ell,t,j}$ would have
			non-zero conditional mean, and the martingale structure
			used in Sections~\ref{sec:4.2}--\ref{sec:4.3} would be lost.
			The resulting selection bias would be of order
			$\tau_{t,j}\mathbb E[\varepsilon^3]
			\asymp(n^\ell_t)^{-1}\vartheta_{t,j}$, driven
			by the skewness of quadratic forms, the third cumulant of a
			normalised sum of $n^\ell_t$ summands being of order
			$(n^\ell_t)^{-2}$. Compared with the stochastic error
			$O_{\mathbb P}(\vartheta_{t,j}N_t^{-1/2})$ this is
			{not} negligible unless $A_t=o(n^\ell_t)$. See
			Proposition~\ref{prop:B4} for the exact constant.
			With \eqref{eq:weights} one has {exactly}
			$\mathbb E[\sum_\ell\omega_{\ell,t,j}
			\varepsilon^{\mathrm{stoch}}_{\ell,t,j}\mid\mathcal F_{t-1}]=0$,
			so no surrogate-weight argument is needed.
			\item \emph{Smoothing of both components.} An unsmoothed
			$V^{\mathrm{res}}_{\ell,t,j}$ is a squared statistic with $O(1)$
			relative fluctuation, which would make the soft-max weights
			fluctuate at the same order as themselves and would preclude
			the first-order match of Section~\ref{sec:4.4}. Averaging over
			$\asymp\eta_t^{-1}$ rounds reduces the relative fluctuation of
			$V_{\ell,t,j}$ to
			$O_{\mathbb P}(\sqrt{\eta_t})=o_{\mathbb P}(1)$.
			The requirement is in fact two-sided. $\eta_t$ must also be
			small relative to the dispersion of the variance profile,
			since otherwise the estimation noise in the weights exceeds the
			entire gain available from weighting. See
			Remark~\ref{rem:eta_twosided}.
		\end{enumerate}
	\end{rem}
	
	\begin{rem}[The transmitted variance estimate]
		\label{rem:why_not_plugin}
		The variance $\hat\vartheta^2_{\ell,t,j}/n^\ell_t$ is already
		transmitted (Corollary~\ref{cor:feasible}), and by
		Lemma~\ref{lem:fluct}
		$\mathbb E[V_{\ell,t-1,j}\mid\mathcal F_{t-2}]
		=\kappa_\rho v_{\ell,t,j}(1+o(1))$, with $\kappa_\rho$ known and
		$v_{\ell,t,j}=\vartheta^2_{\ell,t,j}/n^\ell_t$. Both therefore
		estimate the quantity $\omega^{\mathrm{opt}}_{\ell,t,j}$ inverts.
		The realised fluctuation wins on three counts.
		\begin{enumerate}[label=\textup{(\alph*)}]
			\item \emph{It is free.} The metric is a function of
			$\check{\bm\lambda}_{\ell,t-1}$,
			$\check{\bm\lambda}_{\ell,t-2}$ and $\tilde{\bm\lambda}_{t-2}$,
			all held at the server. Forming
			$\hat\vartheta^2_{\ell,t,j}$ needs $a_{\ell,t}$ and
			$\chi'_{\ell,t}$ at the node, hence a root search on the local
			weight history.
			
			\item \emph{It is model-free.} A formula reports what the model
			predicts, whereas $V^{\mathrm{res}}_{\ell,t}$ measures the
			discrepancy realised and so also registers lag bias, drift and
			degraded tracking \eqref{eq:fluct}. A node whose subspace
			estimate has deteriorated is down-weighted automatically. And a
			plug-in rule would keep weighting it by a variance its estimate
			no longer has.
			
			\item \emph{Nothing is saved by the alternative.} Since
			$v_{\ell,t,j}\to0$, a soft-max in an absolute variance needs
			$\tau_{t,j}\asymp1/v$ to have any effect, so the plug-in route must
			normalise cross-sectionally and, by
			Remark~\ref{rem:design_choices}(b), smooth as well. 
		\end{enumerate}
	\end{rem}
	
	\begin{rem}[Interpretability under correct-then-aggregate]
		\label{rem:metric_meaning}
		Both components of $V_{\ell,t,j}$ are interpretable only because the
		transmitted statistics share a common centre. Under
		aggregate-then-correct, $\bm r_{\ell,t}$ and $\bm r_{m,t}$ are
		centred at $\chi_{\ell,t}(\bm\lambda_t)$ and
		$\chi_{m,t}(\bm\lambda_t)$, which differ by an $O(1)$ deterministic
		amount whenever $c_{\ell,t-1}\ne c_{m,t-1}$. A node would then be
		penalised through $V^{\mathrm{res}}_{\ell,t,j}$ for a bias rather than
		for imprecision, and the soft-max would systematically down-weight the
		nodes with the largest aspect ratios irrespective of their sampling
		accuracy. After local correction,
		$\mathbb E[\check\lambda_{\ell,t,j}\mid\mathcal F_{t-1}]
		=\lambda_{t,j}+o(1)$ uniformly in $\ell$, so $V_{\ell,t,j}$ measures
		genuine variance plus drift, which is exactly the quantity the
		weighting is intended to penalise.
	\end{rem}
	
	\subsection{Conventions and two additional regularity conditions}
	\label{app:B0}
	
	To avoid the notational clash between the smoothing weights of
	Lemma~\ref{lem:fluct} and the quadratic-form noise
	$\varpi_{\ell,t,j}$ of Lemma~\ref{lem:decomp}, we write throughout
	this appendix
	\begin{equation}
		\label{eq:pieta}
		\pi^{\eta}_{s,t}:=\eta_s\!\!\prod_{u=s+1}^{t}\!\!(1-\eta_u),
		\quad
		\pi^{\eta}_{0,t}:=\prod_{u=1}^{t}(1-\eta_u),
		\quad
		\varrho_{s,t}=\!\!\prod_{u=s+1}^{t}\!\!(1-\beta_u),
		\quad
		\varrho_{0,t}=\!\!\prod_{u=1}^{t}\!\!(1-\beta_u),
	\end{equation}
	We also abbreviate, for $\ell\in\mathcal A_t$,
	\begin{equation}
		\label{eq:barn}
		\bar n_t:=N_t/A_t ,
	\end{equation}
	and we write $L:=L_{\bar c}$ for the global Lipschitz constant of
	$\chi^{-1}$ from Lemma~\ref{lem:inverse}(iii) and
	$K:=\sup|(\chi^{-1})''|<\infty$ for the curvature bound
	\eqref{eq:chiinv2} established in Appendix~\ref{app:proofs4142}. Both
	are uniform over the parameter range fixed by
	Assumptions~\ref{assum:bbp}--\ref{assum:hd}.
	
	Two mild regularity conditions are used below. They are not needed for
	the order statements, only for the exact second-moment identities and
	for the concentration display \eqref{eq:fluct_conc}. We indicate at
	each use which of them is invoked.
	
	\begin{assum}[Bounded transmission range]
		\label{assum:clip}
		The transmitted coordinates are confined to a fixed compact
		interval, $\check\lambda_{\ell,t,j}\in[0,\Lambda]$ with
		$\Lambda\ge\lambda_{\max}$ fixed, and
		$\tilde\lambda_{0,j}\in[0,\Lambda]$.
	\end{assum}
	
	Assumption~\ref{assum:clip} is an implementation convention rather
	than a restriction. The estimator \eqref{eq:local_debias} already
	truncates from below at the domain boundary of $\chi^{-1}$, and
	clipping from above at any fixed $\Lambda\ge\lambda_{\max}$ is
	inactive with probability tending to one by
	Theorem~\ref{thm:local_clt}, hence changes no asymptotic statement. Its
	only role is to make
	$\sup_t\max_{\ell\in\mathcal A_t}\max_{j\le k}V_{\ell,t,j}$
	bounded by a deterministic constant, which spares us maximal
	inequalities over a node set whose cardinality $A_t$ may grow.
	
	\begin{assum}[Eighth moments]
		\label{assum:eighth}
		$\sup_{p,\ell,t,i,m}\mathbb E|z_{\ell,t,i,m}|^{8}\le C<\infty$.
	\end{assum}
	
	\subsection{Two preliminary lemmas}
	\label{app:B1}
	
	The first lemma is the precise form of the displayed claim
	\eqref{eq:local_moments}. Throughout we use the local lag constant
	of Section~\ref{sec:4.1}, allowing an absolute
	constant factor,
	\begin{equation}
		\label{eq:blag}
		b^{\mathrm{lag}}_{\ell,t}
		:=C\Bigl\{
		\bigl(N^{\mathrm{eff}}_{\ell,t}\bigr)^{-1/2}
		+\alpha^{\mathrm{lag}}_{\ell,t}+\zeta_t+\Delta_{\ell,t}
		+(\xi^{\star})^{t}\Bigr\}.
	\end{equation}

	\begin{lemma}[Conditional moments of the local error]
		\label{lem:B1}
		Let $\varepsilon_{\ell,t,j}$, $b_{\ell,t,j}$ and
		$\varepsilon^{\mathrm{stoch}}_{\ell,t,j}$ be as in
		\eqref{eq:eps}. Under
		Assumptions~\ref{assum:moment}--\ref{assum:init}:
		\begin{enumerate}[label=\textup{(\roman*)}]
			\item \textup{(Linear representation)}
			\begin{equation}
				\label{eq:B1lin}
				\varepsilon^{\mathrm{stoch}}_{\ell,t,j}
				=\frac{\varpi_{\ell,t,j}}
				{\chi'_{\ell,t}(\lambda_{t,j})}
				+\varrho^{\mathrm{rem}}_{\ell,t,j},
				\qquad
				\mathbb E\bigl[\varrho^{\mathrm{rem}}_{\ell,t,j}\mid
				\mathcal F_{t-1}\bigr]=0,
				\qquad
				\varrho^{\mathrm{rem}}_{\ell,t,j}
				=O_{\mathbb P}\Bigl(\frac1{n^\ell_t}
				+\frac{1}{\sqrt{p\,n^\ell_t}}+\frac kp\Bigr).
			\end{equation}
			\item \textup{(Variance)} Unconditionally on any extra moment
			assumption,
			$\operatorname{Var}(\varepsilon^{\mathrm{stoch}}_{\ell,t,j}
			\mid\mathcal F_{t-1})\le L^2\operatorname{Var}
			(r_{\ell,t,j}\mid\mathcal F_{t-1})=O_{\mathbb P}(1/n^\ell_t)$. 
			And under Assumption~\ref{assum:eighth}
			\begin{equation}
				\label{eq:B1var}
				\operatorname{Var}\bigl(
				\varepsilon^{\mathrm{stoch}}_{\ell,t,j}\mid
				\mathcal F_{t-1}\bigr)
				=\frac{\vartheta^2_{\ell,t,j}}{n^\ell_t}
				\bigl(1+o_{\mathbb P}(1)\bigr),
				\qquad
				\mathbb E\bigl[\|\bm\varepsilon^{\mathrm{stoch}}_{\ell,t}
				\|_2^4\mid\mathcal F_{t-1}\bigr]
				=O_{\mathbb P}\bigl((n^\ell_t)^{-2}\bigr).
			\end{equation}
			\item \textup{(Bias)}
			$|b_{\ell,t,j}|
			=O\bigl(b^{\mathrm{lag}}_{\ell,t}\bigr)
			+O_{\mathbb P}\bigl(1/n^\ell_t\bigr)
			+O\bigl((p\,n^\ell_t)^{-1/2}+k/p\bigr)$, and this contains no
			term of order one.
		\end{enumerate}
	\end{lemma}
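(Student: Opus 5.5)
The plan is to work conditionally on $\mathcal F_{t-1}$ throughout. Conditioning freezes $\bm u_{\ell,t-1,j}$, the scalar $c_{\ell,t-1}$ and $\chi_{\ell,t}$, and leaves only the fresh batch random. We expand $\check\lambda_{\ell,t,j}$ around the deterministic point $\chi_{\ell,t}(\lambda_{t,j})$, working on the event $\{r_{\ell,t,j}\ge\check\sigma^2_{\ell,t}\}$. By Step~1 of the proof of Theorem~\ref{thm:local_clt} this event has probability tending to one, and Assumption~\ref{assum:clip} keeps the complement's contribution to all conditional moments negligible. Write
\[
\check\lambda_{\ell,t,j}-\lambda_{t,j}
=\bigl\{\chi^{-1}(r;c,\check\sigma^2)-\chi^{-1}(r;c,\sigma^2_t)\bigr\}
+\bigl\{\chi^{-1}(r;c,\sigma^2_t)-\chi^{-1}(\chi_{\ell,t}(\lambda_{t,j});c,\sigma^2_t)\bigr\}.
\]
We bound the first bracket by Lemma~\ref{lem:inverse}(iii) and Lemma~\ref{lem:sigma}, giving $O_{\mathbb P}((pn^\ell_t)^{-1/2}+k/p)$. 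For the second bracket we use a second-order Taylor expansion with the uniform curvature bound \eqref{eq:chiinv2}, giving
\[
\frac{r-\chi_{\ell,t}(\lambda_{t,j})}{\chi'_{\ell,t}(\lambda_{t,j})}+O\bigl((r-\chi_{\ell,t}(\lambda_{t,j}))^2\bigr).
\]
Lemma~\ref{lem:decomp} gives $r-\chi_{\ell,t}(\lambda_{t,j})=\varpi_{\ell,t,j}+(\bar r_{\ell,t,j}-\chi_{\ell,t}(\lambda_{t,j}))$. The second piece is $\mathcal F_{t-1}$-measurable and of order $b^{\mathrm{lag}}_{\ell,t}$ by \eqref{eq:barr}.

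For part (iii), take conditional expectations. The linear term in $\varpi$ vanishes exactly. The measurable lag piece contributes $O(b^{\mathrm{lag}}_{\ell,t})$, since $1/\chi'\le L$. The quadratic remainder contributes $K\,\mathbb E[\varpi^2\mid\mathcal F_{t-1}]=O_{\mathbb P}(1/n^\ell_t)$ by \eqref{eq:var}. The $\sigma^2$ plug-in contributes the stated $(pn)^{-1/2}+k/p$. No term of order one survives, because every piece is multiplied by a vanishing quantity.

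For part (i), set $\varepsilon^{\mathrm{stoch}}=\check\lambda-\mathbb E[\check\lambda\mid\mathcal F_{t-1}]$. Subtracting the conditional mean from the expansion leaves $\varpi/\chi'$ plus a centred remainder $\varrho^{\mathrm{rem}}$. This remainder is the centred quadratic term, which is $O_{\mathbb P}(1/n)$, plus the centred plug-in error, which is $O_{\mathbb P}((pn)^{-1/2}+k/p)$. The remainder has conditional mean zero by construction.

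For part (ii), the crude bound follows because $\check\lambda$ is an $L$-Lipschitz function of $r$ for fixed $\check\sigma^2$, so the conditional variance is at most $L^2\operatorname{Var}(r\mid\mathcal F_{t-1})$ up to the plug-in term. For the sharp statement, I would show $\mathbb E[(\varrho^{\mathrm{rem}})^2\mid\mathcal F_{t-1}]=o(1/n)$ and then apply Cauchy--Schwarz to the cross term, which yields variance $\mathbb E[\varpi^2\mid\mathcal F_{t-1}]/\chi'^2\,(1+o(1))=\vartheta^2_{\ell,t,j}/n(1+o(1))$ using \eqref{eq:var} and $\bar r\to\chi$. The fourth-moment bound uses $\mathbb E[\varpi^4\mid\mathcal F_{t-1}]=O(n^{-2})$, which follows from Rosenthal's inequality for the i.i.d.\ centred summands $(\bm b^\top\bm z_i)^2-\|\bm b\|^2$, whose fourth moments require eighth moments of $\bm z$ (Assumption~\ref{assum:eighth}).

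The main obstacle is controlling the second moment of the quadratic remainder, $\mathbb E[(r-\chi)^4\mid\mathcal F_{t-1}]$; this is exactly where Assumption~\ref{assum:eighth} enters. A second difficulty is that $\check\sigma^2_{\ell,t}$ is built from the same batch as $r$, so the plug-in error is correlated with $\varpi$. I would handle this by bounding its second moment by $O((pn)^{-1}+k^2/p^2)$, as in the proof of Lemma~\ref{lem:sigma}. That bound is $o(1/n)$ under $k\sqrt{n}=o(p)$, so its cross term with $\varpi$ is negligible.
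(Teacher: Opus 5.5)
Your plan matches the paper's argument in every ingredient. It uses the same plug-in split (Lemma~\ref{lem:inverse}(iii) with Lemma~\ref{lem:sigma}), the same measurable lag term $d:=\bar r_{\ell,t,j}-\chi_{\ell,t}(\lambda_{t,j})=O_{\mathbb P}(b^{\mathrm{lag}}_{\ell,t})$ from \eqref{eq:barr}, the same curvature bound \eqref{eq:chiinv2}, and Rosenthal under Assumption~\ref{assum:eighth} plus Cauchy--Schwarz for the sharp variance. The one difference is the expansion point: you Taylor-expand $\chi^{-1}$ at the deterministic point $\chi_{\ell,t}(\lambda_{t,j})$, whereas the paper expands at the conditional mean $\bar r_{\ell,t,j}$. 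With your choice, one step fails as written (write $n:=n^\ell_t$).

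Your second-order remainder is $R_2=\tfrac12(\chi^{-1})''(\xi)(\varpi_{\ell,t,j}+d)^2$. After centring it is not $O_{\mathbb P}(1/n)$, because it contains a $d\,\varpi$ piece of order $b^{\mathrm{lag}}_{\ell,t}/\sqrt n$. More importantly, you propose to bound its conditional second moment through $\mathbb E[(r-\chi_{\ell,t}(\lambda_{t,j}))^4\mid\mathcal F_{t-1}]=O(n^{-2}+d^4)$. That is $o(1/n)$ only if $b^{\mathrm{lag}}_{\ell,t}=o(n^{-1/4})$, and nothing in Assumptions~\ref{assum:moment}--\ref{assum:init} supplies such a rate ($\zeta_t$ and $\Delta_{\ell,t}$ are only required to vanish). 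So the sharp identity in (ii) does not follow from the bound you name.

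The repair is short. $R_2$ evaluated at $\bar r_{\ell,t,j}$ is $\mathcal F_{t-1}$-measurable and drops out under centring, so the centred remainder is the centred version of
\[
\bigl\{\chi^{-1}(r)-\chi^{-1}(\bar r)-(\chi^{-1})'(\bar r)\,\varpi\bigr\}
+\bigl\{(\chi^{-1})'(\bar r)-1/\chi'_{\ell,t}(\lambda_{t,j})\bigr\}\varpi .
\]
This is bounded by $\tfrac K2\varpi^2+K|d|\,|\varpi|$ with $K=\sup|(\chi^{-1})''|$, so its conditional second moment is $O(n^{-2}+d^2/n)=o_{\mathbb P}(1/n)$. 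This is exactly the paper's route: expand at $\bar r$, then absorb the factor $1+O(b^{\mathrm{lag}}_{\ell,t})$ on the linear coefficient. Either way, $\varrho^{\mathrm{rem}}$ carries an $O_{\mathbb P}(b^{\mathrm{lag}}_{\ell,t}/\sqrt n)$ term that the displayed rate in (i) omits. It is absorbed into $O_{\mathbb P}(1/n)$ only when $\sqrt n\,b^{\mathrm{lag}}_{\ell,t}=O(1)$, e.g.\ under \eqref{eq:clt_cond}.

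On truncation you are more careful than the paper, which simply works on $\{r_{\ell,t,j}\ge\check\sigma^2_{\ell,t}\}$. Still, ``probability tending to one'' is not enough. With $\check\lambda_{\ell,t,j}\in[0,\Lambda]$, the complement is negligible in a variance of order $1/n$ only if its conditional probability is $o(1/n)$. This follows from Markov's inequality with $\mathbb E[|\varpi_{\ell,t,j}|^{2+\epsilon_1/2}\mid\mathcal F_{t-1}]=O(n^{-1-\epsilon_1/4})$ (Rosenthal), together with Chebyshev for $\check\sigma^2_{\ell,t}$.
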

	
	\begin{proof}[Proof of Lemma~\ref{lem:B1}]
		Fix $\ell,t,j$ and abbreviate $n:=n^\ell_t$, $\lambda:=\lambda_{t,j}$,
		$\chi:=\chi_{\ell,t}$, $r:=r_{\ell,t,j}$,
		$\bar r:=\bar r_{\ell,t,j}=\mathbb E[r\mid\mathcal F_{t-1}]$. By
		Step~1 of the proof of Theorem~\ref{thm:local_clt} the truncation in
		\eqref{eq:local_debias} is inactive with probability tending to one,
		so we may write $\check\lambda_{\ell,t,j}
		=\chi^{-1}(r;c_{\ell,t-1},\check\sigma^2_{\ell,t})$.
		
		{(iii) Bias.} Split
		\begin{align*}
			\check\lambda_{\ell,t,j}-\lambda
			=&\underbrace{\bigl\{\chi^{-1}(r;c_{\ell,t-1},
				\check\sigma^2_{\ell,t})
				-\chi^{-1}(r;c_{\ell,t-1},\sigma^2_t)\bigr\}}_{(\mathrm I)}\\
			&+\underbrace{\bigl\{\chi^{-1}(r)-\chi^{-1}(\bar r)\bigr\}}_{(\mathrm{II})}
			+\underbrace{\bigl\{\chi^{-1}(\bar r)-\lambda\bigr\}}_{(\mathrm{III})},
		\end{align*}
		all inverses in (II)--(III) being evaluated at
		$(c_{\ell,t-1},\sigma^2_t)$. By Lemma~\ref{lem:inverse}(iii) and
		Lemma~\ref{lem:sigma}, $|(\mathrm I)|\le C(1+\bar c)
		|\check\sigma^2_{\ell,t}-\sigma^2_t|
		=O_{\mathbb P}((pn)^{-1/2})+O(k/p)$, and the same bound holds for
		$\mathbb E[|(\mathrm I)|\mid\mathcal F_{t-1}]$ because
		$\mathbb E|\check\sigma^2_{\ell,t}-\sigma^2_t|$ obeys it by the
		proof of Lemma~\ref{lem:sigma}. By a second-order Taylor expansion
		with the uniform curvature bound $K$ of \eqref{eq:chiinv2},
		\begin{align*}
			\bigl|\mathbb E[(\mathrm{II})\mid\mathcal F_{t-1}]\bigr|
			=&\Bigl|\mathbb E\bigl[\chi^{-1}(r)-\chi^{-1}(\bar r)
			-(\chi^{-1})'(\bar r)(r-\bar r)\bigm|\mathcal F_{t-1}\bigr]\Bigr| \\
			\le&\tfrac K2\operatorname{Var}(r\mid\mathcal F_{t-1})
			=O_{\mathbb P}(1/n),
		\end{align*}
		the linear term having exactly zero conditional mean by
		Lemma~\ref{lem:decomp}. Finally (III) is
		$\mathcal F_{t-1}$-measurable and, by Lemma~\ref{lem:inverse}(iii)
		and Step~1 of the proof of Theorem~\ref{thm:attenuation},
		$|(\mathrm{III})|\le L\,|\bar r-\chi(\lambda)|
		=O\bigl(b^{\mathrm{lag}}_{\ell,t}\bigr)$ with
		$b^{\mathrm{lag}}$ as in \eqref{eq:blag}. Adding the three bounds
		gives (iii). None of the terms is $O(1)$. The $O(1)$ misalignment
		$1-a_{\ell,t}(\lambda)$ of Theorem~\ref{thm:tracking} has been
		inverted exactly by $\chi^{-1}_{\ell,t}$, which is why only
		{vanishing} quantities appear.
		
		{(i) Linear representation.} Subtracting the conditional mean
		from the same decomposition,
		\[
		\varepsilon^{\mathrm{stoch}}_{\ell,t,j}
		=(\chi^{-1})'(\bar r)\,(r-\bar r)
		+\Bigl\{(\mathrm{II})-(\chi^{-1})'(\bar r)(r-\bar r)
		-\mathbb E[(\mathrm{II})\mid\mathcal F_{t-1}]\Bigr\}
		+\bigl\{(\mathrm I)-\mathbb E[(\mathrm I)\mid\mathcal F_{t-1}]\bigr\},
		\]
		because (III) is $\mathcal F_{t-1}$-measurable and cancels. The
		first term equals $\varpi_{\ell,t,j}/\chi'(\lambda)$ up to a factor
		$(\chi^{-1})'(\bar r)/(\chi^{-1})'(\chi(\lambda))=1+O_{\mathbb P}
		(|\bar r-\chi(\lambda)|)=1+O_{\mathbb P}(b^{\mathrm{lag}}_{\ell,t})$,
		by $r-\bar r=\varpi_{\ell,t,j}$ (Lemma~\ref{lem:decomp}) and the
		continuous differentiability of $\chi^{-1}$. The two braces are
		centred by construction and bounded by
		$K(r-\bar r)^2+O_{\mathbb P}((pn)^{-1/2}+k/p)$, i.e.\ by
		$O_{\mathbb P}(1/n+(pn)^{-1/2}+k/p)$. Absorbing the multiplicative
		$1+O_{\mathbb P}(b^{\mathrm{lag}})$ into
		$\varrho^{\mathrm{rem}}$ gives \eqref{eq:B1lin}.
		
		{(ii) Variance.} For the unconditional bound, recall that for
		an $L$-Lipschitz $f$ and any square-integrable $X$ one has
		$\operatorname{Var}f(X)=\tfrac12\mathbb E\{f(X)-f(X')\}^2
		\le\tfrac{L^2}2\mathbb E(X-X')^2=L^2\operatorname{Var}(X)$, where
		$X'$ is an independent copy. Applying this conditionally with
		$f=\chi^{-1}(\cdot;c_{\ell,t-1},\sigma^2_t)$ and using
		Lemma~\ref{lem:inverse}(iii) and \eqref{eq:var} gives
		$\operatorname{Var}(\varepsilon^{\mathrm{stoch}}\mid\mathcal F_{t-1})
		\le L^2\operatorname{Var}(r\mid\mathcal F_{t-1})
		=O_{\mathbb P}(1/n)$. Note that this requires no moment beyond
		those of Assumption~\ref{assum:moment}, and it is the only variance
		bound used in the {order} statements below. For the exact
		identity, Assumption~\ref{assum:eighth} makes
		$\mathbb E[(r-\bar r)^4\mid\mathcal F_{t-1}]=O_{\mathbb P}(n^{-2})$
		by Rosenthal's inequality applied to the i.i.d.\ summands
		$(\bm b^\top\bm z_i)^2-\|\bm b\|^2$, whose fourth moments are
		uniformly bounded. Hence in \eqref{eq:B1lin}
		$\mathbb E[(\varrho^{\mathrm{rem}})^2\mid\mathcal F_{t-1}]
		=O_{\mathbb P}(n^{-2}+(pn)^{-1}+k^2/p^2)=o_{\mathbb P}(n^{-1})$
		whenever $k\sqrt n=o(p)$, and by the Cauchy--Schwarz inequality the
		cross term is $o_{\mathbb P}(n^{-1})$ as well, so
		\begin{align*}
			\operatorname{Var}\bigl(\varepsilon^{\mathrm{stoch}}_{\ell,t,j}
			\mid\mathcal F_{t-1}\bigr)
			=&\frac{\operatorname{Var}(\varpi_{\ell,t,j}\mid\mathcal F_{t-1})}
			{\chi'(\lambda)^2}+o_{\mathbb P}(n^{-1})\\
			=&\frac{2\chi(\lambda)^2}{\chi'(\lambda)^2}\frac1n
			\bigl(1+o_{\mathbb P}(1)\bigr)
			=\frac{\vartheta^2_{\ell,t,j}}{n}
			\bigl(1+o_{\mathbb P}(1)\bigr),
		\end{align*}
		using \eqref{eq:var} and $\bar r\to\chi(\lambda)$. The fourth-moment
		bound in \eqref{eq:B1var} follows from the same Rosenthal bound and
		$L$-Lipschitzness, $\|\bm\varepsilon^{\mathrm{stoch}}_{\ell,t}\|_2
		\le\sqrt k\,L\max_j|r_{\ell,t,j}-\bar r_{\ell,t,j}|
		+O_{\mathbb P}(1)$. 
	\end{proof}
	
	The second preliminary lemma collects the elementary calculus of
	exponentially weighted averages. It is used three times below (for
	$V_{\ell,t,j}$, for $\mathsf N_t$, and for the bias terms of
	Theorem~\ref{thm:fusion}) and makes the phrase ``vary slowly on the
	scale $\eta_t^{-1}$'' precise.
	
	\begin{lemma}[Weighted-average calculus]
		\label{lem:B2}
		Let $\gamma_u\in(0,1]$, $\varrho_{s,t}=\prod_{u=s+1}^t(1-\gamma_u)$
		and $\varrho_{0,t}=\prod_{u=1}^t(1-\gamma_u)$. Then
		\begin{enumerate}[label=\textup{(\roman*)}]
			\item $\sum_{s=1}^{t}\varrho_{s,t}\gamma_s=1-\varrho_{0,t}$, and
			$\sum_{s=1}^{t}\varrho_{s,t}\le\gamma_{\min,t}^{-1}$ where
			$\gamma_{\min,t}=\min_{s\le t}\gamma_s$.
			\item \textup{(Toeplitz)} If $x_s\to0$ and $\varrho_{0,t}\to0$
			then $\sum_{s\le t}\varrho_{s,t}\gamma_sx_s\to0$.
			\item $a_t:=\sum_{s\le t}\varrho^2_{s,t}\gamma_s^2$ obeys
			$a_t=(1-\gamma_t)^2a_{t-1}+\gamma_t^2$. If
			$\gamma_u\equiv\gamma$ then $a_t\uparrow\gamma/(2-\gamma)$, with
			$a_t=1$ when $\gamma=1$. If $\gamma_u\to0$ and
			$\sum_u\gamma_u=\infty$ then $a_t\to0$.
			\item If $y_s\ge0$ satisfies $y_s/\gamma_s\to0$ and
			$\varrho_{0,t}\to0$, then $\sum_{s\le t}\varrho_{s,t}y_s\to0$.
			If $\gamma_u\equiv\gamma$ it suffices that $y_s\to0$.
		\end{enumerate}
	\end{lemma}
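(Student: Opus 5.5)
The four parts are elementary manipulations of the product weights $\varrho_{s,t}$. We work with the one-step recursions
\[
\varrho_{s,t}=(1-\gamma_t)\varrho_{s,t-1}\ (s<t),\qquad \varrho_{t,t}=1,
\]
as in the proof of Lemma~\ref{lem:crec}.

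\textbf{Part (i).} Set $S_t:=\sum_{s\le t}\varrho_{s,t}\gamma_s$. Then $S_t=(1-\gamma_t)S_{t-1}+\gamma_t$ with $S_0=0$. Hence $1-S_t=(1-\gamma_t)(1-S_{t-1})$, and iterating gives $1-S_t=\varrho_{0,t}$. This is exactly the normalisation argument used for $\pi_{s,t}$. For the second claim, bound $\gamma_s\ge\gamma_{\min,t}$ inside the sum:
\[
\gamma_{\min,t}\sum_{s\le t}\varrho_{s,t}\le S_t\le1 .
\]

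\textbf{Part (ii).} We use the standard Toeplitz split. Given $\epsilon>0$, choose $s_0$ with $|x_s|\le\epsilon$ for $s>s_0$. The tail $s>s_0$ contributes at most $\epsilon S_t\le\epsilon$. The head $s\le s_0$ equals $\varrho_{s_0,t}\sum_{s\le s_0}\varrho_{s,s_0}\gamma_sx_s$. Its inner sum is a fixed finite number. The prefactor satisfies $\varrho_{s_0,t}=\varrho_{0,t}/\varrho_{0,s_0}\to0$, where we assume $\gamma_u<1$ for $u\le s_0$. If some $\gamma_u=1$, then $\varrho_{s,t}=0$ for all $s<u\le t$ and the head is trivially zero.

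\textbf{Part (iii).} The recursion follows by squaring the one-step relation and separating the term $s=t$, exactly as in Lemma~\ref{lem:crec}. For constant $\gamma$, the map $x\mapsto(1-\gamma)^2x+\gamma^2$ is affine with slope less than one. Starting from $a_0=0$ below the fixed point $\gamma^2/\{1-(1-\gamma)^2\}=\gamma/(2-\gamma)$, the iterates increase monotonically to it, and $a_t=1$ when $\gamma=1$. For vanishing $\gamma_u$, note that $\varrho_{s,t}\gamma_s\le1$. This gives
\[
a_t=\sum_s\varrho^2_{s,t}\gamma_s^2\le\sum_s\varrho_{s,t}\gamma_s\cdot(\varrho_{s,t}\gamma_s)\le\sum_s\varrho_{s,t}\gamma_s\,\gamma_s .
\]
We then apply (ii) with $x_s=\gamma_s\to0$. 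The hypothesis $\varrho_{0,t}\to0$ holds because $\sum\gamma_u=\infty$ implies $\prod(1-\gamma_u)\to0$.

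\textbf{Part (iv).} Write $\varrho_{s,t}y_s=\varrho_{s,t}\gamma_s(y_s/\gamma_s)$ and apply (ii) with $x_s=y_s/\gamma_s$. For constant $\gamma$, the condition $y_s/\gamma\to0$ is equivalent to $y_s\to0$, and $\varrho_{0,t}=(1-\gamma)^t\to0$ automatically when $\gamma>0$.

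\textbf{Main obstacle.} The only delicate point is the head term in (ii) when some $\gamma_u=1$, where the division by $\varrho_{0,s_0}$ is invalid. This is handled by the case split above. Everything else is routine.
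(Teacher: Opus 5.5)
Your proofs of (i), (iii) and (iv) are sound and mostly follow the paper. For the vanishing-step case of (iii) you take a different route. You bound $a_t\le\sum_s\varrho_{s,t}\gamma_s\cdot\gamma_s$ and invoke (ii) with $x_s=\gamma_s$; the inequality actually used is $\varrho_{s,t}\le1$, not $\varrho_{s,t}\gamma_s\le1$. The paper instead compares $a_t$ directly with the recursion $a_t\le(1-\gamma_t)a_{t-1}+\gamma_t^2$. Both work.

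The genuine problem is in (ii), at exactly the point you singled out. Suppose $\gamma_u=1$ for some $u\le s_0$. Then only the head terms with $s<u$ vanish. The terms with $u\le s\le s_0$ survive, all carrying the factor $\varrho_{s_0,t}=\prod_{v=s_0+1}^{t}(1-\gamma_v)$. The hypothesis $\varrho_{0,t}\to0$ says nothing about this factor, because $\varrho_{0,t}=0$ identically for $t\ge u$. So ``the head is trivially zero'' is false; for $u=1$ no head term vanishes at all.

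The case cannot be rescued, because (ii) is false there. Take $\gamma_1=1$, $\gamma_u=2^{-u}$ for $u\ge2$, and $x_1=1$, $x_s=0$ for $s\ge2$. Then $\varrho_{0,t}=0$ for every $t\ge1$ and $x_s\to0$. Yet $\sum_{s\le t}\varrho_{s,t}\gamma_sx_s=\varrho_{1,t}=\prod_{u=2}^{t}(1-2^{-u})$, which tends to a strictly positive limit. Taking $y_1=1$ and $y_s=0$ for $s\ge2$ defeats the first claim of (iv) in the same way. The paper's proof has the identical hole, since it asserts $\varrho_{0,T}>0$.

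The correct hypothesis for (ii) and the first claim of (iv) is $\sum_u\gamma_u=\infty$, equivalently $\varrho_{s_0,t}\to0$ for every fixed $s_0$. Under it, $\varrho_{s_0,t}\le\exp\bigl(-\sum_{u=s_0+1}^{t}\gamma_u\bigr)\to0$ disposes of the head with no case split. Alternatively, restrict to $\gamma_u\in(0,1)$, where your argument via $\varrho_{s_0,t}=\varrho_{0,t}/\varrho_{0,s_0}$ is complete. Your uses of (ii) inside (iii) and in the constant-$\gamma$ case of (iv) are unaffected, since $\sum_u\gamma_u=\infty$ holds there.
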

	
	\begin{proof}[Proof of Lemma~\ref{lem:B2}]
		(i) is the identity already proved in Lemma~\ref{lem:crec} (with
		$\alpha$ replaced by $\gamma$), namely
		$\sum_{s=0}^t\varrho_{s,t}\gamma_s+\varrho_{0,t}=1$ by induction on
		$t$. The second bound follows from
		$\sum_s\varrho_{s,t}\le\gamma_{\min,t}^{-1}
		\sum_s\varrho_{s,t}\gamma_s\le\gamma_{\min,t}^{-1}$.
		
		(ii) Given $\epsilon>0$ pick $T$ with $|x_s|\le\epsilon$ for $s>T$.
		Then $$|\sum_s\varrho_{s,t}\gamma_sx_s|
		\le\varrho_{T,t}\sum_{s\le T}\gamma_s\max_{s\le T}|x_s|
		+\epsilon\sum_{s>T}\varrho_{s,t}\gamma_s
		\le\varrho_{T,t}C_T+\epsilon,$$ 
		and $\varrho_{T,t}\to0$ as
		$t\to\infty$ because $\varrho_{0,t}=\varrho_{0,T}\varrho_{T,t}$ with
		$\varrho_{0,T}>0$ fixed.
		
		(iii) The recursion is immediate from
		$\varrho_{s,t}=(1-\gamma_t)\varrho_{s,t-1}$ for $s<t$ and
		$\varrho_{t,t}=1$. For constant $\gamma$ the map
		$x\mapsto(1-\gamma)^2x+\gamma^2$ is an increasing contraction with
		fixed point $\gamma^2/\{1-(1-\gamma)^2\}=\gamma/(2-\gamma)$, and
		$a_0=0$, so $a_t\uparrow\gamma/(2-\gamma)$. For $\gamma_u\to0$, use
		$(1-\gamma_t)^2\le1-\gamma_t$ to get
		$a_t\le(1-\gamma_t)a_{t-1}+\gamma_t^2$. Fix $\epsilon>0$ and $T$
		with $\gamma_t\le\epsilon/2$ for $t>T$, then for $t>T$,
		$a_t-\epsilon/2\le(1-\gamma_t)a_{t-1}+\gamma_t\epsilon/2-\epsilon/2
		=(1-\gamma_t)(a_{t-1}-\epsilon/2)$, whence
		$(a_t-\epsilon/2)^+\le\prod_{u=T+1}^{t}(1-\gamma_u)
		(a_T-\epsilon/2)^+\to0$ because $\sum\gamma_u=\infty$. Thus
		$\limsup_ta_t\le\epsilon/2$ for every $\epsilon>0$.
		
		(iv) Write $y_s=\gamma_sx_s$ with $x_s=y_s/\gamma_s\to0$ and apply
		(ii). For constant $\gamma$, $x_s=y_s/\gamma\to0$ whenever
		$y_s\to0$.
	\end{proof}
	
	\subsection{Proofs for Section~\ref{sec:4.2}}
	
	\begin{proof}[Proof of Lemma~\ref{lem:fluct}]
		\textbf{Step 1 (unrolling).}
		Fix $j\le k$. Both components of the metric are exponentially
		smoothed with the same schedule, so
		\begin{equation}
			\label{eq:Vunroll}
			V_{\ell,t,j}
			=\sum_{s=1}^{t}\pi^{\eta}_{s,t}\,Y_{\ell,s,j}
			+\pi^{\eta}_{0,t}V_{\ell,0,j},
			\quad
			Y_{\ell,s,j}
			:=(1-\rho)\bigl(\check\lambda_{\ell,s,j}
			-\check\lambda_{\ell,s-1,j}\bigr)^{2}
			+\rho\bigl(\check\lambda_{\ell,s,j}
			-\tilde\lambda_{s-1,j}\bigr)^{2} ,
		\end{equation}
		with $\sum_{s}\pi^{\eta}_{s,t}=1-\pi^{\eta}_{0,t}$ by
		Lemma~\ref{lem:B2}(i) and $V_{\ell,0,j}=O(1)$ by
		Assumption~\ref{assum:eta}.
		
		\medskip\noindent
		\textbf{Step 2 (bounding one round).}
		Write $\varepsilon_{\ell,s,j}
		=\check\lambda_{\ell,s,j}-\lambda_{s,j}$ and
		$\delta_{s,j}=\lambda_{s,j}-\lambda_{s-1,j}$, so that
		$|\delta_{s,j}|\le\zeta_s$ by
		Assumption~\ref{assum:drift}. Then
		\[
		\check\lambda_{\ell,s,j}-\check\lambda_{\ell,s-1,j}
		=\varepsilon_{\ell,s,j}-\varepsilon_{\ell,s-1,j}+\delta_{s,j},
		\qquad
		\check\lambda_{\ell,s,j}-\tilde\lambda_{s-1,j}
		=\varepsilon_{\ell,s,j}+\delta_{s,j}
		+\bigl(\lambda_{s-1,j}-\tilde\lambda_{s-1,j}\bigr),
		\]
		whence, by $(a+b+c)^2\le3(a^2+b^2+c^2)$,
		\begin{equation}
			\label{eq:Ybound}
			Y_{\ell,s,j}
			\ \le\ 3\Bigl\{\varepsilon_{\ell,s,j}^2
			+(1-\rho)\varepsilon_{\ell,s-1,j}^2
			+\zeta_s^2
			+\rho\bigl(\tilde\lambda_{s-1,j}
			-\lambda_{s-1,j}\bigr)^{2}\Bigr\}.
		\end{equation}
		By Lemma~\ref{lem:B1},
		$\mathbb E[\varepsilon_{\ell,s,j}^2\mid\mathcal F_{s-1}]
		=\operatorname{Var}(\varepsilon^{\mathrm{stoch}}_{\ell,s,j}
		\mid\mathcal F_{s-1})+b_{\ell,s,j}^2
		=O_{\mathbb P}\bigl({ T}_{s,j}/n^\ell_s
		+(b^{\mathrm{lag}}_{\ell,s})^2\bigr)$, using
		$\vartheta^2_{\ell,s,j}\le{ T}_{s,j}$ and absorbing the
		$O(1/n^\ell_s)$ part of the bias into ${ T}_{s,j}/n^\ell_s$.
		Substituting this and \eqref{eq:Ybound} into \eqref{eq:Vunroll} and
		using Markov's inequality termwise gives exactly
		\eqref{eq:fluct}.
		
		\medskip\noindent
		\textbf{Step 3 (uniform boundedness).}
		Under Assumption~\ref{assum:clip} every $Y_{\ell,s,j}$ is bounded by
		the deterministic constant $4\Lambda^2$, hence by
		\eqref{eq:Vunroll} and Assumption~\ref{assum:eta}
		\begin{equation}
			\label{eq:Vbounded}
			\sup_{t}\ \max_{\ell\in\mathcal A_t}\max_{j\le k}V_{\ell,t,j}
			\ \le\ 4\Lambda^2+\sup_{\ell,j} V_{\ell,0,j}
			\ =\ O(1)\quad\text{a.s.}
		\end{equation}
		Without Assumption~\ref{assum:clip} one obtains from
		Lemma~\ref{lem:B1} and Assumption~\ref{assum:eighth} the moment
		bound $\sup_{\ell,t,j}\mathbb E V^q_{\ell,t,j}\le C_q$ for $q\le2$,
		hence only $\max_{\ell\in\mathcal A_t}V_{\ell,t,j}
		=O_{\mathbb P}(A_t^{1/q})$. This is the sole reason for adopting
		the clipping convention.
		
		\medskip\noindent
		\textbf{Step 4 (the conditional mean).}
		Assume Assumption~\ref{assum:eighth}. Because
		$\varepsilon^{\mathrm{stoch}}_{\ell,s,j}$ has zero mean given
		$\mathcal F_{s-1}$ while $\varepsilon_{\ell,s-1,j}$ is
		$\mathcal F_{s-1}$-measurable, the cross term vanishes exactly:
		\[
		\mathbb E\bigl[\varepsilon^{\mathrm{stoch}}_{\ell,s,j}\,
		\varepsilon_{\ell,s-1,j}\mid\mathcal F_{s-1}\bigr]=0 .
		\]
		Consequently
		\[
		\mathbb E\bigl[(\check\lambda_{\ell,s,j}
		-\check\lambda_{\ell,s-1,j})^2\mid\mathcal F_{s-1}\bigr]
		=\frac{\vartheta^2_{\ell,s,j}}{n^\ell_s}
		+\varepsilon_{\ell,s-1,j}^2 +\delta_{s,j}^2
		+O\bigl(\zeta_s^2+(b^{\mathrm{lag}}_{\ell,s})^2\bigr)
		+o_{\mathbb P}\Bigl(\frac1{n^\ell_s}\Bigr),
		\]
		and, taking a further expectation and using
		$\mathbb E\varepsilon_{\ell,s-1,j}^2
		=\vartheta^2_{\ell,s-1,j}/n^\ell_{s-1}+O((b^{\mathrm{lag}})^2)$, the
		volatility component contributes
		$2\vartheta^2_{\ell,t,j}/n^\ell_t\{1+o(1)\}$ when $n^\ell_s$ and
		$\vartheta^2_{\ell,s,j}$ vary slowly across the smoothing window. The
		factor $2$ is the variance of a difference of two conditionally
		independent draws. The residual component contributes
		$\vartheta^2_{\ell,t,j}/n^\ell_t\{1+o(1)\}
		+O(\zeta_t^2+(\tilde\lambda_{t-1,j}-\lambda_{t-1,j})^2)$ by
		the same computation with $\tilde\lambda_{s-1,j}$ in place of
		$\check\lambda_{\ell,s-1,j}$ (again $\mathcal F_{s-1}$-measurable,
		so the cross term vanishes). Weighting by $(1-\rho)$ and $\rho$ and
		applying Lemma~\ref{lem:B2}(ii) to average over the smoothing
		window gives the second display of \eqref{eq:fluct_conc} with
		$\kappa_\rho=2(1-\rho)+\rho$. 
		
		\medskip\noindent
		\textbf{Step 5 (concentration).}
		Let $m:=\lceil1/\eta_t\rceil$, $\mathcal G:=\mathcal F_{t-m}$ and
		$\mu_{\ell,s,j}:=\mathbb E[Y_{\ell,s,j}\mid\mathcal F_{s-1}]$. Since
		the weights $\pi^{\eta}_{s,t}$ are deterministic and $Y_{\ell,s,j}$
		is $\mathcal F_{s}$-measurable, the part of \eqref{eq:Vunroll} with
		$s\le t-m$ is $\mathcal G$-measurable and cancels in
		$V_{\ell,t,j}-\mathbb E[V_{\ell,t,j}\mid\mathcal G]$. Hence
		\[
		V_{\ell,t,j}-\mathbb E[V_{\ell,t,j}\mid\mathcal G]
		=\underbrace{\sum_{s=t-m+1}^{t}\pi^{\eta}_{s,t}
			\bigl(Y_{\ell,s,j}-\mu_{\ell,s,j}\bigr)}_{=:\,\mathcal E_1}
		+\underbrace{\sum_{s=t-m+1}^{t}\pi^{\eta}_{s,t}
			\bigl(\mu_{\ell,s,j}-\mathbb E[\mu_{\ell,s,j}\mid\mathcal G]\bigr)}
		_{=:\,\mathcal E_2}.
		\]
		The summands of $\mathcal E_1$ form a martingale difference array
		with respect to $\{\mathcal F_s\}$, so their variances add:
		\begin{align*}
			\operatorname{Var}(\mathcal E_1\mid\mathcal G)
			=&\sum_{s>t-m}(\pi^{\eta}_{s,t})^2
			\mathbb E\bigl[\operatorname{Var}(Y_{\ell,s,j}\mid\mathcal F_{s-1})
			\mid\mathcal G\bigr]\\ 
			\le& C\Bigl(\sum_{s\le t}(\pi^{\eta}_{s,t})^2\Bigr)
			\max_{s>t-m}\mu^2_{\ell,s,j}
			=O_{\mathbb P}\bigl(\eta_t\,\mu^2_{\ell,t,j}\bigr),
		\end{align*}
		where $\operatorname{Var}(Y_{\ell,s,j}\mid\mathcal F_{s-1})
		\le C\mu^2_{\ell,s,j}$ follows from the fourth-moment bound
		\eqref{eq:B1var} (this is the point at which
		Assumption~\ref{assum:eighth} is used), and
		$\sum_s(\pi^{\eta}_{s,t})^2=O(\eta_t)$ by Lemma~\ref{lem:B2}(iii)
		applied to $\gamma=\eta$. Thus
		$\mathcal E_1=O_{\mathbb P}(\sqrt{\eta_t}\,\mu_{\ell,t,j})$. For
		$\mathcal E_2$, Assumption~\ref{assum:persist} and the attendant
		stability of ${ T}_{s,j}$ and of the weights are used in the
		quantitative form
		\begin{equation}
			\label{eq:slowvar}
			\max_{t-m<s\le t}
			\Bigl|\frac{\mu_{\ell,s,j}}{\mu_{\ell,t,j}}-1\Bigr|
			=O_{\mathbb P}\bigl(\sqrt{\eta_t}\bigr),
		\end{equation}
		which gives $\mathcal E_2=O_{\mathbb P}(\sqrt{\eta_t}\mu_{\ell,t,j})$
		directly. Since $\mathbb E[V_{\ell,t,j}\mid\mathcal G]
		=(1+o(1))\mu_{\ell,t,j}$ by Step~4, dividing yields the first display of
		\eqref{eq:fluct_conc}.
		
		\medskip\noindent
		\textbf{Step 6 (vanishing of the metric).}
		Under the hypotheses of Theorem~\ref{thm:fusion} one has
		${ T}_{s,j}/n^\ell_s\to0$, $\zeta_s\to0$,
		$b^{\mathrm{lag}}_{\ell,s}\to0$ and, by
		Theorem~\ref{thm:fusion} itself,
		$\|\tilde{\bm\lambda}_{s-1}-\bm\lambda_{s-1}\|\to0$ in
		probability, uniformly in $\ell\in\mathcal A_s$. Applying
		Lemma~\ref{lem:B2}(ii) with $\gamma=\eta$ to the right-hand side of
		\eqref{eq:fluct} gives $V_{\ell,t,j}\to0$ in probability
		uniformly in $\ell\in\mathcal A_t$. There is no circularity here. The
		proof of Theorem~\ref{thm:fusion} uses only
		Lemma~\ref{lem:weights}, which in turn uses only the
		{boundedness} \eqref{eq:Vbounded}, not the vanishing.
	\end{proof}
	
	\begin{proof}[Proof of Lemma~\ref{lem:weights}]
		The soft-max is invariant under a common shift of its arguments:
		for any $a\in\mathbb R$,
		\[
		\omega_{\ell,t,j}
		=\frac{\exp(-\tilde\tau_t\Psi_{\ell,t,j})}
		{\sum_{m\in\mathcal A_t}\exp(-\tilde\tau_t\Psi_{m,t,j})}
		=\frac{\exp\{-\tilde\tau_t(\Psi_{\ell,t,j}-a)\}}
		{\sum_{m\in\mathcal A_t}\exp\{-\tilde\tau_t(\Psi_{m,t,j}-a)\}} .
		\]
		Take $a:=\min_{m\in\mathcal A_t}\Psi_{m,t,j}$, so that every shifted
		exponent $\tilde\tau_t(\Psi_{\ell,t,j}-a)$ lies in
		$[0,\mathrm{osc}^\tau_{t,j}]$ with
		\[
		\mathrm{osc}^\tau_{t,j}
		:=\tilde\tau_t\Bigl(\max_{m\in\mathcal A_t}\Psi_{m,t,j}
		-\min_{m\in\mathcal A_t}\Psi_{m,t,j}\Bigr)
		\ \le\ \bar\tau(\bar\theta-\underline\theta)
		\]
		by Assumption~\ref{assum:temp}, since $\tilde\tau_t\le\bar\tau$ and
		$\Psi_{m,t,j}\in[\underline\theta,\bar\theta]$ for every $m$ after
		the clip. Each numerator therefore lies in
		$[e^{-\mathrm{osc}^\tau_{t,j}},1]$ and the denominator, being a sum
		of $A_t$ such terms, lies in
		$[A_te^{-\mathrm{osc}^\tau_{t,j}},A_t]$, whence
		\[
		\frac{e^{-\mathrm{osc}^\tau_{t,j}}}{A_t}
		\ \le\ \omega_{\ell,t,j}\ \le\
		\frac{e^{\mathrm{osc}^\tau_{t,j}}}{A_t},
		\]
		which is \eqref{eq:weight_order} with
		$C_\omega:=\exp\{\bar\tau(\bar\theta-\underline\theta)\}$, a
		deterministic constant independent of $t$. Note what is and is not
		used, only the oscillation of the exponent across nodes
		enters, not its level, so the two clip levels are needed only through
		their difference $\bar\theta-\underline\theta$, and no bound on the
		metric $V_{\ell,t-1,j}$, on its mean $\bar V_{t-1,j}$, or on the
		dimensional temperature
		$\tau_{t,j}=\tilde\tau_t/\bar V_{t-1,j}$ is
		required. This is why the bound is insensitive to the divergence of
		$\tau_{t,j}$ discussed after Assumption~\ref{assum:temp}, and why the
		two
		bounds hold surely rather than on an event of probability tending to
		one. Both factors of the exponent are bounded by construction.
		Finally
		\[
		\sum_{\ell\in\mathcal A_t}\omega^2_{\ell,t,j}
		=\Bigl(\max_{\ell\in\mathcal A_t}\omega_{\ell,t,j}\Bigr)\cdot 1
		\le\Bigl(\max_{\ell\in\mathcal A_t}\omega_{\ell,t,j}\Bigr)
		\sum_{\ell\in\mathcal A_t}\omega_{\ell,t,j}
		=\max_{\ell\in\mathcal A_t}\omega_{\ell,t,j}
		\le C_\omega A_t^{-1},
		\]
		which is the second assertion. The lower bound in
		\eqref{eq:weight_order} is exactly what prevents the soft-max
		from degenerating onto a single node, and it fails if the
		{exponent} $\tilde\tau_t\Psi_{\ell,t,j}$ is allowed to diverge.
		This is why $\tilde\tau_t$ is bounded in
		Assumption~\ref{assum:temp}, and it is consistent with the divergence
		of $\tau_{t,j}=\tilde\tau_t/\bar V_{t-1,j}$, which the exponent does
		not see. The argument is uniform in $j$, since the clip acts on each
		coordinate separately and $\tilde\tau_t$ is shared.
	\end{proof}
	
	\begin{proof}[Proof of Lemma~\ref{lem:wavg}]
		\textbf{Step 1 (predictability and exact centring).}
		By construction $\omega_{\ell,t,j}$ is a measurable function of
		$\{V_{m,t-1,j}\}_{m\in\mathcal A_t}$, $\tilde\tau_t$ and
		$\mathcal A_t$,
		all of which are $\mathcal F_{t-1}$-measurable by
		Assumption~\ref{assum:design}. Hence $\omega_{\ell,t,j}$ is
		$\mathcal F_{t-1}$-measurable. Therefore
		\[
		\mathbb E\Bigl[\sum_{\ell\in\mathcal A_t}\omega_{\ell,t,j}
		\varepsilon^{\mathrm{stoch}}_{\ell,t,j}
		\Bigm|\mathcal F_{t-1}\Bigr]
		=\sum_{\ell\in\mathcal A_t}\omega_{\ell,t,j}\,
		\mathbb E\bigl[\varepsilon^{\mathrm{stoch}}_{\ell,t,j}
		\mid\mathcal F_{t-1}\bigr]=0
		\]
		{exactly}, since $\varepsilon^{\mathrm{stoch}}_{\ell,t,j}$ 
		is centred given $\mathcal F_{t-1}$ by definition \eqref{eq:eps}.

		\medskip\noindent
		\textbf{Step 2 (conditional variance).}
		Given $\mathcal F_{t-1}$, the fresh batches
		$\{\bm X_{\ell,t,i}\}_i$ are independent across
		$\ell\in\mathcal A_t$ (Assumptions~\ref{assum:moment}
		and~\ref{assum:design}), and
		$\varepsilon^{\mathrm{stoch}}_{\ell,t,j}$ is a measurable function
		of $\mathcal F_{t-1}$ and of the $\ell$-th batch only. Hence these
		variables are conditionally independent and centred, and
		\[
		\operatorname{Var}\Bigl(\sum_{\ell}\omega_{\ell,t,j}
		\varepsilon^{\mathrm{stoch}}_{\ell,t,j}\Bigm|\mathcal F_{t-1}\Bigr)
		=\sum_{\ell\in\mathcal A_t}\omega^2_{\ell,t,j}
		\operatorname{Var}\bigl(\varepsilon^{\mathrm{stoch}}_{\ell,t,j}
		\mid\mathcal F_{t-1}\bigr)
		=\sum_{\ell\in\mathcal A_t}
		\frac{\omega^2_{\ell,t,j}\vartheta^2_{\ell,t,j}}{n^\ell_t}
		\bigl(1+o_{\mathbb P}(1)\bigr)
		\]
		by Lemma~\ref{lem:B1}(ii), which is the first equality of
		\eqref{eq:condvar_round}. The key here is that the weights 
		$\omega_{\ell,t,j}$ are $\mathcal F_{t-1}$-measurable and the errors 
		$\{\varepsilon^{\mathrm{stoch}}_{\ell,t,j}\}_\ell$ are conditionally 
		independent across $\ell$, so variances simply add.
		
		For the order, Lemma~\ref{lem:weights}
		and Assumption~\ref{assum:balance} give
		\begin{equation}
			\label{eq:varorder}
			\sum_{\ell\in\mathcal A_t}
			\frac{\omega^2_{\ell,t,j}\vartheta^2_{\ell,t,j}}{n^\ell_t}
			\ \le\ { T}_{t,j}\,\frac{C_\omega^2}{A_t^{2}}
			\sum_{\ell\in\mathcal A_t}\frac1{n^\ell_t}
			\ \le\ { T}_{t,j}\,\frac{C^2_\omega}{A^2_t}\cdot
			A_t\cdot\frac{C_{\mathrm{bal}}A_t}{N_t}
			\ =\ C^2_\omega C_{\mathrm{bal}}\frac{{ T}_{t,j}}{N_t},
		\end{equation}
		which establishes the bound in \eqref{eq:condvar_round}. Note that
		\eqref{eq:varorder} uses only the crude variance bound of
		Lemma~\ref{lem:B1}(ii) and therefore requires no moment beyond
		Assumption~\ref{assum:moment}.
		
		\medskip\noindent
		\textbf{Step 3 (assembling).}
		By Chebyshev's inequality conditionally on $\mathcal F_{t-1}$,
		$\sum_\ell\omega_{\ell,t,j}\varepsilon^{\mathrm{stoch}}_{\ell,t,j}
		=O_{\mathbb P}(\sqrt{{ T}_{t,j}/N_t})$ by Step~2. For the predictable part,
		Lemma~\ref{lem:B1}(iii) and $\sum_\ell\omega_{\ell,t,j}=1$ give
		\[
		\Bigl|\sum_{\ell\in\mathcal A_t}\omega_{\ell,t,j}b_{\ell,t,j}\Bigr|
		\le\max_{\ell\in\mathcal A_t}|b_{\ell,t,j}|
		=O\bigl(\max_{\ell\in\mathcal A_t}b^{\mathrm{lag}}_{\ell,t}\bigr)
		+O_{\mathbb P}\Bigl(\max_{\ell\in\mathcal A_t}
		\frac1{n^\ell_t}\Bigr)
		+O\Bigl(\frac{1}{\sqrt{p\min_\ell n^\ell_t}}+\frac kp\Bigr).
		\]
		By Assumption~\ref{assum:balance},
		$\max_\ell(n^\ell_t)^{-1}\le C_{\mathrm{bal}}A_t/N_t$, so the
		second term is $O_{\mathbb P}(C_{\mathrm{bal}}A_t/N_t)$, which 
		is $o_{\mathbb P}(\sqrt{1/N_t})$ precisely when $A_t=o(\sqrt{N_t})$. 
		The third line contributes the noise-estimation terms 
		$O\bigl(\frac{1}{\sqrt{p\min_\ell n^\ell_t}}+\frac{k}{p}\bigr)$,
		which vanish under Assumption~\ref{assum:stab}(ii). 
		
	\end{proof}
	
	\begin{proof}[Proof of Lemma~\ref{lem:decomp_new}]
		By \eqref{eq:weights} and $\sum_\ell\omega_{\ell,t,j}=1$,
		\[
		\tilde\lambda_{t,j}
		=(1-\beta_t)\tilde\lambda_{t-1,j}
		+\beta_t\sum_{\ell\in\mathcal A_t}\omega_{\ell,t,j}
		\check\lambda_{\ell,t,j}
		=(1-\beta_t)\tilde\lambda_{t-1,j}
		+\beta_t\bigl(\lambda_{t,j}+\bar\varepsilon_{t,j}\bigr).
		\]
		Subtracting $\lambda_{t,j}=(1-\beta_t)\lambda_{t,j}
		+\beta_t\lambda_{t,j}$ gives
		$\tilde\lambda_{t,j}-\lambda_{t,j}
		=(1-\beta_t)(\tilde\lambda_{t-1,j}-\lambda_{t,j})
		+\beta_t\bar\varepsilon_{t,j}$, and writing
		$\tilde\lambda_{t-1,j}-\lambda_{t,j}
		=(\tilde\lambda_{t-1,j}-\lambda_{t-1,j})-\delta_{t,j}$. 
		
		Set $E_{t}:=\tilde\lambda_{t,j}-\lambda_{t,j}$ and
		$u_t:=\beta_t\bar\varepsilon_{t,j}-(1-\beta_t)\delta_{t,j}$, so
		that $E_t=(1-\beta_t)E_{t-1}+u_t$. Iterating and using
		$\varrho_{s,t}=\prod_{u=s+1}^t(1-\beta_u)$,
		\[
		E_t=\varrho_{0,t}E_0+\sum_{s=1}^{t}\varrho_{s,t}u_s ,
		\]
		and substituting $\bar\varepsilon_{s,j}
		=\sum_\ell\omega_{\ell,s,j}\varepsilon^{\mathrm{stoch}}_{\ell,s,j}
		+\sum_\ell\omega_{\ell,s,j}b_{\ell,s,j}$ from \eqref{eq:eps}
		produces exactly \eqref{eq:agg_unroll}.
		
		It remains to justify calling $\mathcal M_{t,j}$ a martingale.
		Because $\varrho_{0,t}=\varrho_{0,s}\varrho_{s,t}$ for $s\le t$,
		\begin{equation}
			\label{eq:mart}
			\mathcal M_{t,j}
			=\varrho_{0,t}\sum_{s=1}^{t}
			\frac{\beta_s}{\varrho_{0,s}}\,D_{s,j},
			\qquad
			D_{s,j}:=\sum_{\ell\in\mathcal A_s}\omega_{\ell,s,j}
			\varepsilon^{\mathrm{stoch}}_{\ell,s,j},
		\end{equation}
		and $\{D_{s,j}\}$ is a martingale difference sequence with respect
		to $\{\mathcal F_s\}$ by Step~1 of the proof of
		Lemma~\ref{lem:wavg}. Since $\beta_s$ and $\varrho_{0,s}$ are
		$\mathcal F_{s-1}$-measurable (Assumption~\ref{assum:design}), the
		sum in \eqref{eq:mart} is a martingale transform, hence a
		martingale in $t$. $\mathcal M_{t,j}$ itself is that martingale
		rescaled by the $\mathcal F_{t-1}$-measurable factor  
		$\varrho_{0,t}$. This is the representation used in
		Section~\ref{sec:4.2}.
	\end{proof}
	
	\begin{prop}[Properties of the effective aggregation sample size]
		\label{prop:B3}
		The quantity $\mathsf N_t$ of Definition~\ref{def:Nsharp} satisfies
		the claims stated there:
		\begin{enumerate}[label=\textup{(\roman*)}]
			\item If $\beta_t\equiv\beta\in(0,1]$ and $N_s\equiv N$, then
			$\mathsf N_t$ is strictly increasing in $t$ with
			$\mathsf N_t\uparrow N(2-\beta)/\beta=N/\kappa_\beta$, and
			$\mathsf N_t\equiv N$ when $\beta=1$.
			\item If $\beta_t\to0$ with $\sum_t\beta_t=\infty$ and
			$N_s\equiv N$, then $\mathsf N_t/N\to\infty$.
			\item In general, $\mathsf N_t^{-1}$ obeys the exact recursion
			$\mathsf N_t^{-1}=(1-\beta_t)^2\mathsf N_{t-1}^{-1}
			+\beta_t^2/N_t$, so $\mathsf N_t$ is computable at the server
			with $O(1)$ state.
		\end{enumerate}
	\end{prop}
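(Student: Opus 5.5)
The plan is to establish (iii) first, since both (i) and (ii) follow from it together with Lemma~\ref{lem:B2}(iii). By Definition~\ref{def:Nsharp}, $\mathsf N_t^{-1}=\sum_{s\le t}\varrho_{s,t}^2\beta_s^2N_s^{-1}$.

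\textbf{Step 1: the recursion (iii).} Split off the term $s=t$, for which $\varrho_{t,t}=1$. For $s<t$ use the exact factorisation $\varrho_{s,t}=(1-\beta_t)\varrho_{s,t-1}$, which is the same manipulation as in \eqref{eq:pirec} from the proof of Lemma~\ref{lem:crec}. This gives
\[
\mathsf N_t^{-1}=(1-\beta_t)^2\mathsf N_{t-1}^{-1}+\beta_t^2/N_t ,
\]
with the convention $\mathsf N_0^{-1}=0$. Since $\beta_t$ and $N_t$ are $\mathcal F_{t-1}$-measurable by Assumption~\ref{assum:design}, the server only needs to store the scalar $\mathsf N_{t-1}^{-1}$. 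This proves the $O(1)$-state claim.

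\textbf{Step 2: constant $\beta$ and constant $N$, part (i).} When $N_s\equiv N$, we have $\mathsf N_t^{-1}=a_t/N$ with $a_t=\sum_{s\le t}\varrho_{s,t}^2\beta^2$. Lemma~\ref{lem:B2}(iii), applied with $\gamma\equiv\beta$, identifies $a_t$ as the iterate of the affine contraction $x\mapsto(1-\beta)^2x+\beta^2$. This map has slope $(1-\beta)^2<1$ and fixed point $\beta/(2-\beta)=\kappa_\beta$. The limit $\mathsf N_t\to N/\kappa_\beta=N(2-\beta)/\beta$ follows immediately.

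Monotonicity in $t$ is read off the sign of $a_t-a_{t-1}=(1-\beta)^{2(t-1)}\beta^2$, which is obtained by induction from the contraction. Thus $\mathsf N_t$ is strictly monotone and approaches $N/\kappa_\beta$ geometrically at rate $(1-\beta)^2$. When $\beta=1$, every $\varrho_{s,t}$ with $s<t$ vanishes, so $a_t\equiv1$ and $\mathsf N_t\equiv N$ for all $t$.

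\textbf{Step 3: vanishing $\beta_t$ with constant $N$, part (ii).} Again $\mathsf N_t/N=a_t^{-1}$. The last clause of Lemma~\ref{lem:B2}(iii) gives $a_t\to0$ whenever $\gamma_u=\beta_u\to0$ and $\sum_u\beta_u=\infty$. Hence $\mathsf N_t/N\to\infty$.

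\textbf{Main obstacle.} The only delicate point is the direction of the monotonicity in (i). It depends entirely on the initialisation of the recursion in Step~1 and on the sign of the increment $a_t-a_{t-1}$. I would therefore state that sign explicitly, starting from the initial value $\mathsf N_0^{-1}$ that Definition~\ref{def:Nsharp} induces, rather than treating it as evident. Everything else in the proof is routine bookkeeping with Lemma~\ref{lem:B2}.
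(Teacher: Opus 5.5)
Your route is the paper's: derive the recursion from $\varrho_{s,t}=(1-\beta_t)\varrho_{s,t-1}$ and $\varrho_{t,t}=1$, then apply Lemma~\ref{lem:B2}(iii) to obtain (i) and (ii). Part (iii), part (ii), the limit in (i) and the case $\beta=1$ are all handled correctly.

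The gap is the one you flag and then leave open: the direction of monotonicity in (i). Your own increment settles it, and it settles it against the statement. The induced initialisation is $a_0=N\,\mathsf N_0^{-1}=0$, and you computed $a_t-a_{t-1}=\beta^2(1-\beta)^{2(t-1)}>0$. Explicitly,
\[
a_t=\kappa_\beta\bigl\{1-(1-\beta)^{2t}\bigr\},
\qquad
\mathsf N_t=\frac{N}{\kappa_\beta\bigl\{1-(1-\beta)^{2t}\bigr\}} .
\]
So $\mathsf N_t=N/a_t$ is strictly \emph{decreasing} for $\beta\in(0,1)$. It starts at $\mathsf N_1=N/\beta^2$, which is at least $N/\kappa_\beta$ because $1-\beta(2-\beta)=(1-\beta)^2\ge0$, and converges to $N/\kappa_\beta$ from above. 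The reason $\mathsf N_1$ exceeds $N$ is that Definition~\ref{def:Nsharp} omits the initialisation term $\varrho_{0,t}(\tilde\lambda_{0,j}-\lambda_{0,j})$: early iterates of the damped recursion have small variance but large bias.

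Writing ``strictly monotone'' therefore does not prove the asserted ``strictly increasing, $\mathsf N_t\uparrow N/\kappa_\beta$''. That assertion is false for every $\beta\in(0,1)$, so no argument can deliver it. The paper's own proof makes exactly this slip. It passes from $a_t\uparrow\beta/(2-\beta)$ to $\mathsf N_t\uparrow N(2-\beta)/\beta$, forgetting that taking reciprocals reverses monotonicity. The same wording appears in Definition~\ref{def:Nsharp}.

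The correct form of (i) is $\mathsf N_t\downarrow N/\kappa_\beta$, strictly when $\beta<1$, with $\mathsf N_t\equiv N$ when $\beta=1$. As far as I can see, the downstream uses in Corollary~\ref{cor:C4} and in the proof of Corollary~\ref{cor:feasible} need only the limit and two-sided boundedness of $\mathsf N_t$, so nothing else breaks. You should commit to the direction and state the corrected claim rather than hedge.
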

	
	\begin{proof}[Proof of Proposition~\ref{prop:B3}]
		(iii) is immediate from $\varrho_{s,t}=(1-\beta_t)\varrho_{s,t-1}$
		for $s<t$ and $\varrho_{t,t}=1$ applied to \eqref{eq:Nsharp},
		exactly as in Lemma~\ref{lem:crec}. For (i), with $N_s\equiv N$ we
		have $\mathsf N_t^{-1}=a_t/N$ where $a_t=\sum_{s\le t}
		\varrho^2_{s,t}\beta^2$ is the quantity of Lemma~\ref{lem:B2}(iii)
		with $\gamma=\beta$. Lemma gives
		$a_t\uparrow\beta/(2-\beta)$, i.e.\
		$\mathsf N_t\uparrow N(2-\beta)/\beta$, and $a_t=1$ for
		$\beta=1$. For (ii), Lemma~\ref{lem:B2}(iii) gives $a_t\to0$,
		i.e.\ $\mathsf N_t/N=1/a_t\to\infty$.
	\end{proof}
	
	\begin{proof}[Proof of Theorem~\ref{thm:fusion}]
		We bound the three terms of \eqref{eq:agg_unroll} separately.
		
		\medskip\noindent
		\textbf{Step 1 (martingale term).}
		By \eqref{eq:mart} the increments $\varrho_{s,t}\beta_sD_{s,j}$ are
		orthogonal in $L^2$, since for $s<s'$,
		$\mathbb E[\varrho_{s,t}\beta_sD_{s,j}\varrho_{s',t}\beta_{s'}
		D_{s',j}]
		=\mathbb E[\varrho_{s,t}\beta_sD_{s,j}\varrho_{s',t}\beta_{s'}
		\mathbb E(D_{s',j}\mid\mathcal F_{s'-1})]=0$, using that
		martingale-difference property $\mathbb E[D_{s',j}|\mathcal F_{s'-1}]=0$.
		Hence, by Lemma~\ref{lem:wavg}, 
		\begin{align*}
			\mathbb E\bigl[\mathcal M^2_{t,j}\bigr]
			=&\sum_{s=1}^{t}\varrho^2_{s,t}\beta_s^2\,
			\mathbb E\bigl[\operatorname{Var}(D_{s,j}\mid\mathcal F_{s-1})
			\bigr]
			=\mathbb E\bigl[\mathsf v^2_{t,j}\bigr]
			\bigl(1+o(1)\bigr) ,
		\end{align*}
		by \eqref{eq:condvar_round} and \eqref{eq:vsharp}, so Chebyshev's
		inequality gives
		$\mathcal M_{t,j}=O_{\mathbb P}(\mathsf v_{t,j})$, the first term of
		\eqref{eq:fusion_rate}. For the crude form, bound
		$\omega^2_{\ell,s,j}\le C^2_\omega A_s^{-2}$ by
		Lemma~\ref{lem:weights}, $\vartheta^2_{\ell,s,j}\le{ T}_{s,j}$
		and $(n^\ell_s)^{-1}\le C_{\mathrm{bal}}A_s/N_s$ by
		Assumption~\ref{assum:balance}, giving
		\[
		\mathsf v^2_{t,j}
		\le C^2_\omega C_{\mathrm{bal}}\sum_{s=1}^{t}
		\varrho^2_{s,t}\beta_s^2\frac{{ T}_{s,j}}{N_s}
		\le C^2_\omega C_{\mathrm{bal}}\bar{ T}_{t,j}
		\sum_{s=1}^{t}\frac{\varrho^2_{s,t}\beta^2_s}{N_s}
		=\frac{C^2_\omega C_{\mathrm{bal}}\bar{ T}_{t,j}}{\mathsf N_t},
		\]
		with $\bar{ T}_{t,j}=\max_{s\le t}{ T}_{s,j}=O(1)$ by
		\eqref{eq:varthetabounds} and $\mathsf N_t$ as in
		\eqref{eq:Nsharp}. Observe that only the
		crude conditional-variance bound of Lemma~\ref{lem:B1}(ii) was
		used, so no moment beyond Assumption~\ref{assum:moment} is needed.
		
		\medskip\noindent
		\textbf{Step 2 (predictable term).}
		Since $\sum_\ell\omega_{\ell,s,j}=1$ and
		$|\sum_\ell\omega_{\ell,s,j}b_{\ell,s,j}|
		\le\max_\ell|b_{\ell,s,j}|$,
		\[
		\Bigl|\sum_{s=1}^{t}\varrho_{s,t}\Bigl\{\beta_s
		\sum_{\ell}\omega_{\ell,s,j}b_{\ell,s,j}
		-(1-\beta_s)\delta_{s,j}\Bigr\}\Bigr|
		\le\sum_{s=1}^{t}\varrho_{s,t}
		\Bigl\{\beta_s\max_{\ell\in\mathcal A_s}|b_{\ell,s,j}|
		+(1-\beta_s)|\delta_{s,j}|\Bigr\},
		\]
		which, together with Lemma~\ref{lem:B1}(iii), is the second term of
		\eqref{eq:fusion_rate}.
		
		\medskip\noindent
		\textbf{Step 3 (initialization).}
		$|\varrho_{0,t}(\tilde\lambda_{0,j}-\lambda_{0,j})|
		\le\varrho_{0,t}(\Lambda+\lambda_{\max})=O(\varrho_{0,t})$ by
		Assumption~\ref{assum:clip}.
		
		\medskip\noindent
		\textbf{Step 4 (consistency and sufficient conditions).}
		If $\mathsf N_t\to\infty$ and the second and third terms vanish,
		then $\tilde\lambda_{t,j}-\lambda_{t,j}=o_{\mathbb P}(1)$. We make
		the sufficient conditions precise. The lag part is
		$\sum_s\varrho_{s,t}\beta_sx_s$ with
		$x_s=\max_\ell|b_{\ell,s,j}|\to0$ under
		Assumptions~\ref{assum:drift}--\ref{assum:step} (each constituent of
		\eqref{eq:blag} vanishes, and $A_s/N_s\to0$ under
		Assumption~\ref{assum:balance} with $\min_\ell n^\ell_s\to\infty$). 
		By Lemma~\ref{lem:B2}(ii) it tends to $0$ whenever
		$\varrho_{0,t}\to0$. The drift part is $\sum_s\varrho_{s,t}y_s$
		with $y_s=(1-\beta_s)|\delta_{s,j}|\le|\delta_{s,j}|\le\sqrt{k}\zeta_s$ 
		by Assumption~\ref{assum:drift}, and 
		Lemma~\ref{lem:B2}(iv) shows that it tends to $0$
		\begin{itemize}
			\item if $\beta_t\equiv\beta\in(0,1]$ and $\zeta_t\to0$. 
			\item if $\beta_t\to0$ with $\sum_t\beta_t=\infty$, only under
			the additional requirement $\zeta_t=o(\beta_t)$.
		\end{itemize}
		The second bullet is a genuine restriction, not a technicality:
		$\sum_s\varrho_{s,t}\asymp\beta_t^{-1}\to\infty$, so a vanishing
		step size averages over an ever longer window and can track a
		moving target only if the target moves more slowly than the window
		grows. 
	\end{proof}


	\subsection{Proofs for Section~\ref{sec:4.2}}
	\label{app:C1}
	
	\begin{lemma}[Properties of the influence function]
		\label{lem:C1}
		Let $\varsigma_{\ell,s,i,j}$ be defined by
		\eqref{eq:influence} and put
		$$\varsigma^{\mathrm c}_{\ell,s,i,j}
		:=\varsigma_{\ell,s,i,j}
		-\mathbb E[\varsigma_{\ell,s,i,j}\mid\mathcal F_{s-1}]. $$ Under
		Assumptions~\ref{assum:moment}--\ref{assum:init} and
		\eqref{eq:clt_cond}:
		\begin{enumerate}[label=\textup{(\roman*)}]
			\item \textup{(Exact identity)}
			\begin{equation}
				\label{eq:C1id}
				\frac{1}{n^\ell_s}\sum_{i=1}^{n^\ell_s}
				\varsigma_{\ell,s,i,j}
				=\frac{r_{\ell,s,j}-\chi_{\ell,s}(\lambda_{s,j})}
				{\chi'_{\ell,s}(\lambda_{s,j})},
				\qquad
				\frac{1}{n^\ell_s}\sum_{i=1}^{n^\ell_s}
				\varsigma^{\mathrm c}_{\ell,s,i,j}
				=\frac{\varpi_{\ell,s,j}}
				{\chi'_{\ell,s}(\lambda_{s,j})} .
			\end{equation}
			\item \textup{(Moments)}
			$$\mathbb E[\varsigma_{\ell,s,i,j}\mid\mathcal F_{s-1}]
			=\{\bar r_{\ell,s,j}-\chi_{\ell,s}(\lambda_{s,j})\}
			/\chi'_{\ell,s}(\lambda_{s,j})
			=O\bigl(b^{\mathrm{lag}}_{\ell,s}\bigr)
			=o_{\mathbb P}\bigl((n^\ell_s)^{-1/2}\bigr)$$ and
			$\operatorname{Var}(\varsigma_{\ell,s,i,j}\mid\mathcal F_{s-1})
			=\vartheta^2_{\ell,s,j}(1+o_{\mathbb P}(1))$.
			\item \textup{(Uniform moment bound)}
			$\mathbb E[|\varsigma^{\mathrm c}_{\ell,s,i,j}|^{2+\epsilon_1/2}
			\mid\mathcal F_{s-1}]\le C<\infty$ uniformly in $\ell,s,i,p$.
		\end{enumerate}
	\end{lemma}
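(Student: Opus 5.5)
Part (i) is pure algebra. Since $r_{\ell,s,j}=\bm u_{\ell,s-1,j}^\top\hat{\bm S}_{\ell,s}\bm u_{\ell,s-1,j}=(n^\ell_s)^{-1}\sum_i(\bm u_{\ell,s-1,j}^\top\bm X_{\ell,s,i})^2$, the batch average of \eqref{eq:influence} is
\[
\bigl\{r_{\ell,s,j}-\chi_{\ell,s}(\lambda_{s,j})\bigr\}/\chi'_{\ell,s}(\lambda_{s,j}),
\]
because $\chi_{\ell,s}$ and $\chi'_{\ell,s}$ are deterministic and do not depend on $i$. For the centred version we use that $\bm u_{\ell,s-1,j}$ is $\mathcal F_{s-1}$-measurable and that the batch is independent of $\mathcal F_{s-1}$ (Assumption~\ref{assum:design}). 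Together with Lemma~\ref{lem:decomp} this gives $\mathbb E[(\bm u^\top\bm X_{\ell,s,i})^2\mid\mathcal F_{s-1}]=\bar r_{\ell,s,j}$ for every $i$. Subtracting, the average of $\varsigma^{\mathrm c}$ is $(r_{\ell,s,j}-\bar r_{\ell,s,j})/\chi'=\varpi_{\ell,s,j}/\chi'$, which is the second identity.

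For part (ii), the conditional mean is read off from the same computation: it equals $(\bar r_{\ell,s,j}-\chi_{\ell,s}(\lambda_{s,j}))/\chi'_{\ell,s}(\lambda_{s,j})$. By \eqref{eq:barr} from the proof of Theorem~\ref{thm:attenuation}, the numerator is $O_{\mathbb P}(b^{\mathrm{lag}}_{\ell,s})$. By Lemma~\ref{lem:inverse}(iii), or (iv) for a general profile, $\chi'$ is bounded below by $\min\{1,2/(1+\sqrt{\bar c})\}$ (respectively $\underline a$), so the ratio is also $O(b^{\mathrm{lag}}_{\ell,s})$. Condition \eqref{eq:clt_cond} says exactly that $\sqrt{n^\ell_s}\,b^{\mathrm{lag}}_{\ell,s}\to0$, which gives $o_{\mathbb P}((n^\ell_s)^{-1/2})$.

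For the variance, the summands are conditionally i.i.d. with $\operatorname{Var}(\varsigma\mid\mathcal F_{s-1})=\operatorname{Var}((\bm b^\top\bm z)^2)/\chi'^2$, where $\bm b=\bm\Sigma_s^{1/2}\bm u_{\ell,s-1,j}$. Lemma~\ref{lem:decomp} evaluates this as $\{2\bar r^2_{\ell,s,j}+K_4\sum_m b_m^4\}/\chi'^2$. The delocalization step in that proof gives $\sum_m b_m^4\to0$, and $\bar r_{\ell,s,j}\to\chi_{\ell,s}(\lambda_{s,j})$ in probability by \eqref{eq:barr}. Because $\chi$ is bounded below by $\sigma^2_{\min}$, the ratio of the variance to $\vartheta^2_{\ell,s,j}=2\chi^2/\chi'^2$ tends to one, i.e. the variance is $\vartheta^2_{\ell,s,j}(1+o_{\mathbb P}(1))$.

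Part (iii) repeats the Lyapunov bound in Step~4 of the proof of Theorem~\ref{thm:local_clt}. Write $\varsigma^{\mathrm c}=\{(\bm b^\top\bm z)^2-\|\bm b\|^2\}/\chi'$. Rosenthal (or Marcinkiewicz--Zygmund) together with $\sup\mathbb E|z_m|^{4+\epsilon_1}\le C$ gives $\mathbb E|\bm b^\top\bm z|^{4+\epsilon_1}\le C'\|\bm b\|^{4+\epsilon_1}$. We have $\|\bm b\|^2\le\lambda_{\max}+\sigma^2_{\max}$, and the factor $1/\chi'$ is uniformly bounded as above. Hence $\mathbb E[|\varsigma^{\mathrm c}|^{2+\epsilon_1/2}\mid\mathcal F_{s-1}]\le C$, uniformly in $\ell,s,i,p$.

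The main delicacy is ensuring that the $O(b^{\mathrm{lag}})$ bound on the conditional mean holds at the stated order, rather than only in probability, uniformly over the relevant $s$. I would handle this by working on the high-probability event from Lemma~\ref{lem:contraction}, on which the alignment errors are controlled, so that \eqref{eq:barr} holds with deterministic constants. The complement has vanishing probability and therefore does not affect the $o_{\mathbb P}$ conclusions.
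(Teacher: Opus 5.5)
Your proposal is correct and follows the paper's proof essentially step for step: the averaging identity and $r-\bar r=\varpi$ for (i), \eqref{eq:barr} with the lower bound on $\chi'$ and \eqref{eq:clt_cond} for the conditional mean, Lemma~\ref{lem:decomp} plus delocalization for the variance, and the Marcinkiewicz--Zygmund/Rosenthal bound for (iii). Your closing paragraph is unnecessary, since $O_{\mathbb P}(b^{\mathrm{lag}}_{\ell,s})$ together with \eqref{eq:clt_cond} already gives the $o_{\mathbb P}((n^\ell_s)^{-1/2})$ statement used downstream; moreover, the event of Lemma~\ref{lem:contraction} alone would not make \eqref{eq:barr} deterministic, because the $O_{\mathbb P}(N^{-1/2})$ local-law term coming from Lemma~\ref{lem:maps} remains.
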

	
	\begin{proof}[Proof of Lemma~\ref{lem:C1}]
		(i) Averaging \eqref{eq:influence} over $i$ and using
		$\hat{\bm S}_{\ell,s}=(n^\ell_s)^{-1}\sum_i\bm X_{\ell,s,i}
		\bm X_{\ell,s,i}^\top$ together with the definition of $r_{\ell,s,j}
		=\bm u_{\ell,s-1,j}^\top\hat{\bm S}_{\ell,s}\bm u_{\ell,s-1,j}$
		gives the first identity. Subtracting its conditional mean and
		using $r_{\ell,s,j}-\bar r_{\ell,s,j}=\varpi_{\ell,s,j}$
		(Lemma~\ref{lem:decomp}) gives the second. Note that
		$\chi_{\ell,s}(\lambda_{s,j})$ and
		$\chi'_{\ell,s}(\lambda_{s,j})$ are $\mathcal F_{s-1}$-measurable
		deterministic functions of $(c_{\ell,s-1},\sigma^2_s,\lambda_{s,j})$,
		so no measurability issue arises.
		
		(ii) The conditional mean is immediate from (i) and
		$\mathbb E[r_{\ell,s,j}\mid\mathcal F_{s-1}]=\bar r_{\ell,s,j}$. 
		Step~1 of the proof of Theorem~\ref{thm:attenuation} gives
		$|\bar r_{\ell,s,j}-\chi_{\ell,s}(\lambda_{s,j})|
		=O(b^{\mathrm{lag}}_{\ell,s})$, and $\chi'_{\ell,s}$ is bounded
		away from $0$ and $\infty$ on the supercritical range. By
		Lemma~\ref{lem:inverse}(iii),
		$\chi'\ge\min\{1,2/(1+\sqrt{\bar c})\}$, while
		$\chi'=1+c(1-c)\sigma^4/(\lambda+c\sigma^2)^2\le
		1+c\sigma^4/(c\sigma^4)=2$ for $\lambda\ge\sigma^2\sqrt c$. The
		rate $o_{\mathbb P}((n^\ell_s)^{-1/2})$ is exactly
		\eqref{eq:clt_cond}. For the variance, conditionally on
		$\mathcal F_{s-1}$ the summands are i.i.d., so by
		Lemma~\ref{lem:decomp}
		\[
		\operatorname{Var}(\varsigma_{\ell,s,i,j}\mid\mathcal F_{s-1})
		=\frac{n^\ell_s\operatorname{Var}(\varpi_{\ell,s,j}
			\mid\mathcal F_{s-1})}{\chi'_{\ell,s}(\lambda_{s,j})^2}
		=\frac{2\bar r^2_{\ell,s,j}(1+o_{\mathbb P}(1))}
		{\chi'_{\ell,s}(\lambda_{s,j})^2}
		=\vartheta^2_{\ell,s,j}\bigl(1+o_{\mathbb P}(1)\bigr),
		\]
		using $\bar r_{\ell,s,j}\to\chi_{\ell,s}(\lambda_{s,j})$ and
		\eqref{eq:local_clt}.
		
		(iii) With $\bm b=\bm\Sigma_s^{1/2}\bm u_{\ell,s-1,j}$ one has
		$\varsigma^{\mathrm c}_{\ell,s,i,j}
		=\{(\bm b^\top\bm z_{\ell,s,i})^2-\|\bm b\|^2\}
		/\chi'_{\ell,s}(\lambda_{s,j})$, and by
		Assumption~\ref{assum:moment} and the Marcinkiewicz--Zygmund
		inequality
		$\mathbb E|(\bm b^\top\bm z)^2-\|\bm b\|^2|^{2+\epsilon_1/2}
		\le C\mathbb E|\bm b^\top\bm z|^{4+\epsilon_1}
		\le C'\|\bm b\|^{4+\epsilon_1}
		\le C''(\lambda_{\max}+\sigma^2_{\max})^{2+\epsilon_1/2}$,
		while $\chi'$ is bounded below.
	\end{proof}
	
	\begin{proof}[Proof of Lemma~\ref{lem:linrep}]
		\textbf{Step 1 (from the recursion to the influence functions).}
		By Lemma~\ref{lem:decomp_new},
		\begin{align}
			\label{eq:C2start}
			\tilde\lambda_{t,j}-\lambda_{t,j}
			=&\sum_{s=1}^{t}\varrho_{s,t}\beta_s
			\sum_{\ell\in\mathcal A_s}\omega_{\ell,s,j}
			\varepsilon^{\mathrm{stoch}}_{\ell,s,j}
			+\sum_{s=1}^{t}\varrho_{s,t}
			\Bigl\{\beta_s\sum_{\ell}\omega_{\ell,s,j}b_{\ell,s,j}
			-(1-\beta_s)\delta_{s,j}\Bigr\}\\
			&+\varrho_{0,t}\bigl(\tilde\lambda_{0,j}-\lambda_{0,j}\bigr).\nonumber
		\end{align}
		By Lemma~\ref{lem:B1}(i) and Lemma~\ref{lem:C1}(i),
		\begin{equation}
			\label{eq:C2rep}
			\varepsilon^{\mathrm{stoch}}_{\ell,s,j}
			=\frac{\varpi_{\ell,s,j}}{\chi'_{\ell,s}(\lambda_{s,j})}
			+\varrho^{\mathrm{rem}}_{\ell,s,j}
			=\frac1{n^\ell_s}\sum_{i=1}^{n^\ell_s}
			\varsigma^{\mathrm c}_{\ell,s,i,j}
			+\varrho^{\mathrm{rem}}_{\ell,s,j},
		\end{equation}
		$\varrho^{\mathrm{rem}}_{\ell,s,j}
		=O_{\mathbb P}\Bigl(\frac1{n^\ell_s}
		+\frac1{\sqrt{p\,n^\ell_s}}+\frac kp\Bigr),$ 
		and by Lemma~\ref{lem:C1}(ii) the difference between
		$\varsigma^{\mathrm c}$ and $\varsigma$ contributes
		$$-\{\bar r_{\ell,s,j}-\chi_{\ell,s}(\lambda_{s,j})\}
		/\chi'_{\ell,s}(\lambda_{s,j})=O(b^{\mathrm{lag}}_{\ell,s})$$ per
		node. Substituting into \eqref{eq:C2start} produces exactly the
		stated linear form with
		\begin{align}
			\label{eq:C2R}
			R_{t,j}
			=&\sum_{s\le t}\varrho_{s,t}\beta_s\sum_{\ell}\omega_{\ell,s,j}
			\Bigl\{b_{\ell,s,j}
			-\frac{\bar r_{\ell,s,j}-\chi_{\ell,s}(\lambda_{s,j})}
			{\chi'_{\ell,s}(\lambda_{s,j})}
			+\varrho^{\mathrm{rem}}_{\ell,s,j}\Bigr\}\\
			&-\sum_{s\le t}\varrho_{s,t}(1-\beta_s)\delta_{s,j}
			+\varrho_{0,t}\bigl(\tilde\lambda_{0,j}-\lambda_{0,j}\bigr)
			+\bigl(\lambda_{t,j}-\lambda_j\bigr).\nonumber
		\end{align}
		
		\medskip\noindent
		\textbf{Step 2 (bounding $R_{t,j}$).}
		We treat the five groups of terms in \eqref{eq:C2R}.
		\begin{enumerate}[label=(\arabic*),leftmargin=*]
			\item By the proof of Lemma~\ref{lem:B1}(iii), the
			$\mathcal F_{s-1}$-measurable part of $b_{\ell,s,j}$ is
			$\chi^{-1}(\bar r_{\ell,s,j})-\lambda_{s,j}
			=\{\bar r_{\ell,s,j}-\chi_{\ell,s}(\lambda_{s,j})\}
			/\chi'_{\ell,s}(\lambda_{s,j})
			+O((b^{\mathrm{lag}}_{\ell,s})^2)$ by a second-order Taylor
			expansion with the uniform curvature bound $K_{\mathrm{cv}}$. 
			Hence the first brace in \eqref{eq:C2R} is
			$O((b^{\mathrm{lag}}_{\ell,s})^2)
			+O_{\mathbb P}(1/n^\ell_s)+O((p\,n^\ell_s)^{-1/2}+k/p)$.
			\item The terms $O((b^{\mathrm{lag}}_{\ell,s})^2)$ are dominated
			by $O(b^{\mathrm{lag}}_{\ell,s})$ since
			$b^{\mathrm{lag}}_{\ell,s}\to0$, and
			$$\sqrt{\mathsf N_t}\sum_s\varrho_{s,t}\beta_s
			\max_\ell b^{\mathrm{lag}}_{\ell,s}\to0$$ by
			Assumption~\ref{assum:stab}(ii).
			\item The curvature terms satisfy, using
			$\sum_\ell\omega_{\ell,s,j}=1$ and
			Lemma~\ref{lem:B2}(i),
			\[
			\sqrt{\mathsf N_t}\sum_{s\le t}\varrho_{s,t}\beta_s
			\max_{\ell\in\mathcal A_s}\frac1{n^\ell_s}
			\ \le\ \sqrt{\mathsf N_t}\,
			\sup_{s\le t}\frac{1}{\min_{\ell\in\mathcal A_s}n^\ell_s}
			\ \longrightarrow\ 0
			\]
			provided $\min_{\ell\in\mathcal A_s}n^\ell_s\gg
			\sqrt{\mathsf N_t}$, which is
			Assumption~\ref{assum:stab}(iii) in the quantitative form
			recorded in Remark~\ref{rem:consistency_pictures}. Equivalently
			$A_s=o(n^\ell_s)$ when $\beta$ is fixed, since then
			$\mathsf N_t\asymp N_s\asymp A_sn^\ell_s$. The plug-in terms
			$O((p\,n^\ell_s)^{-1/2}+k/p)$ are
			$o(\mathsf N_t^{-1/2})$ under the same condition together with
			$k\sqrt{n^\ell_s}=o(p)$.
			\item The drift and initialization terms are
			$o(\mathsf N_t^{-1/2})$ by Assumption~\ref{assum:stab}(ii).
			\item Finally $\lambda_{t,j}-\lambda_j\to0$ by
			Assumption~\ref{assum:stab}(ii). For the representation to hold
			with the centring at $\lambda_j$ one needs the slightly stronger
			$\sqrt{\mathsf N_t}\,|\lambda_{t,j}-\lambda_j|\to0$, which is
			not implied by the displayed conditions of
			Assumption~\ref{assum:stab}(ii). Centring at the moving target
			$\lambda_{t,j}$, as in Theorem~\ref{thm:nonasymp}, requires
			no such condition.
		\end{enumerate}
		Collecting, $R_{t,j}=o_{\mathbb P}(\mathsf N_t^{-1/2})$, which is
		\eqref{eq:linrep}. 
	\end{proof}
	
	\begin{proof}[Proof of Theorem~\ref{thm:clt}]
		\textbf{Step 1 (a genuine martingale array).}
		Write
		\[
		\mathcal L_{t,j}
		:=\sum_{s=1}^{t}\varrho_{s,t}\beta_s D_{s,j},
		\qquad
		D_{s,j}:=\sum_{\ell\in\mathcal A_s}
		\frac{\omega_{\ell,s,j}}{n^\ell_s}
		\sum_{i=1}^{n^\ell_s}\varsigma^{\mathrm c}_{\ell,s,i,j},
		\]
		so that, by Lemma~\ref{lem:linrep}, 
		$\tilde{\lambda}_{t,j}-\lambda_j=\mathcal L_{t,j}
		+o_{\mathbb P}(\mathsf N_t^{-1/2})$. Since $\omega_{\ell,s,j}$,
		$n^\ell_s$, $\mathcal A_s$, $\chi_{\ell,s}$ and $\chi'_{\ell,s}$ are
		all $\mathcal F_{s-1}$-measurable
		(Assumption~\ref{assum:design}) and
		$\mathbb E[\varsigma^{\mathrm c}_{\ell,s,i,j}\mid\mathcal F_{s-1}]=0$
		by construction, $\{D_{s,j},\mathcal F_s\}$ is a martingale
		difference sequence. Because $\varrho_{s,t}$ depends on $t$ through
		the future values $\beta_u$, $u>s$, we use the representation
		\eqref{eq:mart}:
		\[
		\mathcal L_{t,j}=\varrho_{0,t}\sum_{s=1}^{t}\xi_{s,j},
		\qquad
		\xi_{s,j}:=\frac{\beta_s}{\varrho_{0,s}}D_{s,j},
		\]
		in which $\beta_s/\varrho_{0,s}$ is $\mathcal F_{s-1}$-measurable,
		so $\{\xi_{s,j}\}$ is a bona fide martingale difference sequence
		and $\mathcal L_{t,j}$ is a deterministic multiple of a martingale.
		(For a deterministic schedule $\{\beta_t\}$ this step is
		unnecessary.)
		
		\medskip\noindent
		\textbf{Step 2 (conditional variance).}
		Conditionally on $\mathcal F_{s-1}$ the observations are
		independent within and across nodes
		(Assumptions~\ref{assum:moment},~\ref{assum:design}), so by
		Lemma~\ref{lem:C1}(ii)
		\begin{equation}
			\label{eq:C3var}
			\operatorname{Var}\bigl(D_{s,j}\mid\mathcal F_{s-1}\bigr)
			=\sum_{\ell\in\mathcal A_s}
			\frac{\omega^2_{\ell,s,j}}{(n^\ell_s)^2}\,n^\ell_s\,
			\operatorname{Var}(\varsigma_{\ell,s,i,j}\mid\mathcal F_{s-1})
			=\sum_{\ell\in\mathcal A_s}
			\frac{\omega^2_{\ell,s,j}\vartheta^2_{\ell,s,j}}{n^\ell_s}
			\bigl(1+o_{\mathbb P}(1)\bigr),
		\end{equation}
		whence
		$\sum_{s\le t}\varrho^2_{s,t}\beta_s^2
		\operatorname{Var}(D_{s,j}\mid\mathcal F_{s-1})
		=\mathsf v^2_{t,j}(1+o_{\mathbb P}(1))$ with $\mathsf v^2_{t,j}$
		as in \eqref{eq:vsharp}. The $o_{\mathbb P}(1)$ may be taken
		uniform in $s$ over the effective window because
		$\varrho^2_{s,t}$ damps the early terms geometrically and, by
		Assumption~\ref{assum:stab}(i)--(ii), the parameters stabilise.
		Thus the normalised conditional variance
		$\mathsf v^{-2}_{t,j}\sum_s\varrho^2_{s,t}\beta_s^2
		\operatorname{Var}(D_{s,j}\mid\mathcal F_{s-1})\to1$ in
		probability.
		
		\medskip\noindent
		\textbf{Step 3 (Lindeberg is automatic).}
		The stated conditional Lindeberg condition is the standard one for
		the array $\{\varrho_{s,t}\beta_s\omega_{\ell,s,j}
		\varsigma^{\mathrm c}_{\ell,s,i,j}/n^\ell_s\}$. We verify that it
		follows from Assumption~\ref{assum:moment} alone, so that it need
		not be assumed. Put $\epsilon:=\epsilon_1/2>0$ and
		$a_{s,\ell}:=\varrho_{s,t}\beta_s\omega_{\ell,s,j}/n^\ell_s$. By
		Lemma~\ref{lem:C1}(iii) the Lyapunov ratio is
		\[
		\Lambda_t
		:=\frac{1}{\mathsf v^{2+\epsilon}_{t,j}}
		\sum_{s\le t}\sum_{\ell\in\mathcal A_s}n^\ell_s\,
		a_{s,\ell}^{2+\epsilon}\,
		\mathbb E\bigl[|\varsigma^{\mathrm c}_{\ell,s,i,j}|^{2+\epsilon}
		\bigm|\mathcal F_{s-1}\bigr]
		\le\frac{C}{\mathsf v^{2+\epsilon}_{t,j}}
		\sum_{s\le t}(\varrho_{s,t}\beta_s)^{2+\epsilon}
		\sum_{\ell\in\mathcal A_s}
		\frac{\omega^{2+\epsilon}_{\ell,s}}{(n^\ell_s)^{1+\epsilon}} .
		\]
		By Lemma~\ref{lem:weights} and
		Assumptions~\ref{assum:balance} and~\ref{assum:stab}(iii),
		$\omega_{\ell,s,j}\asymp A_s^{-1}$ and $n^\ell_s\asymp N_s/A_s$. 
		Only {bounded} batch ratios are used here, not their convergence
		to one. So
		$$\sum_\ell\omega^{2+\epsilon}_{\ell,s}(n^\ell_s)^{-1-\epsilon}
		\asymp A_s\cdot A_s^{-2-\epsilon}(A_s/N_s)^{1+\epsilon}
		=N_s^{-1-\epsilon}, $$ while
		$\mathsf v^2_{t,j}\asymp\bar{ T}_{t,j}/\mathsf N_t$. For a fixed
		$\beta$ and $N_s\equiv N$ this gives
		$$\Lambda_t\asymp\mathsf N_t^{1+\epsilon/2}
		\beta^{1+\epsilon}N^{-1-\epsilon}\asymp N^{-\epsilon/2}\to0,$$ for a
		vanishing $\beta_t$ the same computation gives
		\begin{align*}
			\Lambda_t\asymp&\mathsf N_t^{1+\epsilon/2}
			\sum_s(\varrho_{s,t}\beta_s)^{2+\epsilon}N_s^{-1-\epsilon}\\
			\le&\mathsf N_t^{1+\epsilon/2}
			(\max_s\varrho_{s,t}\beta_s/N_s)^{\epsilon}
			\sum_s\varrho^2_{s,t}\beta^2_s/N_s \\
			=&\mathsf N_t^{\epsilon/2}
			(\max_s\varrho_{s,t}\beta_s/N_s)^{\epsilon}\to0
		\end{align*}
		because $\max_s\varrho_{s,t}\beta_s/N_s
		=O(\mathsf N_t^{-1})$ (Step~2 of the proof of
		Lemma~\ref{lem:conc} below). Lyapunov's condition implies
		Lindeberg's.
		
		\medskip\noindent
		\textbf{Step 4 (martingale CLT).}
		Steps~1--3 verify the hypotheses of the martingale central limit
		theorem for triangular arrays
		\citep[Corollary 3.1]{hall1980martingale}, which states that a square-integrable
		martingale difference array whose normalised conditional variances
		converge in probability to $1$ and which satisfies the conditional
		Lindeberg condition obeys
		$\mathsf v^{-1}_{t,j}\mathcal L_{t,j}
		\stackrel{d}{\longrightarrow}\mathcal N(0,1)$. The norming
		$\mathsf v_{t,j}$ is itself random, being a function of the
		$\mathcal F_{s-1}$-measurable weights, so the cited result is applied
		in its random-norming form and the conclusion is stable convergence. 
		Step~2 supplies exactly the required convergence in probability of the
		normalised conditional variance. By Lemma~\ref{lem:linrep}
		and $\mathsf v_{t,j}\asymp\mathsf N_t^{-1/2}$, the remainder
		$R_{t,j}=o_{\mathbb P}(\mathsf N_t^{-1/2})
		=o_{\mathbb P}(\mathsf v_{t,j})$, so Slutsky's theorem gives the
		first display of Theorem~\ref{thm:clt}. Multiplying by
		$\sqrt{\mathsf N_t}\mathsf v_{t,j}\to\Omega_j$, which converges in
		probability under Assumption~\ref{assum:stab}(i)--(ii),(v), gives the
		second. Note that (v) is what makes
		$\mathsf N_t\mathsf v^2_{t,j}$ converge at all, bounded batch ratios
		alone permitting indefinite oscillation.
		
		\medskip\noindent
		\textbf{Step 5 (oracle weights and the closed form).}
		If $\omega=\omega^{\mathrm{opt}}_{\cdot,s,j}$ then, by
		Proposition~\ref{prop:efficiency},
		$\sum_\ell\omega^2_{\ell,s,j}\vartheta^2_{\ell,s,j}/n^\ell_s
		=\mathcal V_{s,j}(\omega^{\mathrm{opt}})=\mathcal I^{-1}_{s,j}$,
		so $$\mathsf v^2_{t,j}=\sum_s\varrho^2_{s,t}\beta_s^2
		\mathcal I^{-1}_{s,j} \quad \text{and} \quad 
		\Omega^2_j=\lim_t\mathsf N_t\sum_s\varrho^2_{s,t}\beta_s^2
		\mathcal I^{-1}_{s,j},$$ as asserted. If in addition
		$c_{\ell,s}\to c$ and $\sigma^2_s\to\sigma^2$ for every $\ell$,
		then $\vartheta^2_{\ell,s,j}\to\vartheta^2_j$ does not depend on
		$\ell$, so $\mathcal I_{s,j}=N_s/\vartheta^2_j(1+o(1))$ and
		\[
		\mathsf v^2_{t,j}
		=\vartheta^2_j\sum_{s\le t}\frac{\varrho^2_{s,t}\beta^2_s}{N_s}
		\bigl(1+o(1)\bigr)
		=\frac{\vartheta^2_j}{\mathsf N_t}\bigl(1+o(1)\bigr)
		\qquad\Longrightarrow\qquad
		\Omega^2_j=\vartheta^2_j,
		\]
		by Definition~\ref{def:Nsharp}. Substituting \eqref{eq:local_clt}
		yields \eqref{eq:clt_closed}. Note that the same conclusion holds
		for the soft-max weights whenever \eqref{eq:releff_cond} holds, since
		then $\mathcal V_{s,j}(\omega)=\mathcal I^{-1}_{s,j}
		(1+o_{\mathbb P}(1))$ by Proposition~\ref{prop:softmax_releff}. In
		general $\mathcal V_{s,j}(\omega)=\mathcal I^{-1}_{s,j}
		\{1+(1-\mathrm{RE}_{s,j})\mathcal K_{s,j}\}$ exactly, by
		\eqref{eq:releff}, so $\Omega^2_j$ is inflated by the factor
		$1+(1-\mathrm{RE}_j)\mathcal K_j$ and by
		Proposition~\ref{prop:dispersion} by at most $1+\mathcal K_j$
		whatever the weights. The rate is unaffected.
		
		\medskip\noindent
		\textbf{Step 6 (joint convergence).}
		For a fixed vector $\bm a\in\mathbb R^k$ apply Steps~1--4 to the
		scalar array built from
		$\sum_j a_j\varsigma^{\mathrm c}_{\ell,s,i,j}$. Its conditional
		variance is $\sum_{j,j'}a_ja_{j'}\operatorname{Cov}
		(\varsigma_{\ell,s,i,j},\varsigma_{\ell,s,i,j'}\mid
		\mathcal F_{s-1})$, and by Step~5 of the proof of
		Theorem~\ref{thm:local_clt} the off-diagonal terms are
		$O_{\mathbb P}((N^{\mathrm{eff}}_{\ell,s})^{-1})$ relative to the
		diagonal, hence negligible. The Cram\'er--Wold device then gives
		joint convergence to $\mathcal N(\bm 0,\operatorname{diag}
		(\Omega^2_1,\dots,\Omega^2_k))$.
	\end{proof}
	
	\begin{coro}
		\label{cor:C4}
		In the setting of Theorem~\ref{thm:clt}:
		\begin{enumerate}[label=\textup{(\roman*)}]
			\item If $\beta_t\equiv\beta\in(0,1]$, $N_s\equiv N$, the nodes
			are homogeneous and the parameters have stabilised, then
			$\mathsf v^2_{t,j}\to\kappa_\beta\vartheta^2_j/N$ with
			$\kappa_\beta=\beta/(2-\beta)$, so that
			$\sqrt N(\tilde{\lambda}_{t,j}-\lambda_j)\Rightarrow
			\mathcal N(0,\kappa_\beta\vartheta^2_j)$. In particular
			$\kappa_1=1$ and $\kappa_\beta\to0$ as $\beta\to0$.
			\item As $\lambda_j\downarrow\sigma^2\sqrt c$, the limiting
			variance \eqref{eq:clt_closed} satisfies
			$\Omega^2_j\to\sigma^4(1+\sqrt c)^2/2<\infty$ and
			$\Omega_j/\lambda_j\to(1+\sqrt c)/\sqrt{2c}$.
		\end{enumerate}
	\end{coro}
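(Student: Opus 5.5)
For part (i) we specialise the variance formula \eqref{eq:vsharp} of Theorem~\ref{thm:fusion} to the homogeneous stationary design and then evaluate the resulting geometric sum with Lemma~\ref{lem:B2}(iii). For part (ii) we evaluate the closed form \eqref{eq:clt_closed} at the BBP threshold, which is a direct computation.

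\emph{Step 1: reduce the node sum.} Homogeneous nodes means $c_{\ell,s}\to c$ and $\sigma^2_s\to\sigma^2$ for every $\ell$, and common batch sizes $n^\ell_s=N/A_s$. Under these conditions $\vartheta^2_{\ell,s,j}\to\vartheta^2_j$ uniformly in $\ell$, by continuity of \eqref{eq:vartheta} in $(\lambda,c,\sigma^2)$. For the weights, the fluctuation metrics of all nodes share the same conditional mean $\kappa_\rho\vartheta^2_j/n$ by Lemma~\ref{lem:fluct}. By \eqref{eq:fluct_conc} they concentrate around it, so $\Psi_{\ell,s,j}\to1$ for every $\ell$ and the soft-max gives $\omega_{\ell,s,j}=A_s^{-1}(1+o_{\mathbb P}(1))$. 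Equivalently, one can invoke Step~5 of the proof of Theorem~\ref{thm:clt}, where the soft-max is asymptotically the inverse-variance choice and this choice is uniform here. It follows that
\[
\sum_{\ell\in\mathcal A_s}\frac{\omega^2_{\ell,s,j}\vartheta^2_{\ell,s,j}}{n^\ell_s}=\frac{\vartheta^2_j}{N}\bigl(1+o_{\mathbb P}(1)\bigr).
\]

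\emph{Step 2: evaluate the time sum.} Substituting into \eqref{eq:vsharp} gives
\[
\mathsf v^2_{t,j}=\frac{\vartheta^2_j}{N}\sum_{s\le t}\varrho^2_{s,t}\beta^2\,(1+o_{\mathbb P}(1)).
\]
The $o_{\mathbb P}(1)$ is uniform over the effective window, because $\varrho_{s,t}$ damps early terms geometrically, as in Step~2 of the proof of Theorem~\ref{thm:clt}. Lemma~\ref{lem:B2}(iii) gives $\sum_{s\le t}\varrho^2_{s,t}\beta^2\uparrow\beta/(2-\beta)=\kappa_\beta$, hence $\mathsf v^2_{t,j}\to\kappa_\beta\vartheta^2_j/N$.

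\emph{Step 3: the limit law.} Theorem~\ref{thm:clt} gives $\mathsf v_{t,j}^{-1}(\tilde\lambda_{t,j}-\lambda_j)\Rightarrow\mathcal N(0,1)$. Combining this with Slutsky's theorem and $N\mathsf v^2_{t,j}\to\kappa_\beta\vartheta^2_j$ yields $\sqrt N(\tilde\lambda_{t,j}-\lambda_j)\Rightarrow\mathcal N(0,\kappa_\beta\vartheta^2_j)$. This is consistent with Proposition~\ref{prop:B3}(i), since $\mathsf N_t\to N/\kappa_\beta$. Finally $\kappa_1=1$, and $\kappa_\beta\to0$ as $\beta\to0$, follow directly from the formula for $\kappa_\beta$. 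The only non-routine point is justifying $\omega_{\ell,s,j}\approx A_s^{-1}$, which rests on the concentration in \eqref{eq:fluct_conc}; this is the step I expect to need the most care.

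\emph{Part (ii).} The right-hand side of \eqref{eq:clt_closed} is a rational function of $\lambda_j$ whose denominator $(\lambda_j^2+2c\sigma^2\lambda_j+c\sigma^4)^2$ is strictly positive for $\lambda_j\ge0$. The limit is therefore obtained by evaluating at $\lambda_j=\sigma^2\sqrt c$. Using $\lambda_j+\sigma^2=\sigma^2(1+\sqrt c)$ and $\lambda_j+c\sigma^2=\sigma^2\sqrt c(1+\sqrt c)$, the numerator becomes $2\sigma^{12}c^2(1+\sqrt c)^4$. The inner polynomial of the denominator becomes $\sigma^4(2c+2c\sqrt c)=2c\sigma^4(1+\sqrt c)$, so the denominator is $4c^2\sigma^8(1+\sqrt c)^2$. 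The ratio is $\Omega^2_j\to\sigma^4(1+\sqrt c)^2/2$, which is finite. Dividing $\Omega_j\to\sigma^2(1+\sqrt c)/\sqrt2$ by $\lambda_j\to\sigma^2\sqrt c$ gives $\Omega_j/\lambda_j\to(1+\sqrt c)/\sqrt{2c}$.
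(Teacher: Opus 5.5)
Your proposal is correct and is essentially the paper's argument. For (i) the paper combines Proposition~\ref{prop:B3}(i), which is Lemma~\ref{lem:B2}(iii) applied to $\sum_s\varrho^2_{s,t}\beta^2$, with Step~5 of the proof of Theorem~\ref{thm:clt}, which gives $\mathsf v^2_{t,j}=\vartheta^2_j/\mathsf N_t\,(1+o(1))$; for (ii) it evaluates $\vartheta^2_j=2\chi(\lambda_j)^2/\chi'(\lambda_j)^2$ at the threshold using $\chi(\sigma^2\sqrt c)=\sigma^2$ and $\chi'(\sigma^2\sqrt c)=2/(1+\sqrt c)$ from Lemma~\ref{lem:inverse}, a slicker form of your direct substitution.

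The one real difference is how the node sum is reduced for the soft-max weights. You argue directly that the metrics concentrate, so the weights become asymptotically uniform when batch sizes are equal. The paper instead goes through the oracle identity $\mathcal I_{s,j}=N_s/\vartheta^2_j$ and the efficiency proposition. Your reading that ``homogeneous'' includes equal batch sizes is what the claim actually requires for the adaptive weights.
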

	
	\begin{proof}[Proof of Corollary~\ref{cor:C4}]
		(i) By Proposition~\ref{prop:B3}(i),
		$\mathsf N_t\uparrow N(2-\beta)/\beta=N/\kappa_\beta$, and by
		Step~5 above $\mathsf v^2_{t,j}=\vartheta^2_j/\mathsf N_t(1+o(1))
		\to\kappa_\beta\vartheta^2_j/N$. The two special cases are
		immediate from $\kappa_\beta=\beta/(2-\beta)$.
		
		(ii) By Theorem~\ref{thm:clt},
		$\Omega^2_j=\vartheta^2_j=2\chi(\lambda_j)^2/\chi'(\lambda_j)^2$.
		Lemma~\ref{lem:inverse}(ii) gives
		$\chi(\sigma^2\sqrt c)=\sigma^2$ and Lemma~\ref{lem:inverse}(iii)
		gives $\chi'(\sigma^2\sqrt c)=2/(1+\sqrt c)$. Hence
		$\Omega^2_j\to2\sigma^4(1+\sqrt c)^2/4
		=\sigma^4(1+\sqrt c)^2/2$, which is finite, in contrast with the
		eigenvalue route, for which $\Phi'(\lambda)=1-c\sigma^4/\lambda^2
		\downarrow0$ at the same point. Dividing by
		$\lambda_j\to\sigma^2\sqrt c$ gives
		$\Omega_j/\lambda_j\to\sigma^2(1+\sqrt c)/(\sqrt2\sigma^2\sqrt c)
		=(1+\sqrt c)/\sqrt{2c}$. The convergence in Theorem~\ref{thm:clt} is
		not uniform as $\lambda_j\downarrow\sigma^2\sqrt c$. The event
		$\{r_{\ell,t,j}<\check\sigma^2_{\ell,t}\}$, on which the
		truncation in \eqref{eq:local_debias} is active, has probability
		bounded away from $0$ when $\chi(\lambda_j)-\sigma^2
		=\lambda_ja(\lambda_j)$ is of the order of the sampling standard
		deviation $\vartheta_j/\sqrt{n}$, and $a(\lambda_j)\downarrow0$ at
		the transition.
	\end{proof}
	
	\begin{proof}[Proof of Corollary~\ref{cor:feasible}]
		\textbf{Step 1 (the recursion reproduces $\mathsf v^2$).}
		Put $$g_{s,j}:=\sum_{\ell\in\mathcal A_s}
		\omega^2_{\ell,s,j}\vartheta^2_{\ell,s,j}/n^\ell_s \quad \text{and} 
		\quad 
		\hat g_{s,j}:=\sum_{\ell\in\mathcal A_s}
		\omega^2_{\ell,s,j}\hat\vartheta^2_{\ell,s,j}/n^\ell_s. $$ Exactly as
		in Lemma~\ref{lem:B2}(iii),
		$\mathsf v^2_{t,j}=\sum_{s\le t}\varrho^2_{s,t}\beta^2_sg_{s,j}$
		satisfies
		$\mathsf v^2_{t,j}=(1-\beta_t)^2\mathsf v^2_{t-1,j}
		+\beta_t^2g_{t,j}$, which is \eqref{eq:varrec} with $g$ in place
		of $\hat g$. Hence
		$\hat{\mathsf v}^2_{t,j}-\mathsf v^2_{t,j}
		=\sum_{s\le t}\varrho^2_{s,t}\beta^2_s(\hat g_{s,j}-g_{s,j})$.
		
		\medskip\noindent
		\textbf{Step 2 (uniform consistency of the plug-in).}
		By \eqref{eq:local_clt}, $\vartheta^2$ is a continuously
		differentiable function of $(\lambda,\sigma^2,c)$ on the compact
		set determined by Assumptions~\ref{assum:bbp}--\ref{assum:hd}, on
		which $\lambda^2+2c\sigma^2\lambda+c\sigma^4$ is bounded away from
		zero. Hence it is Lipschitz there. By
		Theorem~\ref{thm:fusion}, $\tilde{\lambda}_{t,j}-\lambda_{t,j}
		=o_{\mathbb P}(1)$. By Lemma~\ref{lem:sigma},
		$\max_{\ell\in\mathcal A_t}|\check\sigma^2_{\ell,t}-\sigma^2_t|
		=o_{\mathbb P}(1)$ (the maximum over $A_t$ nodes being controlled
		by Assumption~\ref{assum:clip} and Markov's inequality, or
		directly by Lemma~\ref{lem:conc} under
		Assumption~\ref{assum:subg}). And by Lemma~\ref{lem:crec},
		$|c_{\ell,t-1}-c_{\ell,t}|=O(\alpha_t)=o(1)$.
		Therefore
		$\max_{\ell\in\mathcal A_t}
		|\hat\vartheta^2_{\ell,t,j}-\vartheta^2_{\ell,t,j}|
		=o_{\mathbb P}(1)$, and since $\vartheta^2_{\ell,t,j}$ is bounded
		below by \eqref{eq:varthetabounds},
		$|\hat g_{t,j}-g_{t,j}|\le g_{t,j}\,
		\max_\ell|\hat\vartheta^2_{\ell,t,j}
		/\vartheta^2_{\ell,t,j}-1|=o_{\mathbb P}(1)g_{t,j}$.
		
		\medskip\noindent
		\textbf{Step 3 (transfer to the recursion).}
		Fix $\epsilon>0$ and choose $s_0$ such that
		$\mathbb P(\sup_{s>s_0}|\hat g_{s,j}/g_{s,j}-1|>\epsilon)<\epsilon$.
		Splitting the sum,
		\[
		\frac{|\hat{\mathsf v}^2_{t,j}-\mathsf v^2_{t,j}|}
		{\mathsf v^2_{t,j}}
		\le\epsilon
		+\frac{\sum_{s\le s_0}\varrho^2_{s,t}\beta^2_s
			(\hat g_{s,j}+g_{s,j})}{\mathsf v^2_{t,j}}
		\le\epsilon+C\frac{\varrho^2_{s_0,t}}{\mathsf v^2_{t,j}}
		\sum_{s\le s_0}\varrho^2_{s,s_0}\beta^2_s
		\ \longrightarrow\ \epsilon
		\]
		with probability at least $1-\epsilon$, because
		$\varrho_{s_0,t}\to0$ geometrically while
		$\mathsf v^2_{t,j}\asymp\mathsf N_t^{-1}$ decays at most
		polynomially (indeed $\mathsf N_t\uparrow N/\kappa_\beta$ for fixed
		$\beta$, and $\mathsf N_t\to\infty$ subexponentially under
		Assumption~\ref{assum:stab}(iii)), and $\hat g,g$ are bounded by
		Assumption~\ref{assum:clip} and \eqref{eq:varthetabounds}. As
		$\epsilon>0$ was arbitrary,
		$\hat{\mathsf v}^2_{t,j}/\mathsf v^2_{t,j}\to1$ in probability.
		
		\medskip\noindent
		\textbf{Step 4 (coverage).}
		By Theorem~\ref{thm:clt} and Step~3,
		$\hat{\mathsf v}^{-1}_{t,j}(\tilde{\lambda}_{t,j}-\lambda_j)
		=(\mathsf v_{t,j}/\hat{\mathsf v}_{t,j})\cdot
		\mathsf v^{-1}_{t,j}(\tilde{\lambda}_{t,j}-\lambda_j)
		\stackrel{d}{\longrightarrow}\mathcal N(0,1)$ by Slutsky's theorem, so
		\[
		\mathbb P\bigl(\lambda_j\in[\tilde{\lambda}_{t,j}
		\pm z_{1-\alpha/2}\hat{\mathsf v}_{t,j}]\bigr)
		=\mathbb P\bigl(|\hat{\mathsf v}^{-1}_{t,j}
		(\tilde{\lambda}_{t,j}-\lambda_j)|\le z_{1-\alpha/2}\bigr)
		\longrightarrow1-\alpha .
		\]
		Finally, $\hat\vartheta^2_{\ell,t,j}$ depends only on
		$\tilde{\lambda}_{t,j}$ (available at the server),
		$c_{\ell,t-1}$, $\check\sigma^2_{\ell,t}$ and
		$n^\ell_t$. Transmitting $\hat\vartheta^2_{\ell,t,j}$ costs one
		scalar per node per coordinate, and \eqref{eq:varrec} carries $O(k)$
		state at the server.
	\end{proof}

	\section{Proofs for Sections~\ref{sec:4.3}}
	\label{app:proofs4546}
	
	We retain the notation of Appendices~\ref{app:proofs4142}
	and~\ref{app:proofs4344}. Specifically, $L=L_{\bar c}$ is the global Lipschitz
	constant of $\chi^{-1}$ from Lemma~\ref{lem:inverse}(iii),
	$K_{\mathrm{cv}}:=\sup|(\chi^{-1})''|<\infty$ is the curvature bound
	\eqref{eq:chiinv2}, $b^{\mathrm{lag}}_{\ell,t}$ is the corrected lag
	constant \eqref{eq:blag}, and $\varrho_{s,t}=\prod_{u>s}^{t}(1-\beta_u)$.
	Assumption~\ref{assum:clip} (bounded transmission range) is in force,
	so that the constant $C_\omega$ of Lemma~\ref{lem:weights} is
	deterministic. Throughout, $j\le k$ is fixed.

	\begin{proof}[Proof of Lemma~\ref{lem:transfer}]
		Work on the event $\mathcal T_{\ell,t}
		:=\{r_{\ell,t,j}\ge\check\sigma^2_{\ell,t}\}$, on which the
		truncation in \eqref{eq:local_debias} is inactive and
		$\check\lambda_{\ell,t,j}
		=\chi^{-1}(r_{\ell,t,j};c_{\ell,t-1},\check\sigma^2_{\ell,t})$.
		Insert two intermediate quantities and apply the triangle
		inequality:
		\begin{align*}
			\bigl|\check\lambda_{\ell,t,j}-\lambda_{t,j}\bigr|
			\le\underbrace{\bigl|\chi^{-1}(r;c_{\ell,t-1},
				\check\sigma^2_{\ell,t})
				-\chi^{-1}(r;c_{\ell,t-1},\sigma^2_t)\bigr|}_{(\mathrm I)}\\
			+\underbrace{\bigl|\chi^{-1}(r;c_{\ell,t-1},\sigma^2_t)
				-\chi^{-1}(\chi_{\ell,t}(\lambda_{t,j});c_{\ell,t-1},
				\sigma^2_t)\bigr|}_{(\mathrm{II})}\\
			+\underbrace{\bigl|\chi^{-1}(\chi_{\ell,t}(\lambda_{t,j});
				c_{\ell,t-1},\sigma^2_t)-\lambda_{t,j}\bigr|}_{(\mathrm{III})}.	
		\end{align*}

		By Lemma~\ref{lem:inverse}(iii),
		$(\mathrm I)\le C(1+\bar c)|\check\sigma^2_{\ell,t}-\sigma^2_t|$
		and $(\mathrm{II})\le L_{\ell,t}|r_{\ell,t,j}
		-\chi_{\ell,t}(\lambda_{t,j})|
		\le L_{\bar c}|r_{\ell,t,j}-\chi_{\ell,t}(\lambda_{t,j})|$, the
		last step because $L_c=\max\{1,(1+\sqrt c)/2\}$ is nondecreasing in
		$c$ and $c_{\ell,t-1}\le\bar c$. Finally $(\mathrm{III})=0$
		identically, because $\chi_{\ell,t}$ and the inverse in
		\eqref{eq:local_debias} are taken at the same aspect ratio
		$c_{\ell,t-1}$, cf.\ Remark~\ref{rem:indexshift}, and $\chi^{-1}$ is
		the exact inverse at fixed parameters by
		Lemma~\ref{lem:inverse}(ii). No approximation enters here.
		The resulting bound is {deterministic}
		given the event $\mathcal T_{\ell,t}$. It involves no linearisation,
		no delta method and no expansion in $1/n^\ell_t$, which is precisely
		what makes it usable for exponential tail bounds.
		
		For the probability of $\mathcal T^c_{\ell,t}$, by
		Theorem~\ref{thm:attenuation} and Corollary~\ref{cor:aunder},
		$\chi_{\ell,t}(\lambda_{t,j})-\sigma^2_t
		=\lambda_{t,j}a(\lambda_{t,j};c_{\ell,t-1},\sigma^2_t)
		\ge\lambda_{t,k}\underline a\ge c\,\epsilon_0
		\sigma^2_{\min}\sqrt{\underline c}$ for a numerical constant $c$,
		so $\mathcal T^c_{\ell,t}$ forces either
		$|\varpi_{\ell,t,j}|$ or $|\check\sigma^2_{\ell,t}-\sigma^2_t|$ to
		exceed half of that gap. Under Assumption~\ref{assum:subg} the
		Hanson--Wright bound of Step~1 in the proof of
		Lemma~\ref{lem:conc} gives
		$\mathbb P(\mathcal T^c_{\ell,t})
		\le C\exp(-c\epsilon^2_0n^\ell_t)$, and a union bound over
		$\ell\in\mathcal A_t$ and $j\le k$ gives
		$O(kA_t\exp(-c\epsilon^2_0\min_\ell n^\ell_t))$.
		
		For a general profile, $\chi_{\ell,t}$ is still a strictly
		increasing bijection by Lemma~\ref{lem:inverse}(iv) with
		$\chi'_{\ell,t}\ge\underline a$ on $\lambda\ge\lambda_{t,k}$, so
		$\chi^{-1}_{\ell,t}$ is Lipschitz with constant $1/\underline a$ on
		the image of that range, i.e.\ on a fixed neighbourhood of
		$\chi_{\ell,t}(\lambda_{t,j})$. The same three-term decomposition
		applies with $L_{\bar c}$ replaced by $1/\underline a$.
	\end{proof}
	
	\begin{proof}[Proof of Lemma~\ref{lem:conc}]
		\textbf{Step 1 (per-observation Orlicz bound).}
		Fix $\ell,s,j$ and condition on $\mathcal F_{s-1}$, write
		$\bm b:=\bm\Sigma_s^{1/2}\bm u_{\ell,s-1,j}$, so
		$\|\bm b\|^2=\bar r_{\ell,s,j}$. Under
		Assumption~\ref{assum:subg} the entries of
		$\bm z_{\ell,s,i}$ are independent, centred and sub-Gaussian with
		$\|z_m\|_{\psi_2}\le CK$, so $\bm b^\top\bm z_{\ell,s,i}$ is
		sub-Gaussian with $\|\bm b^\top\bm z\|_{\psi_2}\le CK\|\bm b\|$,
		and by $\|X^2\|_{\psi_1}=\|X\|^2_{\psi_2}$,
		\begin{equation}
			\label{eq:C6psi1}
			\bigl\|\varsigma^{\mathrm c}_{\ell,s,i,j}\bigr\|_{\psi_1}
			\le\frac{2\bigl\|(\bm b^\top\bm z)^2\bigr\|_{\psi_1}}
			{\chi'_{\ell,s}(\lambda_{s,j})}
			\le\frac{CK^2\bar r_{\ell,s,j}}
			{\chi'_{\ell,s}(\lambda_{s,j})}
			\le C_K\,\vartheta_{\ell,s,j}
			\ \le\ C_K\,\bar\vartheta_{s,j},
		\end{equation}
		where we used $\bar r_{\ell,s,j}=\chi_{\ell,s}(\lambda_{s,j})
		(1+o_{\mathbb P}(1))$ and
		$\chi/\chi'=\vartheta/\sqrt2$ by \eqref{eq:local_clt}. Applying
		the Hanson--Wright inequality
		\citep{rudelson2013hanson,vershynin2018high} to the stacked vector
		$(\bm z_{\ell,s,1},\dots,\bm z_{\ell,s,n^\ell_s})$ and the matrix
		$(n^\ell_s)^{-1}\bm I_{n^\ell_s}\otimes\bm b\bm b^\top$, whose
		Frobenius and operator norms are $\|\bm b\|^2/\sqrt{n^\ell_s}$ and
		$\|\bm b\|^2/n^\ell_s$, gives
		\begin{equation}
			\label{eq:C6HW}
			\mathbb P\bigl(|\varpi_{\ell,s,j}|>u\mid\mathcal F_{s-1}\bigr)
			\le2\exp\Bigl(-c\,n^\ell_s\min\Bigl\{
			\frac{u^2}{K^4\|\bm b\|^4},\ \frac{u}{K^2\|\bm b\|^2}
			\Bigr\}\Bigr),
		\end{equation}
		whence $\|\varpi_{\ell,s,j}\|_{\psi_1}
		\le C_K\bar\vartheta_{s,j}/\sqrt{n^\ell_s}$, and by
		Lemma~\ref{lem:transfer} (or by the Lipschitz property of
		$\chi^{-1}$ directly)
		$\|\varepsilon^{\mathrm{stoch}}_{\ell,s,j}\|_{\psi_1}
		\le C_KL_{\bar c}\bar\vartheta_{s,j}/\sqrt{n^\ell_s}$, which is
		the assertion of the lemma. This already exhibits
		$\varepsilon^{\mathrm{stoch}}_{\ell,s,j}$ as a Lipschitz image of
		a centred quadratic form along an $\mathcal F_{s-1}$-measurable
		direction, as claimed.
		
		\medskip\noindent
		\textbf{Step 2 (the correct grouping).}
		For the two-regime bound \eqref{eq:bernstein} it is essential
		to apply Bernstein's inequality at the level of {individual
			observations} rather than of nodes. Using the linear
		representation of Lemma~\ref{lem:linrep}, write
		\[
		\mathcal L_{t,j}=\sum_{s\le t}\sum_{\ell\in\mathcal A_s}
		\sum_{i\le n^\ell_s}a_{s,\ell}\,
		\varsigma^{\mathrm c}_{\ell,s,i,j},
		\qquad
		a_{s,\ell}:=\frac{\varrho_{s,t}\beta_s\omega_{\ell,s,j}}{n^\ell_s},
		\]
		which is a martingale difference array with respect to the
		filtration that refines $\{\mathcal F_s\}$ by the index $i$ within
		each round. By Lemma~\ref{lem:weights} and
		Assumption~\ref{assum:balance} the weights obey the two bounds
		\begin{equation}
			\label{eq:C6weights}
			\sum_{s,\ell,i}a^2_{s,\ell}\,\bar\vartheta^2_{s,j}
			=\sum_{s\le t}\varrho^2_{s,t}\beta^2_s\,\bar\vartheta^2_{s,j}
			\sum_{\ell\in\mathcal A_s}\frac{\omega^2_{\ell,s,j}}{n^\ell_s}
			\ \le\ C_\omega^2C_{\mathrm{bal}}\,\bar{ T}_{t,j}
			\sum_{s\le t}\frac{\varrho^2_{s,t}\beta^2_s}{N_s}
			=\frac{C_\omega^2C_{\mathrm{bal}}\bar{ T}_{t,j}}
			{\mathsf N_t},
		\end{equation}
		\begin{equation}
			\label{eq:C6max}
			\max_{s,\ell}a_{s,\ell}\,\bar\vartheta_{s,j}
			\le\bar\vartheta_{t,j}\max_{s\le t}
			\varrho_{s,t}\beta_s\frac{C_\omega C_{\mathrm{bal}}}{N_s}
			\le C\,\frac{\bar\vartheta_{t,j}}{\mathsf N_t},
		\end{equation}
		the last step because
		$\varrho_{s,t}\beta_s/N_s\le
		(\sum_{u\le t}\varrho^2_{u,t}\beta^2_u/N_u)^{1/2}N_s^{-1/2}
		=\mathsf N_t^{-1/2}N_s^{-1/2}\le C\mathsf N_t^{-1}$ whenever
		$N_s\ge c\,\mathsf N_t$, which holds under
		Assumption~\ref{assum:stab}(iii)--(iv) (for fixed $\beta$,
		$\mathsf N_t\asymp N_s/\kappa_\beta$. For $\beta_t\to0$ the
		inequality holds after the geometric damping of the early rounds).
		It is exactly here that the node-level grouping would be lossy:
		grouping the $n^\ell_s$ observations of a node into a single
		summand of $\psi_1$-norm $\bar\vartheta/\sqrt{n^\ell_s}$ replaces
		\eqref{eq:C6max} by the larger quantity
		$C\bar\vartheta_{t,j}/\sqrt{A_s\mathsf N_t\,\vphantom{N}}\cdot
		\sqrt{\kappa_\beta}$, degrading the exponential regime by the
		factor $\sqrt{N_s/A_s}$. 
		
		\medskip\noindent
		\textbf{Step 3 (Bernstein for martingale differences).}
		Let $X_m$ denote the summands above in any fixed order, with
		$\mathcal G_m$ the corresponding filtration,
		$\mathbb E[X_m\mid\mathcal G_{m-1}]=0$ and
		$\|X_m\|_{\psi_1}\le\varsigma_m:=C_Ka_{s,\ell}\bar\vartheta_{s,j}$
		deterministic (deterministic by
		Assumption~\ref{assum:clip}, which makes $C_\omega$ a constant, and
		by \eqref{eq:varthetabounds}). The standard sub-exponential
		mgf bound gives, for $|\theta|\le c_0/\max_m\varsigma_m$,
		$\mathbb E[e^{\theta X_m}\mid\mathcal G_{m-1}]
		\le\exp(C\theta^2\varsigma^2_m)$. Multiplying successively and
		applying the Chernoff bound,
		\[
		\mathbb P\Bigl(\Bigl|\sum_mX_m\Bigr|\ge v\Bigr)
		\le2\exp\Bigl(-c\min\Bigl\{
		\frac{v^2}{\sum_m\varsigma^2_m},\
		\frac{v}{\max_m\varsigma_m}\Bigr\}\Bigr),
		\]
		and substituting \eqref{eq:C6weights}--\eqref{eq:C6max} yields
		\begin{equation}
			\label{eq:C6final}
			\mathbb P\bigl(|\mathcal L_{t,j}|\ge v\bigr)
			\le2\exp\Bigl(-c_1\mathsf N_t\min\Bigl\{
			\frac{v^2}{\bar{ T}_{t,j}},\ 
			\frac{v}{\bar\vartheta_{t,j}}\Bigr\}\Bigr),
		\end{equation}
		with $c_1$ depending only on $K,C_{\mathrm{bal}},C_\omega,\bar c$.
		This is \eqref{eq:bernstein} for the linearised martingale.
		The martingale $\mathcal M_{t,j}$ of \eqref{eq:agg_unroll} differs
		from $\mathcal L_{t,j}$ by
		$\sum_s\varrho_{s,t}\beta_s\sum_\ell\omega_{\ell,s,j}
		\{\varrho^{\mathrm{rem}}_{\ell,s,j}
		-\mathbb E[\varrho^{\mathrm{rem}}_{\ell,s,j}\mid
		\mathcal F_{s-1}]\}$, which by \eqref{eq:chiinv2} is bounded by
		$K_{\mathrm{cv}}\max_{\ell,s}\varpi^2_{\ell,s,j}$ up to the
		plug-in terms. By \eqref{eq:C6HW} and a union bound over the at
		most $kA_st$ pairs involved,
		\begin{equation}
			\label{eq:C6rem}
			\mathbb P\Bigl(\max_{s\le t}\max_{\ell\in\mathcal A_s}
			\varpi^2_{\ell,s,j}
			>\frac{u\,\bar{ T}_{s,j}}{\min_\ell n^\ell_s}\Bigr)
			\ \le\ 2k\Bigl(\sum_{s\le t}A_s\Bigr)e^{-c\,u},
			\qquad u\le c\min_\ell n^\ell_s .
		\end{equation}
		Consequently $\mathcal M_{t,j}$ obeys \eqref{eq:C6final} on an
		event of probability at least $1-2k(\sum_sA_s)e^{-cu}$, at the
		price of enlarging the systematic error \eqref{eq:sys} by the
		factor $u$ in its third term. Under
		Assumption~\ref{assum:moment} alone none of this is available and
		only Chebyshev's inequality applies, giving
		$\mathbb P(|\mathcal M_{t,j}|\ge v)
		\le\bar{ T}_{t,j}/(\mathsf N_tv^2)$ by Step~1 of the proof of
		Theorem~\ref{thm:fusion}.
	\end{proof}
	
	\begin{proof}[Proof of Theorem~\ref{thm:nonasymp}]
		\textbf{Step 1 (deterministic decomposition).} 
		Let 
		$$\mathcal T:=\bigcap_{s\le t}\bigcap_{\ell\in\mathcal A_s}
		\mathcal T_{\ell,s}$$
		be the event that no truncation is active at
		any active node in any round. On $\mathcal T$, combining
		Lemma~\ref{lem:decomp_new} with Lemma~\ref{lem:linrep},
		\begin{align}
			\label{eq:C7dec}
			\bigl|\tilde{\lambda}_{t,j}-\lambda_{t,j}\bigr|
			\ \le\ & |\mathcal L_{t,j}|
			+\sum_{s\le t}\varrho_{s,t}
			\Bigl\{(1-\beta_s)|\delta_{s,j}|
			+\beta_s\max_{\ell\in\mathcal A_s}
			\bigl|b_{\ell,s,j}-\tfrac{\bar r_{\ell,s,j}
				-\chi_{\ell,s}(\lambda_{s,j})}{\chi'_{\ell,s}}\bigr|\Bigr\}\\			
			&+\sum_{s\le t}\varrho_{s,t}\beta_s\max_{\ell}\bigl|\varrho^{\mathrm{rem}}_{\ell,s,j}
			\bigr|
			+\varrho_{0,t}\bigl|\tilde\lambda_{0,j}-\lambda_{0,j}\bigr| . \nonumber
		\end{align}
		By Step~2 of the proof of Lemma~\ref{lem:linrep} and
		Lemma~\ref{lem:transfer}, the second and third braces are bounded
		by $C\max_\ell(\alpha^{\mathrm{lag}}_{\ell,s}+\zeta_s+\Delta_{\ell,s}
		+(\xi^\star)^s)$ and by $K_{\mathrm{cv}}\max_\ell
		\varpi^2_{\ell,s,j}
		+C(1+\bar c)\max_\ell|\check\sigma^2_{\ell,s}-\sigma^2_s|$
		respectively. On the event of \eqref{eq:C6rem} the latter is at
		most $C'u\,\bar\vartheta^2_{s,j}/\min_\ell n^\ell_s$, so the whole
		predictable part of \eqref{eq:C7dec} is bounded by
		$\mathcal E_{t,j}(u)$, defined as \eqref{eq:sys} with its
		third term multiplied by $u$. Hence, for $\varepsilon
		>\mathcal E_{t,j}(u)$,
		\[
		\bigl\{|\tilde{\lambda}_{t,j}-\lambda_{t,j}|\ge\varepsilon\bigr\}
		\ \subseteq\
		\bigl\{|\mathcal L_{t,j}|\ge\varepsilon-\mathcal E_{t,j}(u)\bigr\}
		\ \cup\ \mathcal T^c\ \cup\ \mathcal R^c_u ,
		\]
		where $\mathcal R_u$ is the event in \eqref{eq:C6rem}.
		
		\medskip\noindent
		\textbf{Step 2 (assembling the bound).}
		Applying \eqref{eq:C6final} with
		$v=\varepsilon-\mathcal E_{t,j}(u)$,
		\begin{align}
			\label{eq:C7final}
			\mathbb P\bigl(|\tilde{\lambda}_{t,j}-\lambda_{t,j}|
			\ge\varepsilon\bigr)
			\le & 2\exp\Bigl(-c_1\mathsf N_t\min\Bigl\{
			\frac{(\varepsilon-\mathcal E_{t,j}(u))^2}{\bar{ T}_{t,j}},\ 
			\frac{\varepsilon-\mathcal E_{t,j}(u)}{\bar\vartheta_{t,j}}
			\Bigr\}\Bigr)\\
			&+\mathbb P(\mathcal T^c)+2k\Bigl(\sum_{s\le t}A_s\Bigr)e^{-cu}.\nonumber
		\end{align}
		By the last part of the proof of Lemma~\ref{lem:transfer} and a
		union bound over rounds,
		$\mathbb P(\mathcal T^c)\le C\sum_{s\le t}kA_s
		\exp(-c\epsilon^2_0\min_{\ell}n^\ell_s)$, which is $o(1)$ whenever
		$\min_\ell n^\ell_s\gg\log(kA_st)$. Taking
		$u=\log(kt\max_sA_s)+w$ makes the last term $2e^{-cw}$, also
		$o(1)$. With $u$ so chosen, $\mathcal E_{t,j}(u)$ differs from
		$\mathcal E_{t,j}$ only by a logarithmic factor in its curvature
		term, and \eqref{eq:C7final} reduces to \eqref{eq:nonasymp}.
		Both correction probabilities are of the displayed exponential form,
		so they are collected in the explicit second term of
		\eqref{eq:nonasymp} rather than absorbed into an asymptotic
		remainder. This is what makes the statement non-asymptotic at every
		fixed $t$.
		
		\medskip\noindent
		\textbf{Step 3 (why the floor is where it is).}
		The bound is vacuous for $\varepsilon\le\mathcal E_{t,j}$ because
		the systematic part of \eqref{eq:C7dec} is deterministic given the
		design and is not reduced by any amount of data. Conversely, every
		term of \eqref{eq:sys} vanishes under
		Assumptions~\ref{assum:drift}--\ref{assum:step} with
		$\min_\ell n^\ell_s\to\infty$, the drift term by
		Lemma~\ref{lem:B2}(iv), the lag term by Lemma~\ref{lem:B2}(ii),
		the curvature term because $\bar\vartheta^2_{s,j}=O(1)$ by
		\eqref{eq:varthetabounds}, and the initialization term because
		$\varrho_{0,t}\to0$. Two further terms are absent, namely the
		misalignment
		floor $\max_\ell d^{(\ell)2}_{s-1}\to\max_\ell D^{\star2}_\ell>0$
		and the Jensen gap $\mathcal D_s$. They do not appear precisely because
		$\chi^{-1}_{\ell,s}$ was applied at the node
		(Theorems~\ref{thm:tracking} and~\ref{thm:ordering}).
	\end{proof}
	
	\begin{proof}[Proof of Corollary~\ref{cor:rate}]
		Fix $\delta'\in(0,1)$ and set
		\[
		v^\star:=\bar\vartheta_{t,j}
		\sqrt{\frac{\log(2/\delta')}{c_1\mathsf N_t}}.
		\]
		Since $\bar{ T}_{t,j}=\bar\vartheta^2_{t,j}$, the condition
		$\log(2/\delta')\le c_1\mathsf N_t$ is exactly
		$v^\star\le\bar\vartheta_{t,j}$, i.e.\
		$(v^\star)^2/\bar{ T}_{t,j}\le v^\star/\bar\vartheta_{t,j}$, so
		the minimum in \eqref{eq:nonasymp} is attained by the
		sub-Gaussian branch and
		\[
		2\exp\Bigl(-c_1\mathsf N_t
		\frac{(v^\star)^2}{\bar{ T}_{t,j}}\Bigr)
		=2\exp\bigl(-\log(2/\delta')\bigr)=\delta' .
		\]
		Applying Theorem~\ref{thm:nonasymp} with
		$\varepsilon=\mathcal E_{t,j}+v^\star$ gives
		$\mathbb P(|\tilde{\lambda}_{t,j}-\lambda_{t,j}|
		\ge\mathcal E_{t,j}+v^\star)\le\delta'+o(1)$, which is
		\eqref{eq:hp}. If $\mathcal E_{t,j}=O(\mathsf N_t^{-a})$ then
		the right-hand side of \eqref{eq:hp} is
		$O(\mathsf N_t^{-a}+\bar\vartheta_{t,j}
		\sqrt{\log(2/\delta')/\mathsf N_t})$. The two terms are of the same
		order precisely when $a=1/2$, in which case the overall rate is
		the parametric $O(\mathsf N_t^{-1/2})$ and, by
		Theorem~\ref{thm:clt} and Corollary~\ref{cor:C4}, the constant
		$\bar\vartheta_{t,j}$ agrees with the limiting standard deviation
		$\Omega_j$ up to the weight-efficiency factor
		$1+(1-\mathrm{RE}_{t,j})\mathcal K_{t,j}$ of
		Propositions~\ref{prop:dispersion} and~\ref{prop:softmax_releff},
		exactly when the aspect ratios are homogeneous, in which case
		$\mathcal K_{t,j}\to0$. To invert
		\eqref{eq:hp} for a target precision $\varepsilon$ with
		confidence $1-\delta'$, it suffices that
		$\mathcal E_{t,j}\le\varepsilon/2$ and
		$\mathsf N_t\ge4\bar{ T}_{t,j}\log(2/\delta')
		/(c_1\varepsilon^2)$, which is the sample-size statement of
		Remark~\ref{rem:tworegimes}.
	\end{proof}

	\section{Efficiency}
	
	\subsection{Efficiency of the Adaptive Weights\label{sec:4.4}}
	
	Theorem~\ref{thm:fusion} establishes consistency for all weights
	satisfying Lemma~\ref{lem:weights}. We ask how much variance is
	lost by using the fluctuation-based weights \eqref{eq:weights}
	instead of the variance-optimal weights implied by
	Theorem~\ref{thm:local_clt}. Local correction has already removed the
	dimensional bias, so this is the remaining design question at the
	server.
	
	Because the transmitted coordinates are conditionally independent
	across $\ell$ with conditional variances
	$\vartheta^2_{\ell,t,j}/n_t^\ell$, the optimal weights for coordinate
	$j$ are the inverse-variance weights
	\begin{equation}
		\label{eq:opt_weights}
		\omega^{\mathrm{opt}}_{\ell,t,j}
		=\frac{n_t^\ell/\vartheta^2_{\ell,t,j}}
		{\sum_{m\in\mathcal A_t}n_t^m/\vartheta^2_{m,t,j}},
		\qquad
		\mathcal I_{t,j}
		:=\sum_{\ell\in\mathcal A_t}
		\frac{n_t^\ell}{\vartheta^2_{\ell,t,j}},
	\end{equation}
	with oracle one-round variance $\mathcal I_{t,j}^{-1}$. By
	Theorem~\ref{thm:local_clt} both $\vartheta^2_{\ell,t,j}$ and
	$n^\ell_t$ are computable at node $\ell$ from
	$(\check\lambda_{\ell,t,j},c_{\ell,t-1},\check\sigma^2_{\ell,t},
	n_t^\ell)$, so \eqref{eq:opt_weights} costs one extra transmitted
	scalar per coordinate.
	
	We retain \eqref{eq:weights} as the default for the reasons given
	in Remark~\ref{rem:why_not_plugin}. The plug-in weights are optimal
	for the variance alone, so by \eqref{eq:local_moments} they ignore the
	node-specific bias $b_{\ell,t,j}=O(b^{\mathrm{lag}}_{\ell,t})$, and
	\eqref{eq:weights} can dominate them in mean squared error
	whenever $\max_\ell b^{\mathrm{lag}}_{\ell,t}
	\gtrsim\vartheta_{t,j}(n^\ell_t)^{-1/2}$. The propositions below
	quantify what this costs when the bias is negligible.

	\begin{prop}[Non-separability of the optimal weights across coordinates]
		\label{prop:nonsep} A single node-weight vector
		$\{\omega_{\ell,t}\}_{\ell\in\mathcal A_t}$ is simultaneously
		variance-optimal for two coordinates $j\ne j'$ if and only if
		\begin{equation}
			\label{eq:sepcrit}
			\frac{\vartheta^2_{\ell,t,j}}{\vartheta^2_{\ell,t,j'}}
			\quad\text{does not depend on }\ell\in\mathcal A_t .
		\end{equation}
		The batch sizes cancel in \eqref{eq:sepcrit}, so the obstruction
		lies entirely in the variance profile and not in the batch profile.
		
		In the canonical case with a common noise level,
		$\vartheta^2_{\ell,t,j}=\vartheta^2(\lambda_{t,j},c_{\ell,t-1})$
		with $\vartheta^2$ as in \eqref{eq:vartheta}, and
		\begin{equation}
			\label{eq:dlogvartheta}
			\frac{\partial}{\partial c}\log\vartheta^2(\lambda,c)
			=\frac{-2\sigma^{2}\lambda(\lambda+\sigma^{2})}
			{(\lambda+c\sigma^{2})
				\bigl(\lambda^{2}+2c\sigma^{2}\lambda+c\sigma^{4}\bigr)} ,
		\end{equation}
		which is strictly negative and non-constant in $\lambda$ on the
		admissible range $\lambda>\sigma^2\sqrt c$. Consequently
		\eqref{eq:sepcrit} fails outside a closed Lebesgue-null subset of
		$\{(\lambda_{t,j},\lambda_{t,j'},\{c_{\ell,t-1}\})\}$ having empty
		interior, in particular for generic distinct spikes and
		heterogeneous aspect ratios. When the aspect ratios are
		homogeneous, $\vartheta^2_{\ell,t,j}$ does not depend on $\ell$,
		so \eqref{eq:sepcrit} holds for every pair and the vector-level
		metric loses nothing.
	\end{prop}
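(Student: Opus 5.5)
The plan has three parts. First characterise when one weight vector is inverse-variance optimal for two coordinates at once. Then compute the $c$-derivative of $\log\vartheta^2$ in closed form. Finally turn non-constancy of that derivative into a genericity statement.

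\emph{Characterisation of simultaneous optimality.} For fixed $j$, the one-round variance is $\mathcal V_{t,j}(\omega)=\sum_\ell\omega_\ell^2\vartheta^2_{\ell,t,j}/n^\ell_t$, minimised over the simplex. This is a strictly convex quadratic, because every $\vartheta^2_{\ell,t,j}>0$ by \eqref{eq:varthetabounds}. Its minimiser is therefore unique and equals \eqref{eq:opt_weights}, as the Lagrange condition $\omega_\ell\vartheta^2_{\ell,t,j}/n^\ell_t=\text{const}$ shows. A vector is optimal for both $j$ and $j'$ if and only if $\omega^{\mathrm{opt}}_{\cdot,t,j}=\omega^{\mathrm{opt}}_{\cdot,t,j'}$. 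That holds if and only if $n^\ell_t/\vartheta^2_{\ell,t,j}$ and $n^\ell_t/\vartheta^2_{\ell,t,j'}$ are proportional in $\ell$. The factor $n^\ell_t$ cancels from the ratio, which gives \eqref{eq:sepcrit} and the remark about batch sizes.

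\emph{The derivative formula.} From \eqref{eq:vartheta}, write
\[
\log\vartheta^2=\log2+2\log\lambda+2\log(\lambda+\sigma^2)+2\log(\lambda+c\sigma^2)-2\log(\lambda^2+2c\sigma^2\lambda+c\sigma^4).
\]
Differentiating in $c$ gives
\[
\frac{2\sigma^2}{\lambda+c\sigma^2}-\frac{2(2\sigma^2\lambda+\sigma^4)}{Q},
\qquad Q:=\lambda^2+2c\sigma^2\lambda+c\sigma^4 .
\]
Put these over the common denominator. Expanding, the numerator is $2\sigma^2\{Q-(\lambda+c\sigma^2)(2\lambda+\sigma^2)\}$, and the brace equals $-\lambda^2-\sigma^2\lambda=-\lambda(\lambda+\sigma^2)$. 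This yields \eqref{eq:dlogvartheta}, and strict negativity follows immediately.

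\emph{Non-constancy in $\lambda$.} Call the derivative $h(\lambda,c)$. As $\lambda\to\infty$ the numerator is of order $\lambda^2$ and the denominator of order $\lambda^3$, so $h\to0$. On the other hand $h$ is strictly negative at any finite admissible $\lambda$. Hence $h$ cannot be constant on $(\sigma^2\sqrt c,\infty)$, and since $h$ is a rational function, $h(\cdot,c)-h(\mu,c)$ has only finitely many zeros in $\lambda$ for any fixed $\mu$.

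\emph{Genericity.} Fix two nodes $\ell\ne m$ with $c_\ell\ne c_m$. The criterion \eqref{eq:sepcrit} forces
\[
F(\lambda,\lambda',c_\ell,c_m):=\log\vartheta^2(\lambda,c_\ell)-\log\vartheta^2(\lambda',c_\ell)-\log\vartheta^2(\lambda,c_m)+\log\vartheta^2(\lambda',c_m)=0,
\]
and by the fundamental theorem of calculus
\[
F=\int_{c_m}^{c_\ell}\{h(\lambda,c)-h(\lambda',c)\}\,dc .
\]
$F$ is real-analytic on the connected open admissible domain. It is not identically zero there: differentiating in $c_\ell$ gives $h(\lambda,c_\ell)-h(\lambda',c_\ell)$, which is nonzero for generic $\lambda\ne\lambda'$ by the previous step. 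The zero set of a nontrivial real-analytic function is closed, Lebesgue-null, and has empty interior. The failure set of \eqref{eq:sepcrit} is contained in this zero set, which settles the generic statement. In the homogeneous case the ratio in \eqref{eq:sepcrit} is trivially free of $\ell$.

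The main obstacle is this last step: one has to exhibit $F\not\equiv0$ cleanly. The fallback is the explicit limit $\lambda'\to\infty$, where $F\to\log\vartheta^2(\lambda,c_\ell)-\log\vartheta^2(\lambda,c_m)\ne0$ by strict monotonicity in $c$.
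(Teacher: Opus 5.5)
Your proposal is correct and follows essentially the same route as the paper: the unique inverse-variance minimiser with $n^\ell_t$ cancelling, the same closed form for $\partial_c\log\vartheta^2$, non-constancy in $\lambda$ from its vanishing as $\lambda\to\infty$, and genericity via the zero set of a non-trivial real-analytic double difference. The paper writes that double difference as the double integral of $\partial^2_{c\lambda}\log\vartheta^2$ and uses the explicit threshold value $-1/\{c(1+\sqrt c)\}$ where you use strict negativity. Your $\lambda'\to\infty$ fallback is also valid, and it shows the double difference is not identically zero slightly more directly, using only strict monotonicity of $\log\vartheta^2$ in $c$.
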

	
	Proposition~\ref{prop:nonsep} is the reason the metric
	\eqref{eq:res} and the weights \eqref{eq:weights} are formed
	one coordinate at a time. A single weight vector shared by the $k$
	coordinates would be variance-optimal for at most one of them, and
	generically for none, so the coordinatewise construction is not a
	refinement of the scheme but a requirement for it to be efficient at
	all. Because every coordinate of $\check{\bm\lambda}_{\ell,t}$ is
	already available at the server, the requirement is met without
	enlarging the $O(k)$ communication budget, the only additional cost
	being $k$ scalar states per node in place of one.
	
	Two special cases are worth recording. When the aspect ratios and
	noise levels are homogeneous, $\vartheta^2_{\ell,t,j}$ does not depend
	on $\ell$, criterion \eqref{eq:sepcrit} holds for every pair of
	coordinates, and the $k$ weight vectors coincide, so nothing is gained
	and nothing is lost by separating them. When $k=1$ the distinction is
	vacuous. In every other configuration the separation is what makes the
	efficiency statement below available for each coordinate
	simultaneously.
	
	The gap is analysed in two steps, neither of which is an assumption.
	Proposition~\ref{prop:dispersion} determines how much any weighting
	scheme can gain over uniform weighting, and
	Proposition~\ref{prop:softmax_releff} determines which fraction of
	that gain the soft-max realises. Proximity to
	$\omega^{\mathrm{opt}}$ is measured in relative rather than additive
	terms throughout, since $\omega^{\mathrm{opt}}\asymp A_t^{-1}$ and
	$\mathcal I_{t,j}\asymp N_t/\bar\vartheta^2_{t,j}$ make an additive
	error $\epsilon_A$ contribute $\asymp A_t^2\epsilon_A^2$ to
	\eqref{eq:eff_gap}, so an additive requirement would read
	$o(A_t^{-1})$ rather than $o(A_t^{-1/2})$.
	
	\begin{prop}[Efficiency gap]
		\label{prop:efficiency}
		Fix $j\le k$. For a weight vector
		$\omega=(\omega_\ell)_{\ell\in\mathcal A_t}$ on the simplex, let
		$\mathcal V_{t,j}(\omega)
		=\sum_\ell\omega_\ell^2\vartheta^2_{\ell,t,j}/n_t^\ell$ be the
		conditional one-round variance of the aggregated statistic. Then
		$\mathcal V_{t,j}(\omega)\ge\mathcal I^{-1}_{t,j}$, with equality iff
		$\omega=\omega^{\mathrm{opt}}_{\cdot,t,j}$, and the gap obeys the
		exact identity and relative bound
		\begin{align}
			\label{eq:eff_gap}
			\frac{\mathcal V_{t,j}(\omega)}{\mathcal I^{-1}_{t,j}}-1
			=&\mathcal I_{t,j}
			\sum_{\ell\in\mathcal A_t}
			\frac{\vartheta^2_{\ell,t,j}}{n_t^\ell}
			\bigl(\omega_\ell
			-\omega^{\mathrm{opt}}_{\ell,t,j}\bigr)^2\\
			=&\sum_{\ell\in\mathcal A_t}\omega^{\mathrm{opt}}_{\ell,t,j}
			\Bigl(\frac{\omega_\ell}
			{\omega^{\mathrm{opt}}_{\ell,t,j}}-1\Bigr)^2
			\ \le\
			\max_{\ell\in\mathcal A_t}
			\Bigl|\frac{\omega_\ell}
			{\omega^{\mathrm{opt}}_{\ell,t,j}}-1\Bigr|^{2}. \nonumber
		\end{align}
		The gap is quadratic in the relative weight error, which is what
		makes the second-order analysis of
		Proposition~\ref{prop:softmax_releff} possible. Irrespective of any
		calibration, Assumptions~\ref{assum:temp} and~\ref{assum:balance}
		give the crude bound $\mathcal V_{t,j}(\omega)/
		\mathcal I^{-1}_{t,j}\le C_\omega^2C_{\mathrm{bal}}\,
		{ T}_{t,j}/\inf_\ell\vartheta^2_{\ell,t,j}=O(1)$, so the
		efficiency loss is bounded in all cases. The statement is applied
		below with $\omega=\omega_{\cdot,t,j}$, the weights
		\eqref{eq:weights} of coordinate $j$.
	\end{prop}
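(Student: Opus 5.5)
This is a weighted least-squares computation on the simplex, so I will treat it as pure algebra. Abbreviate $v_\ell:=\vartheta^2_{\ell,t,j}/n_t^\ell>0$ and $\mathcal I:=\mathcal I_{t,j}=\sum_\ell v_\ell^{-1}$, so that $\omega^{\mathrm{opt}}_\ell=v_\ell^{-1}/\mathcal I$ and $\mathcal V(\omega)=\sum_\ell\omega_\ell^2v_\ell$. Write $\omega=\omega^{\mathrm{opt}}+\Delta$, where $\sum_\ell\Delta_\ell=0$ because both vectors lie on the simplex. Expanding the square gives
\[
\mathcal V(\omega)=\mathcal V(\omega^{\mathrm{opt}})+2\sum_\ell\omega^{\mathrm{opt}}_\ell v_\ell\Delta_\ell+\sum_\ell v_\ell\Delta_\ell^2 .
\]
The cross term vanishes because $\omega^{\mathrm{opt}}_\ell v_\ell=\mathcal I^{-1}$ is constant in $\ell$ and $\sum_\ell\Delta_\ell=0$. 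Moreover $\mathcal V(\omega^{\mathrm{opt}})=\sum_\ell v_\ell^{-1}/\mathcal I^2=\mathcal I^{-1}$.

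From this I get the lower bound $\mathcal V\ge\mathcal I^{-1}$. Equality holds if and only if $\sum_\ell v_\ell\Delta_\ell^2=0$, that is, $\Delta=0$, since every $v_\ell>0$ (by \eqref{eq:varthetabounds} $\vartheta^2$ is bounded below, and $n_t^\ell$ is finite). Dividing by $\mathcal I^{-1}$ gives the first line of \eqref{eq:eff_gap}.

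For the second form, I substitute $v_\ell=1/(\mathcal I\,\omega^{\mathrm{opt}}_\ell)$. Then
\[
\mathcal I\,v_\ell\Delta_\ell^2=\frac{\Delta_\ell^2}{\omega^{\mathrm{opt}}_\ell}=\omega^{\mathrm{opt}}_\ell\Bigl(\frac{\omega_\ell}{\omega^{\mathrm{opt}}_\ell}-1\Bigr)^2 .
\]
The max bound follows because the $\omega^{\mathrm{opt}}_\ell$ are nonnegative and sum to one, so the sum is a convex combination and is bounded by its largest term.

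For the crude bound I use Lemma~\ref{lem:weights}, which gives $\omega_\ell\le C_\omega A_t^{-1}$, together with the computation \eqref{eq:varorder}. These yield $\mathcal V(\omega)\le C_\omega^2C_{\mathrm{bal}}T_{t,j}/N_t$. For the matching lower side I need $\mathcal I^{-1}\ge\inf_\ell\vartheta^2_{\ell,t,j}/N_t$. This holds because $\mathcal I=\sum_\ell n_t^\ell/\vartheta^2_{\ell,t,j}\le N_t/\inf_\ell\vartheta^2_{\ell,t,j}$. Taking the ratio gives $\mathcal V/\mathcal I^{-1}\le C_\omega^2C_{\mathrm{bal}}\,T_{t,j}/\inf_\ell\vartheta^2_{\ell,t,j}$. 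This is $O(1)$ by \eqref{eq:varthetabounds}, since $T_{t,j}$ is bounded above and $\inf_\ell\vartheta^2_{\ell,t,j}$ is bounded below.

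None of these steps is a real obstacle. The only place needing care is the vanishing of the cross term, which rests on both weight vectors lying on the simplex and on $\omega^{\mathrm{opt}}_\ell v_\ell$ being constant in $\ell$. I will state explicitly that $\omega$ may be any simplex vector, random or not, since the identity is deterministic given $\mathcal F_{t-1}$.
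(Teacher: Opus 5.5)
Your proposal is correct and follows essentially the paper's route: the paper also proves the exact identity by expanding $\sum_\ell v_\ell(\omega_\ell-\omega^{\mathrm{opt}}_{\ell,t,j})^2$, using that $v_\ell\omega^{\mathrm{opt}}_{\ell,t,j}=\mathcal I^{-1}_{t,j}$ is constant in $\ell$ while both vectors sum to one. It then rewrites the gap as an $\omega^{\mathrm{opt}}$-weighted convex combination, and obtains the crude bound from Lemma~\ref{lem:weights}, Assumption~\ref{assum:balance} and $\mathcal I_{t,j}\le N_t/\inf_\ell\vartheta^2_{\ell,t,j}$, exactly as you do. The only differences are that the paper derives the lower bound and its equality case first by Cauchy--Schwarz rather than reading them off the identity, and that the crude bound (unlike the identity) needs $\max_\ell\omega_\ell\le C_\omega A_t^{-1}$, so it holds for the soft-max weights rather than for an arbitrary simplex vector.
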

	
	\begin{prop}[Dispersion bound on the value of weighting]
		\label{prop:dispersion}
		Let $v_{\ell,t,j}:=\vartheta^2_{\ell,t,j}/n^\ell_t$ and let
		$\mathcal V^{\mathrm{unif}}_{t,j}
		:=\mathcal V_{t,j}(A_t^{-1}\bm 1)
		=A_t^{-2}\sum_{\ell\in\mathcal A_t}v_{\ell,t,j}$ be the one-round
		variance under uniform weighting. Then
		\begin{equation}
			\label{eq:amhm}
			\frac{\mathcal V^{\mathrm{unif}}_{t,j}}
			{\mathcal I^{-1}_{t,j}}
			=\Bigl(\frac1{A_t}\sum_{\ell\in\mathcal A_t}v_{\ell,t,j}\Bigr)
			\Bigl(\frac1{A_t}\sum_{\ell\in\mathcal A_t}
			v^{-1}_{\ell,t,j}\Bigr)
			=:1+\mathcal K_{t,j}
			\ \ge\ 1 ,
		\end{equation}
		with equality iff $v_{\ell,t,j}$ is constant in $\ell$.
		Consequently {no} weighting scheme whatsoever can reduce the
		standard deviation of the aggregate below
		$(1+\mathcal K_{t,j})^{-1/2}$ times its value under uniform
		weighting, so the attainable relative reduction is at most
		$1-(1+\mathcal K_{t,j})^{-1/2}$.
	\end{prop}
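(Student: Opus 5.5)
The proof reduces to elementary algebra once the oracle variance is written out. From \eqref{eq:opt_weights}, the oracle one-round variance is $\mathcal I^{-1}_{t,j}$ with $\mathcal I_{t,j}=\sum_\ell n^\ell_t/\vartheta^2_{\ell,t,j}=\sum_\ell v^{-1}_{\ell,t,j}$, using the notation $v_{\ell,t,j}=\vartheta^2_{\ell,t,j}/n^\ell_t$. Under uniform weights $A_t^{-1}\bm 1$, Proposition~\ref{prop:efficiency} gives $\mathcal V^{\mathrm{unif}}_{t,j}=A_t^{-2}\sum_\ell v_{\ell,t,j}$. Dividing,
\[
\frac{\mathcal V^{\mathrm{unif}}_{t,j}}{\mathcal I^{-1}_{t,j}}
=A_t^{-2}\Bigl(\sum_\ell v_{\ell,t,j}\Bigr)\Bigl(\sum_\ell v^{-1}_{\ell,t,j}\Bigr),
\]
which is the product of the arithmetic means of $v$ and $v^{-1}$. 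This is the displayed identity \eqref{eq:amhm}, and it defines $\mathcal K_{t,j}$.

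For the inequality $1+\mathcal K_{t,j}\ge1$, I apply Cauchy--Schwarz to the vectors $(v^{1/2}_{\ell,t,j})_\ell$ and $(v^{-1/2}_{\ell,t,j})_\ell$, which gives $A_t^2\le(\sum_\ell v)(\sum_\ell v^{-1})$. Equality holds iff the two vectors are proportional, that is, iff $v_{\ell,t,j}$ is constant in $\ell$. Equivalently, this is the AM--HM inequality. The quantities are well defined because $v_{\ell,t,j}>0$ by \eqref{eq:varthetabounds}.

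For the consequence, Proposition~\ref{prop:efficiency} gives $\mathcal V_{t,j}(\omega)\ge\mathcal I^{-1}_{t,j}=\mathcal V^{\mathrm{unif}}_{t,j}/(1+\mathcal K_{t,j})$ for every $\omega$ on the simplex. Taking square roots, the standard deviation under any weighting is at least $(1+\mathcal K_{t,j})^{-1/2}$ times the uniform one. The attainable relative reduction $1-\sqrt{\mathcal V_{t,j}(\omega)/\mathcal V^{\mathrm{unif}}_{t,j}}$ is therefore at most $1-(1+\mathcal K_{t,j})^{-1/2}$, with the bound attained by $\omega^{\mathrm{opt}}$.

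There is no real obstacle here. The only point needing care is that ``any weighting scheme'' must be read conditionally on $\mathcal F_{t-1}$ with predictable weights, so that the one-round variance really is $\mathcal V_{t,j}(\omega)$ as computed in Lemma~\ref{lem:wavg}. I will state this explicitly, and then the bound is immediate.
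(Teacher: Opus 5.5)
Your proposal is correct and matches the paper's proof: you write the ratio as the product of the arithmetic means of $v_{\ell,t,j}$ and $v^{-1}_{\ell,t,j}$ and bound it by AM--HM (your Cauchy--Schwarz argument is the standard proof of that inequality). The ceiling on the standard-deviation reduction then follows, as in the paper, from the minimality of $\mathcal I^{-1}_{t,j}$ over the simplex (Step~1 of the proof of Proposition~\ref{prop:efficiency}).
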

	
	\begin{rem}
		\label{rem:dispersion}
		$\mathcal K_{t,j}$ is the arithmetic-to-harmonic mean ratio of
		$\{v_{\ell,t,j}\}$ minus one, so
		$\mathcal K_{t,j}=\operatorname{CV}^2(v_{\cdot,t,j})
		\{1+O(\operatorname{CV})\}$ for small dispersion, and it is
		computable at the server at every round. If
		$\vartheta^2_{\ell,t,j}$ is constant in $\ell$ and the batch sizes
		are spread uniformly over $[\underline n,R\underline n]$, then
		\begin{equation}
			\label{eq:Klog}
			1+\mathcal K_{t,j}
			\ \longrightarrow\ \frac{(1+R)\log R}{2(R-1)} ,
		\end{equation}
		so the attainable reduction grows logarithmically in $R$, and even
		$R=8$ permits at most a $13.5\%$ reduction in standard deviation.
		Since Assumption~\ref{assum:stab}(iii) bounds the batch ratios
		without flattening them and Assumption~\ref{assum:balance}
		constrains batch sizes rather than aspect ratios,
		$\mathcal K_{t,j}$ is in general bounded away from zero in the
		regime of Section~\ref{sec:4.2}.
	\end{rem}
	
	\begin{prop}[Relative efficiency of the soft-max]
		\label{prop:softmax_releff}
		Fix $j\le k$ and let $\omega_{\cdot,t,j}$ be the weights
		\eqref{eq:weights}. Assume $\mathcal K_{t,j}>0$ and put
		\begin{equation}
			\label{eq:wdef}
			\tilde{w}_{\ell,t,j}:=\frac{v_{\ell,t,j}}{\bar v_{t,j}}-1,
			\qquad
			\bar v_{t,j}:=\frac1{A_t}
			\sum_{\ell\in\mathcal A_t}v_{\ell,t,j},
			\qquad
			\tau_{t,j}
			=\frac{\gamma_{t,j}}{\kappa_\rho\bar v_{t,j}} ,
		\end{equation}
		where $\gamma_{t,j}>0$ is the mismatch between the implemented
		temperature and the reference value
		$(\kappa_\rho\bar v_{t,j})^{-1}$. In the relative parametrisation
		\eqref{eq:weights} the exponent is
		$\tilde\tau_t\Psi_{\ell,t,j}$, so by \eqref{eq:fluct_conc} and
		Lemma~\ref{lem:fluct},
		\begin{equation}
			\label{eq:gamma_tilde}
			\gamma_{t,j}
			=\tilde\tau_t\bigl\{1+O_{\mathbb P}(\sqrt{\eta_t/A_t})\bigr\} ,
		\end{equation}
		the constant $\kappa_\rho$ having cancelled in
		$V_{\ell,t-1,j}/\bar V_{t-1,j}$. The condition $\gamma_{t,j}\to1$
		below is therefore exactly the statement that the dimensionless
		temperature is held at the reference value $\tilde\tau_t=1$ of
		Assumption~\ref{assum:temp}, and requires no knowledge of $\rho$ or
		of $\bar v_{t,j}$. Let the relative efficiency of a
		weight vector $\omega$ be the fraction of the attainable gain of
		Proposition~\ref{prop:dispersion} that it realises,
		\begin{equation}
			\label{eq:releff}
			\mathrm{RE}_{t,j}(\omega)
			:=\frac{\mathcal V^{\mathrm{unif}}_{t,j}
				-\mathcal V_{t,j}(\omega)}
			{\mathcal V^{\mathrm{unif}}_{t,j}-\mathcal I^{-1}_{t,j}}
			=1-\frac{1}{\mathcal K_{t,j}}
			\sum_{\ell\in\mathcal A_t}\omega^{\mathrm{opt}}_{\ell,t,j}
			\Bigl(\frac{\omega_{\ell,t,j}}
			{\omega^{\mathrm{opt}}_{\ell,t,j}}-1\Bigr)^2 ,
		\end{equation}
		the second expression being exact. Then
		$\mathrm{RE}_{t,j}(A_t^{-1}\bm 1)=0$,
		$\mathrm{RE}_{t,j}(\omega^{\mathrm{opt}}_{\cdot,t,j})=1$, and for
		the soft-max weights
		\begin{equation}
			\label{eq:releff_rate}
			\mathrm{RE}_{t,j}(\omega)
			=1-(\gamma_{t,j}-1)^2
			-O\bigl(\max_\ell|\tilde{w}_{\ell,t,j}|\bigr)
			-O_{\mathbb P}\Bigl(\frac{\eta_t}{\mathcal K_{t,j}}\Bigr).
		\end{equation}
		\begin{enumerate}[label=\textup{(\alph*)}]
			\item \textup{(Fixed dispersion.)} If
			$\mathcal K_{t,j}\asymp1$, then
			$\mathrm{RE}_{t,j}(\omega)\ge1-C_\omega^2C_{\mathrm{bal}}
			{ T}_{t,j}
			/(\mathcal K_{t,j}\inf_\ell\vartheta^2_{\ell,t,j})$.
			\item \textup{(Vanishing dispersion.)} If
			\begin{equation}
				\label{eq:releff_cond}
				\gamma_{t,j}\to1,
				\qquad
				\max_{\ell\in\mathcal A_t}|\tilde{w}_{\ell,t,j}|\to0,
				\qquad
				\eta_t=o(\mathcal K_{t,j}) ,
			\end{equation}
			then $\mathrm{RE}_{t,j}(\omega)\to1$ and
			$\mathcal V_{t,j}(\omega)=\mathcal I^{-1}_{t,j}
			(1+o_{\mathbb P}(1))$, so the soft-max is asymptotically
			efficient.
		\end{enumerate}
	\end{prop}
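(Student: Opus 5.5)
The plan is to work entirely from the exact identities already available and then expand the soft-max weights around uniform weighting.

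\textbf{The exact forms.} Proposition~\ref{prop:efficiency} gives the exact identity
\[
\mathcal V_{t,j}(\omega)/\mathcal I^{-1}_{t,j}-1=\sum_\ell\omega^{\mathrm{opt}}_{\ell,t,j}(\omega_\ell/\omega^{\mathrm{opt}}_{\ell,t,j}-1)^2 .
\]
Proposition~\ref{prop:dispersion} gives $\mathcal V^{\mathrm{unif}}_{t,j}=\mathcal I^{-1}_{t,j}(1+\mathcal K_{t,j})$. Substituting both into the definition of $\mathrm{RE}_{t,j}$ yields the second (exact) expression in \eqref{eq:releff} by direct algebra. The two endpoint values then follow at once:
\begin{itemize}
\item for $\omega^{\mathrm{opt}}$ the sum vanishes, so $\mathrm{RE}=1$;
\item for uniform weights the sum equals $\mathcal K_{t,j}$ by \eqref{eq:amhm}, so $\mathrm{RE}=0$.
\end{itemize}
Part~(a) needs no expansion. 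Write $\mathrm{RE}=1-(\mathcal V/\mathcal I^{-1}-1)/\mathcal K_{t,j}\ge 1-(\mathcal V/\mathcal I^{-1})/\mathcal K_{t,j}$. Then insert the crude bound $\mathcal V/\mathcal I^{-1}\le C_\omega^2C_{\mathrm{bal}}T_{t,j}/\inf_\ell\vartheta^2_{\ell,t,j}$ from Proposition~\ref{prop:efficiency}.

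\textbf{Expansion for \eqref{eq:releff_rate}.} Here I work in the variables $\tilde w_\ell:=\tilde w_{\ell,t,j}$, which satisfy $\sum_\ell\tilde w_\ell=0$.

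For the oracle weights, $\omega^{\mathrm{opt}}_\ell\propto v_\ell^{-1}=\bar v^{-1}(1+\tilde w_\ell)^{-1}$. This gives $\omega^{\mathrm{opt}}_\ell=A_t^{-1}\{1-\tilde w_\ell+O(\tilde w_\ell^2+A_t^{-1}\sum_m\tilde w_m^2)\}$.

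For the soft-max weights, use \eqref{eq:fluct_conc} together with the conditional-mean formula of Lemma~\ref{lem:fluct}, in which $\kappa_\rho$ cancels in the ratio. This gives $\Psi_{\ell,t,j}=(1+\tilde w_\ell)(1+e_\ell)$, where $e_\ell=O_{\mathbb P}(\sqrt{\eta_t})$ are the metric fluctuations. The lag/drift terms are absorbed, and the clip is inactive for small $\tilde w$. Shift-invariance of the soft-max then gives
\[
\omega_\ell=A_t^{-1}\{1-\gamma(\tilde w_\ell+e_\ell-\bar e)+O(\tilde w_\ell^2+e_\ell^2+\cdots)\},
\]
with $\gamma=\gamma_{t,j}$ identified through \eqref{eq:gamma_tilde}. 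Dividing,
\[
\omega_\ell/\omega^{\mathrm{opt}}_\ell-1=(1-\gamma)\tilde w_\ell-\gamma(e_\ell-\bar e)+O(\tilde w_\ell^2)+\text{cross terms}.
\]
Squaring and averaging with weights $\omega^{\mathrm{opt}}_\ell\approx A_t^{-1}$ produces three pieces:
\begin{itemize}
\item $(1-\gamma)^2A_t^{-1}\sum_\ell\tilde w_\ell^2$;
\item a noise piece $A_t^{-1}\sum_\ell(e_\ell-\bar e)^2=O_{\mathbb P}(\eta_t)$;
\item cross terms, which are controlled by Cauchy--Schwarz against the first two.
\end{itemize}
Finally, expanding \eqref{eq:amhm} gives $\mathcal K_{t,j}=A_t^{-1}\sum_\ell\tilde w_\ell^2\{1+O(\max_\ell|\tilde w_\ell|)\}$. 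Dividing the three pieces by $\mathcal K_{t,j}$ gives $(\gamma-1)^2+O(\max_\ell|\tilde w_\ell|)+O_{\mathbb P}(\eta_t/\mathcal K_{t,j})$, which is \eqref{eq:releff_rate}.

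Part~(b) is then immediate: under \eqref{eq:releff_cond} all three error terms vanish, so $\mathrm{RE}\to1$. Hence $\mathcal V/\mathcal I^{-1}-1=\mathcal K_{t,j}(1-\mathrm{RE})=o_{\mathbb P}(1)$, because $\mathcal K_{t,j}\to0$.

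\textbf{Main obstacle.} The delicate step is the noise term. Its relative size $\eta_t/\mathcal K_{t,j}$ requires that the metric fluctuations $e_\ell$ be $O_{\mathbb P}(\sqrt{\eta_t})$ uniformly in $\ell$ and enter only through their empirical variance across nodes, not through their maximum. I would try to establish this using three ingredients:
\begin{itemize}
\item conditional independence of the fresh batches across nodes, which makes the $e_\ell$ nearly independent;
\item the bounded transmission range (Assumption~\ref{assum:clip}), which controls second moments uniformly;
\item the fact that the cross term $\sum_\ell\tilde w_\ell(e_\ell-\bar e)$ has mean zero to leading order, so it contributes only $O_{\mathbb P}(\sqrt{\eta_t\mathcal K_{t,j}/A_t})$, which is dominated after division.
\end{itemize}
The residual-component contamination by $(\tilde\lambda_{t-1,j}-\lambda_{t-1,j})^2$ is common across nodes. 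I would argue that it cancels, to first order, in the ratio $V/\bar V$.
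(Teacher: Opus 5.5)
Your proposal is correct and follows essentially the paper's route. Like the paper, you use the exact identity from Propositions~\ref{prop:efficiency} and~\ref{prop:dispersion}, a first-order expansion of $\omega_{\ell,t,j}/\omega^{\mathrm{opt}}_{\ell,t,j}$ (the paper expands the log-ratio instead, to the same effect), squaring, and division by $\mathcal K_{t,j}\approx A_t^{-1}\sum_\ell\tilde w_{\ell,t,j}^2$, and part~(a) follows from the crude bound of Proposition~\ref{prop:efficiency} just as you do it. One small correction to your last remark: a node-common additive term $D$ in $V_{\ell,t-1,j}$ does not cancel in $V_{\ell,t-1,j}/\bar V_{t-1,j}$ but gives $\Psi_{\ell,t,j}-1\approx\tilde w_{\ell,t,j}/(1+\delta)$ with $\delta:=D/(\kappa_\rho\bar v_{t,j})$. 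It is therefore absorbed into the effective mismatch $\gamma_{t,j}$, which is harmless for \eqref{eq:releff_rate} but adds a relative $O(\delta)$ to \eqref{eq:gamma_tilde}; the paper simply posits that such feedback terms are $o(v_{\ell,t,j})$.
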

	
	\begin{rem}[Reading the two branches, and the role of $\eta_t$]
		\label{rem:eta_twosided}
		Both $\mathcal K_{t,j}$ and the exact form of $\mathrm{RE}_{t,j}$
		in \eqref{eq:releff} are computable from transmitted quantities,
		and branch~\textup{(a)} is evaluated numerically in
		Section~\ref{sec:5}. Since $\mathcal K_{t,j}\asymp\overline{w^2}$,
		the second condition in \eqref{eq:releff_cond} forces
		$\mathcal K_{t,j}\to0$, so branch~\textup{(b)} is the regime in
		which the attainable gain itself vanishes. Asymptotic efficiency at
		fixed dispersion is claimed in neither branch, the residual loss of
		order $\max_\ell w^2_{\ell,t,j}$ being intrinsic to the exponential
		surrogate.
		
		The smoothing is constrained from both sides. It must satisfy
		$\eta_t\to0$, since otherwise the relative fluctuation of the
		metric is $\Theta_{\mathbb P}(1)$ and no first-order match is
		possible (Remark~\ref{rem:design_choices}(b)), and it must satisfy
		$\eta_t=o(\mathcal K_{t,j})$, since otherwise the estimation noise
		in the weights exceeds the entire gain and $\mathrm{RE}_{t,j}<0$.
		By Remark~\ref{rem:dispersion} the second requirement reduces to
		the first in branch~\textup{(a)} and is implied by
		Assumption~\ref{assum:eta}. It binds only in branch~\textup{(b)},
		where $\eta_t$ may be taken as a sufficiently high power of the
		observed $\mathcal K_{t,j}$.
	\end{rem}
	
	\begin{rem}[The temperature must track a mean, not a supremum]
		\label{rem:tau_mean}
		The calibration required by \eqref{eq:gamma_tilde} is achieved by
		construction rather than by tuning. Setting $\tilde\tau_t=1$ in
		\eqref{eq:weights} is the same as the dimensional rule
		\begin{equation}
			\label{eq:tau_mean}
			\tau_{t,j}
			=\Bigl(\frac1{A_t}\sum_{m\in\mathcal A_t}
			V_{m,t-1,j}\Bigr)^{-1}
			=\bar V^{-1}_{t-1,j} ,
		\end{equation}
		since the two produce the same exponent. The reference temperature in
		\eqref{eq:wdef} is $(\kappa_\rho\bar v_{t,j})^{-1}$, an average over
		active nodes, and \eqref{eq:tau_mean} estimates precisely that
		average, because
		$\mathbb E[V_{\ell,t-1,j}\mid\mathcal F_{t-2}]
		=\kappa_\rho v_{\ell,t,j}(1+o(1))$ by
		Lemma~\ref{lem:fluct}. Averaging over $A_t$ nodes then gives
		$\gamma_{t,j}=1+O_{\mathbb P}(\sqrt{\eta_t/A_t})\to1$ by
		\eqref{eq:fluct_conc}.
		
		What is being avoided is a temperature tied to a cross-sectional
		supremum. Had the scale been fixed in absolute units by a rule of the
		form
		\begin{equation}
			\label{eq:tau_calib}
			\tau_t\ \asymp\ \frac{\bar n_t}{\kappa_\rho{ T}_t}
			=\frac{N_t}{\kappa_\rho A_t{ T}_t} ,
		\end{equation}
		which tracks ${ T}_t=\sum_j\sup_\ell\vartheta^2_{\ell,t,j}$ and is
		shared across coordinates, the exponent would carry the mismatch
		factor
		\begin{equation}
			\label{eq:gamma_vec}
			\gamma_{t,j}
			=\frac{\bar n_t\sum_{j'\le k}\bar v_{t,j'}}{{ T}_t} ,
		\end{equation}
		which equals one only when $\vartheta^2_{\ell,t,j}$ is constant in
		$\ell$ and the batch sizes are homogeneous. Since
		\eqref{eq:releff_rate} degrades as $(\gamma_{t,j}-1)^2$, such a
		mismatch would be harmful in exactly the heterogeneous regime where
		weighting is worth doing. Normalising by the mean removes it at no
		cost and requires no estimate of $\kappa_\rho$, since that constant
		cancels in $V_{\ell,t-1,j}/\bar V_{t-1,j}$. The clip then acts on the
		ratio itself, which is bounded above and below by
		\eqref{eq:varthetabounds} and Assumption~\ref{assum:balance}, so
		Assumption~\ref{assum:temp} holds automatically while $\tau_{t,j}$
		itself is free to diverge.
	\end{rem}

	\subsection{Proofs for Section~\ref{sec:4.4}}
	
	\begin{proof}[Proof of Proposition~\ref{prop:nonsep}]
		\textbf{Step 1 (uniqueness of the optimal weighting).}
		Write $v_{\ell,j}:=\vartheta^2_{\ell,t,j}/n^\ell_t>0$ and
		$\mathcal V_{t,j}(\omega)=\sum_\ell\omega_\ell^2v_{\ell,j}$ on the
		simplex $\Delta:=\{\omega\ge0,\ \sum_\ell\omega_\ell=1\}$. The map
		$\mathcal V_{t,j}$ is strictly convex on $\Delta$, hence has a unique
		minimiser, and by Proposition~\ref{prop:efficiency} below (whose
		proof is independent of the present one) that minimiser is
		$\omega^{\mathrm{opt}}_{\cdot,t,j}$ of \eqref{eq:opt_weights}. A
		single $\omega$ is therefore simultaneously optimal for $j$ and $j'$
		if and only if
		$\omega^{\mathrm{opt}}_{\cdot,t,j}
		=\omega^{\mathrm{opt}}_{\cdot,t,j'}$.
		
		\medskip\noindent
		\textbf{Step 2 (proof of (i)).}
		Two positive probability vectors coincide if and only if their
		unnormalised versions are proportional. Hence
		$\omega^{\mathrm{opt}}_{\cdot,t,j}
		=\omega^{\mathrm{opt}}_{\cdot,t,j'}$ holds if and only if there is
		$\kappa>0$ with
		$n^\ell_t/\vartheta^2_{\ell,t,j}
		=\kappa\,n^\ell_t/\vartheta^2_{\ell,t,j'}$ for every
		$\ell\in\mathcal A_t$. The factors $n^\ell_t$ cancel, leaving
		$\vartheta^2_{\ell,t,j}/\vartheta^2_{\ell,t,j'}\equiv\kappa^{-1}$,
		which is \eqref{eq:sepcrit}. This proves part~(i).
		
		\medskip\noindent
		\textbf{Step 3 (the mixed partial derivative does not vanish).}
		Taking logarithms in \eqref{eq:vartheta},
		\[
		\log\vartheta^2(\lambda,c)
		=\log2+2\log\lambda+2\log(\lambda+\sigma^2)
		+2\log(\lambda+c\sigma^2)
		-2\log\bigl(\lambda^2+2c\sigma^2\lambda+c\sigma^4\bigr),
		\]
		so that, writing $F(\lambda,c)
		:=\partial_c\log\vartheta^2(\lambda,c)$,
		\[
		F(\lambda,c)
		=\frac{2\sigma^2}{\lambda+c\sigma^2}
		-\frac{2\sigma^2(2\lambda+\sigma^2)}
		{\lambda^2+2c\sigma^2\lambda+c\sigma^4}.
		\]
		Placing the two fractions over a common denominator and using
		$(2\lambda+\sigma^2)(\lambda+c\sigma^2)
		=2\lambda^2+\sigma^2\lambda+2c\sigma^2\lambda+c\sigma^4$, the
		numerator becomes
		$2\sigma^2\{(\lambda^2+2c\sigma^2\lambda+c\sigma^4)
		-(2\lambda^2+\sigma^2\lambda+2c\sigma^2\lambda+c\sigma^4)\}
		=-2\sigma^2\lambda(\lambda+\sigma^2)$, which gives
		\eqref{eq:dlogvartheta} and shows $F<0$ throughout.
		
		It remains to show that $F$ is non-constant in $\lambda$ on the
		admissible range fixed by Assumption~\ref{assum:bbp}. At the lower
		endpoint $\lambda=\sigma^2\sqrt c$ the numerator of
		\eqref{eq:dlogvartheta} equals $-2\sigma^6\sqrt c(1+\sqrt c)$ and the
		denominator equals
		$\sigma^2(\sqrt c+c)\cdot2\sigma^4c(1+\sqrt c)
		=2\sigma^6c\sqrt c(1+\sqrt c)^2$, so
		\begin{equation}
			\label{eq:Fthreshold}
			F\bigl(\sigma^2\sqrt c,c\bigr)=-\frac{1}{c\,(1+\sqrt c)}\ <\ 0
		\end{equation}
		independently of $\sigma^2$, whereas $F(\lambda,c)
		\sim-2\sigma^2/\lambda\to0$ as $\lambda\to\infty$. A function
		bounded away from zero at the lower endpoint and tending to zero at
		the upper endpoint cannot be constant in $\lambda$, so
		$\partial_\lambda F=\partial^2_{c\lambda}\log\vartheta^2
		\not\equiv0$ on the admissible range.
		
		\medskip\noindent
		\textbf{Step 4 (genericity, completing (ii)).}
		Fix $j\ne j'$ and set
		$D(c):=\log\vartheta^2(\lambda_{t,j},c)
		-\log\vartheta^2(\lambda_{t,j'},c)$. By Step~2 a single weighting is
		simultaneously optimal for $j$ and $j'$ if and only if $D$ takes the
		same value at every $c\in\{c_{\ell,t-1}\}$, that is, if and only if
		\begin{equation}
			\label{eq:exceptional}
			\Psi\bigl(\lambda_{t,j},\lambda_{t,j'},
			c_{\ell,t-1},c_{\ell',t-1}\bigr)
			:=\int_{c_{\ell',t-1}}^{c_{\ell,t-1}}\!\!
			\int_{\lambda_{t,j'}}^{\lambda_{t,j}}
			\partial^2_{c\lambda}\log\vartheta^2(\lambda,c)\,
			d\lambda\,dc\ =\ 0
		\end{equation}
		for all $\ell,\ell'\in\mathcal A_t$. The integrand is real-analytic
		on the admissible range, hence so is $\Psi$. Moreover $\Psi$ is not
		identically zero, since differentiating it in $\lambda_{t,j}$ and
		then in $c_{\ell,t-1}$ returns
		$\partial^2_{c\lambda}\log\vartheta^2(\lambda_{t,j},c_{\ell,t-1})$,
		which is not identically zero by Step~3. The zero set of a
		non-trivial real-analytic function is closed, Lebesgue-null and has
		empty interior, so \eqref{eq:sepcrit} fails for every configuration
		outside such a set, in particular for generic distinct spikes
		$\lambda_{t,j}\ne\lambda_{t,j'}$ and heterogeneous aspect ratios
		$c_{\ell,t-1}\ne c_{\ell',t-1}$. If instead the aspect ratios are
		homogeneous, then $\vartheta^2_{\ell,t,j}$ does not depend on $\ell$,
		both sides of \eqref{eq:sepcrit} are constant and
		\eqref{eq:exceptional} holds trivially.
	\end{proof}

	\begin{proof}[Proof of Proposition~\ref{prop:efficiency}]
		Write again $v_\ell:=\vartheta^2_{\ell,t,j}/n^\ell_t>0$, so that
		$\mathcal V_{t,j}(\omega)=\sum_\ell\omega^2_\ell v_\ell$,
		$\mathcal I_{t,j}=\sum_\ell v_\ell^{-1}$ and
		$\omega^{\mathrm{opt}}_{\ell,t,j}
		=v_\ell^{-1}/\mathcal I_{t,j}$. All quantities are
		$\mathcal F_{t-1}$-measurable, so the argument is deterministic
		given $\mathcal F_{t-1}$.
		
		\medskip\noindent
		\textbf{Step 1 (the bound and its equality case).}
		By the Cauchy--Schwarz inequality,
		\[
		1=\Bigl(\sum_\ell\omega_\ell\Bigr)^2
		=\Bigl(\sum_\ell\omega_\ell v_\ell^{1/2}\cdot
		v_\ell^{-1/2}\Bigr)^2
		\le\Bigl(\sum_\ell\omega^2_\ell v_\ell\Bigr)
		\Bigl(\sum_\ell v_\ell^{-1}\Bigr)
		=\mathcal V_{t,j}(\omega)\,\mathcal I_{t,j},
		\]
		i.e.\ $\mathcal V_{t,j}(\omega)\ge\mathcal I^{-1}_{t,j}$, with
		equality iff $\omega_\ell v_\ell^{1/2}\propto v_\ell^{-1/2}$, i.e.\
		iff $\omega_\ell\propto v_\ell^{-1}$, i.e.\ iff
		$\omega=\omega^{\mathrm{opt}}_{\cdot,t,j}$.
		
		\medskip\noindent
		\textbf{Step 2 (exact Pythagorean identity).}
		Because $v_\ell\omega^{\mathrm{opt}}_{\ell,t,j}
		=\mathcal I^{-1}_{t,j}$ for every $\ell$, and because both
		$\omega$ and $\omega^{\mathrm{opt}}$ sum to one,
		\begin{align*}
			\sum_\ell v_\ell\bigl(\omega_\ell
			-\omega^{\mathrm{opt}}_{\ell,t,j}\bigr)^2
			=&\sum_\ell v_\ell\omega^2_\ell
			-2\sum_\ell\bigl(v_\ell\omega^{\mathrm{opt}}_{\ell,t,j}\bigr)
			\omega_\ell
			+\sum_\ell\bigl(v_\ell\omega^{\mathrm{opt}}_{\ell,t,j}\bigr)
			\omega^{\mathrm{opt}}_{\ell,t,j}\\
			=&\mathcal V_{t,j}(\omega)-\frac{2}{\mathcal I_{t,j}}
			+\frac{1}{\mathcal I_{t,j}} .
		\end{align*}
		Multiplying by $\mathcal I_{t,j}$ yields the first equality of
		\eqref{eq:eff_gap},
		\[
		\frac{\mathcal V_{t,j}(\omega)}{\mathcal I^{-1}_{t,j}}-1
		=\mathcal I_{t,j}\sum_{\ell\in\mathcal A_t}
		\frac{\vartheta^2_{\ell,t,j}}{n^\ell_t}
		\bigl(\omega_\ell-\omega^{\mathrm{opt}}_{\ell,t,j}\bigr)^2 ,
		\]
		which also re-proves Step~1 and exhibits the gap as a weighted
		squared distance to the optimum. The efficiency loss is thus
		quadratic in the relative weight error, which is what allows a
		first-order match of $\omega$ to $\omega^{\mathrm{opt}}$ to leave a
		second-order gap. This is exploited in
		Proposition~\ref{prop:softmax_releff}.
		
		\medskip\noindent
		\textbf{Step 3 (the relative bound).}
		Factoring out $\omega^{\mathrm{opt}}_{\ell,t,j}$ and using
		$v_\ell(\omega^{\mathrm{opt}}_{\ell,t,j})^2
		=\omega^{\mathrm{opt}}_{\ell,t,j}/\mathcal I_{t,j}$,
		\begin{align*}
			\mathcal I_{t,j}\sum_\ell v_\ell
			\bigl(\omega_\ell-\omega^{\mathrm{opt}}_{\ell,t,j}\bigr)^2
			=&\mathcal I_{t,j}\sum_\ell
			v_\ell\bigl(\omega^{\mathrm{opt}}_{\ell,t,j}\bigr)^2
			\Bigl(\frac{\omega_\ell}
			{\omega^{\mathrm{opt}}_{\ell,t,j}}-1\Bigr)^2\\
			=&\sum_\ell\omega^{\mathrm{opt}}_{\ell,t,j}
			\Bigl(\frac{\omega_\ell}
			{\omega^{\mathrm{opt}}_{\ell,t,j}}-1\Bigr)^2
			\le\max_{\ell\in\mathcal A_t}
			\Bigl|\frac{\omega_\ell}
			{\omega^{\mathrm{opt}}_{\ell,t,j}}-1\Bigr|^2 ,
		\end{align*}	
		since $\sum_\ell\omega^{\mathrm{opt}}_{\ell,t,j}=1$. Whenever the
		right-hand side is $o_{\mathbb P}(1)$ we obtain
		$\mathcal V_{t,j}(\omega)=\mathcal I^{-1}_{t,j}(1+o_{\mathbb P}(1))$. 
		Proposition~\ref{prop:softmax_releff} establishes this for the
		soft-max weights, and quantifies the residual through
		$\mathrm{RE}_{t,j}$. Note that the middle expression is exact, and it
		is the quantity divided by $\mathcal K_{t,j}$ in \eqref{eq:releff}.
		
		\medskip\noindent
		\textbf{Step 4 (the unconditional crude bound).}
		Irrespective of any calibration, Lemma~\ref{lem:weights}
		and Assumption~\ref{assum:balance} give
		\[
		\mathcal V_{t,j}(\omega)
		\le\Bigl(\max_\ell\omega_\ell\Bigr)^2\sum_\ell v_\ell
		\le\frac{C^2_\omega}{A^2_t}\cdot A_t{ T}_{t,j}
		\frac{C_{\mathrm{bal}}A_t}{N_t}
		=\frac{C^2_\omega C_{\mathrm{bal}}{ T}_{t,j}}{N_t},
		\]
		$\mathcal I_{t,j}
		=\sum_\ell\frac{n^\ell_t}{\vartheta^2_{\ell,t,j}}
		\le\frac{N_t}{\inf_\ell\vartheta^2_{\ell,t,j}},$ 
		so that $\mathcal V_{t,j}(\omega)/\mathcal I^{-1}_{t,j}
		\le C^2_\omega C_{\mathrm{bal}}\,{ T}_{t,j}
		/\inf_\ell\vartheta^2_{\ell,t,j}$, which is $O(1)$ by
		\eqref{eq:varthetabounds}. (The constant is therefore 
		$C^2_\omega C_{\mathrm{bal}}$.) Hence 
		the efficiency loss of the soft-max weights is bounded in all
		cases, calibrated or not.
	\end{proof}
	
	\begin{proof}[Proof of Proposition~\ref{prop:dispersion}]
		With $v_\ell:=v_{\ell,t,j}$ one has
		$\mathcal V^{\mathrm{unif}}_{t,j}=A_t^{-2}\sum_\ell v_\ell$ and
		$\mathcal I^{-1}_{t,j}=(\sum_\ell v^{-1}_\ell)^{-1}$, so
		\[
		\frac{\mathcal V^{\mathrm{unif}}_{t,j}}{\mathcal I^{-1}_{t,j}}
		=\frac{1}{A^2_t}\Bigl(\sum_\ell v_\ell\Bigr)
		\Bigl(\sum_\ell v^{-1}_\ell\Bigr)
		=\Bigl(\frac1{A_t}\sum_\ell v_\ell\Bigr)
		\Bigl(\frac1{A_t}\sum_\ell v^{-1}_\ell\Bigr) ,
		\]
		which is the ratio of the arithmetic to the harmonic mean of
		$\{v_\ell\}$ and is $\ge1$ by the AM--HM inequality, with equality
		iff $v_\ell$ is constant in $\ell$. This is \eqref{eq:amhm}.
		Since $\mathcal I^{-1}_{t,j}$ is the smallest value of
		$\mathcal V_{t,j}$ over the simplex
		(Step~1 of the proof of Proposition~\ref{prop:efficiency}), the
		standard deviation under any weighting is at least
		$\mathcal I^{-1/2}_{t,j}
		=(1+\mathcal K_{t,j})^{-1/2}
		(\mathcal V^{\mathrm{unif}}_{t,j})^{1/2}$, which is the asserted
		ceiling.
		
		For \eqref{eq:Klog}, let $\vartheta^2_{\ell,t,j}\equiv\rho_{t,j}$, so
		$v_\ell=\rho_{t,j}/n^\ell_t$ and
		$1+\mathcal K_{t,j}
		=(A_t^{-1}\sum_\ell (n^\ell_t)^{-1})(A_t^{-1}\sum_\ell n^\ell_t)$.
		If the $n^\ell_t$ are spread uniformly over
		$[\underline n,R\underline n]$ then, as $A_t\to\infty$,
		$A_t^{-1}\sum_\ell n^\ell_t\to\underline n(1+R)/2$ and
		$A_t^{-1}\sum_\ell(n^\ell_t)^{-1}
		\to\{\underline n(R-1)\}^{-1}\log R$, whence
		$1+\mathcal K_{t,j}\to(1+R)\log R/\{2(R-1)\}$. At $R=8$ this equals
		$1.3368$, so the attainable reduction is
		$1-1.3368^{-1/2}=13.5\%$. The discrete profile
		$n^\ell_t=p+50\ell$, $\ell\le L=50$, $p=300$ gives
		$1+\mathcal K_{t,j}=1.3617$ and $14.3\%$. Finally, the small-dispersion
		expansion of Remark~\ref{rem:dispersion} follows by writing
		$v_\ell=\bar v(1+\tilde{w}_\ell)$ with $\sum_\ell \tilde{w}_\ell=0$. Then
		$A_t^{-1}\sum_\ell v^{-1}_\ell
		=\bar v^{-1}\{1+A_t^{-1}\sum_\ell w^2_\ell
		+O(\max_\ell|\tilde{w}_\ell|^3)\}$, so
		\begin{equation}
			\label{eq:KwexpansionA}
			\mathcal K_{t,j}
			=\frac1{A_t}\sum_{\ell\in\mathcal A_t}w^2_{\ell,t,j}
			+O\bigl(\max_\ell|\tilde{w}_{\ell,t,j}|^3\bigr) .
		\end{equation}
	\end{proof}
	
	\begin{proof}[Proof of Proposition~\ref{prop:softmax_releff}]
		Throughout, $v_\ell:=v_{\ell,t,j}$, $\tilde{w}_\ell:=\tilde{w}_{\ell,t,j}$,
		$\gamma:=\gamma_{t,j}$, and all quantities are
		$\mathcal F_{t-1}$-measurable, so the argument is deterministic given
		$\mathcal F_{t-1}$ apart from the explicitly flagged
		$O_{\mathbb P}(\sqrt{\eta_t})$ terms.
		
		\medskip\noindent
		\textbf{Step 1 (the exact identity).}
		By Proposition~\ref{prop:dispersion},
		$\mathcal V^{\mathrm{unif}}_{t,j}-\mathcal I^{-1}_{t,j}
		=\mathcal K_{t,j}\mathcal I^{-1}_{t,j}$, while by Step~3 of the proof
		of Proposition~\ref{prop:efficiency},
		$\mathcal V_{t,j}(\omega)-\mathcal I^{-1}_{t,j}
		=\mathcal I^{-1}_{t,j}\sum_\ell\omega^{\mathrm{opt}}_{\ell,t,j}
		(\omega_{\ell,t,j}/\omega^{\mathrm{opt}}_{\ell,t,j}-1)^2$.
		Subtracting and dividing gives the second equality in
		\eqref{eq:releff}. Taking $\omega=A_t^{-1}\bm 1$ makes the numerator
		of the first expression vanish, so $\mathrm{RE}_{t,j}=0$. Taking
		$\omega=\omega^{\mathrm{opt}}_{\cdot,t,j}$ makes the sum in the second
		expression vanish, so $\mathrm{RE}_{t,j}=1$. Note that
		$\mathcal K_{t,j}>0$ is exactly what makes the ratio well defined,
		and that $\mathrm{RE}_{t,j}\le1$ always, with $\mathrm{RE}_{t,j}<0$
		possible.
		
		\medskip\noindent
		\textbf{Step 2 (log-ratio expansion).}
		Both $\omega_{\cdot,t,j}$ and $\omega^{\mathrm{opt}}_{\cdot,t,j}$ are
		normalised, so it suffices to compare log-ratios up to an additive
		$\ell$-free constant. Since
		$\omega^{\mathrm{opt}}_{\ell,t,j}\propto v^{-1}_\ell
		=\bar v^{-1}_{t,j}(1+\tilde{w}_\ell)^{-1}$,
		\[
		\log\omega^{\mathrm{opt}}_{\ell,t,j}
		=-\log(1+\tilde{w}_\ell)+\mathrm{const}
		=-\tilde{w}_\ell+\tfrac12w^2_\ell+O(w^3_\ell)+\mathrm{const}.
		\]
		For the soft-max, \eqref{eq:fluct_conc} and Step~4 of the proof of
		Lemma~\ref{lem:fluct} give
		$V_{\ell,t-1,j}=\kappa_\rho v_\ell
		\{1+O_{\mathbb P}(\sqrt{\eta_t})\}+o(v_\ell)$, the $o(v_\ell)$
		absorbing the drift and consensus-feedback terms of
		\eqref{eq:fluct} by hypothesis. Hence, with
		$\tau_{t,j}=\gamma/(\kappa_\rho\bar v_{t,j})$,
		\begin{align*}
			\log\omega_{\ell,t,j}
			=&-\tau_{t,j}V_{\ell,t-1,j}+\mathrm{const}\\
			=&-\gamma(1+\tilde{w}_\ell)
			+\mathrm{const}+O_{\mathbb P}\bigl(\tau_{t,j}V_{\ell,t-1,j}
			\sqrt{\eta_t}\bigr)\\
			=&-\gamma \tilde{w}_\ell+\mathrm{const}+O_{\mathbb P}(\sqrt{\eta_t}),
		\end{align*}
		using $\tau_{t,j}V_{\ell,t-1,j}=\gamma(1+\tilde{w}_\ell)
		\{1+O_{\mathbb P}(\sqrt{\eta_t})\}=\Theta_{\mathbb P}(1)$, which is
		where the boundedness of the {exponent}
		$\tau_{t,j}V_{\ell,t-1,j}$ under
		Assumption~\ref{assum:temp} enters. No bound on $\tau_{t,j}$ alone is
		used or available. Subtracting, all $\ell$-free terms
		cancel and, uniformly in $\ell\in\mathcal A_t$,
		\begin{equation}
			\label{eq:logratio}
			\log\frac{\omega_{\ell,t,j}}
			{\omega^{\mathrm{opt}}_{\ell,t,j}}
			=-(\gamma-1)\tilde{w}_\ell-\tfrac12w^2_\ell+O(w^3_\ell)
			+O_{\mathbb P}(\sqrt{\eta_t}) .
		\end{equation}
		The choice $\gamma=1$ is precisely what annihilates the term of order
		$\tilde{w}_\ell$. This is the sense in which the temperature
		\eqref{eq:tau_mean} matches $\omega^{\mathrm{opt}}$ to first order.
		
		\medskip\noindent
		\textbf{Step 3 (summation).}
		Exponentiating \eqref{eq:logratio} and using $e^x-1=x+O(x^2)$,
		\[
		\frac{\omega_{\ell,t,j}}{\omega^{\mathrm{opt}}_{\ell,t,j}}-1
		=-(\gamma-1)\tilde{w}_\ell-\tfrac12w^2_\ell+O(w^3_\ell)
		+O_{\mathbb P}(\sqrt{\eta_t}) ,
		\]
		so that, squaring and using
		$\sum_\ell\omega^{\mathrm{opt}}_{\ell,t,j}=1$ together with
		$\omega^{\mathrm{opt}}_{\ell,t,j}\asymp A_t^{-1}$
		(Lemma~\ref{lem:weights} and Assumption~\ref{assum:balance}),
		\begin{align*}
			\sum_\ell\omega^{\mathrm{opt}}_{\ell,t,j}
			\Bigl(\frac{\omega_{\ell,t,j}}
			{\omega^{\mathrm{opt}}_{\ell,t,j}}-1\Bigr)^2
			=&(\gamma-1)^2\frac{1}{A_t}\sum_\ell w^2_\ell
			\bigl(1+O(1)\bigr)
			+O\Bigl(\frac{1}{A_t}\sum_\ell w^4_\ell\Bigr)\\
			&+O\bigl(|\gamma-1|\max_\ell|\tilde{w}_\ell|
			\tfrac1{A_t}\textstyle\sum_\ell w^2_\ell\bigr)
			+O_{\mathbb P}(\eta_t) ,
		\end{align*}
		the cross term between $-(\gamma-1)\tilde{w}_\ell$ and $-\tfrac12w^2_\ell$
		being $O(|\gamma-1|\,A_t^{-1}\sum_\ell|\tilde{w}_\ell|^3)$ and hence absorbed
		in the third term. No cancellation is available in the
		$O_{\mathbb P}(\eta_t)$ term, every summand being a square.
		
		\medskip\noindent
		\textbf{Step 4 (division by $\mathcal K_{t,j}$).}
		By \eqref{eq:KwexpansionA},
		$\mathcal K_{t,j}=A_t^{-1}\sum_\ell w^2_\ell
		\{1+O(\max_\ell|\tilde{w}_\ell|)\}$, and
		$A_t^{-1}\sum_\ell w^4_\ell\le\max_\ell w^2_\ell\cdot
		A_t^{-1}\sum_\ell w^2_\ell$. Dividing the display of Step~3 by
		$\mathcal K_{t,j}$ therefore gives
		\[
		1-\mathrm{RE}_{t,j}(\omega)
		=(\gamma-1)^2+O\bigl(\max_\ell|\tilde{w}_\ell|\bigr)
		+O_{\mathbb P}\Bigl(\frac{\eta_t}{\mathcal K_{t,j}}\Bigr) ,
		\]
		which is \eqref{eq:releff_rate}. The term $O(\max_\ell|\tilde{w}_\ell|)$
		collects both $O(\max_\ell w^2_\ell)$ from the quartic sum and
		$O(|\gamma-1|\max_\ell|\tilde{w}_\ell|)$ from the cross term. Under
		\eqref{eq:releff_cond} all three correction terms vanish, so
		$\mathrm{RE}_{t,j}\to1$ and, by \eqref{eq:releff},
		$\mathcal V_{t,j}(\omega)=\mathcal I^{-1}_{t,j}
		\{1+(1-\mathrm{RE}_{t,j})\mathcal K_{t,j}\}
		=\mathcal I^{-1}_{t,j}(1+o_{\mathbb P}(1))$, the last step using
		$\mathcal K_{t,j}=O(1)$ from \eqref{eq:varthetabounds} and
		Assumption~\ref{assum:balance}.
		
		\medskip\noindent
		\textbf{Step 5 (the two roles of the smoothing).}
		Since $\tau_{t,j}V_{\ell,t-1,j}=\Theta_{\mathbb P}(1)$ by Step~2, the
		relative fluctuation of $\omega_{\ell,t,j}$ equals that of
		$V_{\ell,t-1,j}$, namely $O_{\mathbb P}(\sqrt{\eta_t})$ by
		\eqref{eq:fluct_conc}, and this holds {only because} both
		components of the metric are exponentially smoothed. An unsmoothed
		squared statistic has relative fluctuation $\Theta_{\mathbb P}(1)$,
		which would leave a term of order one in \eqref{eq:logratio} and
		destroy the first-order match. This is the first role. The second is
		quantitative. The resulting contribution to the gap is of exact order
		$\eta_t$, so it is negligible relative to the available gain
		$\mathcal K_{t,j}$ only under $\eta_t=o(\mathcal K_{t,j})$, failing
		which $\mathrm{RE}_{t,j}<0$ and uniform weighting is preferable. Both
		roles are recorded in Remark~\ref{rem:eta_twosided}.
		
		\medskip\noindent
		\textbf{Step 6 (the temperature of \eqref{eq:tau_mean}).}
		Write $\bar V_{t-1,j}:=A_t^{-1}\sum_m V_{m,t-1,j}$. By
		\eqref{eq:fluct_conc} each $V_{m,t-1,j}=\kappa_\rho v_m
		\{1+O_{\mathbb P}(\sqrt{\eta_t})\}$, and the fluctuations are
		conditionally independent across $m$ given
		$\mathcal F_{t-\lceil1/\eta_t\rceil}$, so averaging over $A_t$ nodes
		gives $\bar V_{t-1,j}=\kappa_\rho\bar v_{t,j}
		\{1+O_{\mathbb P}(\sqrt{\eta_t/A_t})\}$. Hence
		$\tau_{t,j}=\bar V^{-1}_{t-1,j}$ satisfies
		$\gamma_{t,j}=\kappa_\rho\bar v_{t,j}\tau_{t,j}
		=1+O_{\mathbb P}(\sqrt{\eta_t/A_t})\to1$, and $\tau_{t,j}$ is
		$\mathcal F_{t-1}$-measurable since it uses only
		$\{V_{m,t-1,j}\}$. Assumption~\ref{assum:temp} holds automatically for
		this rule. The exponent is
		$\tau_{t,j}V_{\ell,t-1,j}=V_{\ell,t-1,j}/\bar V_{t-1,j}
		=(1+\tilde{w}_\ell)\{1+O_{\mathbb P}(\sqrt{\eta_t})\}$, and
		$\max_\ell|\tilde{w}_\ell|$ is bounded by \eqref{eq:varthetabounds} together
		with Assumption~\ref{assum:balance}, so
		$$
		\underline\tau\,\underline\theta
		\ \le\ \tilde\tau_t\Psi_{\ell,t,j}\ \le\ \bar\tau\bar\theta
		\qquad\text{for all }
		\ell\in\mathcal A_t\text{ and all }t
		$$
		holds with
		$\underline\theta,\bar\theta$ depending only on
		$C_{\mathrm{bal}}$ and the bounds in \eqref{eq:varthetabounds}. No
		restriction on the growth of $\tau_{t,j}$ is needed, and none would be
		tenable. Since the exponent of \eqref{eq:weights} is
		$\tilde\tau_t\Psi_{\ell,t,j}=\tilde\tau_t V_{\ell,t-1,j}
		/\bar V_{t-1,j}$ whenever the clip is inactive, the rule
		\eqref{eq:tau_mean} is the case $\tilde\tau_t=1$, and
		\eqref{eq:gamma_tilde} follows. By contrast the absolute-scale
		rule \eqref{eq:tau_calib} yields
		$\gamma_{t,j}\to\bar v_{t,j}A_t\bar n_t/{ T}_t$, which equals one
		iff $\vartheta^2_{\ell,t,j}$ is constant in $\ell$, as asserted in
		Remark~\ref{rem:tau_mean}.
	\end{proof}
	
	\begin{prop}[Selection bias of non-predictable weights]
		\label{prop:B4}
		Suppose the weights were formed from the current round, say
		$\omega^{\mathrm{np}}_{\ell,t,j}\propto
		\exp\{-\tau_{t,j}(\check\lambda_{\ell,t,j}
		-\tilde\lambda_{t-1,j})^2\}$, all other elements of the scheme
		being unchanged, and assume Assumption~\ref{assum:eighth}. Then,
		conditionally on $\mathcal F_{t-1}$ and in the regime of
		Proposition~\ref{prop:softmax_releff},
		\begin{equation}
			\label{eq:B4}
			\mathbb E\Bigl[\sum_{\ell\in\mathcal A_t}
			\omega^{\mathrm{np}}_{\ell,t,j}
			\varepsilon^{\mathrm{stoch}}_{\ell,t,j}
			\Bigm|\mathcal F_{t-1}\Bigr]
			=-\tau_{t,j}\Bigl(1-\frac1{A_t}\Bigr)
			\frac1{A_t}\sum_{\ell\in\mathcal A_t}
			\mathbb E\bigl[(\varepsilon^{\mathrm{stoch}}_{\ell,t,j})^3
			\mid\mathcal F_{t-1}\bigr]
			+O_{\mathbb P}\bigl(\tau^2_{t,j}\max_\ell(n^\ell_t)^{-2}\bigr),
		\end{equation}
		which at the calibrated temperature of
		Proposition~\ref{prop:softmax_releff} is of exact order
		$\vartheta_{t,j}/n^\ell_t$ and is therefore
		{not} negligible relative to the stochastic error
		$O_{\mathbb P}(\vartheta_{t,j}/\sqrt{N_t})$ unless
		$A_t=o(n^\ell_t)$.
	\end{prop}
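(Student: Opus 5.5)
The approach is to linearise the soft-max weights in $\tau_{t,j}$ and extract the leading covariance between each weight and its own error. Condition on $\mathcal F_{t-1}$ throughout. Write $x_\ell:=\check\lambda_{\ell,t,j}-\tilde\lambda_{t-1,j}=\varepsilon_\ell+d_\ell$, where $\varepsilon_\ell:=\varepsilon^{\mathrm{stoch}}_{\ell,t,j}$ and $d_\ell:=b_{\ell,t,j}+\lambda_{t,j}-\tilde\lambda_{t-1,j}$ is $\mathcal F_{t-1}$-measurable. Set $e_\ell:=\exp(-\tau x_\ell^2)$ and $\tau:=\tau_{t,j}$, so that $\omega^{\mathrm{np}}_\ell=e_\ell/\sum_m e_m$.

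In the regime of Proposition~\ref{prop:softmax_releff}, the exponent $\tau x_\ell^2$ is $O_{\mathbb P}(1)$, with fluctuating part of order $\tau\cdot O_{\mathbb P}(1/n^\ell_t)$. I will therefore expand
\[
\omega^{\mathrm{np}}_\ell=\frac1{A_t}\Bigl\{1-\tau\bigl(x_\ell^2-\overline{x^2}\bigr)\Bigr\}+R_\ell,
\qquad \overline{x^2}:=\frac1{A_t}\sum_m x_m^2,
\]
around uniform weights. More carefully, I expand around the predictable weights $\propto\exp(-\tau d_\ell^2)$, treating $\tau(x_\ell^2-d_\ell^2)$ as the small parameter. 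The remainder $R_\ell$ is quadratic in that parameter. Using Lemma~\ref{lem:B1}(ii) and Assumption~\ref{assum:eighth}, its conditional contribution after multiplying by $\varepsilon_\ell$ is $O_{\mathbb P}(\tau^2\max_\ell(n^\ell_t)^{-2})$.

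Next I compute $\mathbb E[\sum_\ell\omega^{\mathrm{np}}_\ell\varepsilon_\ell\mid\mathcal F_{t-1}]$ term by term.

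\begin{itemize}
\item The zeroth-order term vanishes, since $\mathbb E[\varepsilon_\ell\mid\mathcal F_{t-1}]=0$.
\item For the first-order term, expand $x_\ell^2=\varepsilon_\ell^2+2d_\ell\varepsilon_\ell+d_\ell^2$. By conditional independence across nodes (Step~2 of the proof of Lemma~\ref{lem:wavg}), $\mathbb E[x_m^2\varepsilon_\ell]=0$ for $m\ne\ell$. Only the self-terms survive:
\[
\mathbb E\bigl[(x_\ell^2-\overline{x^2})\varepsilon_\ell\bigr]=\Bigl(1-\frac1{A_t}\Bigr)\bigl\{\mathbb E\varepsilon_\ell^3+2d_\ell\,\mathbb E\varepsilon_\ell^2\bigr\}.
\]
\item Multiplying by $-\tau/A_t$ and summing over $\ell$ gives the displayed leading term of \eqref{eq:B4}, together with a term $-2\tau A_t^{-1}(1-A_t^{-1})\sum_\ell d_\ell\,\mathbb E\varepsilon_\ell^2$.
\end{itemize}

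The main obstacle is disposing of this $d_\ell$-cross term, which is not displayed in \eqref{eq:B4}. In the regime of Proposition~\ref{prop:softmax_releff} the drift and consensus terms are assumed negligible relative to $v_\ell$ (the $o(v_\ell)$ absorbed in Step~2 of that proof). I would argue $d_\ell=o_{\mathbb P}(n^{-1/2})$, so the cross term is $\tau\cdot o(n^{-3/2})$. I would then show this is dominated by the $\tau\,\mathbb E\varepsilon^3$ term, or else fold it into the remainder; if that fails, the statement should be read with $d_\ell$ absorbed there.

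Finally, for the order claim, I use the linear representation \eqref{eq:B1lin}: $\varepsilon_\ell\approx\varpi_\ell/\chi'$. The third cumulant of a mean of $n$ i.i.d.\ centred quadratic forms is $\kappa_3/n^2$, with $\kappa_3=\mathbb E\{(\bm b^\top\bm z)^2-\|\bm b\|^2\}^3\asymp 8\|\bm b\|^6\ne0$ by Assumption~\ref{assum:eighth}. Hence $\mathbb E\varepsilon_\ell^3\asymp\vartheta^3/n^2$. At the calibrated temperature $\tau\asymp1/(\kappa_\rho\bar v)\asymp n/\vartheta^2$, so the leading term is $\asymp\vartheta/n^\ell_t$. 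Comparing this with the stochastic error $\vartheta/\sqrt{N_t}=\vartheta/\sqrt{A_tn}$ gives non-negligibility unless $A_t=o(n^\ell_t)$.
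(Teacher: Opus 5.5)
You follow the paper's proof essentially step for step. You linearise the soft-max about uniform weights, write $S_\ell=\varepsilon_\ell^2+2d_\ell\varepsilon_\ell+d_\ell^2$ with $d_\ell$ predictable, keep only the self-terms by conditional independence (whence $1-A_t^{-1}$), and read off the order from the $n^{-2}$ scaling of the third cumulant at $\tau\asymp n/\vartheta^2$.

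The one place you depart from the paper is that you keep the term $-2\tau(1-A_t^{-1})A_t^{-1}\sum_\ell d_\ell\,\mathbb E[\varepsilon_\ell^2\mid\mathcal F_{t-1}]$. The paper discards it by simply asserting that $\mathbb E\varepsilon_\ell^3$ is the leading part of $\mathbb E[\varepsilon_\ell S_\ell]$. Your suspicion is justified, and neither of your two ways of disposing of the term works.

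Since $\mathbb E[\varepsilon_\ell^2\mid\mathcal F_{t-1}]\asymp\vartheta^2/n$ while $\mathbb E[\varepsilon_\ell^3\mid\mathcal F_{t-1}]\asymp\vartheta^3/n^2$, the cross term exceeds the cubic term by a factor $\asymp|d_\ell|\,n/\vartheta$. So $d_\ell=o_{\mathbb P}(n^{-1/2})$ is far too weak; you would need $d_\ell=O_{\mathbb P}(1/n)$. But $d_\ell$ contains $\lambda_{t,j}-\tilde\lambda_{t-1,j}$, whose martingale part has exact order $\mathsf N_{t-1}^{-1/2}\asymp(A_tn)^{-1/2}$ for fixed $\beta$ (Theorem~\ref{thm:clt}). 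The ratio is therefore $\asymp\sqrt{n/A_t}$, and the cross term dominates precisely in the regime $A_t=o(n^\ell_t)$ that the proposition singles out. It can be hidden in $O_{\mathbb P}(\tau^2\max_\ell(n^\ell_t)^{-2})$ only when $\tau\gtrsim\sqrt{n/A_t}$, which leads to the second problem.

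That second problem is shared with the paper's proof. At the calibrated temperature $\tau\,\mathbb E[\varepsilon_\ell^2\mid\mathcal F_{t-1}]\asymp1$, so $\tau(x_\ell^2-d_\ell^2)=\Theta_{\mathbb P}(1)$ is not a small parameter. Your remainder $O_{\mathbb P}(\tau^2\max_\ell(n^\ell_t)^{-2})$ is then $O_{\mathbb P}(1)$, larger than the claimed leading order $\vartheta/n$. Hence \eqref{eq:B4} says nothing exactly where you use it for the order claim.

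Sharper bookkeeping does not rescue this. The order-$\tau^k$ term involves $\tau^k\,\mathbb E\varepsilon_\ell^{2k+1}\asymp\tau^kn^{-k-1}$, which is $\asymp n^{-1}$ for every $k$. So the displayed coefficient is not the leading term at that temperature.

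To justify the exact-order conclusion you must evaluate $\mathbb E[\varepsilon_\ell e^{-\tau(\varepsilon_\ell+d_\ell)^2}\mid\mathcal F_{t-1}]$ non-perturbatively, e.g.\ by a (formal) one-term Edgeworth expansion. For a single exponential factor and $v_\ell:=v_{\ell,t,j}$ this gives:
\begin{itemize}
\item $-\tau\,\mathbb E\varepsilon_\ell^3\,(1+2\tau v_\ell)^{-5/2}$ for the skewness part, which has the same order $\vartheta/n$ but a different nonzero constant;
\item $-2\tau v_\ell(1+2\tau v_\ell)^{-3/2}d_\ell$ for the $d_\ell$-linear part, which is of order $|d_\ell|$, i.e.\ comparable to the stochastic error itself, so it must be displayed rather than absorbed.
\end{itemize}
Alternatively, restrict \eqref{eq:B4} to $\tau_{t,j}\max_\ell v_{\ell,t,j}\to0$, where your linearisation is legitimate, and state the $d_\ell$ term explicitly.
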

	
	\begin{proof}[Proof of Proposition~\ref{prop:B4}]
		Write $\varepsilon_\ell:=\varepsilon^{\mathrm{stoch}}_{\ell,t,j}$
		and $S_\ell:=(\check\lambda_{\ell,t,j}
		-\tilde\lambda_{t-1,j})^2$, so that
		$\omega^{\mathrm{np}}_\ell=e^{-\tau S_\ell}
		/\sum_me^{-\tau S_m}$. Conditionally on $\mathcal F_{t-1}$ the
		pairs $(\varepsilon_\ell,S_\ell)$ are independent across $\ell$
		with $\mathbb E\varepsilon_\ell=0$ and
		$\tau S_\ell=O_{\mathbb P}(1)$. Expanding
		$e^{-\tau S_\ell}=1-\tau S_\ell+O(\tau^2S^2_\ell)$ and
		$\sum_me^{-\tau S_m}=A_t\{1-\tau\bar S+O(\tau^2\overline{S^2})\}$
		with $\bar S:=A_t^{-1}\sum_mS_m$,
		\[
		\omega^{\mathrm{np}}_\ell
		=\frac1{A_t}\Bigl\{1-\tau(S_\ell-\bar S)
		+O_{\mathbb P}\bigl(\tau^2\max_mS^2_m\bigr)\Bigr\},
		\]
		whence
		\[
		\mathbb E\Bigl[\sum_\ell\omega^{\mathrm{np}}_\ell
		\varepsilon_\ell\Bigm|\mathcal F_{t-1}\Bigr]
		=-\frac{\tau}{A_t}\sum_\ell
		\mathbb E\bigl[\varepsilon_\ell(S_\ell-\bar S)\bigr]
		+O_{\mathbb P}\bigl(\tau^2\max_\ell(n^\ell_t)^{-2}\bigr),
		\]
		using $\mathbb E\varepsilon_\ell=0$ for the leading term. Writing
		$d_\ell:=\lambda_{t,j}+b_{\ell,t,j}-\tilde\lambda_{t-1,j}$, which is
		$\mathcal F_{t-1}$-measurable, gives exactly
		$S_\ell=\varepsilon_\ell^2+2d_\ell\varepsilon_\ell+d^2_\ell$, so the
		leading contribution to $\mathbb E[\varepsilon_\ell S_\ell]$ is
		$\mathbb E\varepsilon^3_\ell$, while
		$\mathbb E[\varepsilon_\ell\bar S]=A_t^{-1}
		\mathbb E\varepsilon_\ell S_\ell$ by independence across $\ell$. 
		Finally, for a normalized sum of
		$n^\ell_t$ i.i.d.\ centred summands the third cumulant scales as
		$(n^\ell_t)^{-2}$, so
		$\mathbb E\varepsilon^3_\ell\asymp\vartheta^3_{\ell,t,j}
		\gamma_3/(n^\ell_t)^2$ with $\gamma_3$ the standardized skewness of
		the quadratic form $(\bm b^\top\bm z)^2$. At the calibrated
		temperature \eqref{eq:tau_mean}, for which
		$\tau_{t,j}\asymp1/(\kappa_\rho\bar v_{t,j})
		\asymp\bar n_t/(\kappa_\rho\vartheta^2_{t,j})$, this yields
		$|\text{bias}|\asymp\vartheta_{t,j}/n^\ell_t$, as claimed.
		Comparing with $O_{\mathbb P}(\vartheta_{t,j}N_t^{-1/2})$ and
		using $N_t\asymp A_tn^\ell_t$ gives the stated condition. Under the
		predictable weights \eqref{eq:weights} this entire term is
		absent, by Step~1 of the proof of Lemma~\ref{lem:wavg}.
	\end{proof}

	\appendix

\end{document}